\def\hyph{-\penalty0\hskip0pt\relax}
\definecolor{linkB}{RGB}{4, 106, 143}
\definecolor{defRED}{RGB}{84, 4, 18}
\newcommand{\defin}[1]{\textcolor{defRED}{\emph{#1}}\index{#1}}
\declaretheorem[name=Lemma, numberwithin = section]{lemma}
\declaretheorem[name=Theorem,sibling = lemma]{theorem}
\declaretheorem[name=Proposition, sibling=lemma]{proposition}
\declaretheorem[name=Observation, sibling=lemma]{observation}
\declaretheorem[name=Corollary, sibling=lemma]{corollary}
\declaretheorem[name=Conjecture, sibling=lemma]{conjecture}
\declaretheorem[name=Claim]{claim}
\declaretheorem[name=Question]{question}
\declaretheorem[name=Problem]{problem}
\newenvironment{subproof}[1][\proofname]{%
  \begin{proof}[#1]%
}{%
  \end{proof}%
}
\crefname{theorem}{Theorem}{Theorems}
\crefname{proposition}{Proposition}{Propositions}
\crefname{lemma}{Lemma}{Lemmas}
\crefname{claim}{Claim}{Claims}
\crefname{subclaim}{Sub-Claim}{Sub-Claims}
\crefname{observation}{Observation}{Observations}
\crefname{remark}{Remark}{Remarks}
\crefname{corollary}{Corollary}{Corollaries}
\crefname{definition}{Definition}{Definitions}
\crefname{conjecture}{Conjecture}{Conjectures}
\crefname{question}{Question}{Questions}
\DeclareMathOperator*{\argmin}{arg\,min}
\DeclareMathOperator*{\dom}{\operatorname{dom}}
\newcommand{\gcap}{\hspace{-0.02cm}
    \mathbin{\vcenter{\offinterlineskip
    \hbox{\(\raisebox{0.025cm}{\scalebox{1.25}{$\cap$}}\)}\vskip-0.21cm\hbox{\hspace{0.1cm}\scriptsize\(\bullet\)}}}%
}
\newcommand{\gcup}{\hspace{-0.02cm}
    \mathbin{\vcenter{\offinterlineskip
    \hbox{\(\raisebox{0.025cm}{\scalebox{1.25}{$\cup$}}\)}\vskip-0.25cm\hbox{\hspace{0.1cm}\scriptsize\(\bullet\)}}}%
}
\newcommand{\g}{intersectionwise $\chi$-guarding}
\newcommand{\sg}{intersectionwise self-$\chi$-guarding}
\newcommand{\chib}{\chi\text{-bounded}}
\title{\LARGE{Intersections of graphs and $\chi$-boundedness}}
\author[$^{\mathsection}$]{Aristotelis Chaniotis}
\author[$^{\mathsection}$]{Hidde Koerts}
\author[$^{\mathsection}$,\thanks{We acknowledge the support of the Natural Sciences and Engineering Research Council of Canada (NSERC), [funding reference number RGPIN-2020-03912]. Cette recherche a \'et\'e financ\'ee par le Conseil de recherches en sciences naturelles et en g\'enie du Canada (CRSNG), [num\'ero de r\'ef\'erence RGPIN-2020-03912]. This project was funded in part by the Government of Ontario. This research was conducted while Spirkl was an Alfred P. Sloan Fellow.}]{Sophie Spirkl}
\affil[$^{\mathsection}$]{Department of Combinatorics and Optimization, University of Waterloo, \protect\\ Waterloo, Ontario, N2L 3G1, Canada \protect \vspace{0.25cm}}
\date{\today}
\begin{document}

\maketitle

\fontsize{12}{15}\selectfont

\begin{abstract}
   Given $k$ graphs $G_{1}, \ldots, G_{k}$, their {\em intersection} is the graph $(\cap_{i\in [k]}V(G_{i}), \cap_{i\in [k]}E(G_{i}))$. Given $k$ graph classes $\mathcal{G}_{1}, \ldots , \mathcal{G}_{k}$, we call the class $\{G: \forall i \in[k], \exists G_{i} \in \mathcal{G}_{i} \text{ such that } G=G_{1}\cap \ldots \cap G_{k}\}$ the {\em graph-intersection} of $\mathcal{G}_{1}, \ldots , \mathcal{G}_{k}$. The main motivation for the work presented in this paper is to try to understand under which conditions graph-intersection preserves $\chi$-boundedness. We consider the following two questions:

   \begin{itemize}
       \item 
    
    {\em Which graph classes have the property that their graph-intersection with every $\chi$-bounded class of graphs is $\chi$-bounded?} We call such a class {\em intersectionwise $\chi$-guarding}. We prove that classes of graphs which admit a certain kind of decomposition are \g{}. We provide necessary conditions that a finite set of graphs $\mathcal{H}$ should satisfy if the class of $\mathcal{H}$-free graphs is \g{}, and we characterize the \g{} classes which are defined by a single forbidden induced subgraph.

\item
    {\em Which graph classes have the property that, for every positive integer $k$, their $k$-fold graph-intersection is $\chi$-bounded?} We call such a class {\em \sg}. We study \sg{} classes which are defined by a single forbidden induced subgraph, and we prove a result which allows us construct \sg{} classes from known \g{} classes.
   \end{itemize}
\end{abstract}

\newpage
\section{Introduction}
\label{sec:intro}

\subsection{Basic notation and terminology}

For terminology and notation not defined here we refer readers to \cite{west2020combinatorial}. Unless otherwise stated, graphs in this paper are finite, undirected, and have no loops or parallel edges. Graph classes in this paper are assumed to be \defin{hereditary}, that is, closed under isomorphism and under taking induced subgraphs. Let $G$ be a graph. We denote the complement of $G$ by $G^{c}$. In this paper we often denote an edge $\{u,v\}\in E(G)$ by $uv$. Let $X\subseteq V(G)$. We denote the subgraph of $G$ which is induced by $X$, by $G[X]$. Let $H$ be a graph. We say that $G$ is \defin{$H$-free} (respectively \defin{contains $H$}) if it contains no (respectively contains an) induced subgraph isomorphic to $H$. Let $\mathcal{H}$ be a set of graphs. We say that $G$ is $\mathcal{H}$-free if $G$ is $H$-free for every $H\in \mathcal{H}$. We denote by $\{\mathcal{H}\text{-free graphs}\}$ the class of $\mathcal{H}$-free graphs. The \defin{neighborhood of $X$ in $G$}, denoted by $N_{G}(X)$, is the set $\{u\in V(G): \exists v\in X, uv\in E(G)\}$. The \defin{closed neighborhood of $X$ in $G$}, denoted by $N_{G}[X]$, is the set $N_{G}(X)\cup \{X\}$. When $X=\{u\}$ we write $N_{G}(u)$ and $N_{G}[u]$ instead of $N_{G}(\{u\})$ and $N_{G}[\{u\}]$. We use $A_{G}(X)$ to denote the set $V(G)\setminus N_{G}[X]$. When there is no danger of ambiguity, we omit the subscripts from the notations of neighborhoods and from $A_{G}(X)$. For a positive integer $t$ we denote by $K_{t}, P_{t}$, and $C_{t}$ a complete graph, a path, and a cycle on $t$ vertices, respectively.

Let $k$ and $t$ be positive integers. We denote the set $\{1,\ldots,k\}$ by $[k]$. We also denote  by $[k]^{t}$ the set of $t$-tuples with elements from $[k]$. A \defin{$k$-coloring} of $G$ is a function $f:V(G)\to [k]$. A \defin{coloring} of $G$ is a function which is a $k$-coloring of $G$ for some $k$. Let $\mathcal{P}$ be a class of graphs. We say that a $k$-coloring $f$ of $G$ is a \defin{$\mathcal{P}$ $k$-coloring} of $G$ if for every $i\in[k]$ we have that $G[f^{-1}(i)] \in \mathcal{P}$. The \defin{$\mathcal{P}$ chromatic number} of $G$ is the minimum integer $k$ for which $G$ admits a $\mathcal{P}$ $k$-coloring. If $\mathcal{P}$ is the class of edgeless graphs, then we call a $\mathcal{P}$ $k$-coloring a \defin{proper $k$-coloring}, and we refer to the $\mathcal{P}$ chromatic number of a graph $G$ simply as the \defin{chromatic number} of $G$. We denote the chromatic number of $G$ by $\chi(G)$.
 
The \defin{clique number} (respectively the \defin{independence number}) of $G$, denoted by $\omega(G)$ (respectively by $\alpha(G)$), is the size of a largest clique (respectively independent set) of $G$. It is immediate that for every graph $G$, we have $\omega(G) \leq \chi(G)$. A \defin{triangle} in $G$ is a subgraph induced by a clique of size three. Tutte (under the alias Blanche Descartes) \cite{descartes1947three, descartes1954solution} and Zykov \cite{zykov1949some} independently proved in the late 1940's that graphs of large chromatic number do not necessarily contain large complete subgraphs; in particular, they proved that there exist triangle-free graphs of arbitrarily large chromatic number, and thus the gap between $\omega$ and $\chi$ can be arbitrarily large. 

\subsection{\texorpdfstring{$\chi$}{TEXT}-boundedness and graph operations}

Following Gy\'arf\'as \cite{gyarfas1985problems}, we say that a class of graphs $\mathcal{C}$ is \defin{$\chi$-bounded} if there exists a non-decreasing function $f:\mathbb{N} \to \mathbb{N}$ such that for every $G\in \mathcal{C}$, we have that $\chi(G) \leq f(\omega(G))$; in this case $f$ is called a \defin{$\chi$-bounding function} for $\mathcal{C}$. If $f$ can be chosen to be polynomial, then $\mathcal{C}$ is \defin{polynomially $\chi$-bounded}. A graph $G$ is called \defin{perfect} if $\chi(H) = \omega(H)$ for every induced subgraph $H$ of $G$. Thus the class of perfect graphs is the largest class of graphs which is  $\chib$ by the identity function. 

{\em Which classes of graphs are $\chib$?} Since the class of all graphs is not $\chib$, it follows that every $\chib$ class is defined by a set of forbidden induced subgraphs. Despite great progress in the study of $\chi$-boundedness in the last years (see \cite{scott2022graphs, scott2020survey} for recent surveys), we are still far from a good understanding of $\chib$ classes of graphs. 

One approach that can be and has been used in order to prove that a class of graphs $\mathcal{C}$ is $\chib$ is the following: {\em We prove that every graph in $\mathcal{C}$ can be constructed by applying an operation which preserves $\chi$-boundedness to graphs that are members of $\chib$ classes}. For example the celebrated Strong Perfect Graph Theorem \cite{chudnovsky2006strong} was proved using this approach.

{\em Which operations preserve $\chi$-boundedness?} Among operations that are known to preserve $\chi$-boundedness are the following: gluing along a clique, gluing along a bounded number of vertices \cite{alon1987subgraphs} (see also \cite{girao2022subgraphs}), substitution \cite{chudnovsky2013substitution}, $1$-joins \cite{bonamy2020graphs, dvovrak2012classes}, and amalgams \cite{penev2017amalgams}. In the case that an interesting operation does not preserve $\chi$-boundedness in general, it is reasonable to try to characterize the cases where it does. 

In this paper, we consider the operation of the graph-intersection between classes of graphs, which arises from the operation of intersection among a finite number of graphs (precise definitions later on in this section). To the best of our knowledge, Gy\'arf\'as \cite[Section 5]{gyarfas1985problems} first considered the interplay of this operation with $\chi$-boundedness. In recent work, Adenwalla, Braunfeld, Sylvester, and Zamaraev \cite{adenwalla2024boolean} considered this topic in the context of their broader study on boolean combinations of graphs. Since, as we will see later on this section, graph-intersection does not preserve $\chi$-boundedness in general, we focus on understanding under which conditions it does.

\subsection{Terminology and preliminaries on graph-intersection and \texorpdfstring{$\chi$}{TEXT}-boundedness}

Let $k$ be a positive integer. Given $k$ graphs $G_{1}, \ldots, G_{k}$, their intersection (respectively union) is the graph $\left(\cap_{i\in[k]}V(G_{i}), \cap_{i\in[k]}E(G_{i})\right)$ (respectively $\left(\cup_{i\in[k]}V(G_{i}), \cup_{i\in[k]}E(G_{i})\right)$). Given $k$ graph classes $\mathcal{G}_{1}, \ldots , \mathcal{G}_{k}$, we call the class $\{G: \forall i \in[k], \exists G_{i} \in \mathcal{G}_{i} \text{ such that } G=G_{1}\cap \ldots \cap G_{k}\}$ (respectively the class $\{G: \forall i \in[k], \exists G_{i} \in \mathcal{G}_{i} \text{ such that } G=G_{1}\cup \ldots \cup G_{k}\}$) the \defin{graph-intersection} (respectively \defin{graph-union}) of $\mathcal{G}_{1}, \ldots , \mathcal{G}_{k}$, and we denote this class by $\mathcal{G}_{1} \gcap \ldots \gcap \mathcal{G}_{k}$ (respectively $\mathcal{G}_{1} \gcup \ldots \gcup \mathcal{G}_{k}$). Given the definitions of these operations and the discussion above about operations which preserve $\chi$-boundedness, it is natural to ask whether or not these operations preserve $\chi$-boundedness.

We first consider the graph-union. We begin by discussing a well-known way for obtaining a coloring for the union of a finite number of graphs by making use of given colorings of the individual graphs. Let $k$ be a positive integer, let $G_{1}, \ldots, G_{k}$ be graphs on the same vertex set $V$. For each $i\in [k]$, let $f_{i}:V\to [k_{i}]$ be a $k_{i}$-coloring of $G_{i}$. Then the \defin{product coloring obtained from $f_{1}, \ldots, f_{k}$} is the function $\prod_{i\in[k]}f_{i}: V\to \{(j_{1}, \ldots, j_{k}): j_{i}\in [k_{i}], \text{ for every }i\in[k]\}$ which is defined as follows: $\prod_{i\in[k]}f_{i}(v):=(f_{1}(v), \ldots, f_{k}(v))$, for every $v\in V$. The following well-known result states that if the colorings of the individual graphs are proper, then the corresponding product coloring is proper as well:

\begin{proposition}[Folklore]
\label{prop:product.coloring.for.union}
    Let $k$ be a positive integer, let $G_{1}, \ldots, G_{k}$ be graphs on the same vertex set $V$, and for each $i\in [k]$, let $f_{i}:V\to [k_{i}]$ be a proper $k_{i}$-coloring of $G_{i}$. Then the product coloring obtained from $f_{1}, \ldots, f_{k}$ is a proper $\left(\prod_{i\in[k]}k_{i}\right)$-coloring of $G_{1}\cup \ldots \cup G_{k}$. In particular, $\chi(\cup_{i\in[k]}G_{i}) \leq \prod_{i\in[k]} \chi(G_{i})$.
\end{proposition}

The following, which states that graph-union preserves $\chi$-boundedness, is an immediate corollary of \autoref{prop:product.coloring.for.union} and of the fact that $\omega(G_{1}\cup \ldots \cup G_{k}) \geq \max_{i\in[k]}\{\omega(G_{i})\}$:

\begin{proposition}[{Gy\'arf\'as \cite[Proposition 5.1 (a)]{gyarfas1985problems}}]
    If $\mathcal{G}_{1}, \ldots \mathcal{G}_{k}$ are $\chib$ classes of graphs with $\chi$-bounding functions $f_{1}, \ldots, f_{k}$ respectively, then $\mathcal{G}_{1} \gcup \ldots \gcup \mathcal{G}_{k}$ is a $\chib$ family and $\prod_{i\in [k]} f_{i}$ is a suitable $\chi$-bounding function.
\end{proposition}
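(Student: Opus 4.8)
The plan is to verify the claimed bound graph by graph, combining the two ingredients singled out just before the statement. First I would fix an arbitrary $G \in \mathcal{G}_{1} \gcup \ldots \gcup \mathcal{G}_{k}$ and unpack the definition of graph-union to obtain graphs $G_{1}, \ldots, G_{k}$ with $G_{i} \in \mathcal{G}_{i}$ for each $i \in [k]$ and $G = G_{1} \cup \ldots \cup G_{k}$. Since each $f_{i}$ is non-decreasing and $\mathbb{N}$-valued, the pointwise product $\prod_{i\in[k]} f_{i}$ is again non-decreasing, so it qualifies as a candidate $\chi$-bounding function; it therefore suffices to prove $\chi(G) \le \prod_{i\in[k]} f_{i}(\omega(G))$. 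To be able to invoke \autoref{prop:product.coloring.for.union}, which is phrased for graphs on a common vertex set, I would note that we may assume $V(G_{1}) = \ldots = V(G_{k}) = V(G)$: replacing each $G_{i}$ by the graph on vertex set $V(G)$ with the same edge set only adds isolated vertices, which alters neither the union $G = \cup_{i} G_{i}$ nor any of the quantities $\chi(G_{i}), \omega(G_{i})$ appearing below.

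Next I would invoke the hypothesis that each $\mathcal{G}_{i}$ is $\chi$-bounded by $f_{i}$: for every $i \in [k]$ the graph $G_{i}$ admits a proper coloring using $\chi(G_{i}) \le f_{i}(\omega(G_{i}))$ colors. Feeding these proper colorings into \autoref{prop:product.coloring.for.union} produces a proper coloring of $G = G_{1} \cup \ldots \cup G_{k}$ and hence the estimate $\chi(G) \le \prod_{i\in[k]} \chi(G_{i}) \le \prod_{i\in[k]} f_{i}(\omega(G_{i}))$.

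Finally I would replace the arguments $\omega(G_{i})$ by $\omega(G)$. Since each $G_{i}$ is a (not necessarily induced) subgraph of $G$, any clique of $G_{i}$ is a clique of $G$, which is exactly the inequality $\omega(G) = \omega(G_{1}\cup \ldots \cup G_{k}) \ge \max_{i\in[k]}\{\omega(G_{i})\}$ recorded before the statement; thus $\omega(G_{i}) \le \omega(G)$ for every $i$. As each $f_{i}$ is non-decreasing, $f_{i}(\omega(G_{i})) \le f_{i}(\omega(G))$, and multiplying these over $i \in [k]$ upgrades the previous estimate to $\chi(G) \le \prod_{i\in[k]} f_{i}(\omega(G))$. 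Because $G$ was arbitrary, this shows $\mathcal{G}_{1} \gcup \ldots \gcup \mathcal{G}_{k}$ is $\chi$-bounded with $\prod_{i\in[k]} f_{i}$ as a bounding function.

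Given that both ingredients are already at hand, no step is a genuine obstacle; the only points requiring care are the vertex-set alignment needed to apply \autoref{prop:product.coloring.for.union} and the small bookkeeping verification that $\prod_{i\in[k]} f_{i}$ is a legitimate (non-decreasing, $\mathbb{N}$-valued) $\chi$-bounding function and not merely an upper bound evaluated at each individual graph.
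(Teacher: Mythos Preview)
Your proposal is correct and follows exactly the approach the paper itself indicates: the paper does not give a separate proof but simply states the proposition as ``an immediate corollary of \autoref{prop:product.coloring.for.union} and of the fact that $\omega(G_{1}\cup \ldots \cup G_{k}) \geq \max_{i\in[k]}\{\omega(G_{i})\}$.'' You have carefully unpacked precisely these two ingredients, with the added (harmless) bookkeeping about aligning vertex sets and checking that $\prod_{i} f_{i}$ is non-decreasing.
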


What about graph-intersection?

\begin{question}
\label{q.gintersetcion.of.chib}
    Is it always true that the graph-intersection of two $\chib$ classes of graphs is $\chib$?
\end{question}

\begin{question}
\label{q.self-gintersetcion.of.chib}
    Is it always true that if a class $\mathcal{A}$ is $\chib$, then for every positive integer $k$ the class $\gcap_{i\in [k]}\mathcal{A}$ is $\chib$?
\end{question}

 Early constructions of geometric intersection graphs show that both of the above questions have negative answers. Given a finite family of nonempty sets $\mathcal{A}$, the \defin{intersection graph} of $\mathcal{A}$ is the graph with vertex set $\mathcal{A}$, in which two vertices are adjacent if and only if they have a non-empty intersection. Classes of intersection graphs of geometric figures were among the first to be studied from the perspective of $\chi$-boundedness (see \cite{kostochka2004coloring} for a survey).
 
In 1960 Asplund and Gr\"unbaum \cite{asplund1960coloring} proved that the class of intersection graphs of axis-parallel rectangles in the plane is polynomially $\chi$-bounded. Surprisingly, the situation changes in $\mathbb{R}^{3}$. In 1965, Burling \cite{burling1965coloring} proved, by exhibiting a construction, that the class of intersection graphs of axis-parallel boxes in $\mathbb{R}^{3}$ contains triangle-free graphs of arbitrarily large chromatic number, and thus is not $\chi$-bounded. An \defin{interval graph} is a graph which is isomorphic to the intersection graph of a family of intervals on a linearly ordered set (such as the real line). As Bielecki \cite{bielecki1948} and Rado \cite{rado1948covering} proved in the late 1940's, interval graphs are perfect. We note that the class of intersection graphs of axis-parallel boxes in $\mathbb{R}^{k}$ is exactly the class $\gcap_{i\in[k]}\mathcal{I}$: Observe that given an intersection graph of axis-parallel boxes in $\mathbb{R}^{k}$, say $G$, the set of the projections of the boxes in one of the $k$ axes gives rise to an interval graph, and it is easy to see that $G$ is the intersection of these $k$ interval graphs. For the other inclusion the argument is similar. Thus, the class of intersection graphs of axis-parallel rectangles in the plane is the class $\mathcal{I}\gcap \mathcal{I}$, and the class of intersection graphs of axis-parallel boxes in $\mathbb{R}^{3}$ is the class $\mathcal{I}\gcap \mathcal{I}\gcap \mathcal{I}$. Hence, Asplund and Gr\"unbaum \cite{asplund1960coloring} proved that the class $\mathcal{I}\gcap \mathcal{I}$ is $\chib$, and Burling \cite{burling1965coloring} proved that the class $\mathcal{I}\gcap \mathcal{I}\gcap \mathcal{I}$ is not $\chib$. It follows that both \autoref{q.gintersetcion.of.chib} and \autoref{q.self-gintersetcion.of.chib} have a negative answer.

Let $\mathcal{A}$ be a class of graphs. We call $\mathcal{A}$ \defin{intersectionwise $\chi$-imposing} if for every class of graphs $\mathcal{B}$ the class $\mathcal{A}\gcap \mathcal{B}$ is $\chib$. We call $\mathcal{A}$ \defin{\g} if for every $\chib$ class of graphs $\mathcal{B}$ the class $\mathcal{A}\gcap \mathcal{B}$ is $\chib$. Finally, we call $\mathcal{A}$ \defin{\sg} if for every positive integer $k$ the class $\gcap_{i\in [k]}\mathcal{A}$ is $\chib$. 

\subsection{Organization of this paper}

The rest of this paper is organized as follows: In \autoref{sec:imposing} we prove a characterization of intersectionwise $\chi$-imposing graph classes. In \autoref{sec:decomposable.graphs.guarding} we define a certain type of a graph decomposition and we prove that if the graphs of a class admit such a decomposition, then this class is \g{}. We use this result to prove that the classes of unit interval graphs and of line graphs of bipartite graphs are \g{}. In \autoref{sec:constructions} we prove that several $\chib$ classes are not \g; these results give rise to necessary conditions on a finite set $\mathcal{H}$ of graphs for the class of $\mathcal{H}$-free graphs to be \g{}. We summarize these conditions in \autoref{subsec:finite.sets.conditions}. In \autoref{sec:H-free}, we characterize the \g{} classes which are defined by a single forbidden induced subgraph. We note that in an upcoming paper we discuss results towards a complete characterization of \g{} classes which are defined by two forbidden induced subgraphs. Finally in \autoref{sec:self.chi} we investigate the \sg{} classes which are defined by a single forbidden induced subgraph; we conjecture a characterization of these classes; we prove ways to construct \sg{} classes from existing \g{} classes; and we discuss some open problems for \sg{} classes.

\section{A characterization of intersectionwise \texorpdfstring{$\chi$}{TEXT}-imposing graph classes}
\label{sec:imposing}

Following \cite{chudnovsky2021induced}, we say that $\mathcal{A}$ is \defin{colorable} if there exists an integer $k$ such that every graph in $\mathcal{A}$ has chromatic number at most $k$. The main result of this section is the following:

\begin{theorem}
\label{thm:chi_imposing.characterization}
    Let $\mathcal{C}$ be a class of graphs. Then $\mathcal{C}$ is intersectionwise $\chi$-imposing if and only if $\mathcal{C}$ is colorable.
\end{theorem}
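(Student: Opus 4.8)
The plan is to prove the equivalence in two directions, using the definition of intersectionwise $\chi$-imposing together with the folklore product-coloring idea adapted to intersections.

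First I would handle the easy direction: if $\mathcal{C}$ is colorable then it is intersectionwise $\chi$-imposing. Suppose every graph in $\mathcal{C}$ has chromatic number at most $k$. Let $\mathcal{B}$ be an arbitrary class of graphs, and take any $G \in \mathcal{C} \gcap \mathcal{B}$. By definition there are graphs $G_1 \in \mathcal{C}$ and $G_2 \in \mathcal{B}$ on the same vertex set as $G$ with $G = G_1 \cap G_2$. Since $E(G) \subseteq E(G_1)$, any proper coloring of $G_1$ is also a proper coloring of $G$ (edges of $G$ are a subset of edges of $G_1$, so adjacent vertices in $G$ receive distinct colors). Hence $\chi(G) \le \chi(G_1) \le k$. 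This shows $\mathcal{C} \gcap \mathcal{B}$ is colorable, and a colorable class is trivially $\chi$-bounded (take the constant $\chi$-bounding function $f \equiv k$). Since $\mathcal{B}$ was arbitrary, $\mathcal{C}$ is intersectionwise $\chi$-imposing.

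For the harder converse direction, I would argue the contrapositive: if $\mathcal{C}$ is not colorable, then there is a class $\mathcal{B}$ for which $\mathcal{C} \gcap \mathcal{B}$ is not $\chi$-bounded. Since $\mathcal{C}$ is not colorable, it contains graphs of arbitrarily large chromatic number. The key idea is to choose $\mathcal{B}$ to be the class of all graphs (or, to respect hereditariness, the class of all edgeless graphs together with what is needed), so that intersecting with $\mathcal{B}$ can erase all the clique structure of the witnesses in $\mathcal{C}$ while preserving their high chromatic number. Concretely, for a graph $G_n \in \mathcal{C}$ with $\chi(G_n) \ge n$, I want to exhibit a target graph $H$ with $G_n = G_n \cap G'$ for some $G' \in \mathcal{B}$, where $H$ still has large chromatic number but small (ideally bounded) clique number, so that no $\chi$-bounding function can exist for $\mathcal{C} \gcap \mathcal{B}$.

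The main obstacle, and the place requiring care, is controlling the clique number of the intersection graphs so that large chromatic number is not simply explained by large cliques: a $\chi$-bounded class is allowed to have arbitrarily large $\chi$ as long as $\omega$ grows too. So the real content is to intersect each high-$\chi$ graph $G_n$ with a suitable $G'_n \in \mathcal{B}$ that kills large cliques while keeping $\chi$ large. The natural device is to take $\mathcal{B}$ to be the class of all triangle-free graphs (which is hereditary): then $G_n \cap G'_n$ is triangle-free, so $\omega \le 2$ throughout, yet I must ensure the intersection still has large chromatic number. To do this I would replace each vertex of $G_n$ by an independent set and route edges through a triangle-free "gadget" (for instance a shift graph or a Kneser-type construction) so that $G_n$ embeds as an intersection $G_n \cap G'_n$ with $G'_n$ triangle-free, or alternatively observe directly that any graph $G$ can be written as the intersection of two graphs with prescribed sparse structure on a common vertex set. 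Verifying that this construction lies in $\mathcal{C} \gcap \mathcal{B}$ and has $\omega$ bounded while $\chi \to \infty$ is the crux; once that is in place, no non-decreasing $f$ can satisfy $\chi \le f(\omega)$, so $\mathcal{C} \gcap \mathcal{B}$ is not $\chi$-bounded, completing the contrapositive.
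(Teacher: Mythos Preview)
Your backward direction is correct and matches the paper's argument exactly.

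Your forward direction has a genuine gap. You take $G_n\in\mathcal{C}$ with $\chi(G_n)\ge n$ and propose to intersect it with some triangle-free $G_n'\in\mathcal{B}$ so that $G_n\cap G_n'$ has $\omega\le 2$ yet still large chromatic number. But $G_n\cap G_n'$ is a spanning subgraph of $G_n$: intersecting can only \emph{delete} edges, so there is no reason whatsoever that $\chi(G_n\cap G_n')$ remains large. Your suggested fix --- blowing up vertices of $G_n$ into independent sets and routing through a gadget --- produces graphs that need not lie in $\mathcal{C}$ (hereditary classes are closed under induced subgraphs, not under blow-ups), so the resulting intersection is not witnessed in $\mathcal{C}\gcap\mathcal{B}$. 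As written, the argument does not produce a class $\mathcal{B}$ for which $\mathcal{C}\gcap\mathcal{B}$ fails to be $\chi$-bounded.

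The paper avoids all of this with two one-line observations. First, intersecting with the (certainly $\chi$-bounded) class of complete graphs gives $\mathcal{C}\gcap\{\text{complete graphs}\}\supseteq\mathcal{C}$, so $\mathcal{C}$ itself must be $\chi$-bounded; let $f$ be a $\chi$-bounding function. Second, if $\mathcal{C}$ contained arbitrarily large complete graphs, then $\mathcal{C}\gcap\{\text{all graphs}\}$ would equal the class of all graphs (since $K_n\cap H=H$ for every $H$ on $n$ vertices), contradicting $\chi$-imposing. Hence $\omega(\mathcal{C})$ is bounded, and every $G\in\mathcal{C}$ satisfies $\chi(G)\le f(\omega(\mathcal{C}))$, i.e.\ $\mathcal{C}$ is colorable. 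The moral: rather than trying to destroy cliques in the intersection, use complete graphs on the $\mathcal{C}$ side (to get all graphs) and on the $\mathcal{B}$ side (to recover $\mathcal{C}$) as the two witnesses.
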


We begin with the following observation:

\begin{observation}
    \label{obs:chig.is.chib}
    Let $\mathcal{C}$ be a \g{} class of graphs. Then $\mathcal{C}$ is $\chib$.
\end{observation}

\begin{proof}[Proof of \autoref{obs:chig.is.chib}]
    Since $\mathcal{C}$ is \g{}, it follows that the graph-intersection of $\mathcal{C}$ with the class of complete graphs is $\chib$. Since this graph-intersection contains the class $\mathcal{C}$, it follows that $\mathcal{C}$ is $\chib$ as well.
\end{proof}

We are now ready to prove \autoref{thm:chi_imposing.characterization}:

\begin{proof}[Proof of \autoref{thm:chi_imposing.characterization}]
    For the forward direction: Since every intersectionwise $\chi$-imposing class is \g{}, it follows by \autoref{obs:chig.is.chib} that $\mathcal{C}$ is $\chib$. Let $f$ be a $\chi$-bounding function for $\mathcal{C}$. We claim that $\mathcal{C}$ does not contain arbitrarily large complete graphs. Suppose not. Then the graph-intersection of $\mathcal{C}$ with the class of all graphs contains all graphs. Thus, since $\mathcal{C}$ is intersectionwise $\chi$-imposing, the class of all graphs is $\chi$-bounded which is a contradiction. Let $\omega(\mathcal{C})$ be the maximum size of a complete graph in $\mathcal{C}$, and let $k:= f(\omega(\mathcal{C}))$. Then for every graph $G\in \mathcal{C}$, we have that $\chi(G) \leq k$. 

    For the backward direction: Let $k$ be a positive integer such that $\chi(G)\leq k$ for every graph $G\in \mathcal{C}$. Let $\mathcal{A}$ be a class of graphs, and let $H\in \mathcal{C}\gcap \mathcal{A}$. Then $H$ is a subgraph of a graph in $\mathcal{C}$, and thus $\chi(H)\leq k$. Hence, the class $\mathcal{C}\gcap \mathcal{A}$ is $\chib$ by the function $f(\omega)=k$. This completes the proof of \autoref{thm:chi_imposing.characterization}.
\end{proof}

\section{Unions of graphs of bounded componentwise \texorpdfstring{$r$}{TEXT}\hyph{dependent} chromatic number}
\label{sec:decomposable.graphs.guarding}

In this section, we prove that classes of graphs which admit a certain decomposition are \g{}, and we use this result to prove that the classes of unit interval graphs and of line graphs of bipartite graphs are \g{} (we define these classes later on this section). We first need some definitions.

Let $G$ be a graph and let $r\geq 2$ be an integer. We say that $G$ is \defin{componentwise $r$-dependent} if every component of $G$ has independence number at most $r-1$. Thus the componentwise $r$-dependent chromatic number of a graph $G$ is the minimum integer $k$ for which $G$ admits a componentwise $r$-dependent $k$-coloring. We say that $G$ is \defin{$(t,k,r)$-decomposable} if there exist positive integers $t, k$ and $r$ such that $G$ is the union of $t$ graphs of componentwise $r$-dependent chromatic number at most $k$. Finally we say that a class of graphs $\mathcal{C}$ is \defin{decomposable} if there exist $t,k$ and $r$ such that every graph in $\mathcal{C}$ is $(t,k,r)$-decomposable. The main result of this section is the following:

\begin{theorem}
    \label{thm:decomposable.graphs.chi.guarding}
    Let $\mathcal{C}$ be a decomposable class of graphs. Then $\mathcal{C}$ is \g{}.
\end{theorem}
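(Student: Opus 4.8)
The plan is to reduce the general statement to the case of a single componentwise $r$-dependent graph, and then to control the clique number of the $\chib$ factor by means of Ramsey's theorem. First I would fix an arbitrary $\chib$ class $\mathcal{B}$ with $\chi$-bounding function $f$, take $H\in \mathcal{C}\gcap \mathcal{B}$, and write $H=G\cap B$ with $G\in \mathcal{C}$ and $B\in \mathcal{B}$. After restricting $G$ and $B$ to $V(H)=V(G)\cap V(B)$ — which is harmless, since $\mathcal{C}$ and $\mathcal{B}$ are hereditary and componentwise $r$-dependence is preserved under taking induced subgraphs — I may assume that $G$ and $B$ share a common vertex set $V$. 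Since $\mathcal{C}$ is decomposable, I would then write $G=G_{1}\cup \cdots \cup G_{t}$, where each $G_{j}$ has componentwise $r$-dependent chromatic number at most $k$.

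The first key observation is that intersection distributes over union: since $E(H)=E(G)\cap E(B)=\bigcup_{j\in[t]}(E(G_{j})\cap E(B))$, we have $H=\bigcup_{j\in[t]}H_{j}$ where $H_{j}:=G_{j}\cap B$. Each $H_{j}$ is a spanning subgraph of $H$, so $\omega(H_{j})\leq \omega(H)$. Applying \autoref{prop:product.coloring.for.union} gives $\chi(H)\leq \prod_{j\in[t]}\chi(H_{j})$, so it suffices to bound each $\chi(H_{j})$ in terms of $\omega(H)$; this reduces the problem to the case $t=1$. For a single $H_{j}$, I would fix a componentwise $r$-dependent $k$-coloring of $G_{j}$, partitioning $V$ into classes $S_{1},\dots,S_{k}$ with each $G_{j}[S_{\ell}]$ componentwise $r$-dependent, and use $\chi(H_{j})\leq \sum_{\ell}\chi(H_{j}[S_{\ell}])$. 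Because every edge of $H_{j}$ is an edge of $G_{j}$, the edges of $H_{j}[S_{\ell}]$ lie within the components of $G_{j}[S_{\ell}]$, so $\chi(H_{j}[S_{\ell}])$ equals the maximum of $\chi(H_{j}[D])$ over the components $D$ of $G_{j}[S_{\ell}]$, each of which satisfies $\alpha(G_{j}[D])\leq r-1$.

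The heart of the argument, and the step I expect to be the main obstacle, is bounding $\chi(H_{j}[D])$ on such a component $D$. Here $H_{j}[D]$ is an induced subgraph of $B$, so $\chi(H_{j}[D])\leq f(\omega(B[D]))$; the difficulty is that $\omega(B[D])$ could a priori be far larger than $\omega(H)$. I would control it with Ramsey's theorem: let $C$ be a maximum clique of $B[D]$, and two-color the pairs in $C$ according to whether they are edges of $G_{j}$. A monochromatic "$G_{j}$-edge" clique of size $s$ is a clique of $H_{j}$, hence of $H$, of size $s$, while a monochromatic "$G_{j}$-nonedge" clique of size $r$ is an independent set of $G_{j}[D]$ of size $r$, which is forbidden since $\alpha(G_{j}[D])\leq r-1$. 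Hence $|C|<R(\omega(H)+1,\,r)$, giving $\omega(B[D])\leq R(\omega(H)+1,\,r)-1$ and therefore $\chi(H_{j}[D])\leq f\big(R(\omega(H)+1,\,r)-1\big)$.

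Assembling these bounds, I would obtain $\chi(H_{j})\leq k\cdot f\big(R(\omega(H)+1,\,r)-1\big)$ for each $j$, and hence $\chi(H)\leq \big(k\, f(R(\omega(H)+1,\,r)-1)\big)^{t}$, a bound depending only on $\omega(H)$ and the fixed parameters $t,k,r$ and $f$. This establishes that $\mathcal{C}\gcap \mathcal{B}$ is $\chib$, and since $\mathcal{B}$ was an arbitrary $\chib$ class, that $\mathcal{C}$ is \g{}. The only genuinely nontrivial ingredient is the Ramsey bound on $\omega(B[D])$; the reductions via the distributivity of intersection over union and via the componentwise $r$-dependent coloring are structural bookkeeping, relying throughout on the hereditarity of the classes involved and the hereditarity of componentwise $r$-dependence.
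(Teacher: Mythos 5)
Your proof is correct and follows essentially the same route as the paper: the paper factors the argument into separate preservation statements (\autoref{lem:rK1-free.guarding}, \autoref{prop:componentwise.guarding.is.guarding}, \autoref{prop:bounded.guarding.chromatic.number.is.guarding}, \autoref{prop:union.of.guarding.is.guarding}) while you inline them, but the key steps---the product coloring for the union, the sum over the $k$ color classes, the reduction to components of independence number at most $r-1$, and the Ramsey bound on the clique number of the $\chi$-bounded factor---are identical and yield essentially the same bounding function. One cosmetic slip: $H_{j}[D]$ is a spanning subgraph of the induced subgraph $B[D]$ of $B$, not itself an induced subgraph of $B$, but the chain $\chi(H_{j}[D])\leq \chi(B[D])\leq f(\omega(B[D]))$ that you need still holds.
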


The main ingredient for the proof of \autoref{thm:decomposable.graphs.chi.guarding} is following result on classes of graphs of bounded independence number:

\begin{lemma}
\label{lem:rK1-free.guarding}
    Let $r$ be a positive integer. Then the class of $rK_1$-free graphs is \g{}.
\end{lemma}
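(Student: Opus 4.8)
The plan is to unwind the definitions and reduce the statement to a single Ramsey-theoretic estimate. Recall that a graph is $rK_1$-free precisely when it has no independent set of size $r$, i.e.\ when its independence number is at most $r-1$. So I would fix an arbitrary $\chi$-bounded class $\mathcal{B}$ with $\chi$-bounding function $g$, and take an arbitrary member $G = G_1 \cap G_2$ of $\{rK_1\text{-free graphs}\} \gcap \mathcal{B}$, where $\alpha(G_1) \le r-1$ and $G_2 \in \mathcal{B}$. Writing $V := V(G) = V(G_1)\cap V(G_2)$, the graph $G$ is a spanning subgraph of $G_2[V]$: they share the vertex set $V$, and every edge of $G$ lies in $E(G) = E(G_1)\cap E(G_2) \subseteq E(G_2[V])$. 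Hence any proper coloring of $G_2[V]$ is a proper coloring of $G$, so $\chi(G) \le \chi(G_2[V])$. Since the classes here are hereditary, $G_2[V] \in \mathcal{B}$ and therefore $\chi(G_2[V]) \le g(\omega(G_2[V]))$. Thus the whole problem reduces to bounding $\omega(G_2[V])$ by a function of $\omega(G)$.

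For the key step I would analyze a single clique of the $\mathcal{B}$-side graph. Let $Q \subseteq V$ be a clique of $G_2[V]$. The point is that inside $Q$ the edges of $G$ coincide with the edges of $G_1$: for $u,v \in Q$ we have $uv \in E(G_2)$ automatically, so $uv \in E(G) = E(G_1)\cap E(G_2)$ if and only if $uv \in E(G_1)$. Consequently $G[Q] = G_1[Q]$, which yields the two bounds $\omega(G[Q]) \le \omega(G)$ and $\alpha(G[Q]) = \alpha(G_1[Q]) \le \alpha(G_1) \le r-1$. Now I would invoke Ramsey's theorem: every graph on at least $R(\omega(G)+1,\, r)$ vertices contains either a clique of size $\omega(G)+1$ or an independent set of size $r$, and $G[Q]$ has neither. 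Therefore $|Q| \le R(\omega(G)+1,\, r) - 1$, and since $Q$ was arbitrary, $\omega(G_2[V]) \le R(\omega(G)+1,\, r) - 1$.

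Combining the two steps gives $\chi(G) \le g\bigl(R(\omega(G)+1,\, r)-1\bigr)$, so the function $h(\omega) := g\bigl(R(\omega+1,\, r)-1\bigr)$ is a $\chi$-bounding function for $\{rK_1\text{-free graphs}\} \gcap \mathcal{B}$. As $\mathcal{B}$ was an arbitrary $\chi$-bounded class, this shows that the class of $rK_1$-free graphs is \g{}, completing the proof.

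I would expect the only genuine conceptual obstacle to be identifying the right quantity to control. A tempting first move is to use the bound $\alpha(G_1) \le r-1$ directly on $G$, but this fails: $G$ is merely a subgraph of $G_1[V]$, and deleting edges can make the independence number arbitrarily large, so $\alpha(G)$ is uncontrolled. The resolution is to instead bound $\omega(G_2[V])$, because within a $G_2$-clique the only edges deleted in passing from $G_1$ to $G$ are the $G_1$-non-edges; there the clique bound coming from $G$ and the independence bound coming from $G_1$ hold simultaneously, and Ramsey's theorem forces such cliques to be small. Everything else is routine bookkeeping with the definitions of graph-intersection and of hereditary classes.
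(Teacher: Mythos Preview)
Your proof is correct and follows essentially the same approach as the paper: both arguments observe that inside any clique $Q$ of the $\chi$-bounded side, the intersection graph coincides with the $rK_1$-free side restricted to $Q$, so Ramsey's theorem bounds $|Q|$ by $R(\omega(G)+1,r)-1$, and then the $\chi$-bounding function of $\mathcal{B}$ finishes. The only organizational difference is that the paper isolates the step ``$\omega(H)\le g(\omega(G\cap H))$ implies $\chi$-guarding'' as a separate proposition before proving the Ramsey bound, whereas you do everything inline.
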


For our proof of \autoref{lem:rK1-free.guarding} we need the following observation:

\begin{proposition}
\label{prop:bound.clique.of.H}
    Let $\mathcal{C}$ be a class of graphs for which there exists a function $g$ such that for every class $\mathcal{H}$ we have that for every $G\in \mathcal{C}$ and for every $H\in \mathcal{H}$ the following holds: $\omega(H)\leq g(\omega(G\cap H))$. Then $\mathcal{C}$ is \g{}.
\end{proposition}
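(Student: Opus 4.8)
The plan is to peel off the definition of \g{} and reduce everything to the elementary fact that an intersection $G\cap B$ is a spanning subgraph of $B$. Fix a $\chib$ class $\mathcal{B}$ with non-decreasing $\chi$-bounding function $f_{\mathcal{B}}$; the goal is to show that $\mathcal{C}\gcap \mathcal{B}$ is $\chib$. So take an arbitrary $F\in \mathcal{C}\gcap \mathcal{B}$ and write $F=G\cap B$ with $G\in \mathcal{C}$ and $B\in \mathcal{B}$. The first step is to arrange that $G$, $B$, and $F$ share the same vertex set: since $V(F)=V(G)\cap V(B)$, I would replace $G$ and $B$ by $G[V(F)]$ and $B[V(F)]$, which still lie in $\mathcal{C}$ and $\mathcal{B}$ respectively (as these classes are hereditary) and whose intersection is still exactly $F$. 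After this normalization $F$ has vertex set $V(B)$ and edge set $E(G)\cap E(B)\subseteq E(B)$, so $F$ is a spanning subgraph of $B$ and in particular $\chi(F)\le \chi(B)$.

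The core of the argument is then a short chain of inequalities. Applying the hypothesis with the graph $H:=B$ and this $G\in \mathcal{C}$ gives $\omega(B)\le g(\omega(G\cap B))=g(\omega(F))$. Since $\mathcal{B}$ is $\chib$, I then have $\chi(B)\le f_{\mathcal{B}}(\omega(B))$, and combining this with the previous inequality and the subgraph bound yields
\[
\chi(F)\ \le\ \chi(B)\ \le\ f_{\mathcal{B}}(\omega(B))\ \le\ f_{\mathcal{B}}\bigl(g(\omega(F))\bigr),
\]
where the final inequality uses that $f_{\mathcal{B}}$ is non-decreasing. As $F$ was arbitrary, this shows $\mathcal{C}\gcap \mathcal{B}$ is $\chib$ with bounding function $\omega\mapsto f_{\mathcal{B}}(g(\omega))$.

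The only point needing care, and the nearest thing to an obstacle, is that $g$ is not assumed to be monotone, so $f_{\mathcal{B}}\circ g$ need not be non-decreasing and hence might not literally qualify as a $\chi$-bounding function. I would dispose of this at the outset by noting that replacing $g$ with its non-decreasing envelope $\tilde{g}(n):=\max_{m\le n} g(m)$ preserves the hypothesis (since $\tilde{g}\ge g$), so one may assume $g$ is non-decreasing; then $f_{\mathcal{B}}\circ g$ is a genuine non-decreasing $\chi$-bounding function. Apart from this bookkeeping the proof is immediate: the essential content is the single observation that intersecting $B$ with a graph from $\mathcal{C}$ can only shrink its edge set, while by hypothesis $\omega(B)$ stays controlled by the clique number of the resulting intersection.
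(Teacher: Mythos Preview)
Your proof is correct and follows essentially the same approach as the paper's: both establish the chain $\chi(G\cap H)\le \chi(H)\le f(\omega(H))\le f(g(\omega(G\cap H)))$ and conclude that $f\circ g$ is a $\chi$-bounding function for $\mathcal{C}\gcap\mathcal{H}$. You are simply more explicit than the paper about the vertex-set normalization and the monotonicity of $g$, both of which the paper leaves implicit.
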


\begin{proof}[Proof of \autoref{prop:bound.clique.of.H}]
    Let $\mathcal{H}$ be a $\chib$ class of graphs, let $f$ be a $\chi$-bounding function for $\mathcal{H}$, let $G\in \mathcal{C}$, and let $H\in \mathcal{H}$. Then $\chi(G\cap H) \leq \chi(H) \leq f(\omega (H)) \leq f(g(\omega(G\cap H)))$. Thus, $f\circ g$ is a $\chi$-bounding function for the class $\mathcal{C}\gcap \mathcal{H}$. Hence, $\mathcal{C}$ is \g{}. This completes the proof of \autoref{prop:bound.clique.of.H}.
\end{proof}

We also need the following version of Ramsey's theorem:

\begin{theorem}[{Ramsey \cite[Theorem B]{ramsey1930problem}}]
    \label{thm:ramsey_cliqueORindependent}
    Let $s$ and $t$ be positive integers. Then there exists an integer $n(s,t)$ such that if $G$ is a graph on $n(s,t)$ vertices, then $G$ contains either a clique of size $s$ or an independent set of size $t$.
\end{theorem}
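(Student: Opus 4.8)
The plan is to prove the existence of the Ramsey number $n(s,t)$ by the standard double-induction argument on $s+t$, establishing along the way the recursive bound $n(s,t)\le n(s-1,t)+n(s,t-1)$. First I would dispose of the base cases: whenever $s=1$ or $t=1$, a single vertex already forms both a clique of size $1$ and an independent set of size $1$, so $n(1,t)=n(s,1)=1$ suffices. These base cases will anchor the induction.

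For the inductive step, I would assume $s,t\ge 2$ and that $n(s-1,t)$ and $n(s,t-1)$ both exist. The claim is that $n(s,t):=n(s-1,t)+n(s,t-1)$ works. Given a graph $G$ on at least this many vertices, pick any vertex $v$ and partition the remaining vertices into its neighborhood $N_{G}(v)$ and its non-neighborhood $V(G)\setminus N_{G}[v]$ (in the paper's notation, $A_{G}(v)$). Since the two part sizes sum to at least $n(s-1,t)+n(s,t-1)-1$, by pigeonhole either $|N_{G}(v)|\ge n(s-1,t)$ or $|A_{G}(v)|\ge n(s,t-1)$. I would handle each case symmetrically.

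In the first case, apply the inductive hypothesis to $G[N_{G}(v)]$: it contains either an independent set of size $t$ (and we are done) or a clique of size $s-1$, which together with $v$ (adjacent to all of $N_{G}(v)$) yields a clique of size $s$. In the second case, apply the hypothesis to $G[A_{G}(v)]$: it contains either a clique of size $s$ (done) or an independent set of size $t-1$, which together with $v$ (non-adjacent to all of $A_{G}(v)$) yields an independent set of size $t$. Either way $G$ contains a clique of size $s$ or an independent set of size $t$, completing the step.

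I do not anticipate a genuine obstacle here, since this is the classical proof; the only point requiring care is the bookkeeping in the pigeonhole step. Concretely, if neither inequality held we would have $|N_{G}(v)|\le n(s-1,t)-1$ and $|A_{G}(v)|\le n(s,t-1)-1$, giving $|V(G)|\le n(s-1,t)+n(s,t-1)-1<n(s,t)$, contradicting the size assumption; I would state this cleanly to justify the case split. One could optionally remark that unwinding the recursion gives the bound $n(s,t)\le\binom{s+t-2}{s-1}$, but since the statement asserts only existence of $n(s,t)$, the recursive argument suffices and I would keep the proof to the essentials.
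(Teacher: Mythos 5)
Your proof is correct: the base cases, the pigeonhole split of $V(G)\setminus\{v\}$ into $N_{G}(v)$ and $A_{G}(v)$, and the two applications of the inductive hypothesis are all handled properly, and the recursion $n(s,t)\le n(s-1,t)+n(s,t-1)$ is exactly what is needed. There is nothing in the paper to compare against, however: the paper imports this statement as Theorem B of Ramsey's 1930 paper and gives no proof of it, using it only as a black box (to define $R(s,t)$ and in the proof of \cref{lem:rK1-free.guarding}). For what it is worth, the argument you give is the classical Erd\H{o}s--Szekeres double induction for the two-color graph case, not Ramsey's original (more general, set-theoretic) argument; it is the standard modern route, it yields the bound $n(s,t)\le\binom{s+t-2}{s-1}$ you mention, and it fully suffices for the existence claim as stated.
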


The \defin{Ramsey number} $R(s,t)$ is the minimum integer such that every graph on $R(s,t)$ vertices contains either a clique of size $s$ or an independent set of size $t$.

\begin{proof}[Proof of \autoref{lem:rK1-free.guarding}]
    Let $\mathcal{C}$ be the class of $rK_1$-free graphs and let $\mathcal{D}$ be a $\chi$-bounded class of graphs with $\chi$-bounding function $f:\mathbb{N} \to \mathbb{N}$. Consider two graphs $G \in \mathcal{C}$ and $H \in \mathcal{D}$. We may assume that $V(G \cap H) = V(G) = V(H).$

    \begin{claim}
    \label{cl:rK1.bound.H.clique.number}
        $\omega(H) < R(\omega(G \cap H)+1, r)$.
    \end{claim}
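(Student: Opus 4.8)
The plan is to use the maximality of a clique in $H$ to relate $G \cap H$ back to $G$, and then invoke Ramsey's theorem. First I would fix a maximum clique $K \subseteq V(H)$ of $H$, so that $|K| = \omega(H)$. The crucial observation is that the intersection graph agrees with $G$ on $K$: since every pair of vertices of $K$ is adjacent in $H$, a pair with both endpoints in $K$ is an edge of $G \cap H$ if and only if it is an edge of $G$. Hence $(G \cap H)[K] = G[K]$.

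With this in hand I would record two properties of the graph $G[K]$. On one side, every clique of $G[K] = (G \cap H)[K]$ is a clique of $G \cap H$, so $G[K]$ contains no clique of size $\omega(G \cap H) + 1$. On the other side, $G$ is $rK_1$-free, so its induced subgraph $G[K]$ contains no independent set of size $r$. By the definition of the Ramsey number $R(\omega(G \cap H) + 1, r)$, which is well defined by \autoref{thm:ramsey_cliqueORindependent}, any graph on at least $R(\omega(G \cap H) + 1, r)$ vertices must contain a clique of size $\omega(G \cap H) + 1$ or an independent set of size $r$. Since $G[K]$ contains neither, I conclude that $|K| < R(\omega(G \cap H) + 1, r)$, that is, $\omega(H) < R(\omega(G \cap H) + 1, r)$.

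The step I expect to carry the real content is the opening observation that restricting to a clique of $H$ forces $G \cap H$ to coincide with $G$ on that vertex set; this is precisely what converts the hypothesis on $G$ (bounded independence number) into a bound relating $\omega(H)$ and $\omega(G \cap H)$, after which Ramsey's theorem finishes the argument with no further difficulty. Once the claim is proved, it feeds directly into \autoref{prop:bound.clique.of.H} via the choice $g(x) = R(x + 1, r) - 1$, yielding that the class of $rK_1$-free graphs is \g{}.
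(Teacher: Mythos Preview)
Your proof is correct and follows essentially the same approach as the paper: both arguments rest on the observation that $(G\cap H)[S]=G[S]$ whenever $S$ is a clique of $H$, and then apply Ramsey's theorem using that $G$ is $rK_1$-free. The only cosmetic difference is that the paper phrases the argument by contradiction (assuming a clique of size at least $R(\omega(G\cap H)+1,r)$) whereas you argue directly on a maximum clique of $H$.
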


    \begin{subproof}[Proof of \autoref{cl:rK1.bound.H.clique.number}]
        Suppose not. That is, there exists a clique $S \subseteq V(H)$ such that $|S| \geq R(\omega(G\cap H) + 1,r)$. Then $(G \cap H)[S] \cong G[S]$. Since $G$ is $rK_1$-free, it follows that $\omega(G \cap H) \geq \omega((G \cap H)[S]) \geq \omega(G \cap H) + 1$, a contradiction.
    \end{subproof}
Now \autoref{lem:rK1-free.guarding} follows by the above claim and \autoref{prop:bound.clique.of.H}. This completes the proof of \autoref{lem:rK1-free.guarding}.
\end{proof}

Now in order to prove that decomposable classes of graphs are \g{} we prove that all the operations needed to create the graphs of a decomposable class starting from $rK_{1}$-free graphs preserve the property of being \g{}.

\begin{proposition}
\label{prop:componentwise.guarding.is.guarding}
    Let $\mathcal{C}_{1}, \ldots ,\mathcal{C}_{t}$ be \g{} classes, and let $\mathcal{C}$ be a class of graphs such that for every $G\in \mathcal{C}$ and for every component $C$ of $G$ there exists $i\in [t]$ such that $C\in \mathcal{C}_{i}$. Then $\mathcal{C}$ is \g{}.
\end{proposition}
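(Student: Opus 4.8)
The plan is to prove that the property of being intersectionwise $\chi$-guarding is preserved under taking disjoint unions of components, where each component comes from one of finitely many guarding classes. The key insight is that the chromatic number of a graph is the maximum chromatic number over its components, and crucially, the *intersection* operation interacts well with components: when we intersect $G$ (with its nice component structure) with an arbitrary graph $H$ from a $\chi$-bounded class, the edges of $G \cap H$ only appear within the components of $G$, so $G \cap H$ decomposes along the components of $G$.

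Let me sketch the main steps. Let $\mathcal{D}$ be a $\chi$-bounded class with $\chi$-bounding function $f$, let $f_1, \dots, f_t$ be $\chi$-bounding functions for $\mathcal{C}_1, \dots, \mathcal{C}_t$ respectively, and set $f^* := \max_{i \in [t]} f_i$. Take $G \in \mathcal{C}$ and $H \in \mathcal{D}$, and assume $V(G \cap H) = V(G) = V(H)$. First I would observe that for each component $C$ of $G$, the graph $(G \cap H)[V(C)]$ equals $G[V(C)] \cap H[V(C)]$: indeed no edge of $G \cap H$ can join two distinct components of $G$, since such an edge is absent from $G$. Hence every edge of $G \cap H$ lies inside $V(C)$ for some component $C$, which means the components of $G \cap H$ refine the partition of $V(G)$ into components of $G$; in particular each component of $G \cap H$ is contained in the vertex set of some component $C$ of $G$.

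The second step is to bound the chromatic number of each piece. Fix a component $C$ of $G$, and choose $i \in [t]$ with $C \in \mathcal{C}_i$. Since $C \in \mathcal{C}_i$ and $H[V(C)]$ is an induced subgraph of $H$ (hence in $\mathcal{D}$, as $\mathcal{D}$ is hereditary), the graph $C \cap H[V(C)] = (G \cap H)[V(C)]$ lies in $\mathcal{C}_i \gcap \mathcal{D}$, which is $\chi$-bounded with bounding function $f_i \circ$-something; more directly, applying the guarding property of $\mathcal{C}_i$ gives $\chi\big((G\cap H)[V(C)]\big) \le g_i\big(\omega((G\cap H)[V(C)])\big)$ for some $\chi$-bounding function $g_i$ of $\mathcal{C}_i \gcap \mathcal{D}$, and since $\omega((G \cap H)[V(C)]) \le \omega(G \cap H)$ and $g_i \le g^* := \max_i g_i$, we get $\chi\big((G \cap H)[V(C)]\big) \le g^*(\omega(G \cap H))$.

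Finally I would assemble the global coloring. Since the components of $G \cap H$ each live inside a single $V(C)$, and each such piece is properly colorable with at most $g^*(\omega(G \cap H))$ colors, we can color all of $G \cap H$ with $g^*(\omega(G \cap H))$ colors simultaneously (components can be colored independently, reusing the same palette). Therefore $\chi(G \cap H) \le g^*(\omega(G \cap H))$, so $g^*$ is a $\chi$-bounding function for $\mathcal{C} \gcap \mathcal{D}$, proving $\mathcal{C}$ is \g{}. The main subtlety — and the step I would be most careful about — is the first one: verifying that the intersection $G \cap H$ does not create edges across components of $G$, so that the guarding hypothesis on each $\mathcal{C}_i$ can be applied componentwise. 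This is exactly where the restriction to $\mathcal{D}$ being hereditary is used, ensuring $H[V(C)] \in \mathcal{D}$ so that $(G \cap H)[V(C)] \in \mathcal{C}_i \gcap \mathcal{D}$.
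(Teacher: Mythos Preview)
Your proposal is correct and follows essentially the same approach as the paper's proof: decompose $G\cap H$ along the components of $G$, apply the guarding hypothesis of the appropriate $\mathcal{C}_i$ to each piece, and take the maximum (the paper uses the sum $\sum_j g_j$ instead of your $g^* = \max_j g_j$, a cosmetic difference). You are in fact more explicit than the paper about why $(G\cap H)[V(C)] \in \mathcal{C}_i \gcap \mathcal{D}$, invoking heredity of $\mathcal{D}$; the paper takes this for granted.
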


\begin{proof}[Proof of \autoref{prop:componentwise.guarding.is.guarding}]
    Let $\mathcal{H}$ be a $\chib$ class of graphs. For each $j\in [t]$, let $f_{j}$ be a $\chi$-bounding function of the class $\mathcal{C}_{j}\gcap \mathcal{H}$. Let $H\in \mathcal{H}$, let $G\in \mathcal{C}$, and let $C_{1}, \ldots, C_{l}$ be the components of $G$. Let $i\in [l]$ be such that $\chi(G_{i}\cap H) \geq \chi(G_{k}\cap H)$ for every $k\in [l]$. Then we have that $\chi(G\cap H) = \chi(G_{i}\cap H) \leq f_{i}(\omega(G_{i}\cap H))\leq \sum_{j\in[t]} f_{j}(\omega(G\cap H))$. Thus, $\sum_{j\in[t]} f_{j}$ is a $\chi$-bounding function of the class $\mathcal{C}\gcap\mathcal{H}$ and hence $\mathcal{C}$ is \g{}. This completes the proof of \autoref{prop:componentwise.guarding.is.guarding}.
\end{proof}

The following is an immediate corollary of \autoref{lem:rK1-free.guarding} and \autoref{prop:componentwise.guarding.is.guarding}:

\begin{corollary}
\label{cor:componentwise.r.dependent.guarding}
    Let $r\geq 2$ be an integer and $\mathcal{C}$ be a class of componentwise $r$-dependent graphs. Then $\mathcal{C}$ is \g{}.
\end{corollary}

\begin{proposition}
\label{prop:bounded.guarding.chromatic.number.is.guarding}
    Let $k$ be a positive integer, let $\mathcal{C}_{1}, \ldots ,\mathcal{C}_{t}$ be \g{} classes, and let $\mathcal{C}$ be a class of graphs such every $G\in \mathcal{C}$ has a $k$-coloring $f$ such that for every $j\in[k]$ there exists $i\in [t]$ such that $G[f^{-1}(j)] \in{C}_{i}$. Then $\mathcal{C}$ is \g{}.
\end{proposition}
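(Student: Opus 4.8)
The plan is to prove that $\mathcal{C}$ is \g{} by showing that a $\mathcal{P}$-coloring built from the given $k$-coloring interacts well with the intersection operation, and then bounding the $\chi$ of the intersection using the $\chi$-bounding functions of the $\mathcal{C}_i$'s. This proposition is the natural $k$-coloring analogue of \autoref{prop:componentwise.guarding.is.guarding}: there we split along connected components, here we split along the color classes of a fixed $k$-coloring.

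First I would fix a $\chib$ class $\mathcal{H}$ with a $\chi$-bounding function, and for each $j \in [t]$ let $f_j$ be a $\chi$-bounding function for the \g{} class $\mathcal{C}_j \gcap \mathcal{H}$ (such functions exist precisely because each $\mathcal{C}_j$ is \g{}). Now take arbitrary $G \in \mathcal{C}$ and $H \in \mathcal{H}$, and fix the $k$-coloring $f \colon V(G) \to [k]$ witnessing membership of $G$ in $\mathcal{C}$, so that each $G[f^{-1}(j)]$ lies in some $\mathcal{C}_{i(j)}$. The key point is that $f$ is also a (not necessarily proper) $k$-coloring of the intersection graph $G \cap H$, and for each color class $j$ we have $(G\cap H)[f^{-1}(j)] = G[f^{-1}(j)] \cap H[f^{-1}(j)]$, where $G[f^{-1}(j)] \in \mathcal{C}_{i(j)}$ and $H[f^{-1}(j)] \in \mathcal{H}$ since both classes are hereditary. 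Hence each induced subgraph of $G \cap H$ on a color class belongs to $\mathcal{C}_{i(j)} \gcap \mathcal{H}$, so its chromatic number is at most $f_{i(j)}(\omega((G\cap H)[f^{-1}(j)])) \le \sum_{j'\in[t]} f_{j'}(\omega(G \cap H))$, using that $\omega$ is monotone under taking induced subgraphs.

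To finish, I would color $G \cap H$ by properly coloring each of the $k$ color classes separately, using disjoint palettes, which yields $\chi(G \cap H) \le \sum_{j \in [k]} \chi\big((G\cap H)[f^{-1}(j)]\big)$; combining with the per-class bound above gives $\chi(G \cap H) \le k \cdot \sum_{j'\in[t]} f_{j'}(\omega(G \cap H))$. Thus the function $\omega \mapsto k \cdot \sum_{j'\in[t]} f_{j'}(\omega)$ is a $\chi$-bounding function for $\mathcal{C} \gcap \mathcal{H}$, and since $\mathcal{H}$ was an arbitrary $\chib$ class, $\mathcal{C}$ is \g{}.

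The main subtlety, rather than a genuine obstacle, is bookkeeping the quantifiers correctly: the index $i(j)$ assigning each color class to one of the $\mathcal{C}_i$ depends on both $G$ and the coloring $f$, but the bound $\sum_{j'\in[t]} f_{j'}$ is chosen uniformly over all of these before fixing $G$ and $H$, which is exactly what makes it a legitimate $\chi$-bounding function. I would also take care that the summation over $[t]$ (which dominates any single $f_{i(j)}$ since the $f_{j'}$ are nonnegative non-decreasing) and the factor $k$ from combining the color classes are both accounted for; everything else follows from hereditariness of the classes and the fact that $f$ restricts correctly to the intersection graph.
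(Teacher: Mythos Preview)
Your proposal is correct and follows essentially the same approach as the paper: fix a $\chi$-bounded class $\mathcal{H}$, take the $\chi$-bounding functions $f_{i}$ for each $\mathcal{C}_{i}\gcap\mathcal{H}$, restrict $G\cap H$ to the color classes of the given $k$-coloring, bound each restricted piece via the appropriate $f_{i}$, and conclude with the $\chi$-bounding function $k\cdot\sum_{p\in[t]}f_{p}$. The only cosmetic difference is that the paper picks the color class of maximum chromatic number and multiplies by $k$, whereas you sum the bounds over all $k$ color classes; both routes yield the same final bound.
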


\begin{proof}[Proof of \autoref{prop:bounded.guarding.chromatic.number.is.guarding}]
    Let $\mathcal{H}$ be a $\chib$ class of graphs. For each $i\in [t]$, let $f_{i}$ be a $\chi$-bounding function of the class $\mathcal{C}_{i}\gcap \mathcal{H}$. Let $H\in \mathcal{H}$, let $G\in \mathcal{C}$, and let $f$ be a $k$-coloring $f$ such that for every $j\in[k]$ there exists $i\in [t]$ such that $G[f^{-1}(j)] \in{C}_{i}$. Let $j\in[k]$ be such that $\chi(G[f^{-1}(j)]\cap H)\geq \chi(G[f^{-1}(l)]\cap H)$ for every $l\in[k]$. Let $i\in[t]$ be such that $G[f^{-1}(j)]\in \mathcal{C}_{i}$. Then we have that $\chi(G\cap H) \leq k\cdot\chi(G[f^{-1}(j)]\cap H) \leq k\cdot f_{i}(\omega\left(G[f^{-1}(j)]\cap H\right)) \leq k\cdot f_{i}(\omega(G\cap H)) \leq k\cdot \sum_{p\in[t]}f_{p}(\omega(G\cap H))$. Thus, $k\cdot \sum_{p\in[t]}f_{p}$ is a $\chi$-bounding function of the class $\mathcal{C}\gcap\mathcal{H}$ and hence $\mathcal{C}$ is \g{}. This completes the proof of \autoref{prop:bounded.guarding.chromatic.number.is.guarding}.
\end{proof}

The following is an immediate corollary of \autoref{cor:componentwise.r.dependent.guarding} and \autoref{prop:bounded.guarding.chromatic.number.is.guarding}:

\begin{corollary}
\label{cor:bouned.componentwise.chi.is.guarding}
    Let $k$ and $r\geq 2$ be positive integers and let $\mathcal{C}$ be a class of graphs of componentwise $r$-dependent chromatic number at most $k$. Then $\mathcal{C}$ is \g{}.
\end{corollary}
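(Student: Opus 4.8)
The plan is to obtain the statement directly from the two cited results, specialising \autoref{prop:bounded.guarding.chromatic.number.is.guarding} to the case $t=1$. First I would let $\mathcal{C}_{1}$ denote the class of all componentwise $r$-dependent graphs. By \autoref{cor:componentwise.r.dependent.guarding}, this class $\mathcal{C}_{1}$ is \g{}, so I have a single \g{} class at my disposal to feed into the proposition.

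Next I would unpack the hypothesis on $\mathcal{C}$. To say that every graph in $\mathcal{C}$ has componentwise $r$-dependent chromatic number at most $k$ is, by the definition of that parameter, exactly to say that every $G\in\mathcal{C}$ admits a $k$-coloring $f$ whose color classes $G[f^{-1}(j)]$ each induce a componentwise $r$-dependent graph; equivalently, $G[f^{-1}(j)]\in\mathcal{C}_{1}$ for every $j\in[k]$. This is precisely the structural assumption required by \autoref{prop:bounded.guarding.chromatic.number.is.guarding}, with the single \g{} class $\mathcal{C}_{1}$ serving as $\mathcal{C}_{1},\ldots,\mathcal{C}_{t}$ in the case $t=1$. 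Applying that proposition then immediately gives that $\mathcal{C}$ is \g{}, which is the desired conclusion.

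There is no real obstacle to overcome here: the only point to verify is that the definition of ``componentwise $r$-dependent chromatic number at most $k$'' lines up verbatim with the coloring hypothesis of \autoref{prop:bounded.guarding.chromatic.number.is.guarding}, which it does once one observes that a color class being componentwise $r$-dependent is the same as it belonging to $\mathcal{C}_{1}$. Hence the result follows as an immediate corollary.
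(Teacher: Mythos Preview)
Your proposal is correct and follows exactly the route the paper intends: the paper states the corollary as an immediate consequence of \autoref{cor:componentwise.r.dependent.guarding} and \autoref{prop:bounded.guarding.chromatic.number.is.guarding}, and your argument is precisely the unpacking of that deduction with $t=1$ and $\mathcal{C}_{1}$ the class of componentwise $r$-dependent graphs.
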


By the above, in order to prove \autoref{thm:decomposable.graphs.chi.guarding}, which states that decomposable classes are \g{}, it suffices to prove that graph-union preserves the property of being \g{}.

\begin{proposition}
\label{prop:union.of.guarding.is.guarding}
    Let $\mathcal{C}_{1}, \ldots , \mathcal{C}_{t}$ be \g{} classes. Then their graph-union is \g{}.
\end{proposition}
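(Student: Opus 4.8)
The plan is to reduce everything to the folklore product-coloring bound of \autoref{prop:product.coloring.for.union}, after observing that intersecting with a fixed graph $H$ distributes over graph-union. First I fix an arbitrary $\chib$ class $\mathcal{H}$, and for each $i\in[t]$ I use that $\mathcal{C}_{i}$ is \g{} to obtain a $\chi$-bounding function $f_{i}$ for the class $\mathcal{C}_{i}\gcap\mathcal{H}$; recall that $\chi$-bounding functions are non-decreasing, which I will use below. I then take an arbitrary $G$ in the graph-union $\mathcal{C}_{1}\gcup\ldots\gcup\mathcal{C}_{t}$ together with $H\in\mathcal{H}$, and as usual assume $V(G\cap H)=V(G)=V(H)=:V$. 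Writing $G=G_{1}\cup\ldots\cup G_{t}$ with $G_{i}\in\mathcal{C}_{i}$, the goal is to bound $\chi(G\cap H)$ in terms of $\omega(G\cap H)$.

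The key step is the identity $G\cap H=(G_{1}\cap H)\gcup\ldots\gcup(G_{t}\cap H)$. Indeed, since $E(G)=\bigcup_{i\in[t]}E(G_{i})$, distributivity of intersection over union gives $E(G\cap H)=E(G)\cap E(H)=\bigcup_{i\in[t]}\bigl(E(G_{i})\cap E(H)\bigr)=\bigcup_{i\in[t]}E(G_{i}\cap H)$, while the vertex sets recover $V$. Each factor $G_{i}\cap H$ lies in $\mathcal{C}_{i}\gcap\mathcal{H}$, so $\chi(G_{i}\cap H)\le f_{i}(\omega(G_{i}\cap H))$. Moreover $G_{i}\cap H$ is a subgraph of $G\cap H$, so $\omega(G_{i}\cap H)\le\omega(G\cap H)$, and by monotonicity of $f_{i}$ I get $\chi(G_{i}\cap H)\le f_{i}(\omega(G\cap H))$.

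Applying \autoref{prop:product.coloring.for.union} to the graph-union displayed above yields $\chi(G\cap H)\le\prod_{i\in[t]}\chi(G_{i}\cap H)\le\prod_{i\in[t]}f_{i}(\omega(G\cap H))$. Hence $\prod_{i\in[t]}f_{i}$ is a $\chi$-bounding function for $(\mathcal{C}_{1}\gcup\ldots\gcup\mathcal{C}_{t})\gcap\mathcal{H}$, and since $\mathcal{H}$ was an arbitrary $\chib$ class, the graph-union is \g{}, as required.

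I do not expect a serious obstacle: the whole argument rests on recognizing that the operation $-\cap H$ commutes with graph-union, after which the product-coloring bound together with the subgraph monotonicity of $\omega$ finishes the job. The only points needing a little care are bookkeeping about vertex sets: when I invoke \autoref{prop:product.coloring.for.union} I should view each $G_{i}\cap H$ on the common vertex set $V$ by treating any vertex of $V\setminus V(G_{i})$ as isolated, which changes neither $\chi(G_{i}\cap H)$ nor $\omega(G_{i}\cap H)$; and I should note explicitly that a $\chi$-bounding function may be assumed non-decreasing so that $f_{i}(\omega(G_{i}\cap H))\le f_{i}(\omega(G\cap H))$.
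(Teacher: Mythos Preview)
Your proof is correct and follows essentially the same route as the paper: distribute $-\cap H$ over the union to get $G\cap H=\bigcup_{i}(G_{i}\cap H)$, apply \autoref{prop:product.coloring.for.union}, and bound each factor via $f_{i}$ using $\omega(G_{i}\cap H)\le\omega(G\cap H)$, arriving at the same $\chi$-bounding function $\prod_{i}f_{i}$. Your extra care about the common vertex set and about monotonicity of the $f_{i}$ is fine and does not change the argument.
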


\begin{proof}[Proof of \autoref{prop:union.of.guarding.is.guarding}]
    Let $\mathcal{C}$ be the graph-union of $\mathcal{C}_{1}, \ldots , \mathcal{C}_{t}$. Let $G\in \mathcal{C}$. Let $\mathcal{H}$ be a $\chib$ class and let $H\in \mathcal{H}$. For each $i\in [t]$, let $G_{i}\in \mathcal{C}_{i}$ be such that $G=\cup_{i\in [t]} G_{i}$, and let $f_{i}$ be a $\chi$-bounding function for the class $\mathcal{C}_{i}\gcap \mathcal{H}$. Then we have that $\chi(G\cap H) = \chi((\cup_{i\in [t]} G_{i})\cap H) = \chi(\cup_{i\in [t]} (G_{i}\cap H))$. Since, by \autoref{prop:product.coloring.for.union}, we have that $ \chi(\cup_{i\in [t]} (G_{i}\cap H)) \leq \prod_{i\in[t]}\chi(G_{i}\cap H)$, it follows that $\chi(G\cap H)\leq \prod_{i\in[t]}\chi(G_{i}\cap H) \leq \prod_{i\in[t]} f_{i}(\omega(G_{i}\cap H)) \leq \prod_{i\in[t]} f_{i}(\omega(G\cap H))$ Thus, $\prod_{i\in[t]} f_{i}$ is a $\chi$-bounding function of the class $\mathcal{C}\gcap\mathcal{H}$ and hence $\mathcal{C}$ is \g{}. This completes the proof of \autoref{prop:union.of.guarding.is.guarding}.
\end{proof}

Now \autoref{thm:decomposable.graphs.chi.guarding} is an immediate corollary of \autoref{cor:bouned.componentwise.chi.is.guarding} and \autoref{prop:union.of.guarding.is.guarding}.

A \defin{unit interval} graph is an interval graph which has a representation in which all the intervals have length one.
 
\begin{lemma}
\label{lem:unit.interval.decomposable}
    The class of unit interval graphs is decomposable.
\end{lemma}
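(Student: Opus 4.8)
The plan is to exhibit uniform constants $t,k,r$ for which every unit interval graph is $(t,k,r)$-decomposable; concretely I will show that $t=2$, $k=1$, and $r=3$ work for the entire class. Fix a unit interval graph $G$ together with a representation assigning to each vertex $v$ a real number $a_v$ (the left endpoint of its interval $[a_v, a_v+1]$), so that $uv \in E(G)$ if and only if $|a_u - a_v| \le 1$. First I would isolate the single structural fact that drives everything: if $W \subseteq \mathbb{R}$ is an interval of length $2$, then the vertices whose left endpoints lie in $W$ induce a subgraph of independence number at most $2$. Indeed, an independent set corresponds to left endpoints that are pairwise more than $1$ apart, and three such points $x_1 < x_2 < x_3$ inside $W$ would satisfy $x_3 - x_1 > 2$, contradicting $|W| = 2$.

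Next I would discretize the line. For each integer $j$ set $I_j := [j, j+1)$ and assign each vertex $v$ to the segment $I_{j(v)}$ with $j(v) = \lfloor a_v \rfloor$. A short calculation shows that if $uv \in E(G)$ then $|j(u) - j(v)| \le 1$, so every edge joins vertices in the same segment or in consecutive segments. I then group segments into two families of length-$2$ windows, offset by one: the \emph{even} super-segments $E_j := I_{2j} \cup I_{2j+1}$ and the \emph{odd} super-segments $O_j := I_{2j+1} \cup I_{2j+2}$. Each family partitions $V(G)$, and each super-segment is a window of length $2$, so by the fact above every super-segment induces a subgraph of independence number at most $2$.

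Then I would define $G_1$ to be the spanning subgraph of $G$ keeping exactly the edges whose two endpoints lie in a common even super-segment, and $G_2$ the spanning subgraph keeping the edges whose endpoints lie in a common odd super-segment. The key verification is that $E(G) = E(G_1) \cup E(G_2)$: an edge inside a single segment, or between $I_{2j}$ and $I_{2j+1}$, lies in $E_j$ and hence in $G_1$; an edge between $I_{2j+1}$ and $I_{2j+2}$ lies in $O_j$ and hence in $G_2$; and these are the only possibilities by the locality just noted. Since $G_1$ has no edges between distinct even super-segments, each of its components lies inside some $E_j$ and thus has independence number at most $2$; the single-colour colouring therefore witnesses that $G_1$ has componentwise $3$-dependent chromatic number at most $1$, and likewise for $G_2$. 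Hence $G = G_1 \cup G_2$ exhibits $G$ as $(2,1,3)$-decomposable, and since the constants do not depend on $G$ the class of unit interval graphs is decomposable.

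I expect the only genuine obstacle to be reconciling a complete edge-covering with a bounded independence number. A single family of disjoint unit segments captures only the within-clique edges and leaves every edge between consecutive segments uncovered, while a single family of length-$2$ windows necessarily misses any edge straddling a window boundary. The offset-by-one device — two families of length-$2$ windows shifted by one segment — is precisely what forces every edge to fall entirely inside some window of one of the two families, while the length-$2$ bound simultaneously keeps the independence number, and hence $r$, at a universal constant; getting both of these to hold at once is the crux of the argument.
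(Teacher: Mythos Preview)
Your argument is correct: the two offset families of length-$2$ windows do cover every edge, and within any such window the independence number is at most $2$, so each of your spanning subgraphs $G_1,G_2$ is componentwise $3$-dependent with the trivial $1$-colouring, giving a $(2,1,3)$-decomposition.

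The paper takes a different but closely related route. Rather than partitioning edges into two spanning subgraphs, it partitions \emph{vertices}: after perturbing so that no endpoint is an integer, each unit interval contains exactly one integer, and the parity of that integer $2$-colours $V(G)$. Two intervals in the same colour class intersect if and only if they contain the \emph{same} integer, so each colour class induces a disjoint union of cliques. This yields a $(1,2,2)$-decomposition: a single graph (namely $G$ itself) with componentwise $2$-dependent chromatic number at most $2$. The paper's version is a bit tighter --- it gets cliques ($r=2$) rather than merely $\alpha\le 2$ ($r=3$), and needs no edge-union --- while your version trades the vertex $2$-colouring for a graph-union of two pieces each requiring no further colouring. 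Both exploit the same underlying locality (adjacency implies bounded gap between reference points), just sliced differently.
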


\begin{proof}[Proof of \autoref{lem:unit.interval.decomposable}]
    In what follows we prove that every unit interval graph has componentwise $2$-dependent chromatic number at most two.
    
    Let $\{I_{1}, \ldots, I_{n}\}$ be a family of intervals on the real line such that each has unit length. We may assume that no interval has an integer as an endpoint, and thus every interval contains exactly one integer. Let $\{A,B\}$ be the partition of $\{I_{1}, \ldots, I_{n}\}$ which is defined as follows: $A$ contains an interval $I$ if and only if $I$ contains an even integer, and $B=\{I_{1}, \ldots, I_{n}\}\setminus A$. Thus $B$ contains exactly those intervals which contain an odd integer. Let $G$ be the intersection graph of $\{I_{1}, \ldots, I_{n}\}$.

    Finally we claim that each of $G[A]$ and $G[B]$ is a componentwise $2$-dependent graph, that is, a disjoint union of complete graphs. The claim for follows immediately by the observation that any two vertices in $G[A]$ (respectively in $G[B]$) are adjacent if and only if the two corresponding intervals contain the same even (respectively odd) integer. This completes the proof of \autoref{lem:unit.interval.decomposable}.
\end{proof}

The following is an immediate corollary of \autoref{thm:decomposable.graphs.chi.guarding} and \autoref{lem:unit.interval.decomposable}.

\begin{corollary}
\label{cor:unit.interval.guarding}
    The class of unit interval graphs is \g{}.
\end{corollary}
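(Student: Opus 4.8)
The plan is to obtain the corollary by directly chaining the two results that immediately precede it. First I would invoke \autoref{lem:unit.interval.decomposable} to conclude that the class of unit interval graphs is decomposable; concretely, that lemma exhibits every unit interval graph as a single graph of componentwise $2$-dependent chromatic number at most two, so each such graph is $(1,2,2)$-decomposable with constants that do not depend on the particular graph. Then I would apply \autoref{thm:decomposable.graphs.chi.guarding}, which asserts that every decomposable class is \g{}, to the class of unit interval graphs. Combining these two facts yields the statement at once.

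Since both ingredients are already established, there is essentially no obstacle to overcome here: the corollary is a one-line consequence. If I wanted to unwind where the content actually lives, it sits entirely in \autoref{thm:decomposable.graphs.chi.guarding}, whose proof proceeds by showing that $rK_1$-free graphs are \g{} via a Ramsey-type clique bound (\autoref{lem:rK1-free.guarding}) and then that the property of being \g{} is preserved under componentwise combination, under bounded colorings into \g{} classes, and under graph-union (\autoref{prop:componentwise.guarding.is.guarding}, \autoref{prop:bounded.guarding.chromatic.number.is.guarding}, and \autoref{prop:union.of.guarding.is.guarding}). The decomposition supplied by \autoref{lem:unit.interval.decomposable} is exactly the input that these closure operations consume.

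The one point I would verify is that the constants line up with the definition of decomposability, which requires \emph{fixed} $t$, $k$, and $r$ that work uniformly across all members of the class. Here the even/odd integer partition of the unit intervals gives the uniform choice $t=1$, $k=2$, $r=2$ independent of the specific unit interval graph, so the hypothesis of \autoref{thm:decomposable.graphs.chi.guarding} is genuinely met and the corollary follows without further work.
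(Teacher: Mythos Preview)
Your proposal is correct and matches the paper's approach exactly: the paper states that the corollary is an immediate consequence of \autoref{thm:decomposable.graphs.chi.guarding} and \autoref{lem:unit.interval.decomposable}, which is precisely the chain you describe. Your additional check that the constants $t=1$, $k=2$, $r=2$ are uniform across the class is a nice sanity check but not something the paper spells out.
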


Let $G$ be a graph. The \defin{line graph} of $G$, which we denote by $L(G)$, is the graph with vertex set the set $E(G)$ and edge set the set $\{ef:e\cap f\neq \emptyset\}$.

\begin{lemma}
\label{lem:line.graphs.of.bipartite.decomposable}
   The class of line graphs of bipartite graphs is decomposable.
\end{lemma}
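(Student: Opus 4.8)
The plan is to mimic the structure of the proof of \autoref{lem:unit.interval.decomposable}, but to realize the line graph as a \emph{union} of two nice graphs rather than to partition its vertex set. Fix a bipartite graph $H$ with bipartition $(X,Y)$ and set $G=L(H)$, so that $V(G)=E(H)$ and each vertex of $G$ is an edge $xy$ of $H$ with $x\in X$ and $y\in Y$. The key observation is that two distinct edges of $H$ are adjacent in $G$ precisely when they share an endpoint, and since $H$ is bipartite this common endpoint lies in exactly one of $X$ or $Y$ (they cannot share both endpoints, as $H$ has no parallel edges).

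First I would define two graphs $G_{X}$ and $G_{Y}$, both on the vertex set $V(G)$: let $ee'$ be an edge of $G_{X}$ (respectively of $G_{Y}$) exactly when the edges $e,e'$ of $H$ share their endpoint in $X$ (respectively in $Y$). By the observation above, every edge of $G$ belongs to exactly one of $E(G_{X})$, $E(G_{Y})$, and hence $G=G_{X}\cup G_{Y}$ is a union of two graphs on the same vertex set.

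Next I would show that each of $G_{X}$ and $G_{Y}$ is a disjoint union of complete graphs, and therefore is componentwise $2$-dependent. For a fixed vertex $x\in X$, all edges of $H$ incident with $x$ pairwise share the endpoint $x$, so they induce a clique in $G_{X}$; conversely, edges incident with distinct vertices of $X$ never share their $X$-endpoint and so are nonadjacent in $G_{X}$. Since $H$ is bipartite, every vertex of $G$ (that is, every edge of $H$) has a unique endpoint in $X$, so these cliques partition $V(G)$ and are exactly the components of $G_{X}$. The same argument with $Y$ in place of $X$ handles $G_{Y}$. As each component is a clique, it has independence number $1$, so $G_{X}$ and $G_{Y}$ are componentwise $2$-dependent; in particular each has componentwise $2$-dependent chromatic number at most $1$ (color every vertex with a single color).

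Combining these two steps shows that $G$ is the union of two graphs of componentwise $2$-dependent chromatic number at most $1$, that is, $G$ is $(2,1,2)$-decomposable. Since $H$ was an arbitrary bipartite graph, the class of line graphs of bipartite graphs is decomposable, witnessed by $t=2$, $k=1$, and $r=2$. There is no genuinely hard step here; the only thing to be careful about is the bipartiteness bookkeeping that guarantees both that every adjacency of $L(H)$ is recorded in exactly one of $G_{X}, G_{Y}$, and that the ``share-an-$X$-endpoint'' relation partitions $E(H)$ into cliques. This is exactly what breaks down for general (non-bipartite) graphs: in an odd cycle, for instance, consecutive edges share endpoints on no fixed side, so the line graph does not split along two sides in this way.
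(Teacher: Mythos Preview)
Your proof is correct and follows essentially the same approach as the paper: both write the line graph of a bipartite graph with bipartition $(X,Y)$ as the union of the two ``share-an-$X$-endpoint'' and ``share-a-$Y$-endpoint'' graphs, and observe that each of these is a disjoint union of cliques (hence componentwise $2$-dependent). The only cosmetic difference is that you spell out the parameters $(t,k,r)=(2,1,2)$ explicitly, whereas the paper leaves them implicit.
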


\begin{proof}[Proof of \autoref{lem:line.graphs.of.bipartite.decomposable}]
    In what follows we prove that each line graph of a bipartite graph is the union of two componentwise $2$-dependent graphs.
    
    Let $G$ be a bipartite graph, and let $\{A_{1}, A_{2}\}$ be a bipartition of $V(G)$. Let $\{E_{1}, E_{2}\}$ be a partition of $E(L(G))$ which is defined as follows: for each $i\in [2]$, an edge $ef$ of $L(G)$ is in $E_{i}$ if and only if $e\cap f\subseteq A_{i}$. 

    We claim that for each $i\in [2]$ the graph $(E(G), E_{i})$ is componentwise $2$-dependent, that is, it is the disjoint union of complete graphs. Indeed, let $e,f, g \in E(G)$ and suppose that $ef, fg\in E_{i}$. Let $v$ be the unique element of $f\cap A_{i}$. Then $v\in e\cap g$, and thus $eg\in E_{i}$. Hence the adjacency relation in $(E(G), E_{i})$ is an equivalence relation. This completes the proof of \autoref{lem:line.graphs.of.bipartite.decomposable}. This completes the proof of \autoref{lem:line.graphs.of.bipartite.decomposable}.
\end{proof}

The following is an immediate corollary of \autoref{thm:decomposable.graphs.chi.guarding} and \autoref{lem:line.graphs.of.bipartite.decomposable}:

\begin{corollary}
    The class of line graphs of bipartite graphs is \g{}.
\end{corollary}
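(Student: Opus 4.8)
The plan is to combine the two results that immediately precede this statement, so there is essentially no new work to do. First I would invoke \autoref{lem:line.graphs.of.bipartite.decomposable}, which establishes that the class of line graphs of bipartite graphs is decomposable. More precisely, that lemma exhibits, for each line graph $L(G)$ of a bipartite graph $G$, a decomposition into two componentwise $2$-dependent graphs: taking a bipartition $\{A_1, A_2\}$ of $V(G)$, one partitions $E(L(G))$ into the two classes $E_1, E_2$ according to whether the shared endpoint lies in $A_1$ or in $A_2$, and each $(E(G), E_i)$ is a disjoint union of cliques. Since a disjoint union of cliques is itself componentwise $2$-dependent, each part has componentwise $2$-dependent chromatic number at most $1$, and hence every such line graph is $(2,1,2)$-decomposable with the parameters $t=2$, $k=1$, $r=2$ independent of $G$. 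This is the step that carries all the content, and it has already been discharged.

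With uniform decomposability in hand, I would then apply \autoref{thm:decomposable.graphs.chi.guarding}, which asserts that every decomposable class of graphs is \g{}. Instantiating it with $\mathcal{C}$ equal to the class of line graphs of bipartite graphs yields the corollary directly. I do not expect any genuine obstacle here: the substantive work lies in proving \autoref{thm:decomposable.graphs.chi.guarding} itself (assembled from \autoref{lem:rK1-free.guarding}, \autoref{prop:componentwise.guarding.is.guarding}, \autoref{prop:bounded.guarding.chromatic.number.is.guarding}, and \autoref{prop:union.of.guarding.is.guarding}) and in verifying the explicit decomposition of \autoref{lem:line.graphs.of.bipartite.decomposable}, both of which are already available. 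The corollary therefore follows by a single chaining of these two results.
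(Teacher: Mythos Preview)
Your proposal is correct and matches the paper's own treatment exactly: the corollary is stated as an immediate consequence of \autoref{thm:decomposable.graphs.chi.guarding} and \autoref{lem:line.graphs.of.bipartite.decomposable}, with no additional argument. Your parameter bookkeeping ($(t,k,r)=(2,1,2)$) is also accurate given the decomposition in the lemma.
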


In \autoref{subsec:line.graphs.of.graphs.of.large.girth} we will show that the class of line graphs is not \g{}.

\section{Classes which are not \texorpdfstring{\g}{TEXT}}
\label{sec:constructions}

In this section, we prove that certain $\chib$ classes of graphs are not \g{}. We do this by considering the graph-intersections of pairs of $\chib$ graph classes and proving that these graph-intersections contain triangle-free graphs of arbitrarily large chromatic number. We then use these results to prove certain properties that a finite set of graphs $\mathcal{H}$ should satisfy for the class of $\mathcal{H}$-free graphs to be \g{}.

\subsection{Line graphs of graphs of large girth and complete multipartite graphs}
\label{subsec:line.graphs.of.graphs.of.large.girth}

 The \defin{chromatic index} of a graph $G$, denoted by $\chi'(G)$, is the minimum size of a partition of the edge set of $G$ into matchings. Vizing \cite{vizing1964estimate} proved that for every graph $G$, we have $\chi'(G) \leq \Delta(G) +1$. Hence, we have the following:

\begin{proposition}[{Vizing \cite{vizing1964estimate}}]
\label{prop:line.graphs.chib.vizing}
    The class of line graphs is $\chib$ by the function $f(\omega) = \omega +1$.
\end{proposition}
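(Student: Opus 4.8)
The plan is to derive the bound from three elementary ingredients: the identity $\chi(L(G)) = \chi'(G)$, the inequality $\omega(L(G)) \geq \Delta(G)$, and Vizing's theorem $\chi'(G) \leq \Delta(G) + 1$, which is already stated just above the proposition.

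First I would record the standard translation between coloring the line graph and edge-coloring the base graph. For any graph $G$, a proper coloring of $L(G)$ is exactly a partition of $V(L(G)) = E(G)$ into color classes, each of which is an independent set of $L(G)$. By the definition of the line graph, two edges of $G$ are adjacent in $L(G)$ if and only if they share an endpoint, so an independent set of $L(G)$ is precisely a matching of $G$. Hence the minimum number of colors in a proper coloring of $L(G)$ equals the minimum number of matchings needed to partition $E(G)$; that is, $\chi(L(G)) = \chi'(G)$.

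Next I would bound the clique number of the line graph from below by the maximum degree. Let $v$ be a vertex of $G$ of maximum degree $\Delta(G)$. The $\Delta(G)$ edges of $G$ incident with $v$ pairwise share the endpoint $v$, hence are pairwise adjacent in $L(G)$ and form a clique of $L(G)$. Therefore $\omega(L(G)) \geq \Delta(G)$. Stringing these facts together with Vizing's theorem yields
$$\chi(L(G)) = \chi'(G) \leq \Delta(G) + 1 \leq \omega(L(G)) + 1,$$
which is exactly the claimed bound with $f(\omega) = \omega + 1$.

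I do not expect any genuine obstacle here: the only substantive input is Vizing's theorem, which is already available, and the remaining two steps are folklore. The one point worth being careful about is the direction of the clique-number inequality — we need only $\omega(L(G)) \geq \Delta(G)$, which is immediate from the star of edges at a maximum-degree vertex, and we do not require the sharper classification of cliques of $L(G)$ as either such stars or triangles.
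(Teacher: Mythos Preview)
Your proposal is correct and matches the paper's approach exactly: the paper does not give a separate proof but simply states Vizing's bound $\chi'(G)\le\Delta(G)+1$ and says ``Hence, we have the following,'' leaving implicit precisely the two folklore identifications $\chi(L(G))=\chi'(G)$ and $\omega(L(G))\ge\Delta(G)$ that you spell out. There is nothing to add.
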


Let $A$ and $B$ be disjoint subsets of $V(G)$. We say that $A$ and $B$ are \defin{complete to each other}  (respectively \defin{anticomplete to each other}) if for every $a\in A$ and $b\in B$ we have $ab\in E(G)$ (respectively $ab\notin E(G)$). The graph $G$ is \defin{complete $k$-partite} if its vertex set can be partitioned into a family of $k$ non-empty independent sets which are pairwise complete to each other. Finally, we say that $G$ is \defin{complete multipartite} if there exists a positive integer $k$ such that $G$ is a complete $k$-partite graph. It follows from the definition of complete multipartite graphs that a graph $G$ is complete multipartite if and only if the non-adjacency relation on $V(G)$ is an equivalence relation. It is straightforward to show that assigning each equivalence class its own color is an optimal coloring, and hence that complete multipartite graphs are perfect. 

The main results of this subsection are the following two theorems:

\begin{theorem}
\label{thm:complete.multipartite.graphs.not.chig}
    The class of complete multipartite graphs is not \g{}.
\end{theorem}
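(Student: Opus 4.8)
The plan is to use the class of line graphs as the \chib{} class against which to test: by \autoref{prop:line.graphs.chib.vizing} it is \chib{}, so it suffices to exhibit, for every $m$, a triangle-free graph $G_m$ with $\chi(G_m)\ge m$ lying in $\{\text{complete multipartite graphs}\}\gcap\{\text{line graphs}\}$. The first thing I would record is a convenient description of such intersections. If $M$ is complete multipartite and $L(H)$ is a line graph on the same vertex set $E(H)$, then the parts of $M$ form a partition of $E(H)$ into \emph{blocks}, and $M\cap L(H)$ is precisely the graph on $E(H)$ in which two edges of $H$ are adjacent if and only if they share an endpoint in $H$ and lie in different blocks. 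Thus the whole task reduces to choosing $H$ together with a block-partition of $E(H)$.

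Next I would isolate the triangle-free condition. If $H$ is triangle-free, then the only triangles of $L(H)$ are triples of edges through a common vertex of $H$; such a triangle survives in $M\cap L(H)$ exactly when its three edges lie in three distinct blocks. Hence $M\cap L(H)$ is triangle-free if and only if every vertex of $H$ is incident to edges of at most two blocks. This \emph{at most two blocks per vertex} condition is the design constraint I must respect while still forcing the chromatic number up.

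For the construction I would use shift graphs. Write $S^{(2)}_n$ for the graph whose vertices are the pairs $(a,b)$ with $1\le a<b\le n$ and in which $(a,b)$ is adjacent to $(b,c)$; it is triangle-free. I would take $H=S^{(2)}_n$ and let the block of an edge $\{(a,b),(b,c)\}$ be its middle value $b$, so that the parts of $M$ group the edges of $H$ by their middle coordinate. Each vertex $(a,b)$ is then incident only to edges with middle $a$ or middle $b$, so the at-most-two-blocks condition holds and the resulting graph $G_n := M\cap L(S^{(2)}_n)$ is triangle-free. A direct check of adjacencies identifies $G_n$ with the shift graph on triples $S^{(3)}_n$ (vertices the increasing triples $(a,b,c)$, with $(a,b,c)$ adjacent to $(b,c,d)$), and since $\chi(S^{(3)}_n)\to\infty$ as $n\to\infty$, the family $(G_n)$ has unbounded chromatic number. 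As each $G_n$ is the intersection of the complete multipartite graph $M$ with the line graph $L(S^{(2)}_n)$, this proves the theorem (and, reading the same intersection the other way, shows that the \chib{} class of line graphs is not \g{} either).

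The step I expect to be the main obstacle is precisely reconciling the two-blocks-per-vertex constraint with unbounded chromatic number. The naive idea of realizing the shift graph on \emph{pairs} as $M\cap L(K_n)$ fails: the identifications it would require (pairs sharing their smaller, respectively larger, element must be co-blocked) do not generate an equivalence relation, so no single complete multipartite graph produces it. Passing up one level to triples is what repairs this, because partitioning triples by their middle coordinate genuinely is a partition; carrying out the adjacency identification $G_n\cong S^{(3)}_n$ and invoking the (slow but unbounded) growth of $\chi(S^{(3)}_n)$ is where the real content sits. If one further wants the companion statement about line graphs of graphs of \emph{large girth}, the same scheme should apply once $H$ is replaced by a triangle-free base graph of large girth carrying an analogous two-blocks-per-vertex partition.
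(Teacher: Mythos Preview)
Your argument is correct and is essentially the same construction as the paper's: you take the line graph of the shift graph $G(n,2)$, partition its vertices (the triples $(a,b,c)$) into the parts of a complete multipartite graph by their middle coordinate, and observe that the resulting intersection is precisely the shift graph $G(n,3)$, which is triangle-free with unbounded chromatic number. The paper packages this a bit differently---it phrases the construction via line digraphs, proves the more general girth-$g$ statement (Lemma~4.6) using the Tardos--Walczak subgraph and the Harner--Entringer bound, and then deduces Theorem~4.1---but for Theorem~4.1 itself your direct appeal to the Erd\H{o}s--Hajnal bound on $\chi(G(n,3))$ is a clean shortcut that avoids those auxiliary results.
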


\begin{theorem}
\label{thm:line.graphs.not.chig}
    Let $g \geq 3$. Then the class of line graphs of graphs of girth at least $g$ is not \g{}. In particular, the class of line graphs is not \g{}.
\end{theorem}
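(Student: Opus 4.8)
The plan is to prove \autoref{thm:line.graphs.not.chig} (and simultaneously \autoref{thm:complete.multipartite.graphs.not.chig}) from a single construction, by exhibiting a family of triangle-free graphs of unbounded chromatic number lying in the graph-intersection of the class $\mathcal{M}$ of complete multipartite graphs and the class $\mathcal{L}_{g}$ of line graphs of graphs of girth at least $g$. Since complete multipartite graphs are perfect, $\mathcal{M}$ is $\chib$ by the identity, so such a family witnesses that $\mathcal{L}_{g}$ is not \g{}; symmetrically, as $\mathcal{L}_{g}$ is a subclass of the line graphs and hence $\chib$ by \autoref{prop:line.graphs.chib.vizing}, the same family proves \autoref{thm:complete.multipartite.graphs.not.chig}. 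Because $\mathcal{L}_{g'} \subseteq \mathcal{L}_{g}$ whenever $g' \geq g$, a family whose host graphs have arbitrarily large girth settles every $g \geq 3$ at once (the case $g=3$ being the class of all line graphs). So it suffices to construct, for every $g \geq 4$ and every $N$, a graph $G$ of girth at least $g$ and a complete multipartite graph $H$ with $F := G \cap H$ triangle-free and $\chi(F) \geq N$.

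First I would unwind what such an intersection is. Taking $V(H) = E(G)$, the graph $H$ is encoded by a partition of $E(G)$ into its parts, and $G \cap H = L(G) \cap H$ is obtained from $L(G)$ by deleting exactly those edges both of whose endpoints (two edges of $G$ sharing a vertex) lie in the same part. Since $g \geq 4$ forces $G$ to be triangle-free, the only triangles of $L(G)$ arise from triples of edges through a common vertex of $G$; hence $F$ is triangle-free if and only if, at every vertex of $G$, the incident edges use at most two parts. Thus the target is a high-girth graph $G$ together with a ``locally $2$-part'' partition of $E(G)$ such that the resulting graph -- a union of complete bipartite graphs, one across the two parts at each vertex -- has large chromatic number.

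The point that makes this feasible is that large girth forces $G$ to be locally tree-like, and this sparsity is exactly what permits a partition that is globally rich (using many parts) while staying locally $2$-part, since palettes of far-apart vertices need not interact. To build the family I would start from a triangle-free graph $\Gamma$ of girth greater than $g$ and chromatic number at least $N$, whose existence is the classical theorem of Erd\H{o}s, and realize (a graph closely related to) $\Gamma$ as such a union: a locally $2$-part partition of $E(G)$ whose associated union is isomorphic to $\Gamma$ corresponds precisely to a proper colouring of $\Gamma$ in which every neighbourhood uses at most two colours, together with a high-girth host $G$ implementing the incidences. From $G$ and $H$ one then reads off that $\chi(F) = \chi(\Gamma) \geq N$.

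The hard part, and the main obstacle, is precisely reconciling large girth of the host $G$ with large chromatic number of $F$. The tension is genuine: the most natural locally $2$-part partitions -- those coming from an acyclic orientation, where the two parts at a vertex are its in-edges and out-edges -- produce line digraphs that can be properly coloured by the rank of the tail, and so have small chromatic number on sparse hosts. Overcoming this requires a partition that is \emph{not} orientation-like, equivalently a proper colouring of the high-chromatic input with at most two colours per neighbourhood, realised by a host of girth at least $g$. I expect constructing this colouring-plus-host pair, and verifying that the girth stays large while the chromatic number does not collapse, to be the technical heart of the argument.
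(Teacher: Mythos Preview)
Your setup is right: it suffices to build, for each $N$ and each $g$, a triangle-free graph of chromatic number at least $N$ in $\mathcal{L}_g\gcap\mathcal{M}$, and you correctly unpack this as a ``locally $2$-part'' edge-partition of a high-girth host. But the proposal stops at the point where the actual construction is needed, and the one concrete idea you do discuss you discard for the wrong reason.

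The orientation idea you dismiss is exactly what works. If $D$ is any orientation of a graph $H$, the underlying graph of the line digraph $\vec{L}(D)$ satisfies $\chi(\vec{L}(D))\ge\log_2\chi(H)$ by a theorem of Harner and Entringer; so if $\chi(H)\ge 2^N$ then the line digraph already has chromatic number at least $N$, regardless of how sparse $H$ is. Your ``rank of the tail'' colouring only gives the upper bound $\chi(\vec{L}(D))\le$ (longest directed path), and by Gallai--Roy this is at least $\chi(H)-1$, so it is not small on a host of large chromatic number. There is therefore no need for a non-orientation-like partition.

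What the paper actually does is take $H$ to be a high-girth, high-chromatic subgraph of the shift graph $G(n,2)$, whose existence is a theorem of Tardos and Walczak. The shift-graph labelling $(x,y)$ supplies the missing global partition: the arc from $(x,y)$ to $(y,z)$ is placed in part $y$, so at every vertex the in-edges and out-edges form the two local parts, and this is consistent across the whole graph (giving a genuine complete multipartite $C$). One then checks that $L(H)\cap C$ is precisely the underlying graph of $\vec{L}(\vec{H})$, hence triangle-free (it sits inside $G(n,3)$) and of chromatic number at least $\log_2\chi(H)$ by Harner--Entringer. Your plan to start from an arbitrary Erd\H{o}s graph $\Gamma$ and ``implement'' it runs into the real obstacle you did not name: for an orientation of a generic graph there is no reason the local in/out split should come from a single global multipartite partition, which is why the shift-graph structure (and hence Tardos--Walczak, not just Erd\H{o}s) is used.
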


We prove \autoref{thm:complete.multipartite.graphs.not.chig} and~\autoref{thm:line.graphs.not.chig} by showing that the graph intersection of the class of line graphs of graphs of girth at least $g$ for $g \geq 3$ and the class of complete multipartite graphs contain triangle-free graphs of arbitrarily large chromatic number. We first introduce notions and results we will need to describe the construction of these graphs.

Let $D$ be a digraph. We denote by $\chi(D)$ (respectively by $L(D)$) the chromatic number (respectively the line graph) of the underlying undirected graph of $D$. Following Harary and Norman \cite{harary1960line_digraphs}, the \defin{line digraph} of $D$, which we denote by $\vec{L}(D)$, is the digraph with $V(\vec{L}(D))=E(D)$ and $E(\vec{L}(D))=\{(uv)(vw):uv,vw\in E(D)\}$. We remark that the underlying undirected graph of $\vec{L}(D)$ is a subgraph of $L(D)$.

We need the following theorem which states that the chromatic number of the underlying undirected graph of the line digraph of a digraph $D$ is lower-bounded by a function of $\chi(D)$:

\begin{theorem}[{Harner and Entringer~\cite[Theorem 9]{coloring_directed_line_graph}}]
\label{thm:coloring.directed.line.graphs}
    Let $D$ be a digraph. Then $\chi(\vec{L}(D)) \geq \log_2(\chi(D))$.
\end{theorem}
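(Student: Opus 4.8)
The plan is to prove the equivalent inequality $\chi(D) \le 2^{\chi(\vec{L}(D))}$, from which the stated bound follows immediately by taking $\log_2$ of both sides. To this end I would fix an optimal proper coloring $\phi \colon E(D) \to [c]$ of the underlying undirected graph of $\vec{L}(D)$, where $c = \chi(\vec{L}(D))$, and manufacture from it a proper coloring of the underlying undirected graph of $D$ using at most $2^c$ colors.

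The construction assigns to each vertex $v \in V(D)$ the \emph{set} of colors appearing on its out-arcs, namely $S(v) := \{\phi(vw) : vw \in E(D)\} \subseteq [c]$. Since $[c]$ has only $2^c$ subsets, this is a coloring of $V(D)$ with at most $2^c$ colors, so the whole argument reduces to checking that $S$ is proper.

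The key step, which is also where the main care is needed, is to verify that $S$ separates the two endpoints of every arc, and this rests on correctly reading off adjacency in the line digraph. Unwinding the definition of $\vec{L}(D)$, an arc $uv$ of $D$ is adjacent in $\vec{L}(D)$ to precisely the out-arcs $vw$ of $v$ and the in-arcs of $u$; in particular $\phi(uv) \neq \phi(vw)$ for every out-arc $vw$ of $v$. Now take any arc $uv \in E(D)$. On one hand $\phi(uv) \in S(u)$, since $uv$ is itself an out-arc of $u$. On the other hand $\phi(uv) \notin S(v)$, because every element of $S(v)$ is the color of some out-arc $vw$ of $v$, and such an arc is adjacent to $uv$ in $\vec{L}(D)$, forcing $\phi(vw) \neq \phi(uv)$. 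Hence $\phi(uv) \in S(u) \setminus S(v)$, so $S(u) \neq S(v)$; as adjacency in the underlying undirected graph of $D$ means exactly that at least one of $uv, vu$ is an arc, this shows $S$ is a proper coloring.

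The only point deserving extra attention is bookkeeping rather than a genuine difficulty: one must pick out-arcs (or consistently in-arcs) throughout, since mixing the two breaks the separation argument, and one should note that all sinks receive the color $\emptyset$ without conflict, precisely because the presence of an arc $uv$ guarantees that $u$ is not a sink. Combining these observations gives $\chi(D) \le 2^c = 2^{\chi(\vec{L}(D))}$, and taking logarithms yields $\chi(\vec{L}(D)) \ge \log_2 \chi(D)$, as required.
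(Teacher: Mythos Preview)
Your argument is correct. The paper itself does not supply a proof of this theorem; it is quoted as a known result of Harner and Entringer and used as a black box in the proof of \autoref{lem:intersection.line.graphs.complete.multipartite.not.chib}. What you have written is essentially the classical Harner--Entringer argument: color each vertex by the set of colors on its out-arcs, and use the fact that the color of an arc $uv$ lies in $S(u)$ but not in $S(v)$ to conclude properness. So there is nothing to compare against in the present paper, but your proof matches the original source.
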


Let $n$ and $k$ be integers such that $n>2k>2$. Erd\H{o}s and Hajnal~\cite{erdos1968chromatic} defined the \defin{shift graph} $G(n,k)$ as the graph with vertex set the set of all $k$-tuples $(t_{1},\ldots,t_{k})$ such that $1\leq t_{1} < \ldots < t_{k} \leq n$, in which the vertices $(t_{1},\ldots,t_{k})$ and $(t_{1}',\ldots,t_{k}')$ are adjacent if and only if $t_{i+1}=t'_{i}$ for $1 \leq i<k$, or vice versa. Observe that $G(n,k)$ is triangle-free. We additionally consider \defin{directed shift graphs}, orientations of shift graphs where $(t_1, \ldots, t_k)(t_1', \ldots, t_k')$ is an arc exactly if $t_{i+1}=t'_{i}$ for all $1 \leq i<k$. For integers $n$ and $k$ such that $n > 2k > 2$, we denote the directed shift graph of $k$-tuples over an alphabet of size $n$ by $\vec{G}(n,k)$. The shift graph $G(n,k)$ is thus the underlying undirected graph of $\vec{G}(n,k)$.

Erd\H{o}s and Hajnal~\cite{erdos1968chromatic} proved the following result for the chromatic number of shift graphs:

\begin{theorem}[Erd\H{o}s and Hajnal~\cite{erdos1968chromatic}]
\label{thm:erdos.hajnal.shift}
    Let $n$ and $k$ be integers such that $n>2k>2$. Then $\chi(G(n,k))=(1-o(1))\log^{(k-1)}(n)$. In particular, $\chi(G(n,2))=\lceil \log n\rceil$, and for $n$ sufficiently large we have that $\chi(G(n,3))>k$.
\end{theorem}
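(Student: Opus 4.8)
The plan is to pin down the base case $k=2$ exactly and then lift it to all $k$ using the recursive line-digraph structure of the shift graphs together with \autoref{thm:coloring.directed.line.graphs}. Once both the lower and the upper bound on $\chi(G(n,k))$ are of the form $(1\pm o(1))\log^{(k-1)}(n)$, the two ``in particular'' assertions are immediate: $\chi(G(n,2))$ will be computed exactly, and $\chi(G(n,3))=(1-o(1))\log^{(2)}(n)\to\infty$ exceeds any fixed $k$ once $n$ is large.

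For the base case I would show $\chi(G(n,2))=\lceil\log_2 n\rceil$ by matching bounds. For the upper bound, colour a vertex $(a,b)$ with $a<b$ by the position of the most significant bit in which the binary expansions of $a$ and $b$ differ. If $(a,b)$ and $(b,c)$ are adjacent and receive the same colour $p$, then at bit $p$ the pair $a,b$ differ and the pair $b,c$ differ, forcing $b$ to carry two different bit-values at position $p$, a contradiction; hence the colouring is proper and uses $\lceil\log_2 n\rceil$ colours. For the lower bound, take any proper colouring $\phi$ and assign to each $a\in[n]$ the set $D(a)=\{\phi(a,b):a<b\leq n\}$. For $a<a'$ the colour $\phi(a,a')$ lies in $D(a)$, but every vertex $(a',b)$ with $b>a'$ is adjacent to $(a,a')$, so $\phi(a',b)\neq\phi(a,a')$ and $\phi(a,a')\notin D(a')$; thus $a\mapsto D(a)$ is injective into the subsets of the colour set, giving $n\leq 2^{\chi(G(n,2))}$ and so $\chi(G(n,2))\geq\lceil\log_2 n\rceil$.

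For general $k$ the structural fact I would use is $\vec{G}(n,k)=\vec{L}(\vec{G}(n,k-1))$: a $k$-tuple $(t_1,\dots,t_k)$ is exactly the arc of $\vec{G}(n,k-1)$ from its prefix $(t_1,\dots,t_{k-1})$ to its suffix $(t_2,\dots,t_k)$, and the shift adjacency of $k$-tuples is precisely the head-equals-tail adjacency defining $\vec{L}$. The lower bound then follows by iterating \autoref{thm:coloring.directed.line.graphs}, since $G(n,k)$ is the underlying undirected graph of $\vec{G}(n,k)$: we get $\chi(G(n,k))\geq\log_2\chi(G(n,k-1))$, and descending to the base case and using monotonicity of $\log_2$ yields $\chi(G(n,k))\geq\log^{(k-1)}(n)$. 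For the matching upper bound I would exhibit a homomorphism: given a proper colouring $\psi$ of $G(n,k-1)$ with $c$ colours, map each $k$-tuple $T$ to the ordered pair $\bigl(\psi(\mathrm{prefix}(T)),\psi(\mathrm{suffix}(T))\bigr)$ of distinct colours. Because the suffix of $T$ equals the prefix of its shift, this map sends shift-adjacent tuples to shift-adjacent pairs, i.e.\ it is a homomorphism into the shift graph on the $c$ symbols, which by (a variant of) the base-case colouring has chromatic number $O(\log c)$. Hence $\chi(G(n,k))\leq O(\log\chi(G(n,k-1)))$, and iterating from the base case gives $\chi(G(n,k))\leq(1+o(1))\log^{(k-1)}(n)$.

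The hard part is the bookkeeping in the upper bound needed to obtain the sharp leading constant $(1-o(1))$ rather than merely the correct order of magnitude: both the constant factor incurred when colouring the ordered-pair shift graph and the ceilings from the base case must be shown to wash out as one descends the tower of logarithms, which works because at each level $c=\chi(G(n,k-1))\to\infty$ and so $\log_2(O(c))=(1+o(1))\log_2 c$. A smaller point to check with care is that the hypothesis $n>2k>2$ keeps every intermediate shift graph non-trivial, so that each iterated logarithm is well defined and the monotonicity of $\log_2$ may legitimately be applied at every step; the base case and the identity $\vec{G}(n,k)=\vec{L}(\vec{G}(n,k-1))$ are then routine to verify.
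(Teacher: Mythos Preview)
The paper does not prove \autoref{thm:erdos.hajnal.shift}; it is quoted as a result of Erd\H{o}s and Hajnal with a citation and no proof, and is used only as a black box (and in fact only the weak consequence that $\chi(G(n,3))\to\infty$ is ever invoked, via the triangle-freeness of shift graphs). So there is no ``paper's own proof'' to compare your proposal against.

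Taken on its own merits, your sketch is essentially the standard argument and is sound. Two small points worth tightening. First, in the $k=2$ upper bound, the reason the most-significant-differing-bit colouring is proper is not just that ``$b$ would carry two different bit-values at position $p$'', but specifically that $a<b$ forces the most significant differing bit of $(a,b)$ to read $0$ in $a$ and $1$ in $b$, while $b<c$ forces the corresponding bit to read $0$ in $b$ and $1$ in $c$; if both positions are $p$ then bit $p$ of $b$ is simultaneously $1$ and $0$. You have the right contradiction, but the monotonicity $a<b<c$ is what makes it work and should be said. Second, in the recursive upper bound, the image of your map $T\mapsto(\psi(\mathrm{prefix}(T)),\psi(\mathrm{suffix}(T)))$ is not literally $G(c,2)$ (since the two coordinates need not be increasing), but rather the ``shift graph'' on all ordered pairs of distinct colours, i.e.\ the underlying graph of $\vec{L}(\vec{K}_c)$; you flag this with ``a variant of'', and indeed the same highest-differing-bit idea colours it with $O(\log c)$ colours, so this is fine once stated. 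The absorption of the constant factors into the $(1+o(1))$ under iterated logarithms is exactly as you say.
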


Hence, for any fixed $k \geq 2$, the shift graphs $\left(G(n,k)\right)_{n \in \mathbb{N}^+}$ form a class of triangle-free graphs of arbitrarily large chromatic number. Any class containing such a class of shift graphs is thus not $\chi$-bounded.

In 2018, Gábor Tardos gave a talk at \textit{Combinatorics: Extremal, Probabilistic and Additive} in São Paulo about work of his and Bartosz Walczak on a conjecture of Erd\H{o}s and Hajnal. One of their main results is the following:

\begin{theorem}[Tardos \& Walczak]\label{thm:subgraph.shift.graph.large.girth.and.chi}
    Let $k \geq 2$, $g \geq 3$ and $c > 0$ be integers. Then there exists an integer $n^* > 0$ such that for each $n \geq n^*$, the shift graph $G(n, k)$ contains a subgraph $H_n$ with girth at least $g$ and $\chi(H_n) \geq c$.
\end{theorem}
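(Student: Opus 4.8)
The plan is to build the required subgraph inside the directed shift graph $\vec{G}(n,k)$, using its description as an iterated line digraph together with three features of the line digraph operation: it is monotone under passing to subdigraphs, it does not decrease girth, and it decreases chromatic number only logarithmically. Write $T_n$ for the transitive tournament on $[n]$, that is, the digraph with an arc from $i$ to $j$ exactly when $i<j$. A direct check from the definitions gives $\vec{L}(T_n)=\vec{G}(n,2)$ and, more generally, $\vec{L}(\vec{G}(n,j))=\vec{G}(n,j+1)$ for every $j$, so that $\vec{G}(n,k)=\vec{L}^{k-1}(T_n)$. Moreover $\vec{L}$ is monotone: if $D$ is a subdigraph of $D'$, then $\vec{L}(D)$ is a subdigraph of $\vec{L}(D')$.

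The seed of the construction is an auxiliary graph of large girth and large chromatic number. By the classical theorem of Erd\H{o}s guaranteeing graphs of arbitrarily large girth and chromatic number, I may fix a graph $H$ with girth at least $g$ and with $\chi(H)$ so large that $k-1$ successive applications of $\log_2$ to $\chi(H)$ still leave a value at least $c$ (that is, $\chi(H)$ is an iterated exponential of $c$ of height $k-1$). Fixing a labelling of $V(H)$ by $[n]$ with $n=|V(H)|$ and orienting every edge from its smaller to its larger endpoint exhibits $H$ as a subdigraph of $T_n$. By monotonicity, $\vec{L}^{k-1}(H)$ is a subdigraph of $\vec{L}^{k-1}(T_n)=\vec{G}(n,k)$; hence the underlying undirected graph $H_n$ of $\vec{L}^{k-1}(H)$ is a subgraph of the shift graph $G(n,k)$.

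It remains to bound the chromatic number and girth of $H_n$ from below. For the chromatic number, iterating \cref{thm:coloring.directed.line.graphs} gives $\chi(\vec{L}^{k-1}(H)) \geq \log_2^{(k-1)}(\chi(H)) \geq c$ by the choice of $H$, and $\chi(H_n)=\chi(\vec{L}^{k-1}(H))$ since chromatic number depends only on the underlying graph. For the girth, the key step is a lemma stating that the line digraph does not decrease girth: for any digon-free digraph $D$, the girth of the underlying graph of $\vec{L}(D)$ is at least the girth of the underlying graph of $D$. To see this, consider a cycle $e_1,\dots,e_\ell$ in the underlying graph of $\vec{L}(D)$; these are distinct arcs of $D$, and each consecutive pair shares a vertex that is the head of one arc and the tail of the other (this vertex is unique precisely because $D$ has no digons). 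Listing the shared vertices as $u_1,\dots,u_\ell$ produces a closed walk $u_1u_2\cdots u_\ell u_1$ of length $\ell$ in the underlying graph of $D$ whose edges are the distinct edges $e_1,\dots,e_\ell$; since any closed walk with pairwise distinct edges contains a cycle of length at most its own length, $\ell$ is at least the girth of $D$. Each digraph $\vec{L}^{j}(H)$ is acyclic (the line digraph of an acyclic digraph is acyclic and $H$ was oriented acyclically), hence digon-free, so applying the lemma $k-1$ times yields that the girth of $H_n$ is at least the girth of $H$, namely at least $g$. Finally, to obtain the statement for every $n' \geq n^{*}:=|V(H)|$ rather than only for $n=|V(H)|$, I use $T_{n}\subseteq T_{n'}$ and monotonicity once more to embed the same $\vec{L}^{k-1}(H)$ into $\vec{G}(n',k)$.

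The main obstacle is the girth lemma, and within it the orientation bookkeeping: one must verify that consecutive arcs on a cycle of $\vec{L}(D)$ meet in a single well-defined head-to-tail vertex (which is exactly where digon-freeness enters), that the arcs $e_1,\dots,e_\ell$ remain distinct as undirected edges of $D$, and that the resulting closed walk genuinely forces a short cycle in $D$. The remaining ingredients are routine: the identity $\vec{G}(n,k)=\vec{L}^{k-1}(T_n)$ and the monotonicity of $\vec{L}$ follow directly from the definitions, and the chromatic bookkeeping is a mechanical iteration of \cref{thm:coloring.directed.line.graphs}.
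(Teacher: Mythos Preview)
The paper does not prove this theorem; it is quoted as a result of Tardos and Walczak, with a pointer to Enju's thesis for a written account, so there is no argument in the paper to compare yours against. Your proposal, however, contains a genuine gap: the girth lemma is false. Take $D$ on five vertices $a,b,c,d,f$ with arcs $(a,b),(d,b),(b,c),(b,f)$. This digraph is acyclic and digon-free, and its underlying graph is a star, so it has infinite girth. In $\vec L(D)$ the arcs are $(a,b)\to(b,c)$, $(a,b)\to(b,f)$, $(d,b)\to(b,c)$, $(d,b)\to(b,f)$; two arcs of $D$ with the same head (such as $(a,b)$ and $(d,b)$) are non-adjacent in $\vec L(D)$, and likewise for two arcs with the same tail. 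Hence the underlying graph of $\vec L(D)$ is exactly a $4$-cycle on the vertices $(a,b),(b,c),(d,b),(b,f)$ in this cyclic order. More generally, every vertex of $D$ with in-degree and out-degree both at least $2$ produces such a $C_4$; and since the arc $(x,y)$ has in-degree $\operatorname{indeg}_D(x)$ and out-degree $\operatorname{outdeg}_D(y)$ as a vertex of $\vec L(D)$, these configurations are unavoidable under iteration once $H$ has any non-trivial structure.

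The error in your proof of the lemma is the assertion that the shared vertices $u_1,\dots,u_\ell$ form a closed walk in $D$ whose edges are the underlying edges of $e_1,\dots,e_\ell$. This is correct only when $e_1,\dots,e_\ell$ is a \emph{directed} cycle of $\vec L(D)$. Whenever the orientation reverses at some $e_{i+1}$ (that is, $e_{i+1}$ is a local source or sink on the cycle), both $u_i$ and $u_{i+1}$ equal the same endpoint of $e_{i+1}$, the walk stalls, and $e_{i+1}$ is never recorded as an edge of the walk. In the counterexample every vertex of the $4$-cycle is a source or a sink and all four $u_i$ equal $b$. Your construction therefore need not produce subgraphs of girth at least $g$ once $g\ge 5$, and the proof fails there.
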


We will use this result in our construction. A write-up of the proof of \autoref{thm:subgraph.shift.graph.large.girth.and.chi} may be found in Rodrigo Aparecido Enju's master's dissertation~\cite{Aparecido_Enju_2023} (in Portuguese).

We are now ready to describe the construction showing that the graph intersection of line graphs of graphs of large girth and complete multipartite graphs contains triangle-free graphs of arbitrarily large chromatic number.

\begin{lemma}\label{lem:intersection.line.graphs.complete.multipartite.not.chib}
    Let $g \geq 3$ be an integer, let $\mathcal{L}_g$ be the class of line graphs of graphs with girth at least $g$, and let $\mathcal{C}$ be the class of complete multipartite graphs. Then the class $\mathcal{L}_g \gcap \mathcal{C}$ is not $\chi$-bounded.
\end{lemma}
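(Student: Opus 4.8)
The plan is to show, for every integer $c$, that the class $\mathcal{L}_g \gcap \mathcal{C}$ contains a triangle-free graph $\Gamma$ with $\chi(\Gamma) \geq c$; since any such graph has $\omega(\Gamma) \leq 2$, the existence of such graphs for all $c$ shows that $\mathcal{L}_g \gcap \mathcal{C}$ admits no $\chi$-bounding function. To build $\Gamma$, I would start from a high-girth, high-chromatic subgraph of a shift graph, orient it inside the directed shift graph, and then take the underlying graph of its line digraph. Concretely, fix a large integer $c$; applying \autoref{thm:subgraph.shift.graph.large.girth.and.chi} with $k=2$, girth parameter $g$, and chromatic parameter $2^{c}$ yields an $m$ and a subgraph $H$ of $G(m,2)$ with girth at least $g$ and $\chi(H) \geq 2^{c}$. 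Orient every edge of $H$ according to its direction in $\vec{G}(m,2)$ to obtain an oriented graph $\vec{H}$ with underlying graph $H$ and $\chi(\vec{H}) = \chi(H)$, and let $\Gamma$ be the underlying undirected graph of $\vec{L}(\vec{H})$.

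The main step, and the place where the hypotheses really bite, is to exhibit $\Gamma$ as the intersection of a line graph of a graph of girth at least $g$ with a complete multipartite graph. Recall that the vertices of $\vec{G}(m,2)$ are the pairs $(t_1,t_2)$ with $t_1 < t_2$ and its arcs are the shifts $(t_1,t_2) \to (t_2,t_3)$; thus the arcs of $\vec{H}$ --- equivalently the edges of $H$, equivalently the vertices of $\Gamma$ --- are naturally indexed by triples $t_1 < t_2 < t_3$. I would take $G_1 := L(H)$, which lies in $\mathcal{L}_g$ since $H$ has girth at least $g$, and take $G_2$ to be the complete multipartite graph on the same vertex set whose parts are the classes of triples sharing a common middle coordinate $t_2$. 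The key point is that two edges of $H$ sharing a vertex of $H$ fall into exactly two kinds: either they agree in their middle coordinate (these come from a shared first pair $(t_1,t_2)$ or a shared last pair $(t_2,t_3)$, and are precisely the out-star and in-star adjacencies that are deleted when passing from $L(H)$ to $\vec{L}(\vec{H})$), or they overlap head-to-tail with distinct middle coordinates (these are exactly the shift adjacencies of $\vec{L}(\vec{H})$). Hence $E(L(H)) \cap E(G_2)$ retains precisely the shift edges, so $L(H) \cap G_2 = \Gamma$ and therefore $\Gamma \in \mathcal{L}_g \gcap \mathcal{C}$.

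It then remains to check triangle-freeness and large chromatic number. For triangle-freeness I would observe that $\vec{L}(\vec{H})$ is a sub-digraph of $\vec{L}(\vec{G}(m,2)) = \vec{G}(m,3)$, so $\Gamma$ is a subgraph of the shift graph $G(m,3)$, which is triangle-free; hence $\omega(\Gamma) \leq 2$. For the chromatic number I would apply \autoref{thm:coloring.directed.line.graphs} to $D = \vec{H}$, obtaining $\chi(\Gamma) = \chi(\vec{L}(\vec{H})) \geq \log_2 \chi(\vec{H}) = \log_2 \chi(H) \geq \log_2 2^{c} = c$. Letting $c \to \infty$ produces triangle-free graphs in $\mathcal{L}_g \gcap \mathcal{C}$ of unbounded chromatic number, which is the desired conclusion.

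The single genuine obstacle is the one handled in the second paragraph: a graph-intersection with $\mathcal{C}$ only allows us to delete from $L(H)$ a set of edges whose non-adjacency relation is a single equivalence relation, whereas the passage from $L(H)$ to $\vec{L}(\vec{H})$ must delete both the in-star and the out-star edges at every vertex. These two families do not individually partition the vertex set of $\Gamma$, and attempting to delete both directly would in general fuse too many classes together, collapsing $\Gamma$. What rescues the construction is the consistent "middle coordinate" labelling furnished by the shift-graph orientation, under which "same tail or same head in $\vec{H}$" coincides with the single equivalence relation "same middle coordinate." This is exactly why the argument must be routed through the directed shift graph and the line-digraph bound of \autoref{thm:coloring.directed.line.graphs}, rather than trying to realize an arbitrary high-girth, high-chromatic subgraph of a shift graph directly as a line graph intersected with a complete multipartite graph.
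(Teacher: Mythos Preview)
Your proposal is correct and follows essentially the same approach as the paper: both construct the desired graph as the underlying undirected graph of $\vec{L}(\vec{H})$ for a high-girth, high-chromatic subgraph $H$ of $G(n,2)$ (obtained via \autoref{thm:subgraph.shift.graph.large.girth.and.chi}), realize it as $L(H)$ intersected with the complete multipartite graph whose parts are triples with equal middle coordinate, verify triangle-freeness by embedding into $G(n,3)$, and bound the chromatic number below via \autoref{thm:coloring.directed.line.graphs}. Your analysis of why the ``same middle coordinate'' equivalence relation simultaneously kills both the in-star and out-star edges is exactly the content of the paper's \autoref{claim:line.digraph.intersection.representation}.
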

\begin{proof}[Proof of \autoref{lem:intersection.line.graphs.complete.multipartite.not.chib}]
    To show that $\mathcal{L}_g \gcap \mathcal{C}$ is not $\chi$-bounded, we will show that $\mathcal{L}_g \gcap \mathcal{C}$ contains triangle-free graphs of arbitrarily large chromatic number. As such, no $\chi$-bounding function $f: \mathbb{N} \to \mathbb{N}$ exists, since $f(2)$ cannot be defined satisfactorily. Let $c \geq 1$ be an integer. We will construct a triangle-free graph in $\mathcal{L}_g \gcap \mathcal{C}$ with chromatic number at least $c$.

    By \autoref{thm:subgraph.shift.graph.large.girth.and.chi}, there exists an integer $n > 0$ such that $G(n, 2)$ has a subgraph $H$ with girth at least $g$ and $\chi(H) \geq 2^c$. We consider the corresponding subdigraph $\vec{H}$ of $\vec{G}(n,2)$ with $V(\vec{H}) = V(H)$.

    Let $C$ be a complete multipartite graph defined as follows; set $V(C) := [n]^3$, and two vertices $(a_1, a_2, a_3)$ and $(b_1, b_2, b_3)$ are adjacent in $C$ if $a_2 \neq b_2$. The equivalence classes of $C$ are then exactly the vertices whose middle entries are equal.

    Let $G_{\vec{L}}$ be the underlying undirected graph of $\vec{L}(\vec{H})$. In the remainder of the proof, it is useful to consider a different representation of the vertices of line (di)graphs of (subgraphs of) (directed) shift graphs. For an edge (or arc) $(x,y)(y,z)$ in the (directed) shift graph, we will label the corresponding vertex in the line (di)graph with the tuple $(x, y, z)$. 

    \begin{claim}\label{claim:line.digraph.intersection.representation}
        $G_{\vec{L}} = L(\vec{H}) \cap C$.
    \end{claim}
    \begin{subproof}[Proof of \autoref{claim:line.digraph.intersection.representation}]
        First observe that $V(G_{\vec{L}}) = V(L(\vec{H}))$. Moreover, since each vertex in these two graphs is represented by a 3-tuple with elements in $[n]$, it follows that $V(G_{\vec{L}}), V(L(\vec{H})) \subseteq V(C)$. Hence, $V(G_{\vec{L}}) = V(L(\vec{H}) \cap C)$, as desired.

        By the definitions of shift graphs and line graphs, two vertices $(x, y, z)$ and $(x', y', z')$ in $L(\vec{H})$ are adjacent if
        \begin{enumerate}[topsep=0.2cm,itemsep=-0.4cm,partopsep=0.5cm,parsep=0.5cm]
            \item[(1)] $x = x'$ and $y = y'$ (both vertices represent arcs with tail $(x, y)$ in $\vec{H}$);
            \item[(2)] $y = y'$ and $z = z'$ (both vertices represent arcs with head $(y, z)$ in $\vec{H}$);
            \item[(3)] $x = y'$ and $y = z'$ (the two vertices represent arcs with $(x,y)$ as tail and head respectively in $\vec{H}$); or
            \item[(4)] $y = x'$ and $z = y'$ (the two vertices represent arcs with $(y,z)$ as head and tail respectively in $\vec{H}$).
        \end{enumerate}
        We note that in both cases (1) and (2), $y = y'$, and hence in $C$ the vertices $(x, y, z)$ and $(x', y', z')$ are not adjacent. In case (3), $y = z' \neq y'$, and in case (4), $y = x' \neq y'$, and hence the edges of both these cases are also present in $C$. Thus, $L(\vec{H}) \cap C$ is exactly the graph with vertex set $V(L(\vec{H}))$ and all edges satisfying (3) or (4). From the definition of line digraphs, it follows that this is exactly $G_{\vec{L}}$, as desired.
    \end{subproof}

    \begin{claim}\label{claim:line.digraph.triangle.free}
        $G_{\vec{L}}$ is triangle-free.
    \end{claim}
    \begin{subproof}[Proof of \autoref{claim:line.digraph.triangle.free}]
        Observe that $\vec{L}(\vec{G}(n,2)) \cong \vec{G}(n,3)$. Because $\vec{H} \subseteq \vec{G}(n, 2)$, we thus have that $L(\vec{G}(n,2)) \subseteq \vec{G}(n, 3)$, and hence $G_{\vec{L}} \subseteq G(n, 3)$. The claim then follows from the fact that shift graphs are triangle-free.
    \end{subproof}
    
    Since $H$ has girth at least $g$, and as $\vec{H}$ is an orientation of $H$, we have that $L(\vec{H}) \in \mathcal{L}_g$. Thus, by \autoref{claim:line.digraph.intersection.representation}, $G_{\vec{L}} \in \mathcal{L}_g \gcap \mathcal{C}$. Moreover, by \autoref{claim:line.digraph.triangle.free}, $G_{\vec{L}}$ is triangle-free. Finally, as $\chi(H) \geq 2^c$, by \autoref{thm:coloring.directed.line.graphs}, $\chi(G_{\vec{L}}) = \chi(\vec{L}(\vec{H})) \geq \log_2(\chi(L(\vec{H}))) = \log_2(\chi(L(H))) \geq c$, as desired. This completes the proof of \autoref{lem:intersection.line.graphs.complete.multipartite.not.chib}.
\end{proof}

\autoref{thm:complete.multipartite.graphs.not.chig} and \autoref{thm:line.graphs.not.chig} now follow from \autoref{lem:intersection.line.graphs.complete.multipartite.not.chib} and the fact that the classes of complete multipartite graphs and line graphs are both $\chi$-bounded.

The following is an immediate corollary of \autoref{thm:complete.multipartite.graphs.not.chig}:

\begin{corollary}
\label{cor:H.finite.guarding.complete.multipartite}
    Let $\mathcal{H}$ be a finite set of graphs. If the class of $\mathcal{H}$-free graphs is \g{}, then $\mathcal{H}$ contains a complete multipartite graph.
\end{corollary}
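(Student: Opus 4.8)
The plan is to prove the contrapositive: assuming that $\mathcal{H}$ contains no complete multipartite graph, I would show that the class of $\mathcal{H}$-free graphs is not \g{}. The strategy is to exhibit the class of complete multipartite graphs (which is \emph{not} \g{} by \autoref{thm:complete.multipartite.graphs.not.chig}) as a subclass of the class of $\mathcal{H}$-free graphs, and then invoke the fact that being \g{} is inherited by subclasses.

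The first step is to show that under the assumption, every complete multipartite graph is $\mathcal{H}$-free. For this I would use that the class of complete multipartite graphs is hereditary: recalling the characterization that a graph is complete multipartite precisely when its non-adjacency relation is an equivalence relation, and noting that the restriction of an equivalence relation to a subset of the ground set is again an equivalence relation, it follows that every induced subgraph of a complete multipartite graph is complete multipartite. Consequently, if some complete multipartite graph contained a member $H\in\mathcal{H}$ as an induced subgraph, then $H$ itself would be complete multipartite, contradicting the assumption that $\mathcal{H}$ has no such member. Hence no complete multipartite graph contains any $H \in \mathcal{H}$, i.e.\ the class of complete multipartite graphs is contained in the class of $\mathcal{H}$-free graphs.

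The second step is to record that \g{} is preserved under passing to subclasses: if $\mathcal{A}$ is \g{} and $\mathcal{B}\subseteq\mathcal{A}$, then for every $\chib$ class $\mathcal{D}$ one has $\mathcal{B}\gcap\mathcal{D}\subseteq\mathcal{A}\gcap\mathcal{D}$ (any $G=B\cap D$ with $B\in\mathcal{B}\subseteq\mathcal{A}$ is witnessed by the same pair in $\mathcal{A}\gcap\mathcal{D}$), so a $\chi$-bounding function for $\mathcal{A}\gcap\mathcal{D}$ also bounds its subclass $\mathcal{B}\gcap\mathcal{D}$. Combining the two steps: were the class of $\mathcal{H}$-free graphs \g{}, then its subclass of complete multipartite graphs would be \g{} as well, contradicting \autoref{thm:complete.multipartite.graphs.not.chig}. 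This establishes the contrapositive and hence the corollary.

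I do not expect a real obstacle here, which is consistent with the statement being flagged as an immediate corollary; the only points requiring a little care are getting the direction of the subclass monotonicity correct (a subclass of a \g{} class is \g{}, so that non-\g{}ness of the small class propagates to the large one) and checking the elementary inclusion $\mathcal{B}\gcap\mathcal{D}\subseteq\mathcal{A}\gcap\mathcal{D}$, both of which are routine. Note also that the finiteness of $\mathcal{H}$ is not actually used by this argument.
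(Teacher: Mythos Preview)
Your proof is correct and matches the paper's intended argument: the paper simply declares the result an immediate corollary of \autoref{thm:complete.multipartite.graphs.not.chig}, and your contrapositive argument (complete multipartite graphs are hereditary, hence contained in the $\mathcal{H}$-free class, hence the latter inherits non-\g{}ness) is exactly how one unpacks that immediacy. Your observation that finiteness of $\mathcal{H}$ is unnecessary is also correct.
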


To obtain a similar corollary of \autoref{thm:line.graphs.not.chig}, we observe the following:

\begin{observation}
\label{obs:avoid.line.graphs.of.graphs.of.large.g}
    Let $\mathcal{H}$ be finite set of graphs which contains no line graph of a forest. Then there exists $g$ such that the class of all $\mathcal{H}$-free graphs contains all line graphs of graphs of girth at least $g$.
\end{observation}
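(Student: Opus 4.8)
The plan is to produce an explicit girth threshold $g$ depending only on the sizes of the graphs in $\mathcal{H}$, and then to verify that for this $g$ every line graph of a graph of girth at least $g$ is $\mathcal{H}$-free; since the class of $\mathcal{H}$-free graphs is hereditary, this yields the desired containment.

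First I would record the key structural fact that induced subgraphs of line graphs are themselves line graphs of subgraphs. Precisely: for a graph $G$ and a set $S \subseteq E(G) = V(L(G))$, let $F$ be the subgraph of $G$ whose edge set is $S$ and whose vertex set consists of the endpoints of the edges in $S$. Two distinct edges in $S$ share an endpoint in $G$ exactly when they are adjacent in $L(F)$, so directly from the definition of the line graph one gets $L(G)[S] = L(F)$. This requires no appeal to Whitney's theorem, and I would emphasize that $F$ is a genuine subgraph of $G$.

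Next I would fix $g$ to be any integer with $g \geq 3$ and $g > \max_{H \in \mathcal{H}}|V(H)|$; here the finiteness of $\mathcal{H}$ is exactly what guarantees this maximum is finite, so that a single $g$ works for all members of $\mathcal{H}$ simultaneously. Now suppose, for contradiction, that for some graph $G$ of girth at least $g$ the line graph $L(G)$ contains an induced subgraph isomorphic to some $H \in \mathcal{H}$, witnessed by a vertex set $S \subseteq E(G)$ with $|S| = |V(H)|$. By the structural fact, $H \cong L(G)[S] = L(F)$, where $F$ is the subgraph of $G$ with edge set $S$. Every cycle of $F$ is a cycle of $G$, hence has length at least $g$; but any cycle uses distinct edges, so its length is at most $|E(F)| = |S| = |V(H)| < g$, a contradiction. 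Therefore $F$ is acyclic, i.e.\ a forest, and $H \cong L(F)$ is the line graph of a forest, contradicting the hypothesis on $\mathcal{H}$. Hence no such induced copy exists, every such $L(G)$ is $\mathcal{H}$-free, and the claim follows.

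I do not anticipate a serious obstacle: the argument is essentially a counting bound on cycle lengths. The only points demanding care are the identification $L(G)[S] = L(F)$ and the observation that $F$ is a subgraph of $G$, so that the girth lower bound on $G$ transfers to $F$ while the number of edges of $F$ stays capped by $|V(H)|$; these two facts together force the preimage to be a forest.
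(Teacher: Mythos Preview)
Your argument is correct. The paper states this result as an observation without supplying a proof, treating it as immediate; your write-up spells out precisely the implicit reasoning one would expect, namely choosing $g$ larger than $\max_{H\in\mathcal{H}}|V(H)|$ so that any induced copy of some $H\in\mathcal{H}$ inside $L(G)$ corresponds to a subgraph $F$ of $G$ with too few edges to contain a cycle of length at least $g$, forcing $F$ to be a forest and hence $H$ to be the line graph of a forest.
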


The following is then an immediate corollary of \autoref{thm:line.graphs.not.chig} and \autoref{obs:avoid.line.graphs.of.graphs.of.large.g}:

\begin{corollary}
\label{cor:H.finite.guarding.line.of.forest}
    Let $\mathcal{H}$ be a finite set of graphs. If the class of $\mathcal{H}$-free graphs is \g{}, then $\mathcal{H}$ contains the line graph of a forest.
\end{corollary}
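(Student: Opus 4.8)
The plan is to prove the contrapositive: assuming $\mathcal{H}$ contains no line graph of a forest, I would show that the class of $\mathcal{H}$-free graphs fails to be \g{}. The whole argument is a direct combination of the two cited results, glued together by a monotonicity observation.

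First I would invoke \autoref{obs:avoid.line.graphs.of.graphs.of.large.g} to obtain an integer $g$ such that $\{\mathcal{H}\text{-free graphs}\}$ contains the entire class $\mathcal{L}_g$ of line graphs of graphs of girth at least $g$. Next I would apply \autoref{thm:line.graphs.not.chig}, which states that $\mathcal{L}_g$ is not \g{}; unpacking the definition, this produces a $\chi$-bounded class $\mathcal{D}$ for which $\mathcal{L}_g \gcap \mathcal{D}$ is not $\chi$-bounded.

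The key step is to transfer this failure from the subclass $\mathcal{L}_g$ up to the larger class $\{\mathcal{H}\text{-free graphs}\}$. I would observe that graph-intersection is monotone in each argument: since $\mathcal{L}_g \subseteq \{\mathcal{H}\text{-free graphs}\}$, any graph of the form $G_1 \cap G_2$ with $G_1 \in \mathcal{L}_g$ and $G_2 \in \mathcal{D}$ equally witnesses membership with $G_1 \in \{\mathcal{H}\text{-free graphs}\}$, so $\mathcal{L}_g \gcap \mathcal{D} \subseteq \{\mathcal{H}\text{-free graphs}\} \gcap \mathcal{D}$. Because $\chi$-boundedness is inherited by subclasses, the fact that the smaller class $\mathcal{L}_g \gcap \mathcal{D}$ is not $\chi$-bounded forces the larger class $\{\mathcal{H}\text{-free graphs}\} \gcap \mathcal{D}$ to be not $\chi$-bounded as well. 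Hence $\{\mathcal{H}\text{-free graphs}\}$ is not \g{}, which establishes the contrapositive.

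There is no genuine obstacle in this argument; the only point that deserves explicit care is the monotonicity observation, namely that the failure of the \g{} property propagates upward to superclasses. This is precisely what lets a single witnessing class $\mathcal{D}$ for $\mathcal{L}_g$ serve as a witness for the whole $\mathcal{H}$-free class, and everything else is an immediate consequence of \autoref{thm:line.graphs.not.chig} and \autoref{obs:avoid.line.graphs.of.graphs.of.large.g}.
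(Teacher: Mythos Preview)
Your proposal is correct and is exactly the argument the paper has in mind: the paper simply states that the corollary is immediate from \autoref{thm:line.graphs.not.chig} and \autoref{obs:avoid.line.graphs.of.graphs.of.large.g}, and your write-up just unpacks this by making the (obvious) monotonicity of graph-intersection and of the \g{} property explicit.
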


We use $K_{s,t}$ to denote a complete bipartite graph with parts of sizes $s$ and $t$. A \defin{star} is a $K_{1,t}$ for some positive integer $t$, and a \defin{claw} is a $K_{1,3}$.

\begin{corollary}
\label{cor:line.graph.guarding}
    The class of claw-free graphs is not \g{}. 
\end{corollary}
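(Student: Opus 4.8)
The plan is to recognize the class of claw-free graphs as the class of $\{K_{1,3}\}$-free graphs and then apply the contrapositive of \autoref{cor:H.finite.guarding.line.of.forest} with the finite set $\mathcal{H} = \{K_{1,3}\}$. That corollary asserts that if the class of $\mathcal{H}$-free graphs is \g{}, then $\mathcal{H}$ must contain the line graph of a forest. Since here $\mathcal{H}$ has a single element, this reduces the whole argument to checking that $K_{1,3}$ is not the line graph of a forest.

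First I would verify the (stronger and cleaner) statement that $K_{1,3}$ is not the line graph of any graph whatsoever. For any graph $G$ and any vertex $e = uv$ of $L(G)$, the neighbours of $e$ in $L(G)$ are exactly the edges of $G$ sharing an endpoint with $e$, that is, those incident with $u$ together with those incident with $v$. The edges incident with $u$ pairwise intersect (in $u$) and hence form a clique in $L(G)$, and likewise for the edges incident with $v$; thus $N_{L(G)}(e)$ is covered by two cliques and contains no independent set of size three. Consequently $L(G)$ is claw-free, so $L(G) \not\cong K_{1,3}$. As a forest is in particular a graph, it follows that $K_{1,3}$ is not the line graph of a forest, and therefore $\mathcal{H} = \{K_{1,3}\}$ contains no line graph of a forest.

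Combining these observations, the contrapositive of \autoref{cor:H.finite.guarding.line.of.forest} immediately gives that the class of claw-free graphs is not \g{}, as desired. I expect no real obstacle in this argument: the substantive work is already done in \autoref{thm:line.graphs.not.chig} and packaged through \autoref{obs:avoid.line.graphs.of.graphs.of.large.g} into \autoref{cor:H.finite.guarding.line.of.forest}, and the only genuine point to confirm is the elementary fact that the claw fails to be a line graph. One could alternatively argue directly that every line graph is claw-free, so the class of claw-free graphs contains all line graphs of graphs of girth at least $g$ for every $g$, and invoke \autoref{thm:line.graphs.not.chig}; but routing through \autoref{cor:H.finite.guarding.line.of.forest} keeps the deduction shortest and fully aligned with the corollary machinery developed above.
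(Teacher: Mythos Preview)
Your argument is correct and rests on the same underlying fact as the paper's proof: line graphs are claw-free (equivalently, $K_{1,3}$ is not a line graph). The paper argues directly from this via \autoref{thm:line.graphs.not.chig}, which is precisely the alternative you describe at the end; your primary route through \autoref{cor:H.finite.guarding.line.of.forest} is just a repackaging of the same deduction.
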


\begin{proof}[Proof of \autoref{cor:line.graph.guarding}]
    As shown by Van~Rooij and Wilf~\cite{van1965interchange}, line graphs are claw-free, thus the class of claw-free graphs contains all line graphs. Hence, by \autoref{thm:line.graphs.not.chig}, the class of claw-free graphs is not \g{}. This completes the proof of \autoref{cor:line.graph.guarding}.
\end{proof}

\subsection{Trivially perfect graphs}
\label{subsec:burling.strongly.chordal.trivially.perfect}

Following Golumbic \cite{golumbic1978trivially} we say that a graph $G$ is \defin{trivially perfect graph} if for every induced subgraph $H$ of $G$ the independence number of $H$ equals the number of maximal cliques of $H$. We note that the class of trivially perfect graphs is a subclass of the class of perfect graphs \cite{golumbic1978trivially} and thus $\chib$. We also note the class of trivially perfect graphs has the following nice characterization in terms of forbidden induced subgraphs: 

\begin{theorem}[{Golumbic \cite[Theorem 2]{golumbic1978trivially}}]
    Let $G$ be a graph. Then $G$ is trivially perfect if and only if $G$ is $\{P_{4}, C_{4}\}$-free.
\end{theorem}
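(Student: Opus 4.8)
The plan is to prove both directions of the equivalence, reducing the whole statement to a single structural fact about $\{P_4, C_4\}$-free graphs. Since the defining property of trivially perfect graphs quantifies over all induced subgraphs it is hereditary, and $\{P_4,C_4\}$-freeness is hereditary as well. This makes the forward direction cheap: it suffices to observe that neither $P_4$ nor $C_4$ is itself trivially perfect, since a trivially perfect graph cannot then contain either as an induced subgraph. For $P_4$ (on $a,b,c,d$ with edges $ab,bc,cd$) we have $\alpha(P_4)=2$ while the three edges are its maximal cliques; for $C_4$ we have $\alpha(C_4)=2$ while its four edges are its maximal cliques. In both cases $\alpha$ differs from the number of maximal cliques, so $P_4$ and $C_4$ are not trivially perfect and the forward direction follows.

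For the backward direction, I would first observe that, by heredity, it suffices to prove the single statement that \emph{every $\{P_4,C_4\}$-free graph $G$ satisfies $\alpha(G)=m(G)$}, where $m(G)$ denotes the number of maximal cliques of $G$; applying this to every induced subgraph then yields trivial perfection. I would prove this by induction on $|V(G)|$, and the crux is the following. \emph{Structural lemma: every connected $\{P_4,C_4\}$-free graph on at least one vertex has a universal vertex}, i.e.\ a vertex adjacent to all others.

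I expect the structural lemma to be the main obstacle, although a maximum-degree argument dispatches it cleanly, and this is where the forbidden subgraphs are actually used. Pick a vertex $v$ of maximum degree and suppose it is not universal, so $A(v):=V(G)\setminus N[v]\neq\emptyset$. By connectivity there are $u\in N(v)$ and $a\in A(v)$ with $ua\in E(G)$. For any $x\in N(v)\setminus\{u\}$ I claim $x\sim u$: otherwise, if also $x\not\sim a$ then $\{x,v,u,a\}$ induces a $P_4$, while if $x\sim a$ then $\{v,x,a,u\}$ induces a $C_4$ (with non-edges $va$ and $xu$), both forbidden. Hence $N(v)\setminus\{u\}\subseteq N(u)$, and since $v$ and $a$ are also neighbors of $u$ yet lie outside $N(v)\setminus\{u\}$, we get $\deg(u)\geq(\deg(v)-1)+2>\deg(v)$, contradicting the choice of $v$.

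Granting the lemma, the induction runs as follows. If $G$ is disconnected with components $G_1,\ldots,G_s$, then $\alpha(G)=\sum_i\alpha(G_i)$ and, since every maximal clique lies in a single component, $m(G)=\sum_i m(G_i)$; the claim then follows from the inductive hypothesis on each $G_i$. If $G$ is connected, let $v$ be a universal vertex and set $G':=G-v$. Because $v$ is universal it lies in every maximal clique, and $K\mapsto K\setminus\{v\}$ is a bijection between the maximal cliques of $G$ and those of $G'$, so $m(G)=m(G')$; moreover $v$ lies in no independent set of size at least $2$, so $\alpha(G)=\alpha(G')$ when $G'$ is nonempty (and both sides equal $1$ when $G$ is a single vertex). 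Since $G'$ is an induced subgraph it is $\{P_4,C_4\}$-free, and the inductive hypothesis gives $\alpha(G')=m(G')$, whence $\alpha(G)=m(G)$. This completes the induction and hence the proof.
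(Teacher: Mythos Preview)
The paper does not supply its own proof of this theorem; it is simply quoted from Golumbic and used as background. So there is nothing in the paper to compare your argument against.

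Your proof is correct. The forward direction is handled cleanly by checking that $P_4$ and $C_4$ themselves fail the defining equality. For the backward direction, your structural lemma---that every connected $\{P_4,C_4\}$-free graph has a universal vertex---is exactly Wolk's characterization, which the paper also cites (without proof) later in the same subsection. Your maximum-degree argument for it is correct: the two cases on $x$ produce precisely $P_4$ and $C_4$, and the degree count $\deg(u)\ge(\deg(v)-1)+2$ is valid since $v$ and $a$ are distinct neighbours of $u$ outside $N(v)\setminus\{u\}$. The induction then reducing $\alpha(G)=m(G)$ to $G-v$ via the bijection $K\mapsto K\setminus\{v\}$ on maximal cliques is routine and you have handled the single-vertex base case.

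In short, you have given the standard proof (essentially Golumbic's original via Wolk), which the paper omits.
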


The main result of this subsection is the following theorem:

\begin{theorem}
\label{thm:trivially.perfect.not.chig}
    The class of trivially perfect graphs is not \g{}.
\end{theorem}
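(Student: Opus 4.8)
The plan is to follow the template of \autoref{lem:intersection.line.graphs.complete.multipartite.not.chib}: I would fix one $\chib$ class $\mathcal{B}$ and exhibit, for every integer $c$, a triangle-free graph of chromatic number at least $c$ lying in (trivially perfect graphs)~$\gcap\,\mathcal{B}$. As in that lemma, a triangle-free family of unbounded chromatic number rules out any $\chi$-bounding function for $\mathcal{B}\gcap$(trivially perfect), since the value at $2$ could not be chosen; this would prove that the class of trivially perfect graphs is not \g{}.

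The class $\mathcal{B}$ I would use is the class $\mathcal{I}\gcap\mathcal{I}$ of intersection graphs of axis-parallel rectangles in the plane, which is $\chib$ by the theorem of Asplund and Gr\"unbaum recalled in \autoref{sec:intro}. The reason this is promising is a classical description of the target class: a graph is trivially perfect if and only if it is the intersection graph of a \emph{laminar} (nested) family of intervals, the containment order being the rooted forest underlying Golumbic's $\{P_4,C_4\}$-free characterization. Consequently, a graph lies in (trivially perfect)~$\gcap\,(\mathcal{I}\gcap\mathcal{I})$ as soon as it is the intersection graph of a family of axis-parallel boxes in $\mathbb{R}^3$ whose projections to one fixed coordinate axis form a laminar family of intervals: the laminar coordinate contributes a trivially perfect graph, the other two coordinates contribute a rectangle graph, and the box graph is exactly their graph-intersection. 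Since Burling's construction produces triangle-free intersection graphs of boxes in $\mathbb{R}^3$ of arbitrarily large chromatic number (the class $\mathcal{I}\gcap\mathcal{I}\gcap\mathcal{I}$ is not $\chib$, as recalled in \autoref{sec:intro}), it would suffice to realize such graphs by box families that are laminar in one coordinate.

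The main obstacle is therefore to produce, for each $c$, a triangle-free box graph $G$ with $\chi(G)\ge c$ \emph{together with} a box representation in which one coordinate is laminar; equivalently, I must reconcile the unbounded chromatic number of a Burling-type construction with the rigidity of a nested family in one direction. I would attempt this either by revisiting Burling's recursive construction---where one of the three directions already carries the nested, tree-like structure of the recursion and should be arrangeable as a laminar family---or, staying closer to the tools of this section, by building the witnesses from the high-girth, high-chromatic shift-graph subgraphs of \autoref{thm:subgraph.shift.graph.large.girth.and.chi}: there the laminar coordinate would be replaced by a tree-depth (rooted-forest) decomposition of the witness, whose comparability graph is automatically trivially perfect. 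The delicate point in either route is the same: because the trivially perfect factor contains no induced $C_4$, it is forced to add a chord to every induced four-cycle of the witness, and the two possible chords can both be forbidden (they would have to be differently-colored comparable pairs, i.e.\ new edges of $G$). This is precisely why the witnesses must be taken with girth large enough to contain no induced four-cycle---exactly what \autoref{thm:subgraph.shift.graph.large.girth.and.chi} supplies---after which every extra comparability introduced by the forest (respectively, the laminar coordinate) must be matched, edge by edge, to a non-edge of the $\chib$ partner class. Establishing this matching, so that the intersection recovers the witness exactly while the partner stays inside a fixed $\chib$ class, is what I expect to be the technical heart of the proof.
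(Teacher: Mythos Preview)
Your plan is sound in outline and, in fact, converges on the same idea the paper uses, though you package it geometrically while the paper works combinatorially. Both arguments show that the Burling graphs lie in (trivially perfect)~$\gcap\,\mathcal{B}$ for some $\chi$-bounded $\mathcal{B}$, with the trivially perfect factor coming from the rooted-tree structure of Burling's recursion. Your ``laminar coordinate'' is precisely that tree: the intersection graph of a laminar interval family is the comparability graph of a rooted forest, i.e.\ trivially perfect, which is exactly the $I(T)$ the paper builds from a Burling tree $(T,r,l,c)$.

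The differences are these. First, the paper does not touch box representations at all; it uses the Pournajafi--Trotignon characterization of Burling graphs as \emph{derived graphs} of Burling trees, and writes each derived graph $G(T)$ explicitly as $C(T)\cap I(T)$. This sidesteps the ``main obstacle'' you flag: there is no need to re-engineer Burling's geometry, because the tree already hands you both factors. Second, the paper's partner class is stronger than your $\mathcal{I}\gcap\mathcal{I}$: the graphs $C(T)$ are shown to be \emph{strongly chordal} (hence perfect), so the theorem in the paper reads ``Burling graphs $\subseteq$ (trivially perfect) $\gcap$ (a proper subclass of strongly chordal).'' Your route would only yield a $\chi$-bounded partner, which suffices for the theorem as stated but gives less.

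As for your two proposed routes: the first (verify that Burling's boxes can be taken laminar in one axis) is viable and is essentially the geometric shadow of the paper's $I(T)$ construction; if you pursue it, the cleanest way is to go through the derived-graph description rather than the boxes themselves. The second route (high-girth shift-graph subgraphs plus a tree-depth decomposition) is not promising here: tree-depth of graphs of unbounded chromatic number is unbounded, so the resulting trivially perfect supergraph would have unbounded clique number, but more importantly you have no control over the \emph{other} factor---there is no reason the complementary supergraph needed to kill the extra comparabilities should land in any fixed $\chi$-bounded class. Your worry about induced $C_4$'s is a symptom of this; the paper avoids the issue entirely because $C(T)$ is chordal by construction.
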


We prove \autoref{thm:trivially.perfect.not.chig} by showing that the graph-intersection of the class of trivially perfect graphs with a $\chib$ class contains graphs of bounded clique number and arbitrarily large chromatic number. We need some definitions. A \defin{hole} in a graph is an induced subgraph which is a cycle of length at least four. A graph is \defin{chordal} if it contains no holes. Let $G$ be a graph, let $l\geq 3$, and let $C=\{v_{1}, \ldots, v_{2l}\}$ be an even cycle of length at least six in $G$. For $i,j\in[2l]$, we say that the edge $v_{i}v_{j}$ is an \defin{odd chord} of $C$ if $i$ and $j$ have different parity; in this case the distance of $v_{i}$ and $v_{j}$ in the cycle is odd. A \defin{strongly chordal graph} is a chordal graph in which every even cycle of length at least six has an odd chord. We note that, since chordal graphs are perfect \cite{berge1960problemes}, strongly chordal graphs are perfect. In what follows we show that the graph-intersection of the class of trivially perfect graphs with a proper subclass of the class of strongly chordal graphs is not $\chib$ by proving that this graph-intersection contains all Burling graphs (defined below).

In 1965, Burling \cite{burling1965coloring} introduced a sequence $\{\mathcal{B}_{k}\}_{k\geq 1}$ of families of axis-aligned boxes in $\mathbb{R}^{3}$. Throughout this section, for each $k$, we denote by $G_{k}$ the intersection graph of $\mathcal{B}_{k}$; Burling proved the following: 

\begin{theorem}[Burling \cite{burling1965coloring}]
\label{thm:Burling}
For every positive integer $k$, the graph $G_{k}$ is triangle-free and has chromatic number 
at least $k$.
\end{theorem}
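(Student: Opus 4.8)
The plan is to prove both assertions by induction on $k$, recalling (and, where needed, re-deriving) Burling's recursive construction of the families $\mathcal{B}_k$ and carrying along, at each stage, auxiliary bookkeeping data that drives the chromatic lower bound. Since $G_k$ is by definition the intersection graph of $\mathcal{B}_k$, the two claims to establish are: (a) no three boxes of $\mathcal{B}_k$ pairwise intersect, and (b) $\chi(G_k)\ge k$. For (a), I would invoke the Helly property of axis-parallel boxes: three boxes that pairwise intersect have a common point. Hence $G_k$ is triangle-free if and only if no point of $\mathbb{R}^3$ lies in three boxes of $\mathcal{B}_k$, and this is the geometric invariant I would maintain throughout the recursive construction.

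For the construction I would attach to each $\mathcal{B}_k$ a finite set $\mathcal{P}_k$ of \emph{probes}. Each probe $p$ comes equipped with an independent set $N(p)\subseteq\mathcal{B}_k$ of pairwise non-intersecting boxes, all passing through a small region located at $p$, with $p$ itself covered by no box. The coloring invariant I would maintain is: for every proper coloring $c$ of $G_k$ there is a probe $p\in\mathcal{P}_k$ with $|c(N(p))|\ge k$. This invariant immediately yields (b), since a proper coloring restricted to the independent set $N(p)$ already exhibits at least $k$ distinct colors, whence $\chi(G_k)\ge k$. The base case $k=1$ is a single box with one probe seeing it.

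The inductive step is the geometric heart of the argument. To pass from order $k$ to order $k+1$ I would, near each probe $p$ of the order-$k$ gadget, insert a shrunken fresh copy of the entire order-$k$ gadget together with one new \emph{guard} box $R_p$ placed at $p$. The guard $R_p$ is engineered to intersect every box of $N(p)$, so that, because $N(p)$ is independent, $R_p$ together with $N(p)$ forms a star rather than a triangle; taken over all inserted boxes this preserves invariant (a). The purpose of $R_p$ is that once a probe is found whose near-set already displays $k$ colors, $R_p$ meets all of those boxes and is therefore forced to take a color outside that palette, contributing a $(k+1)$-st color. The fresh copy, meanwhile, supplies $k$ independent colors near a newly created probe, and the placement is chosen so that these combine at a single new probe whose augmented near-set is again an independent set witnessing $k+1$ colors, giving the coloring invariant at order $k+1$.

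The main obstacle is precisely this inductive step: calibrating the shrinking and placement of the fresh copies and the shape of the guard box $R_p$ in $\mathbb{R}^3$ so that all three invariants hold simultaneously — triangle-freeness (no point in three boxes), independence of each new near-set, and the "some probe sees $k$ colors" coloring property — while the forced color of $R_p$ is genuinely new relative to the $k$ colors guaranteed by the inductive hypothesis. Coordinating the independence requirement (the near-set must stay pairwise non-intersecting, so the extra box cannot meet the others) with the color-forcing requirement (the extra box must be pinned away from their palette) is the delicate point; the freedom afforded by the third coordinate is what makes it possible, which is presumably why Burling's construction lives in $\mathbb{R}^3$ rather than $\mathbb{R}^2$.
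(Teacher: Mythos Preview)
The paper does not prove this statement: \autoref{thm:Burling} is quoted as a result of Burling and used as a black box, so there is no ``paper's own proof'' to compare against. Your sketch is in the spirit of Burling's original argument (Helly for triangle-freeness; an inductive gadget with probes and near-sets carrying a colouring invariant), and as a high-level plan it is pointed in the right direction.

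That said, the inductive step as you describe it has a genuine gap, which you yourself flag but do not resolve. You want a new probe whose near-set is independent and displays $k+1$ colours. Your guard box $R_p$ is forced off the $k$ colours of the \emph{outer} near-set $N(p)$ because it meets every box there; but the $k$ colours guaranteed at a probe of the \emph{inner} fresh copy are, a priori, an entirely different set of $k$ colours, and nothing prevents the colour of $R_p$ from coinciding with one of them. So ``combine at a single new probe'' does not yet yield $k+1$. Burling's actual construction handles this by a more elaborate iteration: one does not insert a single fresh copy per probe but nests many copies and stabbers in a carefully organised way, and the argument proceeds via a dichotomy (either some stabber contributes a genuinely new colour at its probe, or the stabbers' colours are so constrained that a pigeonhole/iteration forces $k+1$ colours elsewhere). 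Your sketch stops exactly where the real work begins; if you want to turn this into a proof you will need to make that nested iteration and the accompanying case analysis explicit.
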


 We call the sequence $\{G_{k}\}_{k\in \mathbb{N}}$ the \defin{Burling sequence}. Following Pournajafi and Trotignon \cite{pournajafi2023burling}, we say that a graph $G$ is a \defin{Burling graph} if there exists a positive integer $k$ such that $G$ is isomorphic to an induced subgraph of $G_{k}$. In what follows in this section we denote by $\mathcal{B}$ the hereditary class of Burling graphs. As we discussed in \autoref{sec:intro}, Asplund and Gr\"unbaum \cite{asplund1960coloring} proved that the $2$-fold graph-intersection of the class of interval graphs is $\chib$, and it follows from \autoref{thm:Burling} that the $3$-fold graph-intersection of the class of interval graphs is not $\chib$. Motivated by these results, Gy\'arf\'as \cite[Problem 5.7]{gyarfas1985problems} asked whether the graph-intersection of the class of chordal graphs with the class of interval graphs is $\chib$. Chaniotis, Miraftab, and Spirkl pointed out in \cite{chordality2024} that a result of Felsner, Joret, Micek, Trotter, and Wiechert \cite{felsner2017burling} implies that Burling graphs are contained in the graph-intersection of the class of chordal graphs with the class of interval graphs, and thus the answer to Gy\'arf\'as' question is negative. Here we strengthen this result, by proving the following:

\begin{theorem}
\label{thm:Burling.main}
The class of Burling graphs is a subclass of the graph-intersection of the class of trivially perfect graphs with a proper subclass of the class of strongly chordal graphs.
\end{theorem}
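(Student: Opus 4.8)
The plan is to show, for every Burling graph $G$, that $V(G)$ carries two auxiliary graphs $T$ and $S$ on the same vertex set with $G = T\cap S$, where $T$ is trivially perfect and $S$ is strongly chordal; taking the second factor-class to be the hereditary closure $\mathcal{S}$ of all the graphs $S$ that arise, the remaining task is to check that $\mathcal{S}$ is a \emph{proper} subclass of the strongly chordal graphs. Since trivially perfect graphs are exactly the $\{P_4,C_4\}$-free graphs (Golumbic \cite{golumbic1978trivially}), I would build $T$ as the comparability graph of a rooted forest: the first step is to equip $V(G)$ with a rooted-forest partial order $\preceq$ (an order whose Hasse diagram is a forest, so that the set of elements below any given element is a chain) such that every edge of $G$ joins a $\preceq$-comparable pair. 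Any such comparability graph is trivially perfect and satisfies $E(G)\subseteq E(T)$. Dually, I would construct $S\supseteq G$ (so $E(G)\subseteq E(S)$) strongly chordal with the property that no pair that is $\preceq$-comparable but \emph{non}-adjacent in $G$ is an edge of $S$. Granting these two constructions, the verification that $G = T\cap S$ is formal: an edge of $T\cap S$ is $\preceq$-comparable (as an edge of $T$) and lies in $S$, so if it were a non-edge of $G$ it would be a comparable non-edge of $G$ present in $S$, which is forbidden; hence it is an edge of $G$, while conversely every edge of $G$ is comparable and in $S$, hence in $T\cap S$.

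The second step is to produce $\preceq$ and $S$ from the structure of the Burling sequence. Here I would use a combinatorial representation of Burling graphs (as developed by Pournajafi and Trotignon \cite{pournajafi2023burling}, and in the form exploited via \cite{felsner2017burling} in \cite{chordality2024} to place Burling graphs inside chordal $\gcap$ interval): in such a representation each vertex sits at a node of a rooted host tree together with a secondary geometric coordinate, and adjacency is the conjunction of a \emph{nesting} (ancestor) relation coming from the tree and a geometric relation coming from the coordinate. The nesting relation is laminar, so it induces precisely a rooted-forest order $\preceq$ with every edge of $G$ comparable; this is the order defining $T$, and it is what upgrades the earlier chordal factor to a trivially perfect one. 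The geometric coordinate, read off across the levels of the tree, yields $S$. I would carry this out by induction along the sequence $\{G_k\}$: maintaining the usual auxiliary data (the system of "columns" of $G_k$), I would show that each derivation step extends $\preceq$ and $S$ while preserving that $\preceq$ is a forest order, that $S$ is strongly chordal, and that $T\cap S$ is exactly $G_k$. Since both the trivially perfect and the strongly chordal classes are hereditary and every Burling graph is an induced subgraph of some $G_k$, restricting $T$ and $S$ to the relevant vertex set then gives the statement for all Burling graphs.

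The main obstacle is the construction and analysis of $S$: unlike the chordal factor of \cite{felsner2017burling}, which only has to be a subtree-intersection graph, here $S$ must satisfy the much stiffer condition that every even cycle of length at least six have an odd chord, and it must simultaneously omit every comparable non-edge of $G$ so that the intersection is exactly $G$. I would establish strong chordality directly by exhibiting a strong elimination ordering of $V(S)$ compatible with the level structure of the host tree (equivalently, by verifying that no $k$-sun occurs as an induced subgraph). This is where the inductive invariants must be chosen most carefully, since a single derivation step must neither create a sun nor spoil the exactness of $T\cap S$; I expect the coupling of these two requirements—strong chordality of $S$ on the one hand, and the vanishing of all comparable non-edges of $G$ in $S$ on the other—to be the technical heart of the argument.

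Finally, to see that the graphs $S$ arising this way form a \emph{proper} subclass of the strongly chordal graphs, it suffices to exhibit one strongly chordal graph lying outside the hereditary closure of the $S$'s. The constructed graphs all inherit a restrictive feature from the host-tree level structure (their maximal cliques are organised along the laminar order $\preceq$ inherited from the tree), so any strongly chordal graph whose clique structure cannot be laid out along a single such laminar family serves as the desired witness. Combining this witness with the decomposition $G=T\cap S$ then completes the proof that the class of Burling graphs is contained in the graph-intersection of the trivially perfect graphs with this proper subclass of the strongly chordal graphs.
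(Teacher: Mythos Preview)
Your overall strategy—write each Burling graph as (trivially perfect) $\cap$ (strongly chordal) and pass through the Pournajafi–Trotignon description—is exactly the paper's. But the particular decomposition you sketch would not work as stated, because you have the two factors wired the wrong way around. In the Burling-tree representation $(T,r,l,c)$, an edge $uv$ of the derived graph means $v\in c(u)$, a branch starting at the \emph{last-born sibling} of $u$; so the two endpoints of every edge are \emph{incomparable} in the ancestor order of $T$. Hence taking $\preceq$ to be the tree's ancestor relation, as you propose, gives a trivially perfect graph $T$ that contains \emph{no} edge of $G$, not all of them, and the argument collapses at the line ``every edge of $G$ is comparable''. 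The paper's fix is to swap roles: the ancestor relation of the host tree lives in the \emph{strongly chordal} factor $C(T)$ (make every principal branch a clique, and keep the original edges of $G(T)$), while the \emph{trivially perfect} factor $I(T)$ encodes a different relation—each vertex is made adjacent to all of its siblings and to all descendants of its last-born sibling. That $I(T)$ is trivially perfect is proved via Wolk's universal-vertex characterisation using Pournajafi–Trotignon's ``top-left'' vertex; that $C(T)$ is strongly chordal is proved via Farber's simple-vertex characterisation, by locating a ``bottom-left'' leaf on a ``left principal branch'' whose closed neighbourhood sits inside a single branch and is therefore totally ordered by ancestry (hence pairwise compatible). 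None of this requires your proposed induction along the sequence $\{G_k\}$: once the Burling-tree description is in hand, both factors are defined and verified in one shot.

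For properness you also need a concrete witness, which your final paragraph leaves unspecified. The paper shows every $C(T)$ is \emph{net}-free: any triangle in $C(T)$ has two vertices that are collinear in $T$ (since $G(T)$ itself is triangle-free), collinear vertices are compatible, and compatible vertices of a triangle cannot each carry a private pendant. Since the net is strongly chordal, this gives the missing separation.
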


We note that \autoref{thm:trivially.perfect.not.chig} follows immediately from \autoref{thm:Burling.main}. Our proof of \autoref{thm:Burling.main} is based on a characterization of Burling graphs which was proved by Pournajafi and Trotignon in \cite{pournajafi2023burling}, where they defined the hereditary class of derived graphs and proved that this class is the same as the class of Burling graphs.

Before we proceed, we need to introduce some terminology and notation for rooted trees. A \defin{rooted tree} is a pair $(T,r)$ where $r$ is a vertex of $T$. We call $r$ the \defin{root} of $T$. If there is no danger of ambiguity, we use the notation $T$ for the rooted tree $(T,r)$. Let $(T,r)$ be a rooted tree. The \defin{parent} of a vertex $v\in V(T)\setminus \{r\}$, denoted by $p(v)$, is the neighbor of $v$ which lies in the unique $(v,r)$-path in $T$. If $p(v)=u$, then we say that $v$ is a \defin{child} of $u$. Let $u,v\in V(T)$. We say that $u$ and $v$ are \defin{siblings} if $p(u)=p(v)$. We say that $u$ is an \defin{ancestor} of $v$ if $u$ lies in the in the unique $(v,r)$-path in $T$. The \defin{descendants} of a vertex $u$ are all the vertices which have $u$ as an ancestor. Finally, following \cite{pournajafi2023burling} we say that a \defin{branch} in $T$ is a path $v_{1},\ldots, v_{k}$ such that for each $i\in [k-1]$ the vertex $v_{i}$ is the parent of the vertex $v_{i+1}$; in this case we say that the branch \defin{starts} at $v_{1}$ and ends at $v_{k}$. A \defin{principal branch} is a branch which starts at the root and ends at a leaf of the tree $T$. Following \cite{pournajafi2023burling} we say that a \defin{Burling tree} is a $4$-tuple $(T,r,l,c)$ in which:
\begin{enumerate}[label=(\roman*)]
    \item $T$ is a rooted tree and $r$ is its root;
    \item $l$ is a function associating to each non-leaf vertex $v$ of $T$ 
    one child of $v$ which is called the last-born of $v$;
    \item $c$ is a function defined on the vertices of $T$. If $v$ is a non-last-born vertex of $T$ other than the root, then $c$ associates to $v$ the vertex set of a (possibly empty) branch in $T$ starting at the last-born of $p(v)$. If $v$ is a last-born vertex or the root of $T$, then we define $c(v)=\emptyset$. We call $c$ the \defin{choose 
    function} of $T$.
\end{enumerate}

Following \cite{pournajafi2023burling} we say that the \defin{oriented graph fully derived} from the Burling tree $(T,r,l,c)$, which we denote by $A(T)$, is the oriented graph whose vertex set is $V(T)$ and $uv \in E(A(T))$ if and only if $v$ is a vertex in $c(u)$. The \defin{graph fully derived} from the Burling tree $(T,r,l,c)$, which we denote by $G(T)$, is the underlying undirected graph of $A(T)$. A graph (respectively oriented graph) is \defin{derived} from the Burling tree $(T,r,l,c)$ if it is an induced subgraph of $G(T)$ (respectively $A(T)$). A graph $G$ (respectively oriented graph $A$) is called a \defin{derived graph} (respectively \defin{oriented derived graph}) if there exists a Burling tree $(T,r,l,c)$ such that $G$ (respectively $A$) is derived from $(T,r,l,c)$.

\begin{theorem}[Pournajafi and Trotignon {\cite[Theorem 4.9]{pournajafi2023burling}}]
    \label{thm:Burling.same.as.derived}
    The class of derived graphs is the same as the class of Burling graphs.
\end{theorem}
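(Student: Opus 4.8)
The plan is to prove the two inclusions separately, in both cases by passing through an explicit recursive description of the box families $\mathcal{B}_k$ that mirrors the tree data $(T,r,l,c)$. The conceptual point is that Burling's recursion builds the boxes of $\mathcal{B}_{k+1}$ out of nested copies of $\mathcal{B}_{k}$, and this nesting is exactly what the parent--child relation of $T$, the last-born function $l$, and the choose function $c$ are designed to record. The whole argument is bookkeeping that makes this correspondence precise, and I expect it to be cleanest to carry it out for the oriented objects $A(T)$ first and then pass to underlying undirected graphs.

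For the inclusion ``every derived graph is a Burling graph'', I would realize $G(T)$ geometrically. Given a Burling tree $(T,r,l,c)$, I would assign to each vertex $v \in V(T)$ an axis-aligned box $B_v = I_1(v) \times I_2(v) \times I_3(v)$ in $\mathbb{R}^3$ by induction on the height of $v$. The depth of $v$ in $T$ would control the first coordinate, so that only vertices in an ancestor--descendant relationship can meet there; the second coordinate would encode the branch structure, so that $B_u$ and $B_v$ overlap in it precisely when $v$ lies on the branch $c(u)$ starting at the last-born of $p(u)$; and the third coordinate would separate siblings according to the last-born order. Checking that $B_u \cap B_v \neq \emptyset$ if and only if $v \in c(u)$ or $u \in c(v)$ then reduces to three one-dimensional interval conditions, one per axis, and the box intersection graph is the intersection of the three corresponding interval graphs. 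A bound on the number of boxes depending only on $|V(T)|$ places this realization inside some $G_k$, exhibiting $G(T)$, and hence every induced subgraph of it, as a Burling graph.

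Conversely, for ``every Burling graph is a derived graph'', I would argue by induction on $k$ that the intersection graph $G_k$ of $\mathcal{B}_k$ is derived from a Burling tree $T_k$, reading the tree off the recursive construction: each box contributes a vertex, the recursive copying of $\mathcal{B}_{k-1}$ inside $\mathcal{B}_k$ supplies the parent--child edges, the most recently introduced box of each recursive copy becomes the last-born, and the set of boxes that a given box overlaps becomes its choose-branch. The inductive step must verify that the adjacencies created at level $k$ are exactly those prescribed by a legitimate choose function (a branch starting at the last-born of the parent) and that no forbidden adjacencies appear; the triangle-freeness supplied by \autoref{thm:Burling} is a useful consistency check. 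Since Burling graphs are by definition induced subgraphs of the $G_k$ and derived graphs are closed under taking induced subgraphs, this yields the containment $\mathcal{B} \subseteq \{\text{derived graphs}\}$.

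The main obstacle in both directions is the same: matching adjacency exactly, with no spurious and no missing edges. The definition of $c(v)$ as a branch starting at the last-born of $p(v)$ is delicate, and the crux is to show that box overlaps along the ``branch'' coordinate correspond to exactly these downward paths. I expect the cleanest route is to prove the equivalence first at the level of the oriented objects --- realizing $A(T)$ on one side and reading off an orientation of $G_k$ on the other --- since recording which endpoint is the tail and which is the head removes the ambiguity between the symmetric roles of $u$ and $v$ and keeps the induction self-contained; passing to underlying undirected graphs at the end is then immediate.
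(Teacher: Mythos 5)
First, a point of comparison: the paper does not prove \autoref{thm:Burling.same.as.derived} at all --- it is imported verbatim from Pournajafi and Trotignon \cite[Theorem 4.9]{pournajafi2023burling}, so there is no in-paper argument to measure your proposal against; it has to be judged as a self-contained reproof of their result.

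Judged that way, there is a genuine gap in each direction. In the direction ``derived $\Rightarrow$ Burling'', your closing step --- that ``a bound on the number of boxes depending only on $|V(T)|$ places this realization inside some $G_k$'' --- is not a valid inference: Burling graphs are by definition induced subgraphs of the intersection graphs of the \emph{specific} recursive families $\mathcal{B}_k$, not arbitrary (even triangle-free) box intersection graphs in $\mathbb{R}^3$, and having few boxes does nothing to embed your representation into Burling's sequence. You would need to build your boxes so that they occur inside (a copy of) $\mathcal{B}_k$, i.e.\ re-run Burling's recursion in parallel with the tree; that is the actual content of the theorem, not an afterthought. Moreover, the coordinate scheme you sketch is internally inconsistent with the adjacency rule of $G(T)$: you let the first coordinate (depth) ensure that only vertices in ancestor--descendant relation can meet, but the out-neighbors of $u$ are the vertices of $c(u)$, a branch starting at the last-born of $p(u)$; these are descendants of a \emph{sibling} of $u$, hence incomparable to $u$ in the tree order, so your scheme would suppress exactly the edges you must realize. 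In the converse direction you defer the crux: one must show that the adjacencies of $G_k$ can be oriented so that each vertex's out-neighborhood is precisely the vertex set of a branch starting at the last-born of its parent --- that the overlap sets have this very special chain form is the heart of the matter, and triangle-freeness as a ``consistency check'' does not substitute for it. For calibration, Pournajafi and Trotignon's actual proof does not go box-to-tree directly but passes through intermediate characterizations (abstract Burling structures and frame/segment representations), which is a sign that the bookkeeping you postpone is where the real work lives.
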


By the above, in order to prove \autoref{thm:Burling.main}, it suffices to prove the following:

\begin{theorem}
    \label{thm:Burling.derived.in.intersection.}
    The class of derived graphs is a subclass of the graph-intersection of the class of trivially perfect graphs with a proper subclass of the class of strongly chordal graphs.
\end{theorem}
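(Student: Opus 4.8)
The plan is to prove the statement at the level of Burling trees and then pass to all derived graphs by heredity. By \autoref{thm:Burling.same.as.derived} it suffices to handle derived graphs, and these are by definition the induced subgraphs of the graphs $G(T)$ fully derived from Burling trees. Since the class of trivially perfect graphs and any hereditary subclass of strongly chordal graphs are closed under induced subgraphs, and since $(P\cap Q)[X]=P[X]\cap Q[X]$, it is enough to produce, for each Burling tree $(T,r,l,c)$, two graphs $P$ and $Q$ on vertex set $V(T)$ with $G(T)=P\cap Q$, where $P$ is trivially perfect and $Q$ lies in a fixed proper subclass of the strongly chordal graphs.

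The starting point is a combinatorial reading of the edges of $A(T)$. An oriented edge $u\to v$ means $v\in c(u)$, and unwinding the definition of the choose function this splits as the conjunction of a \emph{coarse} condition and a \emph{fine} condition. The coarse condition is that $\mathrm{lca}_T(u,v)=p(u)$ (so $u$ is a child of the least common ancestor) and that the child of $p(u)$ whose subtree contains $v$ is the last-born $l(p(u))$; equivalently, $u$ is a non-last-born child of $a:=\mathrm{lca}_T(u,v)$ and $v\in \mathrm{subtree}_T(l(a))$. The fine condition is that $v$ lies on the single prescribed branch $c(u)$ issuing from $l(a)$, rather than merely somewhere in that subtree. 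I will let $P$ enforce the coarse condition and $Q$ enforce the fine one.

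For $P$ I would take the comparability graph of the rooted tree $F$ on $V(T)$ obtained from $T$ by, at every vertex $a$, stacking the non-last-born children of $a$ into a chain placed \emph{above} (as ancestors of) the whole subtree $\mathrm{subtree}_T(l(a))$ of the last-born child, while each non-last-born child's own subtree is rearranged recursively and hangs below it. Since the trivially perfect graphs are exactly the comparability graphs of rooted forests, $P$ is trivially perfect by construction. The key point is that every edge of $G(T)$ is of the coarse form above, so its non-last-born endpoint $u$ becomes an $F$-ancestor of its other endpoint $v\in\mathrm{subtree}_T(l(a))$; hence $E(G(T))\subseteq E(P)$. The surplus edges of $P$ are exactly the pairs that are comparable in $F$ but not edges of $G(T)$: pairs of non-last-born siblings in a common chain, and pairs $(u,v')$ with $v'\in\mathrm{subtree}_T(l(p(u)))\setminus c(u)$; a short check using parities of depths and the fact that branches live inside last-born subtrees confirms all of these are genuine non-edges of $G(T)$, to be cancelled later by $Q$. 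For $Q$ I would realize the fine condition as a \emph{rooted directed path graph}, i.e.\ the intersection graph of a family of monotone (rootward) paths in an auxiliary rooted tree built from $T$; such graphs are well known to be strongly chordal and to form a proper subclass of the strongly chordal graphs, which certifies the ``proper subclass'' clause. Concretely I would assign to each vertex a monotone path so that two paths meet precisely when one vertex lies on the other's branch (so $E(G(T))\subseteq E(Q)$) and so that the surplus pairs of $P$ correspond to disjoint paths.

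The heart of the argument — and the step I expect to be the main obstacle — is fitting the two representations together so that $E(P)\cap E(Q)=E(G(T))$ exactly. Since both factors over-count ($P$ by comparabilities, $Q$ by path intersections), tightness is delicate and must be arranged simultaneously: one has to choose the auxiliary tree and the monotone path of each vertex so that (a) every edge of $G(T)$ is captured in $Q$ as a genuine path intersection, respecting the fact that the branch $c(x)$ of a vertex sits in a \emph{sibling} subtree and so does not form a rootward path together with $x$ in $T$ itself, forcing a careful redesign of the host tree; and (b) each surplus comparability of $F$ — a non-last-born-sibling pair or an off-branch pair — is realized by disjoint paths. Verifying $E(P)\cap E(Q)=E(G(T))$ then reduces to a case analysis on $\mathrm{lca}_T(u,v)$, on which endpoint is the last-born-side vertex, and on branch membership, together with a proof that the chosen family really consists of directed paths in a rooted tree, so that $Q$ is genuinely a rooted directed path graph and hence strongly chordal in the required proper subclass. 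Once this is established for $G(T)$, heredity yields the statement for all derived graphs, completing the proof.
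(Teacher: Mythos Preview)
Your overall strategy matches the paper's: express each fully derived graph $G(T)$ as $P\cap Q$ with $P$ trivially perfect and $Q$ lying in a fixed proper hereditary subclass of the strongly chordal graphs, then close under induced subgraphs. Your construction of $P$ as the comparability graph of a rooted forest obtained by stacking non-last-born children above the last-born subtree is in the same spirit as the paper's graph $I(T)$, and your argument that $E(G(T))\subseteq E(P)$ is fine.

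The gap is on the $Q$ side. You name a target class (rooted directed path graphs) but do not construct any representation, and you explicitly flag ``fitting the two representations together so that $E(P)\cap E(Q)=E(G(T))$ exactly'' as the main unsolved obstacle. That is precisely the content of the theorem, so at present the proposal is a plan rather than a proof. In particular, you have not shown how to choose the host tree and monotone paths so that (a) every edge of $G(T)$ appears as a path intersection, and (b) every surplus $F$-comparability is killed; nor have you verified that the resulting family really yields a rooted directed path graph.

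The paper avoids these difficulties by choosing $P$ and $Q$ so that the intersection identity is immediate. It sets $C(T)$ to be $G(T)$ together with all edges needed to make each principal branch of $T$ a clique, and $I(T)$ to be $G(T)$ together with all edges from each vertex to its siblings and to the descendants of its last-born sibling. Then $G(T)=C(T)\cap I(T)$ is a one-line observation: the only added edges of $C(T)$ join $T$-comparable pairs, and the only added edges of $I(T)$ join $T$-incomparable pairs, so no added edge survives the intersection. The substantive work then becomes: (i) $I(T)$ is trivially perfect, proved via Wolk's universal-vertex criterion using the ``top-left'' vertex of Pournajafi--Trotignon; (ii) $C(T)$ is strongly chordal, proved via Farber's simple-vertex criterion and a ``bottom-left'' vertex (the leaf of a left principal branch), using that $T$-comparable vertices have nested closed neighborhoods in $C(T)$; and (iii) the hereditary closure $\mathcal{C}$ of $\{C(T)\}$ is a \emph{proper} subclass of the strongly chordal graphs, witnessed by the net being strongly chordal but excluded from $\mathcal{C}$. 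If you want to salvage your approach, the cleanest route is to replace your unspecified rooted-directed-path construction by this $C(T)$ and verify $P\cap C(T)=G(T)$ for your $P$; alternatively, adopt the paper's $I(T)$ for $P$ as well, at which point the intersection identity becomes trivial.
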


Let $(T,r,l,c)$ be a Burling tree. We denote by $C(T)$ the graph which we obtain from $G(T)$ by adding the necessary edges in order to make the vertex set of every principal branch of $T$ a clique. We also denote by $I(T)$ the graph that we obtain from $G(T)$ by adding edges so that every vertex is adjacent to all of its siblings, and to all the descendants of its last-born sibling.

\begin{observation}
\label{obs:derived.in.the.intersection}
    Let $(T,r,l,c)$ be a Burling tree. Then $G(T) = C(T)\cap I(T)$. 
\end{observation}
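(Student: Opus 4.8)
The plan is to reduce the whole statement to a single dichotomy on pairs of vertices of the rooted tree $(T,r)$. Observe first that, by construction, both $C(T)$ and $I(T)$ are obtained from $G(T)$ by only \emph{adding} edges; hence all three graphs share the vertex set $V(T)$, and $E(G(T)) \subseteq E(C(T))$ and $E(G(T)) \subseteq E(I(T))$. This already gives $E(G(T)) \subseteq E(C(T)) \cap E(I(T))$, so the entire content of the observation is the reverse inclusion $E(C(T)) \cap E(I(T)) \subseteq E(G(T))$. To prove it, I would call an unordered pair $\{u,v\}$ \emph{comparable} if one of $u,v$ is an ancestor of the other, and \emph{incomparable} otherwise (i.e. $u$ and $v$ lie in distinct subtrees hanging off their nearest common ancestor), and then show that $C(T)$ and $I(T)$ add edges of opposite types.

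The three steps I would carry out are as follows. First, every edge of $G(T)$ joins an incomparable pair: if $uv \in E(G(T))$ then, say, $v \in c(u)$, and by the definition of the choose function $c(u)$ is a branch contained in the subtree rooted at the last-born child $l(p(u))$ of $u$'s parent; since $u$ is a non-last-born child of $p(u)$, this last-born child is a sibling of $u$ distinct from $u$, so $v$ lies in a subtree disjoint from the root-to-$u$ path and $u,v$ are incomparable. Second, the edges added in passing from $G(T)$ to $C(T)$ are exactly the comparable pairs: a principal branch is a root-to-leaf path, hence a chain in the ancestor order, so making each principal branch a clique adds precisely the pairs $uv$ with one of $u,v$ an ancestor of the other (conversely every comparable pair extends to a common principal branch by prolonging the deeper vertex down to a leaf). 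Third, the edges added in passing from $G(T)$ to $I(T)$ again join incomparable pairs: two siblings share the parent $p(u)$ and so are incomparable, while a vertex $u$ and a descendant of its last-born sibling $l(p(u)) \neq u$ lie in distinct subtrees hanging off $p(u)$ and are likewise incomparable.

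Combining the three steps closes the argument with no further computation. Every edge of $I(T)$ — whether inherited from $G(T)$ or newly added — joins an incomparable pair, whereas every edge of $C(T)$ that is not already in $G(T)$ joins a comparable pair; these two types are mutually exclusive, so any edge lying in both $C(T)$ and $I(T)$ must already belong to $G(T)$. Together with the trivial forward inclusion this yields $E(C(T)) \cap E(I(T)) = E(G(T))$, and hence $G(T) = C(T) \cap I(T)$. The only delicate point — the ``main obstacle'', such as it is — is purely definitional bookkeeping: one must fix the correct reading of ``last-born sibling'' (a last-born vertex has \emph{no} last-born sibling and so contributes only its sibling edges to $I(T)$) and confirm that $c(u)$ lives strictly inside a sibling's subtree, so that no edge of $G(T)$ is ever a comparable pair. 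Once the comparable/incomparable dichotomy is in place, the conclusion is immediate.
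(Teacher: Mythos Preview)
Your argument is correct. The paper states this result as an observation with no proof, evidently regarding it as immediate from the definitions of $C(T)$ and $I(T)$; your comparable/incomparable dichotomy is exactly the right way to make that immediacy precise, and all three steps check out against the definitions (in particular your reading that $c(u)$ lies entirely in the subtree rooted at the sibling $l(p(u)) \neq u$, and that a last-born vertex has no last-born sibling, matches the paper's conventions).
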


In what follows in this section, we denote by $\mathcal{C}$ (respectively by $\mathcal{I}$) the closure under induced subgraphs of the class $\{C(T):(T,r,l,c) \text{ is a Burling tree}\}$ (respectively the closure under induced subgraphs of the class $\{I(T):(T,r,l,c) \text{ is a Burling tree}\}$). We call the class $\mathcal{C}$ the class of \defin{Burling strongly chordal graphs}.

\begin{observation}
\label{obs:derived.is.equal.to.intersection}
    The class of derived graphs is contained in the class $\mathcal{C}\gcap \mathcal{I}$.
\end{observation}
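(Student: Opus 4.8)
The plan is to unwind the definitions and reduce everything to \autoref{obs:derived.in.the.intersection}, which already supplies the crucial factorization $G(T) = C(T)\cap I(T)$ for a single Burling tree. The only work that remains is to pass from this factorization of $G(T)$ itself to a factorization of an arbitrary induced subgraph of $G(T)$, and to verify that the two factors land in $\mathcal{C}$ and $\mathcal{I}$ respectively.

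First I would take an arbitrary derived graph $G$. By definition there is a Burling tree $(T,r,l,c)$ and a subset $X\subseteq V(T)$ with $G = G(T)[X]$. The natural candidate factorization is $G_{1} := C(T)[X]$ and $G_{2} := I(T)[X]$. Since $C(T)$ belongs to the generating set of $\mathcal{C}$ and $\mathcal{C}$ is closed under induced subgraphs, we have $G_{1}\in \mathcal{C}$; symmetrically $G_{2}\in \mathcal{I}$. It then remains to check that $G = G_{1}\cap G_{2}$.

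The key (and essentially only) computation is that graph-intersection commutes with restriction to a common vertex set: for graphs $H_{1},H_{2}$ on the same vertex set and any $X$, one has $(H_{1}\cap H_{2})[X] = H_{1}[X]\cap H_{2}[X]$. This is immediate from the definitions, since all three graphs have vertex set $X$, and an edge $uv$ with $u,v\in X$ lies in $(H_{1}\cap H_{2})[X]$ iff $uv\in E(H_{1})\cap E(H_{2})$ iff $uv$ lies in both $H_{1}[X]$ and $H_{2}[X]$. Applying this with $H_{1}=C(T)$, $H_{2}=I(T)$, and invoking \autoref{obs:derived.in.the.intersection} to rewrite $C(T)\cap I(T) = G(T)$, I get $G_{1}\cap G_{2} = C(T)[X]\cap I(T)[X] = (C(T)\cap I(T))[X] = G(T)[X] = G$. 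Hence $G = G_{1}\cap G_{2}$ with $G_{1}\in\mathcal{C}$ and $G_{2}\in\mathcal{I}$, so $G\in \mathcal{C}\gcap\mathcal{I}$, as required.

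I do not expect any genuine obstacle here: the content of the statement is entirely carried by \autoref{obs:derived.in.the.intersection}, and this observation merely bundles it together with the (trivial) fact that induced subgraphs interact well with graph-intersection and that $\mathcal{C},\mathcal{I}$ are defined as induced-subgraph closures. The one point worth stating explicitly, to avoid sloppiness, is that $G(T)$, $C(T)$, and $I(T)$ share the common vertex set $V(T)$, so that restricting each of them to $X$ and intersecting is well defined and yields exactly $G(T)[X]$.
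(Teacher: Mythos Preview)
Your proposal is correct and matches the paper's approach: the paper states this observation without proof, treating it as immediate from \autoref{obs:derived.in.the.intersection} together with the definitions of $\mathcal{C}$ and $\mathcal{I}$ as induced-subgraph closures. Your write-up simply makes explicit the routine verification that $(C(T)\cap I(T))[X] = C(T)[X]\cap I(T)[X]$, which is exactly the intended content.
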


By \autoref{obs:derived.is.equal.to.intersection}, in order to prove \autoref{thm:Burling.derived.in.intersection.}, it suffices to prove that $\mathcal{C}$ is a proper subclass of the class of strongly chordal graphs, and that $\mathcal{I}$ is the class of trivially perfect graphs.

To this end, we need to introduce some terminology in order to state a characterization of strongly chordal graphs. Let $G$ be a graph and let $u,v\in V(G)$. Following Farber \cite{farber1983characterizations}, we say $u$ and $v$ are \defin{compatible} if $N[u]\subseteq N[v]$ or $N[v]\subseteq N[u]$, and that a vertex $v\in V(G)$ is \defin{simple} if the vertices in $N[v]$ are pairwise compatible. Farber \cite{farber1983characterizations} gave the following characterization of strongly chordal graphs.

\begin{theorem}[Farber {\cite[Theorem 3.3]{farber1983characterizations}}]
\label{thm:strongly.chordal.characterization.simple}
    A graph $G$ is strongly chordal if and only if every induced subgraph of $G$ has a simple vertex.
\end{theorem}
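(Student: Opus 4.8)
The plan is to prove both implications, relying throughout on two easy facts. First, \emph{a simple vertex is simplicial}: if $u,w$ are distinct neighbors of a simple vertex $v$, then $u,w\in N[v]$ are compatible, so say $N[u]\subseteq N[w]$, and then $u\in N[u]\subseteq N[w]$ forces $uw\in E(G)$; hence $N(v)$ is a clique. Second, \emph{strong chordality is hereditary}: a hole of $G[S]$ is a hole of $G$, and an even cycle of length at least six in $G[S]$ is such a cycle in $G$ whose guaranteed odd chord has both endpoints in $S$ and therefore survives in the induced subgraph $G[S]$. The first fact drives the backward direction, while the second lets me reduce the forward direction to exhibiting a single simple vertex.

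\textbf{Forward direction.} Assuming $G$ is strongly chordal, by heredity it suffices to produce one simple vertex. As $G$ is chordal it has a simplicial vertex, and I would take a simplicial vertex $v$ with $N[v]$ inclusion-minimal and argue it is simple. If not, there are distinct $a,b\in N(v)$ together with $x\in N[a]\setminus N[b]$ and $y\in N[b]\setminus N[a]$. Since $N(v)$ is a clique, one checks that $a,b,x,y$ are distinct, that $x,y\notin N[v]$, and that $xa,yb\in E(G)$ while $xb,ya,xv,yv\notin E(G)$. If $xy\in E(G)$ then $a,x,y,b$ induce a $4$-cycle, contradicting chordality; hence $xy\notin E(G)$, and the remaining configuration (the triangle on $a,v,b$ with the two non-adjacent private attachments $x,y$) must be closed up into an even cycle of length at least six all of whose chords are even, contradicting strong chordality. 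The minimality of $N[v]$ is used precisely to locate the vertices completing this cycle.

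\textbf{Backward direction.} Assume every induced subgraph of $G$ has a simple vertex. Because simple vertices are simplicial, every induced subgraph has a simplicial vertex, so $G$ is chordal by the classical characterization of Dirac. To see that $G$ is in fact strongly chordal, I rule out the $k$-sun (a clique $c_1,\dots,c_k$ and an independent set $s_1,\dots,s_k$ with $s_i$ adjacent to exactly $c_i$ and $c_{i+1}$, indices modulo $k$, $k\ge 3$) as an induced subgraph: for each $s_i$ the neighbors $c_i,c_{i+1}$ are incompatible, witnessed by $s_{i-1}\in N[c_i]\setminus N[c_{i+1}]$ and $s_{i+1}\in N[c_{i+1}]\setminus N[c_i]$, and for each $c_i$ the neighbors $s_{i-1},s_i$ are incompatible; hence a sun contains no simple vertex. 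Thus the hypothesis forces $G$ to be sun-free, and it remains to show that a chordal sun-free graph is strongly chordal. For this I would take a shortest even cycle of length at least six with no odd chord, note that it must have chords (otherwise it is a hole) and that all of them are even, and show that the way these even chords triangulate the cycle forces an induced sun, the desired contradiction.

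\textbf{Main obstacle.} Both directions reduce to the same extremal-configuration analysis, and that is where the real work lies: in the forward direction, converting an incompatible pair of neighbors of a minimal simplicial vertex into an odd-chord-free even cycle; and in the backward direction, extracting an induced sun from a shortest odd-chord-free even cycle in a chordal graph. In each case chordality eliminates holes while the parity condition on the chords must be leveraged to force the forbidden structure, and I expect the detailed casework on which chords are present to dominate the proof.
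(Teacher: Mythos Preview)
The paper does not prove this theorem at all: it is stated with an explicit citation to Farber~\cite[Theorem 3.3]{farber1983characterizations} and then used as a black box in the proof of \autoref{lem:burling.is.strongly.chordal}. There is therefore no ``paper's own proof'' to compare your proposal against.

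As for the proposal itself, your outline follows the standard route (essentially Farber's original argument), and the high-level structure is sound: simple $\Rightarrow$ simplicial gives chordality in the backward direction, heredity reduces the forward direction to finding one simple vertex, and the equivalence with sun-freeness is the natural intermediate step. However, you have correctly identified that the proposal is not a proof but a plan: both of the places you flag as the ``main obstacle'' are genuine gaps. In the forward direction you have not shown how the minimality of $N[v]$ actually produces the odd-chord-free even cycle from the pair $x,y$; and in the backward direction the claim that a chordal sun-free graph is strongly chordal is itself a nontrivial theorem (also due to Farber) whose proof you have only gestured at. So what you have written is an accurate roadmap of Farber's proof rather than a self-contained argument.
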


We begin with two lemmas that we need in order to prove that $\mathcal{C}$ is a subclass of the class of strongly chordal graphs.

\begin{lemma}
\label{lem:Burling.collinear.vertices.are.compatible.in.C(T)}
   Let $(T,r,l,c)$ be a Burling tree and let $u$ be an ancestor of $v$ in $T$. Then $u$ and $v$ are compatible in $C(T)$, in particular $N_{C(T)}[v] \subseteq N_{C(T)}[u]$. 
\end{lemma}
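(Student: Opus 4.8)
The plan is to reduce the inclusion $N_{C(T)}[v]\subseteq N_{C(T)}[u]$ to a short case analysis driven by a clean description of the edges of $C(T)$. First I would record the useful observation that, by the definition of $C(T)$, a pair $\{x,y\}$ is an edge of $C(T)$ if and only if either \textbf{(a)} one of $x,y$ is an ancestor of the other, or \textbf{(b)} $y\in c(x)$ or $x\in c(y)$. Type~(a) captures exactly the edges added to turn each principal branch into a clique, since two vertices lie on a common principal branch precisely when one is an ancestor of the other (the root-to-$y$ path through $x$ extends to a leaf, and conversely a root-to-leaf path totally orders its vertices by the ancestor relation); type~(b) captures the edges of $G(T)$ coming from the arcs of $A(T)$. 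The case $u=v$ is trivial, so I would assume $u$ is a proper ancestor of $v$; then $uv$ is a type-(a) edge, which already handles $w=v$ and shows $v\in N_{C(T)}[u]$.

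Now fix $w\in N_{C(T)}[v]$ with $w\neq v$, so $wv\in E(C(T))$, and I would show $w\in N_{C(T)}[u]$ by distinguishing how $wv$ arises. If $wv$ is of type~(a), then $w$ is comparable to $v$ in the ancestor order; combining this with $u$ being an ancestor of $v$ and using that the ancestors of $v$ form a chain (together with transitivity when $v$ is an ancestor of $w$), one sees that $u$ and $w$ are comparable, so $uw$ is again a type-(a) edge. If $wv$ is of type~(b) with $w\in c(v)$, then $w$ lies on the branch $c(v)$ starting at the last-born $l(p(v))$ of $p(v)$; hence $w$ is a descendant of $p(v)$, so $p(v)$, and therefore every proper ancestor $u$ of $v$, is an ancestor of $w$, giving a type-(a) edge $uw$.

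The remaining and most delicate case is $v\in c(w)$. Here $c(w)$ is the vertex set of a branch $b_1,\dots,b_m$ starting at $b_1=l(p(w))$, with $v=b_j$ for some $j$. The ancestors of $v$ first climb this branch up to $b_1$ and then pass to $p(w)$ and above; concretely, a proper ancestor $u$ of $v$ is either one of $b_1,\dots,b_{j-1}$, in which case $u\in c(w)$ and so $uw$ is a type-(b) edge, or else $u$ equals $p(w)$ or an ancestor of $p(w)$, in which case $u$ is an ancestor of $w$ and $uw$ is a type-(a) edge. Either way $w\in N_{C(T)}[u]$, which completes the inclusion $N_{C(T)}[v]\subseteq N_{C(T)}[u]$ and hence shows $u$ and $v$ are compatible. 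I expect this final case to be the main obstacle, since it is the one place where one must use that the image of the choose function is itself a branch, so that the ancestors of $v$ lying inside $c(w)$ remain inside $c(w)$ until they reach its starting vertex.
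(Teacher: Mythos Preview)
Your proof is correct and follows essentially the same approach as the paper. The paper partitions $N_{C(T)}[v]$ into three sets $N_1$ (vertices comparable to $v$ in the ancestor order, together with $v$), $N_2=\{w:v\in c(w)\}$, and $N_3=\{w:w\in c(v)\}$, and then argues each lies in $N_{C(T)}[u]$ by exactly the same reasoning you give for your type~(a) edges, your case $v\in c(w)$, and your case $w\in c(v)$, respectively; your edge-type case split is just a relabeling of this partition.
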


\begin{proof}[Proof of \autoref{lem:Burling.collinear.vertices.are.compatible.in.C(T)}]
    The set $N_{C(T)}[v]$ can be partitioned in the following three sets:
    \begin{itemize}[topsep=0.2cm,itemsep=-0.4cm,partopsep=0.5cm,parsep=0.5cm]
        \item The set $N_{1}$ which contains $v$, the ancestors of $v$, and the descendants of $v$;
        \item the set $N_{2} = \{w\in V(T): wv\in E(A(T))\}$; and
        \item the set $N_{3} = \{w\in V(T): vw\in E(A(T))\}$.
    \end{itemize}

    Since the vertex set of every branch of $T$ is a clique in $C(T)$, it follows that $N_{1}\subseteq N_{C(T)}[u]$. We claim that $N_{2}\subseteq N_{C(T)}[u]$. Indeed, let $w\in N_{2}$. Then $v$ lies in a branch of $T$ which starts at $p(w)$. Since $u$ is an ancestor of $v$, we deduce that either $u$ lies in the $(r,p(w))$-path in $T$, or $u$ lies in the $(p(w),v)$-path in $T$. In both cases we have that $u$ is adjacent with $w$ in $C(T)$. Hence, $N_{2}\subseteq N_{C(T)}[u]$. We claim that $N_{3}\subseteq N_{C(T)}[u]$. Indeed, let $w\in N_{3}$. Then $w\in c(v)$. Since $u$ is an ancestor of $v$, we have that $u$ is either equal to $p(v)$ or an ancestor of $p(v)$, and thus $w$ is a descendant of $u$. Hence, $w \in N_{C(T)}[u]$. By the above it follows that $N_{C(T)}[v] \subseteq N_{C(T)}[u]$. This completes the proof of \autoref{lem:Burling.collinear.vertices.are.compatible.in.C(T)}.
\end{proof}

Let $(T,r,l,c)$ be a Burling tree. A \defin{left principal branch} is a principal branch $B$ such that for every $u\in V(T)\setminus V(B)$ we have $c(u)\cap V(B)=\emptyset$.

\begin{lemma}
\label{lem:Burling.c(T).left.pricipal.branch}
    Let $(T,r,l,c)$ be a Burling tree. Then $T$ has a left principal branch.
\end{lemma}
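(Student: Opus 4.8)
The plan is to construct a principal branch explicitly by a greedy top-down procedure and then verify directly that it is left. First I would build a branch $B = b_0 b_1 \cdots b_m$ as follows: set $b_0 = r$; having reached an internal vertex $b_i$, if $b_i$ has more than one child, let $b_{i+1}$ be any non-last-born child of $b_i$ (one exists, since only one child of $b_i$ is the last-born $l(b_i)$), and if $b_i$ has a unique child, let $b_{i+1}$ be that child (which is then $l(b_i)$). Since $T$ is finite and the procedure strictly descends, it terminates at a leaf $b_m$, so $B$ is a principal branch. The guiding intuition is that $c(u)$ always starts at the \emph{last-born} $l(p(u))$, so choosing $B$ to avoid last-born children whenever there is a genuine choice should keep other vertices from choosing into $B$.

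The key structural observation I would isolate is: for any vertex $u$ with $c(u) \neq \emptyset$ and any $x \in c(u)$, the vertex $l(p(u))$ is an ancestor of $x$ (allowing equality), because $c(u)$ is by definition a branch starting at $l(p(u))$ and branches descend. Consequently, if $c(u) \cap V(B) \neq \emptyset$, pick $x = b_j \in c(u) \cap V(B)$; since the ancestors of $b_j$ in $T$ are exactly $b_0, \dots, b_j$ (as $B$ is a branch through the root), it follows that $l(p(u)) \in V(B)$. This reduces the whole problem to understanding when a last-born vertex can lie on $B$.

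From here the verification is short. Suppose for contradiction that $B$ is not left, so there is $u \in V(T) \setminus V(B)$ with $c(u) \cap V(B) \neq \emptyset$; then $c(u) \neq \emptyset$, so $u$ is a non-last-born non-root vertex, and by the observation $l(p(u)) \in V(B)$, say $l(p(u)) = b_t$. As $b_t$ is a child of $p(u)$ we have $t \geq 1$ and $p(u) = b_{t-1}$, with $b_t = l(b_{t-1})$. But $u$ is a non-last-born child of $b_{t-1}$, hence $u \neq l(b_{t-1}) = b_t$, so $b_{t-1}$ has at least two children. By construction, whenever $b_{t-1}$ has more than one child we chose $b_t \neq l(b_{t-1})$, contradicting $b_t = l(b_{t-1})$. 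Hence no such $u$ exists and $B$ is a left principal branch.

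I expect the only genuinely delicate point to be getting the greedy rule right: the naive choice of always following the last-born child fails precisely because then $c(u)$ for a non-last-born sibling $u$ of $b_{i+1}$ starts at $b_{i+1} \in B$, immediately violating the condition; the fix is to descend into a non-last-born child whenever a choice is available, and the ancestor observation above is exactly what makes this local rule suffice globally. The remaining steps are routine tree bookkeeping.
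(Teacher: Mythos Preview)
Your proof is correct and follows essentially the same approach as the paper: both construct the branch greedily by descending into a non-last-born child whenever one is available, and both derive a contradiction by observing that if some $u \notin V(B)$ has $c(u) \cap V(B) \neq \emptyset$ then $l(p(u)) \in V(B)$, forcing the construction to have chosen a last-born at a step where a non-last-born was available. Your write-up is simply more explicit than the paper's (you spell out the ancestor argument for $l(p(u)) \in V(B)$ and the deduction $p(u) = b_{t-1}$), but there is no substantive difference.
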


\begin{proof}[Proof of \autoref{lem:Burling.c(T).left.pricipal.branch}]
    We construct a left principal branch of $T$ inductively as follows: Let $v_{0}:=r$. Let $i\geq 0$ and suppose that we have constructed a path $v_{0}, \ldots, v_{i}$. If $v_{i}$ is a leaf, then we are done. Otherwise, $v_{i}$ has at least one child. If $v_{i}$ has at least two children, then we let $v_{i+1}$ be a non-last-born of $v_{i}$, otherwise we let $v_{i+1}:=l(v_{i})$. 
    
    Let $B:= v_{0}, \ldots, v_{k}$ be a principal branch that has been created by the above process. We claim that $B$ is a left principal branch. Let us suppose towards a contradiction that there exists $u\in V(T)\setminus V(B)$ and $v\in V(B)$, such that $v \in c(u)$. Then $l(p(u)) \in V(B)$, but $p(u)$ has a non-last born child, namely $u$; this contradicts the construction of $B$. This completes the proof of \autoref{lem:Burling.c(T).left.pricipal.branch}.
\end{proof}

Let $(T,r,l,c)$ be a Burling tree, and let $G$ be a graph which is derived from $(T,r,l,c)$. We may assume that all leaves of $T$ are in $G$. We call a vertex $v$ of $G$ a \defin{bottom-left} vertex of $G$ if $v$ lies in a left principal branch $B$ of $T$, and $v$ is a leaf of $T$.

\begin{observation}
\label{obs:bottom-left.vertex.exists}
   Let $G$ be a derived graph. Then $G$ has at least one bottom-left vertex.
\end{observation}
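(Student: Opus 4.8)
The plan is to combine the existence of a left principal branch, guaranteed by \autoref{lem:Burling.c(T).left.pricipal.branch}, with the standing assumption recorded just before the statement that every leaf of $T$ belongs to $G$. Since $G$ is derived from a Burling tree $(T,r,l,c)$, by \autoref{lem:Burling.c(T).left.pricipal.branch} the tree $T$ has a left principal branch $B$. By the definition of a principal branch, $B$ starts at the root $r$ and ends at a leaf of $T$; let $w$ denote this leaf.

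First I would observe that $w\in V(G)$, which is immediate from the assumption that all leaves of $T$ lie in $G$. Then I would simply check that $w$ satisfies the definition of a bottom-left vertex: it lies in the left principal branch $B$ (being its terminal vertex), and it is a leaf of $T$. Hence $w$ is a bottom-left vertex of $G$, and in particular $G$ has at least one bottom-left vertex.

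There is no real obstacle here; the content has already been packaged into \autoref{lem:Burling.c(T).left.pricipal.branch}, and the remaining step is purely definitional. The one point worth making explicit is that a principal branch, by definition, terminates at a leaf of $T$, so the endpoint of the left principal branch supplied by the lemma is automatically a candidate bottom-left vertex; the assumption that all leaves of $T$ appear in $G$ is exactly what ensures this candidate survives as a vertex of the derived graph $G$.
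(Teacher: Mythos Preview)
Your proposal is correct and matches the paper's intended reasoning: the paper states this as an observation without proof, and your argument—take the leaf at the end of a left principal branch supplied by \autoref{lem:Burling.c(T).left.pricipal.branch}, then invoke the standing assumption that all leaves of $T$ lie in $G$—is exactly the one-line justification the paper leaves implicit.
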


\begin{lemma}
\label{lem:burling.is.strongly.chordal}
   The class $\mathcal{C}$ is a subclass of the class of strongly chordal graphs.
\end{lemma}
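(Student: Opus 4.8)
The plan is to invoke Farber's characterization, \autoref{thm:strongly.chordal.characterization.simple}: since $\mathcal{C}$ is hereditary, it suffices to show that every graph in $\mathcal{C}$ has a simple vertex. So let $G\in\mathcal{C}$, say $G=C(T)[S]$ for a Burling tree $(T,r,l,c)$ and $S\subseteq V(T)$; normalizing the representation so that every leaf of $T$ lies in $S$, \autoref{obs:bottom-left.vertex.exists} provides a bottom-left vertex $v$, which is a leaf of $T$ and hence lies in $S$. Because $N_{C(T)[S]}[x]=N_{C(T)}[x]\cap S$, any inclusion $N_{C(T)}[x]\subseteq N_{C(T)}[y]$ survives intersection with $S$; thus compatibility is inherited by induced subgraphs, and it is enough to prove the core statement that $v$ is simple in $C(T)$.

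To this end I would first describe $N_{C(T)}[v]$. Let $B$ be a left principal branch through $v$. As $v$ is a leaf, the only principal branch containing $v$ is the root-to-$v$ path, so the clique edges of $C(T)$ at $v$ join $v$ precisely to its ancestors; call this set $A$. The edges of $G(T)$ at $v$ are the out-neighbors forming the chain $c(v)$ together with the in-neighbors $\{w:v\in c(w)\}$. The crucial point is that $v$ has no in-neighbor: for $w\notin V(B)$ this is the defining property of a left principal branch (provided by \autoref{lem:Burling.c(T).left.pricipal.branch}), namely $c(w)\cap V(B)=\emptyset$; and for $w\in V(B)$ a short case analysis shows that either $w$ is last-born, whence $c(w)=\emptyset$, or $w$ is non-last-born, whence $c(w)$ lies in the subtree rooted at $l(p(w))$, which is disjoint from the portion of $B$ below $w$. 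In either case $v\notin c(w)$, so $N_{C(T)}[v]=\{v\}\cup A\cup c(v)$.

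It remains to verify that the vertices of $N_{C(T)}[v]$ are pairwise compatible. The set $A$ is a chain in the tree order, as is the branch $c(v)$, and every vertex of $c(v)$ is a descendant of $p(v)$ and hence of every ancestor of $v$. Consequently, for any two vertices of $N_{C(T)}[v]$ one of which is an ancestor of the other, \autoref{lem:Burling.collinear.vertices.are.compatible.in.C(T)} yields compatibility; this disposes of every pair except those consisting of $v$ and some $w\in c(v)$, which are incomparable. For such a pair I would prove $N_{C(T)}[v]\subseteq N_{C(T)}[w]$ directly: $v$ is adjacent to $w$ via the arc $vw\in E(A(T))$; each ancestor of $v$ is an ancestor of $w$, hence adjacent to $w$ by a clique edge; and $w$ lies on a common principal branch with the whole chain $c(v)$, so $w$ is complete to $c(v)$. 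Every element of $N_{C(T)}[v]$ therefore lies in $N_{C(T)}[w]$, giving compatibility and completing the proof that $v$ is simple.

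I expect the main obstacle to be the bookkeeping in the reduction rather than the neighborhood computation: one must justify that the representation of $G\in\mathcal{C}$ can be chosen with all leaves of $T$ present (so that the bottom-left vertex lies in $S$), and one must carry out carefully the case analysis establishing that the bottom-left leaf has no in-neighbor, since this is exactly what guarantees $N_{C(T)}[v]=\{v\}\cup A\cup c(v)$ and hence the clean compatibility argument above. With these two points in hand, Farber's characterization delivers the claim.
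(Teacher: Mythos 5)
Your proof is correct and follows essentially the same route as the paper: Farber's characterization via \autoref{thm:strongly.chordal.characterization.simple}, a bottom-left vertex supplied by \autoref{obs:bottom-left.vertex.exists}, the observation that such a vertex has no in-neighbours so that its open neighbourhood lies in a single principal branch, and compatibility via \autoref{lem:Burling.collinear.vertices.are.compatible.in.C(T)}. Your explicit treatment of the pairs $\{v,w\}$ with $w\in c(v)$ (which are incomparable in the tree, so the ancestor-compatibility lemma does not apply directly, and for which you verify $N_{C(T)}[v]\subseteq N_{C(T)}[w]$ by hand) fills in a step the paper's write-up leaves implicit, and it is handled correctly.
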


\begin{proof}[Proof of \autoref{lem:burling.is.strongly.chordal}]
We prove that for every Burling tree $(T,r,l,c)$, the graph $C(T)$ is strongly chordal. Let $(T,r,l,c)$ be a Burling tree. In order to prove that $C(T)$ is strongly chordal, by \autoref{thm:strongly.chordal.characterization.simple}, it suffices to prove that every induced subgraph of $C(T)$ has a simple vertex. Let $H$ be an induced subgraph of $C(T)$. By \autoref{obs:bottom-left.vertex.exists} we know that $G(T)[V(H)]$ has at least one bottom-left vertex. Let $v$ be such a vertex.

We claim that $v$ is a simple vertex of $C(T)$ and thus a simple vertex of $H$. By \autoref{lem:Burling.collinear.vertices.are.compatible.in.C(T)}, in order to prove that $v$ is a simple vertex in $H$, it suffices to prove that the neighborhood of $v$ in $C(T)$ is included in a branch of $T$. Indeed, since $v$ is a bottom-left vertex, we have that $v$ has no in-neighbors in $A(T)[V(H)]$ and no descendant of $v$ is in $A(T)[V(H)]$. Hence, the neighborhood of $v$ in $H$ is included in the principal branch of $T$ which contains the set $c(v)$ in the case that $v$ is not a last-born, and in the principal branch of $T$ which contains $v$ otherwise. This completes the proof of \autoref{lem:burling.is.strongly.chordal}.
\end{proof}

\begin{figure}
    \centering
    \includegraphics[scale=1.5, page=2]{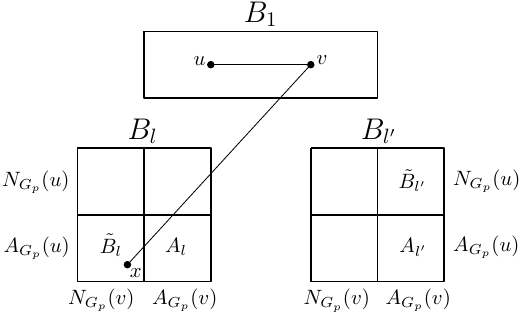}
    \caption{From left to right: The net, the complement of the graph $H$, and the complement of the graph $X_{96}$. Each of these graphs is a strongly chordal graph which is not a Burling strongly chordal graph.}
    \label{fig:forbidden.induced.subgraphs.of.C}
\end{figure}

We say that two vertices of a tree are \defin{collinear} if they both lie in the same principal branch. Since, by \autoref{thm:Burling}, Burling graphs (and thus derived graphs) are triangle-free, we have the following useful observation:

\begin{observation}
    \label{obs:chordal.triangle.lemma}
    Let $(T,r,l,c)$ be a Burling tree and let $a,b,c\in V(T)$ be such that the graph $C(T)[\{a,b,c\}]$ is a triangle. Then at least two of the vertices $a,b,c$ are collinear in $T$.
\end{observation}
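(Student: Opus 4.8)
The plan is to exploit the fact that $C(T)$ is obtained from $G(T)$ by adding \emph{only} edges between collinear vertices, combined with the triangle-freeness of derived graphs. The key preliminary observation I would record is this: by the definition of $C(T)$, every edge $xy$ of $C(T)$ is either an edge of $G(T)$ or joins two collinear vertices. Indeed, the only edges added to $G(T)$ in order to form $C(T)$ are precisely those needed to make the vertex set of each principal branch into a clique, and any such added edge joins two vertices lying in a common principal branch, which is exactly the definition of two collinear vertices.

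With this in hand, I would argue by contradiction. Suppose $C(T)[\{a,b,c\}]$ is a triangle but that no two of $a,b,c$ are collinear. Consider each of the three edges $ab$, $bc$, and $ca$ of this triangle in turn. Since in each case the two endpoints are not collinear, the preliminary observation forces each of these three edges to be an edge of $G(T)$. Consequently, $\{a,b,c\}$ induces a triangle in $G(T)$ as well.

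To finish, I would derive the contradiction from triangle-freeness: $G(T)$ is a derived graph, hence a Burling graph by \autoref{thm:Burling.same.as.derived}, and by \autoref{thm:Burling} Burling graphs are triangle-free. This contradicts the existence of a triangle on $\{a,b,c\}$ in $G(T)$, so at least two of $a,b,c$ must be collinear, as required. The only point demanding care—and the single step on which the whole argument rests—is the initial claim that every edge of $C(T)$ which is not already an edge of $G(T)$ joins a collinear pair; the remainder is a one-line application of the triangle-freeness of Burling graphs.
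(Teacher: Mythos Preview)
Your argument is correct and is essentially the same as the paper's: the paper records this observation as an immediate consequence of the triangle-freeness of Burling (equivalently, derived) graphs, and your write-up simply unpacks that implication by noting that the only edges of $C(T)$ not already in $G(T)$ join collinear pairs.
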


The \defin{net} is the leftmost graph in \autoref{fig:forbidden.induced.subgraphs.of.C}.

\begin{lemma}
\label{lem:net.is.not.in.C}
    Every graph in the class $\mathcal{C}$ is net-free.
\end{lemma}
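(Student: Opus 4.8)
The plan is to argue by contradiction, leaning on the two structural facts already established for $C(T)$: \autoref{obs:chordal.triangle.lemma}, which says that any triangle of $C(T)$ contains two collinear vertices, and \autoref{lem:Burling.collinear.vertices.are.compatible.in.C(T)}, which says that an ancestor's closed neighborhood contains that of its descendant. Since $\mathcal{C}$ is the closure under induced subgraphs of the graphs $C(T)$, and net-freeness is hereditary, it suffices to show that $C(T)$ itself has no induced net for every Burling tree $(T,r,l,c)$.

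So suppose for contradiction that $C(T)$ contains an induced net, with triangle $\{x,y,z\}$ and degree-one vertices $x', y', z'$, where (within the net) $x'$ is adjacent to $x$, $y'$ to $y$, and $z'$ to $z$, and to no other net-vertex. First I would apply \autoref{obs:chordal.triangle.lemma} to the triangle $\{x,y,z\}$ to obtain two collinear vertices; without loss of generality these are $x$ and $y$. Since two collinear vertices lie on a common principal branch, which is a root-to-leaf path, one of them is an ancestor of the other; say $x$ is an ancestor of $y$.

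The key step is then \autoref{lem:Burling.collinear.vertices.are.compatible.in.C(T)}, which gives $N_{C(T)}[y] \subseteq N_{C(T)}[x]$. The pendant $y'$ is adjacent to $y$, so $y' \in N_{C(T)}[y] \subseteq N_{C(T)}[x]$; as $y' \neq x$, this forces the edge $y'x$ in $C(T)$, contradicting that in the induced net $y'$ is adjacent to $y$ only. This contradiction shows $C(T)$ is net-free, and heredity then gives the claim for all of $\mathcal{C}$.

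I do not expect a serious obstacle: the two cited results do essentially all the work. The only points requiring care are bookkeeping — namely that, having chosen the ancestor–descendant pair among the triangle vertices, it is the pendant attached to the \emph{descendant} $y$ (not the ancestor $x$) that yields the forbidden edge, and that the six net-vertices are genuinely distinct, so that $y' \in N_{C(T)}[x]$ with $y' \neq x$ really produces an edge rather than an equality. The one subtlety worth double-checking is the claim that collinearity implies comparability in the ancestor order, which holds because a principal branch is precisely a path descending from the root to a leaf.
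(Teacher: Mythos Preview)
Your proposal is correct and follows essentially the same approach as the paper: apply \autoref{obs:chordal.triangle.lemma} to get two collinear triangle vertices, then use \autoref{lem:Burling.collinear.vertices.are.compatible.in.C(T)} to obtain compatibility, which is impossible for two triangle vertices of an induced net because of their private pendants. The paper's proof is terser---it stops at ``$a$ and $b$ are compatible, which is a contradiction''---whereas you spell out explicitly that the pendant of the descendant would then have to be adjacent to the ancestor; this is exactly the content of that contradiction.
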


\begin{proof}[Proof of \autoref{lem:net.is.not.in.C}]
    Suppose not. Then, since $\mathcal{C}$ is hereditary, it follows that $\mathcal{C}$ contains the net. Let $G\in \mathcal{C}$ be isomorphic to the net, and let $(T,r,l,c)$ be a Burling tree such that $G$ is an induced subgraph of $C(T)$. By \autoref{obs:chordal.triangle.lemma}, we have that at least two vertices of the triangle of $G$, say $a$ and $b$, are collinear. Thus, by \autoref{lem:Burling.collinear.vertices.are.compatible.in.C(T)}, we have that $a$ and $b$ are compatible, which is a contradiction. This completes the proof of \autoref{lem:net.is.not.in.C}.
\end{proof}

Following the notation of the website \url{https://www.graphclasses.org} we denote by $H^c$ and $X_{96}^c$, the second and the third (from left to right) graph which is illustrated in \autoref{fig:forbidden.induced.subgraphs.of.C} respectively. Both $H^c$ and $X_{96}^c$ are strongly chordal graphs. We note that using \autoref{lem:Burling.collinear.vertices.are.compatible.in.C(T)} and \autoref{obs:chordal.triangle.lemma}, one can prove that every graph in $\mathcal{C}$ is $\{H^c, X_{96}^c\}$-free.

The following is an immediate corollary of \autoref{lem:net.is.not.in.C} and the fact that the net is a strongly chordal graph:

\begin{corollary}
\label{cor:C.proper.subclass.of.strongly.chordal}
        The class of Burling strongly chordal graphs is a proper subclass of the class of strongly chordal graphs.
\end{corollary}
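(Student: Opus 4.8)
The plan is to combine the two preceding lemmas directly, with essentially no further work. By \autoref{lem:burling.is.strongly.chordal}, every graph in $\mathcal{C}$ is strongly chordal, so $\mathcal{C}$ is already contained in the class of strongly chordal graphs. It therefore only remains to verify that this containment is strict, and for that it suffices to exhibit a single strongly chordal graph that does not belong to $\mathcal{C}$.

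To establish strictness, I would take the net as the witness. On one hand, the net is strongly chordal, as the statement grants: its only cycle is its central triangle, so it is chordal, and since it contains no even cycle of length at least six, the odd-chord condition in the definition of strongly chordal graphs holds vacuously. On the other hand, \autoref{lem:net.is.not.in.C} tells us that every member of $\mathcal{C}$ is net-free, and since the net trivially contains itself as an induced subgraph, it is not net-free and hence does not lie in $\mathcal{C}$.

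Putting these two observations together, the net is a strongly chordal graph outside $\mathcal{C}$, which shows that $\mathcal{C}$ is properly contained in the class of strongly chordal graphs, exactly as claimed. I do not expect any genuine obstacle in this argument: the corollary is immediate once \autoref{lem:burling.is.strongly.chordal} and \autoref{lem:net.is.not.in.C} are available, and the only minor point worth keeping in mind is the (easy and already-granted) fact that the net is strongly chordal.
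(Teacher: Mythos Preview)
Your proposal is correct and matches the paper's own justification exactly: the paper states the corollary as immediate from \autoref{lem:net.is.not.in.C} together with the fact that the net is strongly chordal, with the containment coming from \autoref{lem:burling.is.strongly.chordal}. There is nothing to add.
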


In \autoref{subsec:finite.sets.conditions} we make use of the the following observation:

\begin{proposition}
\label{prop:trivially.perrfect.subclass.of.Burling.chordal}
    The class of trivially perfect graphs is a subclass of the class of Burling strongly chordal graphs.
\end{proposition}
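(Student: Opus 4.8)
The plan is to prove \autoref{prop:trivially.perrfect.subclass.of.Burling.chordal} by showing directly that every trivially perfect graph $G$ is an induced subgraph of $C(T)$ for a suitable Burling tree $(T,r,l,c)$; since $\mathcal{C}$ is by definition the closure under induced subgraphs of $\{C(T) : (T,r,l,c) \text{ a Burling tree}\}$, this places $G$ in $\mathcal{C}$. The engine of the argument is the classical fact (Wolk, and Golumbic) that the trivially perfect graphs are exactly the comparability graphs of rooted forests: a graph $G$ is trivially perfect if and only if there is a rooted forest $F$ on $V(G)$ in which $uv \in E(G)$ precisely when one of $u,v$ is an ancestor of the other. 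I would either cite this or derive it by a short induction on $|V(G)|$, splitting into connected components and, for a connected trivially perfect graph, peeling off a universal vertex; such a vertex exists because neither $P_4$ nor $C_4$ has a universal vertex, so deleting a universal vertex preserves $\{P_4,C_4\}$-freeness and the inductive hypothesis applies to what remains.

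Given such a forest $F$ for $G$, I would form a rooted tree $T$ by adding a new root $r$ whose children are the roots of the trees of $F$, and then turn $T$ into a Burling tree by choosing the last-born function $l$ arbitrarily and taking the \emph{empty} choose function, $c(v) = \emptyset$ for every $v$. This is admissible, since the definition of a Burling tree permits $c(v)$ to be an empty branch at every non-last-born, non-root vertex. Consequently $A(T)$ has no arcs, so $G(T)$ is edgeless, and $C(T)$ is obtained from an edgeless graph purely by making the vertex set of each principal branch a clique. Hence $uv \in E(C(T))$ if and only if $u$ and $v$ are collinear in $T$, and two vertices of a rooted tree are collinear — lie on a common root-to-leaf path — exactly when one is an ancestor of the other. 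Therefore $C(T)$ is precisely the comparability graph of $T$.

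Finally, since adding the root $r$ creates ancestor relations only between $r$ and the remaining vertices, restricting to $V(T)\setminus\{r\} = V(F)$ recovers exactly the comparability graph of $F$; that is, $C(T)[V(F)] = G$. As $\mathcal{C}$ is closed under induced subgraphs, it follows that $G \in \mathcal{C}$, which completes the argument. The one place demanding care is the input characterization of trivially perfect graphs as comparability graphs of rooted forests: everything after it is a routine verification that the empty choose function collapses $C(T)$ to the comparability graph of $T$. If I prefer a self-contained write-up rather than citing Wolk and Golumbic, the main obstacle becomes the inductive proof of that characterization, and in particular the auxiliary lemma that every connected trivially perfect graph has a universal vertex.
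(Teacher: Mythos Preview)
Your proof is correct and takes a genuinely different route from the paper's. The paper uses the Yan--Chen--Chang characterization (\autoref{thm:trivially.perfect.characterization.operations}) and proceeds by showing that $\mathcal{C}$ contains $K_1$ and is closed under disjoint union and the addition of a universal vertex; for each operation it builds a new Burling tree from given ones (adding a few auxiliary vertices in the disjoint-union case) and checks that the desired graph appears as an induced subgraph of the resulting $C(T)$. Your approach, by contrast, constructs the Burling tree in one shot: you invoke the Wolk characterization of trivially perfect graphs as comparability graphs of rooted forests, hang the forest under a new root, and take the choose function to be identically empty. The key observation---that with $c \equiv \emptyset$ the graph $G(T)$ is edgeless and hence $C(T)$ collapses to the comparability graph of $T$---makes the connection completely transparent and avoids the ad hoc tree surgery of the paper's closure arguments. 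The paper's approach does establish a bit more, namely that $\mathcal{C}$ itself is closed under those two operations (for arbitrary members of $\mathcal{C}$, not just trivially perfect ones), which could conceivably be reused; your approach is cleaner and more direct for the proposition at hand.
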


Let $G_{1},\ldots, G_{k}$ be graphs. The \defin{disjoint union} of $G_{1},\ldots, G_{k}$, which we denote by $G_{1}+\ldots+G_{k}$, is the graph which has as vertex set (respectively edge set) the disjoint union of the sets $V(G_{1}), \ldots, V(G_{k})$ (respectively of the sets $E(G_{1}), \ldots, E(G_{k})$). In order to prove \autoref{prop:trivially.perrfect.subclass.of.Burling.chordal} we use the following characterization of trivially perfect graphs.

\begin{theorem}[Yan, Chen, and Chang {\cite[Theorem 3]{yan1996quasi}}]
\label{thm:trivially.perfect.characterization.operations}
    The class of trivially perfect graphs is the minimal hereditary class of graphs which contains the graph $K_{1}$, and is closed under the following operations:
    \begin{enumerate}[label=(\roman*)]
        \item disjoint union of two graphs;
        \item adding a new vertex complete to every other vertex.
    \end{enumerate}
\end{theorem}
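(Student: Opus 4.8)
The plan is to prove both inclusions between the class $\mathcal{TP}$ of trivially perfect graphs and the minimal hereditary class $\mathcal{M}$ described in the statement (the class contained in every hereditary class that contains $K_1$ and is closed under operations (i) and (ii)). Throughout I would work with Golumbic's characterization that $\mathcal{TP}$ is exactly the class of $\{P_4, C_4\}$-free graphs, since this makes the behaviour of the two operations transparent.

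For the inclusion $\mathcal{M} \subseteq \mathcal{TP}$, I would verify that $\mathcal{TP}$ itself is a hereditary class containing $K_1$ and closed under (i) and (ii); minimality of $\mathcal{M}$ then yields the inclusion. Hereditariness and $K_1 \in \mathcal{TP}$ are immediate. For closure under disjoint union, observe that $P_4$ and $C_4$ are connected, so any induced copy in $G_1 + G_2$ lies entirely in one summand; hence if $G_1, G_2$ are $\{P_4,C_4\}$-free then so is $G_1 + G_2$. For closure under adding a universal vertex $v$, note that in both $P_4$ and $C_4$ every vertex has a non-neighbour among the other three, so a universal vertex cannot participate in an induced $P_4$ or $C_4$; thus adding $v$ creates no new forbidden subgraph.

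The substantial inclusion is $\mathcal{TP} \subseteq \mathcal{M}$, which I would prove by induction on $|V(G)|$ for $G \in \mathcal{TP}$. If $G$ has a single vertex it is $K_1 \in \mathcal{M}$. If $G$ is disconnected, I would group its components into two nonempty parts, each of which is an induced subgraph and hence trivially perfect, and each smaller than $G$; by induction both lie in $\mathcal{M}$, so $G \in \mathcal{M}$ by (i). The connected case rests on the following lemma, which is the heart of the argument: every connected $\{P_4,C_4\}$-free graph on at least one vertex has a universal vertex. Granting it, a connected $G$ has a universal vertex $v$, and $G - v$ is trivially perfect and smaller, so $G - v \in \mathcal{M}$ by induction and $G \in \mathcal{M}$ by (ii).

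The main obstacle is the universal-vertex lemma. I would pick a vertex $v$ whose closed neighbourhood $N[v]$ is maximal under inclusion and argue that it is universal. If not, connectivity gives a vertex $w \notin N[v]$ adjacent to some $a_0 \in N(v)$; set $A = N(v) \cap N(w)$ (nonempty, as $a_0 \in A$) and $B = N(v) \setminus N(w)$, so $N(v) = A \sqcup B$. Applying $P_4$-freeness to the four vertices $w,a,v,c$ for $a \in A$ and $c \in B$ (the edges $wa, av, vc$ are present while $wv, wc$ are absent) forces $ac \in E(G)$, so $A$ is complete to $B$; applying $C_4$-freeness to $a,v,a',w$ for distinct $a,a' \in A$ (the four edges $av, va', a'w, wa$ are present and $vw$ is absent) forces $aa' \in E(G)$, so $A$ is a clique. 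Then for any $a \in A$ one checks that $N[a]$ contains $v$, $w$, all of $A$, and all of $B$, whence $N[a] \supseteq N[v] \cup \{w\} \supsetneq N[v]$, contradicting the maximal choice of $v$. Combining this lemma with the induction, and then combining the two inclusions, completes the proof.
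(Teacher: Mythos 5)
Your proposal is correct. Note that the paper itself offers no proof of this statement: it is quoted verbatim from Yan, Chen, and Chang, so there is no internal argument to compare against; the natural comparison is with the surrounding material the paper does cite. Both of your inclusions are sound. The direction $\mathcal{M}\subseteq\mathcal{TP}$ via Golumbic's $\{P_4,C_4\}$-free characterization is routine and your verifications are right: $P_4$ and $C_4$ are connected (so disjoint unions create no new copies), and every vertex of $P_4$ and of $C_4$ has a non-neighbour among the remaining three (so a universal vertex cannot lie in an induced copy). For $\mathcal{TP}\subseteq\mathcal{M}$, your induction only uses $K_1\in\mathcal{M}$ and closure under (i) and (ii), which is all that minimality guarantees, so the logic is clean.

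The heart of your argument, the universal-vertex lemma for connected $\{P_4,C_4\}$-free graphs, is exactly Wolk's theorem, which the paper states separately (with citation to Wolk, 1962) as its Theorem on connected induced subgraphs containing universal vertices; you in effect reprove it rather than invoke it. Your proof of the lemma checks out in detail: choosing $v$ with $N[v]$ inclusion-maximal, connectivity yields $w\notin N[v]$ with $A=N(v)\cap N(w)\neq\emptyset$; the quadruple $w,a,v,c$ with $a\in A$, $c\in B=N(v)\setminus N(w)$ would induce a $P_4$ unless $ac\in E(G)$, and $a,v,a',w$ with distinct $a,a'\in A$ would induce a $C_4$ unless $aa'\in E(G)$; then $N[a]\supseteq\{v,w\}\cup A\cup B\supsetneq N[v]$ contradicts maximality (all four vertices in each configuration are distinct, since $w\notin N[v]$ while $a,a',c\in N(v)$). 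So your write-up is a self-contained proof, modulo Golumbic's characterization, of a result the paper imports as a black box; the only simplification available to you would have been to cite Wolk's theorem, which the paper already has on hand, and thereby skip the maximal-closed-neighbourhood argument entirely.
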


Before we proceed to the proof of \autoref{prop:trivially.perrfect.subclass.of.Burling.chordal} we need to introduce some notation about functions. For a function $f$ we denote the domain of $f$ by $\dom(f)$. Let $f$ and $g$ be two functions which agree in the set $\dom(f)\cap \dom(g)$, that is for every $x\in \dom(f)\cap \dom(g)$ we have $f(x)=g(x)$. Then we denote by \defin{$f\cup g$} the function with $\dom(f\cup g)=\dom(f)\cup \dom(g)$, which is defined as follows: 
    \[ (f\cup g)(x) =\begin{cases} 
      f(x), & \text{if } x\in \dom(f);\\
      g(x), & \text{otherwise.}
   \end{cases}
   \]

Let $k$ be a positive integer, let $f$ be a function, and let $x_{1}, \ldots x_{k}$ be elements such that for every $i\in [k]$ we have that $x_{i}\notin \dom(f)$; we denote by $f\cup \{(x_{1},y_{1}), \ldots, (x_{k},y_{k})\}$ the function that we obtain by extending the definition of $f$ to include $f(x_{i})=y_{i}$ for every $i\in [k]$. We are now ready to proceed with the proof of \autoref{prop:trivially.perrfect.subclass.of.Burling.chordal}.

\begin{proof}[Proof of \autoref{prop:trivially.perrfect.subclass.of.Burling.chordal}]
    Since $\mathcal{C}$ contains $K_{1}$, it suffices to show that $\mathcal{C}$ is closed under the two operations which are mentioned in the statement of \autoref{thm:trivially.perfect.characterization.operations}. Let $(T_1,r_1,l_1,c_1)$ and $(T_2,r_2,l_2,c_2)$ be Burling trees on disjoint vertex sets.

    We claim that $C(T_1)+C(T_2)\in \mathcal{C}$. Indeed, let $T$ be the tree which we obtain from $T_1 + T_2$ by adding the three new vertices $r,r_1'$ and $r_2'$, and the edges $r_1r_1', r_2r_2', rr_1'$ and $rr_2'$. Let $l:= l_1\cup l_2\cup \{(r,r_2'), (r_1',r_1), (r_2',r_2)\}$ and $c:=c_1\cup c_2\cup\{(r_1',B), (r,\emptyset),(r_2',\emptyset)\}$ where $B$ is a branch in $T$ starting at $r_2'$. Consider the Burling tree $(T,r,l,c)$. Then $C(T_1)+C(T_2)$ is the induced subgraph of $C(T)$ that we obtain by deleting the vertices $r,r_1'$ and $r_2'$. This proves that $\mathcal{C}$ is closed under disjoint union.

    Let $C'(T_1)$ be the graph that we obtain from $C(T_1)$ by adding a new vertex, say $r$, which is complete to $V(C(T_1))$. Let $T$ be the tree which we obtain from $T_1$ by adding the new vertex $r$ and the edge $rr_1$. Consider the Burling tree $(T,r,l_1\cup\{(r,r_1)\},c_1)$. Then $C'(T_1)$ is isomorphic to $C(T)$, and thus $C'(T_1)\in \mathcal{C}$. This proves that $\mathcal{C}$ is closed under operation $(ii)$ in the statement of $\autoref{thm:trivially.perfect.characterization.operations}$. This completes the proof of \autoref{prop:trivially.perrfect.subclass.of.Burling.chordal}.
\end{proof}

We were not able to fully characterize Burling strongly chordal graphs. We suggest the following:

\begin{problem}
   Characterize the class of Burling strongly chordal graphs by its forbidden induced subgraphs.
\end{problem}

Let $G$ be an oriented graph which is derived from a Burling tree $(T,r,l,c)$. Following Pournajafi and Trotignon \cite{pournajafi2023burling} we call a vertex $v$ of $G$ a \defin{top-left} vertex if the following hold:
\begin{itemize}[topsep=0.2cm,itemsep=-0.4cm,partopsep=0.5cm,parsep=0.5cm]
    \item the distance of $v$ from $r$ in $T$ is equal to the minimum distance of a vertex of $G$ from $r$ in $T$; and
    \item either $v$ is not a last-born of $T$ or $v$ is the only vertex in $\argmin\{d_{T}(u,r):u\in V(G)\}$.
\end{itemize}

\begin{lemma}[Pournajafi and Trotignon {\cite[Lemma 3.1]{pournajafi2023burling}}]
\label{lem:Burling.top-left}
    Every non-empty oriented graph $G$ derived from a Burling tree $(T,r,l,c)$ contains at least one top-left vertex and every such vertex is a source of $G$. Moreover, the neighborhood of a top-left vertex is included in a branch of $T$.
\end{lemma}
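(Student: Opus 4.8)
The plan is to base the whole argument on a single monotonicity property of the arcs of $A(T)$ and then read off the three conclusions. First I would record the following structural fact: for every arc $u \to v$ of $A(T)$ (equivalently, for every $v \in c(u)$) one has $d_{T}(u,r) \le d_{T}(v,r)$, with equality only if $v$ is a last-born of $T$. Indeed, $v \in c(u)$ forces $u$ to be a non-last-born vertex distinct from the root, and by definition $c(u)$ is the vertex set of a branch of $T$ starting at $l(p(u))$. Since $l(p(u))$ is a sibling of $u$ it lies at distance $d_{T}(u,r)$, while every other vertex of that branch is a strict descendant of $l(p(u))$ and hence lies strictly deeper; thus $d_{T}(v,r) \ge d_{T}(u,r)$, with equality exactly when $v = l(p(u))$, which is a last-born. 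In passing this also shows that $A(T)$, and hence $G$, is acyclic.

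Next I would prove that every top-left vertex $v$ is a source. Write $m$ for the minimum of $d_{T}(u,r)$ over $u \in V(G)$, so that $d_{T}(v,r) = m$. Suppose $v$ had an in-neighbor $u \in V(G)$, that is, an arc $u \to v$ with $u \ne v$. The monotonicity fact gives $d_{T}(u,r) \le m$, and minimality forces $d_{T}(u,r) = m$; hence $u \in \argmin\{d_{T}(w,r) : w \in V(G)\}$. If the top-left condition holds because $v$ is the unique element of this $\argmin$, then $u = v$, a contradiction. If instead it holds because $v$ is not a last-born, then the equality $d_{T}(u,r) = d_{T}(v,r)$ forces $v$ to be a last-born, again a contradiction. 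So $v$ has no in-neighbor in $G$ and is a source.

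The statement about neighborhoods then follows immediately. Since $v$ is a source, each of its neighbors in $G$ is an out-neighbor and therefore lies in $c(v)$. If $v$ is the root or a last-born then $c(v) = \emptyset$; otherwise $c(v)$ is, by definition, the vertex set of a branch of $T$. In either case the neighborhood of $v$ is contained in a branch of $T$, as required.

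What remains, and what I expect to be the main obstacle, is the existence of a top-left vertex. Here I would examine the top level $M := \argmin\{d_{T}(u,r) : u \in V(G)\}$: if $|M| = 1$ its unique element is top-left by the second clause of the definition, so the real work is the case $|M| \ge 2$, where one must exhibit a vertex of $M$ that is not a last-born. The natural approach is to use the left-to-right order on children encoded by the last-born function and descend from the root along leftmost choices; carrying this out, namely showing that a tie at the minimum distance can always be resolved by a non-last-born vertex, is the delicate combinatorial step, and it is the only part of the lemma not delivered directly by the monotonicity fact above.
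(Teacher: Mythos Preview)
The paper does not supply its own proof of this lemma; it is quoted from Pournajafi and Trotignon and used as a black box. There is thus nothing in this paper to compare your proposal against directly.

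On the merits: your monotonicity observation is correct, and the two consequences you derive from it---that every top-left vertex is a source, and that its neighborhood is contained in $c(v)$ and hence in a single branch of $T$---are correctly and cleanly argued.

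The gap is exactly where you locate it: existence. You outline an approach but do not carry it out, and in fact with the definitions as reproduced in this paper the existence clause admits a counterexample. Take a Burling tree in which the root $r$ has children $a,b$ with $l(r)=b$, the vertex $a$ has children $a_1,a_2$ with $l(a)=a_2$, and the vertex $b$ has children $b_1,b_2$ with $l(b)=b_2$. Then $G := A(T)[\{a_2,b_2\}]$ is a non-empty derived oriented graph; both of its vertices lie at the minimum depth, both are last-borns, and neither is the unique minimizer, so neither is top-left. Hence no argument along the lines you sketch (``a tie at the minimum distance can always be resolved by a non-last-born vertex'') can succeed under these definitions; presumably the original source has a slightly different formulation or an additional hypothesis (and the paper's only use of the lemma is for connected subgraphs of $I(T)$, where such configurations do not arise). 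Your instinct that existence is the delicate part was well founded.
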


A \defin{universal vertex} of a graph $G$ is a vertex $v\in V(G)$ which is complete to $V(G)\setminus \{v\}$. We need the following characterization of trivially perfect graphs.

\begin{theorem}[Wolk \cite{wolk1962comparability}]
\label{thm:characterization.trivially.perfect}
    Let $G$ be a graph. Then $G$ is trivially perfect if and only if every connected induced subgraph of $G$ contains a universal vertex.
\end{theorem}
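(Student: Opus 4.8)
The plan is to deduce Wolk's characterization from Golumbic's theorem \cite{golumbic1978trivially}, stated just above, which asserts that $G$ is trivially perfect if and only if $G$ is $\{P_4, C_4\}$-free. It therefore suffices to prove that $G$ is $\{P_4, C_4\}$-free if and only if every connected induced subgraph of $G$ contains a universal vertex.

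For the (easier) implication from universal vertices to $\{P_4,C_4\}$-freeness, I would argue by contrapositive. Both $P_4$ and $C_4$ are connected graphs in which no vertex is adjacent to all the others (in $P_4 = w\text{-}a\text{-}v\text{-}z$ the endpoints each miss two vertices and the internal vertices each miss one; in $C_4$ every vertex misses exactly one vertex). Hence if $G$ contained an induced $P_4$ or $C_4$, this would be a connected induced subgraph with no universal vertex, contradicting the hypothesis. So the hypothesis forces $G$ to be $\{P_4,C_4\}$-free, and thus trivially perfect by Golumbic's theorem.

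For the main implication, suppose $G$ is $\{P_4,C_4\}$-free. Since this class is hereditary, every connected induced subgraph of $G$ is again a connected $\{P_4,C_4\}$-free graph, so it is enough to show that every connected $\{P_4,C_4\}$-free graph $H$ has a universal vertex. The heart of the argument is extremal: let $v$ be a vertex of maximum degree $\Delta$ in $H$, and suppose for contradiction that $v$ is not universal. By connectedness there is an edge leaving $N[v]$, necessarily of the form $aw$ with $a\in N(v)$ and $w\notin N[v]$ (so in particular $vw\notin E(H)$ and $v\neq w$). The key claim is that every $z\in N(v)\setminus\{a\}$ satisfies $az\in E(H)$: examining the four distinct vertices $\{v,a,w,z\}$, on which $va, aw, vz\in E(H)$ and $vw\notin E(H)$ are forced, the subcase $az\notin E(H),\ wz\notin E(H)$ yields the induced path $w\text{-}a\text{-}v\text{-}z$ (a $P_4$), while the subcase $az\notin E(H),\ wz\in E(H)$ yields the induced cycle $w\text{-}a\text{-}v\text{-}z\text{-}w$ (a $C_4$); both are forbidden, so $az\in E(H)$. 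Consequently $N(a)\supseteq \big(N(v)\setminus\{a\}\big)\cup\{v,w\}$, and since $v,w\notin N(v)\setminus\{a\}$ and $v\neq w$, this set has at least $\Delta+1$ distinct elements, contradicting the maximality of $\deg(v)=\Delta$. Therefore $v$ is universal, completing the forward direction.

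I expect the only genuine obstacle to be the case analysis establishing the key claim $N(v)\setminus\{a\}\subseteq N(a)$; once that is in place, the degree count and the hereditary reduction to connected subgraphs are routine. As an alternative to the extremal argument, one could route the forward direction through the join (cograph) decomposition of connected $P_4$-free graphs and use $C_4$-freeness to force one side of the join to be a clique, every vertex of which is then universal; I prefer the extremal argument above since it is self-contained and avoids invoking that structure theory.
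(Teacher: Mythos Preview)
Your argument is correct in both directions: the contrapositive handles the backward implication cleanly, and the extremal degree argument for the forward implication is a standard, self-contained proof that a connected $\{P_4,C_4\}$-free graph has a universal vertex. The case analysis on $\{v,a,w,z\}$ is complete, and the degree count $\lvert N(a)\rvert \geq \Delta+1$ goes through as stated.

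However, there is nothing to compare against: the paper does not supply a proof of this theorem. It is stated as a cited result of Wolk and then invoked as a black box in the proof of \autoref{lem:interval.derived.trivially.perfect}. So your proposal is not an alternative to the paper's argument but rather a self-contained justification of a result the paper takes on faith. If you wish to include it, it would function as supplementary material rather than a replacement for anything in the text.
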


We recall the definition of the class $\mathcal{I}$: Let $(T,r,l,c)$ be a Burling tree. We denote by $I(T)$ the graph that we obtain from $G(T)$ by adding edges so that every vertex is adjacent to all of its siblings, and to all the descendants of its last-born sibling. Then $\mathcal{I}$, as we defined it earlier in this section, is the closure under induced subgraphs of the class $\{I(T):(T,r,l,c) \text{ is a Burling tree}\}$.

\begin{lemma}
\label{lem:interval.derived.trivially.perfect}
    The class $\mathcal{I}$ is a subclass of the class of trivially perfect graphs.
\end{lemma}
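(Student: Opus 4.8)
The plan is to show that every graph of the form $I(T)$ is trivially perfect; since the class of trivially perfect graphs is hereditary (it is $\{P_4,C_4\}$-free) and $\mathcal{I}$ is by definition the closure under induced subgraphs of $\{I(T):(T,r,l,c)\text{ is a Burling tree}\}$, this suffices. To prove that a fixed $I(T)$ is trivially perfect I would invoke Wolk's characterization (\autoref{thm:characterization.trivially.perfect}): it is enough to show that every connected induced subgraph of $I(T)$ contains a universal vertex.

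So let $H=I(T)[S]$ be a connected induced subgraph, with $\emptyset\neq S\subseteq V(T)$. The oriented graph $A(T)[S]$ is an induced subgraph of $A(T)$ and hence is derived from the Burling tree $(T,r,l,c)$, so by \autoref{lem:Burling.top-left} it has a top-left vertex $v$. Recall that $v$ is a source of $A(T)[S]$, that $d_T(v,r)$ is minimum among the vertices of $S$ (so no proper ancestor of $v$ lies in $S$), and that the neighborhood of $v$ in $G(T)[S]$ is contained in a branch of $T$. I claim that $v$ is universal in $H$, which would complete the proof.

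To establish the claim I would take $u\in S\setminus\{v\}$ and aim to show $uv\in E(I(T))$. Write $a_0=r,a_1,\ldots,a_k=v$ for the path from the root to $v$, so that $a_0,\ldots,a_{k-1}\notin S$, and let $b=l(p(v))$ be the last-born sibling of $v$. A direct analysis of the two edge types added in passing from $G(T)$ to $I(T)$ shows that $v$ is adjacent in $I(T)$ to: every sibling of $v$ (the sibling rule); every vertex of the subtree rooted at $b$ (these are the descendants of the last-born sibling of $v$); every descendant of $v$ in the case that $v$ is itself last-born (its last-born sibling is then $v$ itself); and every child $u=q$ of an ancestor $a_i$ for which the path continues through the last-born, i.e.\ $a_{i+1}=l(a_i)$ (since then $v$ is a descendant of the last-born sibling $a_{i+1}$ of $q$). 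The two defining alternatives of a top-left vertex --- $v$ is not a last-born, or $v$ is the unique vertex of minimum depth --- are exactly what I would use to control the remaining last-born corner cases.

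For the vertices $u\in S$ not covered above I would argue by connectivity rather than by exhibiting an edge to $v$ directly. Concretely, the goal is to prove that $N_{I(T)}[v]\cap S$ is a union of connected components of $H$, i.e.\ that $I(T)$ has no edge between $N_{I(T)}[v]\cap S$ and $S\setminus N_{I(T)}[v]$; since this set contains $v$ and $H$ is connected, it must then equal $S$, giving universality. The main obstacle is precisely this closure statement: one has to rule out that either added edge type joins a neighbor of $v$ to a vertex outside $N_{I(T)}[v]$. Here the minimality of $d_T(v,r)$, the fact that $v$ is a source, and the containment of $N_{G(T)[S]}(v)$ in a single branch are the tools that prevent such escaping edges --- intuitively, the non-neighbors of $v$ (proper descendants of $v$ when $v$ is not last-born, and the interiors of the subtrees hanging off the non-last-born children encountered along the root-to-$v$ path) sit in portions of $T$ whose $I(T)$-edges stay internal, so they are genuinely cut off from $v$ and cannot appear in the connected graph $H$ unless they are in fact adjacent to $v$.
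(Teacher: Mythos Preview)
Your overall framework---reduce to Wolk's criterion and take a top-left vertex $v$ of $A(T)[V(H)]$ via \autoref{lem:Burling.top-left}---is exactly what the paper does. The divergence, and the gap, is in the universality argument for $v$. You correctly enumerate $N_{I(T)}(v)$, but your decisive step, the ``closure statement'' that $I(T)$ has no edge from $N_{I(T)}[v]\cap V(H)$ to $V(H)\setminus N_{I(T)}[v]$, is left at the level of intuition. Two of the three tools you propose to use---$v$ being a source of $A(T)[V(H)]$ and the containment of $N_{G(T)[V(H)]}(v)$ in a single branch---concern only the $G(T)$-edges; they say nothing about the sibling edges or the ``descendant of the last-born sibling'' edges that are added when passing from $G(T)$ to $I(T)$, and it is precisely those added edges that could produce escaping edges in your closure statement. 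The one tool that actually does the work is the depth-minimality of $v$, and exploiting it properly amounts to proving the structural fact the paper uses instead.

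The paper's route is shorter and avoids the case analysis: it observes that each connected component of $I(T)$ has the form $S\cup D$, where $S$ is a set of siblings in $T$ (hence a clique in $I(T)$) and $D$ is the set of descendants of the last-born $l$ in $S$, with $S$ complete to $D$. Since $H$ is connected, $V(H)$ lies in one such component; any non-adjacent pair inside the component must lie entirely in $D$, which places $v$ strictly below the sibling layer $S$ and contradicts that $v$ realizes the minimum depth among the vertices of $H$. Proving this one structural lemma about components of $I(T)$ replaces your edge-by-edge closure argument.
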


\begin{proof}[Proof of \autoref{lem:interval.derived.trivially.perfect}]
    We prove that for every Burling tree $(T,r,l,c)$, the graph $I(T)$ is trivially perfect. Let $(T,r,l,c)$ be a Burling tree. By \autoref{thm:characterization.trivially.perfect} in order to prove that $I(T)$ is trivially perfect it suffices to prove that every connected induced subgraph of $I(T)$ has a universal vertex. 
    
    Let $H$ be a connected induced subgraph of $I(T)$. By \autoref{lem:Burling.top-left} we know that $A(T)[V(H)]$ has a top-left vertex. Let $v$ be such a vertex. We claim that $v$ is a universal vertex of $H$. Suppose not. Let $u\in V(H)$  be a vertex which is not adjacent to $v$. We claim that the vertex set of each connected component of $I(T)$ consists of a set $S$ of siblings in $T$ and the set $D$ of the descendants of the last-born, say $l$, vertex in $S$; where $S$ is a clique, and every vertex of $S$ is complete to $D$. Indeed, our claim follows by the definition of $I(T)$ and the fact that the vertex set of each branch of $T$ is an independent set in $G(T)$. Now since $u$ and $v$ are non-adjacent in $H$, it follows that $u,v\in D$; that is, both $u$ and $v$ are descendants of $l$ in $T$. Thus, we have that $v$ is not a top-left vertex, which is a contradiction. This completes the proof of \autoref{lem:interval.derived.trivially.perfect}.
\end{proof}

\begin{lemma}
\label{lem:trivially.perfect.in.I}
    The class of trivially perfect graphs is a subclass of the class $\mathcal{I}$.
\end{lemma}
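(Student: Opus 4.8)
The plan is to mirror the proof of \autoref{prop:trivially.perrfect.subclass.of.Burling.chordal}. Since $\mathcal{I}$ is hereditary by definition and contains $K_1$ (the one-vertex tree $T$ gives $I(T)=K_1$), it suffices by the minimality asserted in \autoref{thm:trivially.perfect.characterization.operations} to show that $\mathcal{I}$ is closed under (i) disjoint union of two graphs and (ii) adding a vertex complete to every other vertex. Because $\mathcal{I}$ is the hereditary closure of $\{I(T)\}$, for (i) it is enough to produce, given Burling trees $T_1,T_2$, a Burling tree $T$ with $I(T_1)+I(T_2)$ an induced subgraph of $I(T)$; and for (ii) a Burling tree $T$ with $I(T_1)$ plus a universal vertex an induced subgraph of $I(T)$. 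Passing to induced subgraphs then yields the two closure properties.

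The feature of $I(T)$ that I would exploit is how a vertex sees the rest of the tree: a non-last-born child $v$ of a vertex $p$ is adjacent in $I(T)$ to \emph{every} descendant of the last-born $l(p)$, whereas a vertex whose siblings and last-born descendants are absent (in particular the root) is isolated. The only delicate issue in embedding $T_i$ into a larger tree is that its root $r_i$ stops being a root, while in $I(T_i)$ the vertex $r_i$ is isolated. To keep the induced subgraph on $V(T_i)$ equal to $I(T_i)$, I would attach $T_i$ so that $r_i$ becomes a \emph{non}-last-born vertex with $c(r_i)=\emptyset$; then $r_i$ keeps an empty choose-value as before, and all its extra $I$-edges (to its siblings and to the descendants of its last-born sibling) are routed to auxiliary vertices that will be deleted, so that within $V(T_i)$ the adjacency is unchanged.

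For (i), I would take $T$ with root $r$ whose children are $p_1$ and $p_2=l(r)$; give $p_j$ the children $r_j$ (non-last-born) and $q_j=l(p_j)$, hang a copy of $T_j$ below $r_j$, and set $c(r_j)=c(p_j)=\emptyset$. Deleting the auxiliary vertices $r,p_1,p_2,q_1,q_2$ should leave exactly $I(T_1)+I(T_2)$: the two copies induce $I(T_1)$ and $I(T_2)$ by the paragraph above, and no edge of $I(T)$ runs between them, since any edge arises either from one vertex lying in the other's choose-branch, from a common parent, or from one vertex lying among the descendants of the other's last-born sibling, and each of these is confined to a single copy or to deleted vertices. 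For (ii), I would take $T$ with root $r$ whose children are $v$ (non-last-born) and $m=l(r)$, let $m$ have children $r_1$ (non-last-born) and $m'=l(m)$, hang $T_1$ below $r_1$, and set $c(v)=c(r_1)=\emptyset$. Since $v$ is non-last-born and $l(r)=m$, the vertex $v$ is adjacent to every descendant of $m$, hence to all of $V(T_1)$, while the copy of $T_1$ induces $I(T_1)$ as before; deleting $r,m,m'$ then leaves $I(T_1)$ together with the universal vertex $v$.

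The main obstacle is the bookkeeping in these two constructions: verifying that each embedded copy induces exactly $I(T_i)$ despite the changed status of its root, and — for the disjoint union — confirming the total absence of edges between the two copies by checking each of the three ways an edge of $I(T)$ can arise. Once both closure properties are in place, \autoref{thm:trivially.perfect.characterization.operations} yields that every trivially perfect graph lies in $\mathcal{I}$; combined with \autoref{lem:interval.derived.trivially.perfect} this shows that $\mathcal{I}$ is exactly the class of trivially perfect graphs.
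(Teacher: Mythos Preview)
Your proposal is correct and follows essentially the same approach as the paper: invoke \autoref{thm:trivially.perfect.characterization.operations} and verify that $\mathcal{I}$ contains $K_1$ and is closed under disjoint union and under adding a universal vertex, by exhibiting suitable Burling trees.

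The only difference is in the bookkeeping of the two auxiliary trees. For the disjoint union the paper uses three new vertices $r,r_1',r_2'$, attaching each $T_i$ as the \emph{only} child of $r_i'$ (so $r_i$ becomes last-born with no siblings and stays isolated in $I(T)[V(T_i)]$); for the universal vertex the paper adds just two vertices $r,r'$, with $r_1$ as the last-born child of $r$ and $r'$ as its non-last-born sibling. Your versions instead make each $r_i$ a \emph{non}-last-born child and absorb the resulting extra $I$-edges into dummy last-born leaves $q_j,m'$ that get deleted. Both devices achieve the same thing---keeping the embedded copy of $I(T_i)$ intact and routing any new adjacencies of $r_i$ to auxiliary vertices---so the difference is cosmetic; the paper's constructions are slightly leaner, while yours make the ``$r_i$ stays isolated'' verification more uniform.
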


\begin{proof}[Proof of \autoref{lem:trivially.perfect.in.I}]
    As in the proof of \autoref{prop:trivially.perrfect.subclass.of.Burling.chordal} we use the characterization of trivially perfect graphs from \autoref{thm:trivially.perfect.characterization.operations}. Since $\mathcal{I}$ contains $K_{1}$, it suffices to show that $\mathcal{I}$ is closed under the two operations which are mentioned in the statement of \autoref{thm:trivially.perfect.characterization.operations}. Let $(T_1,r_1,l_1,c_1)$ and $(T_2,r_2,l_2,c_2)$ be Burling trees on disjoint vertex sets.

    We claim that $I(T_1)+I(T_2)\in \mathcal{C}$. Indeed, let $T$ be the tree which we obtain from $T_1 + T_2$ by adding the three new vertices $r,r_1'$ and $r_2'$, and the edges $r_1r_1', r_2r_2', rr_1'$ and $rr_2'$. Let $l:=l_1\cup l_2\cup \{(r,r_2'), (r_1',r_1), (r_2',r_2)\}$ and $c:=c_1\cup c_2\cup\{(r_1',B), (r,\emptyset),(r_2',\emptyset)\}$ where $B$ is a branch in $T$ starting at $r_2'$. Consider the Burling tree $(T,r,l,c)$. Then $I(T_1)+I(T_2)$ is the induced subgraph of $I(T)$ that we obtain by deleting the vertices $r,r_1'$ and $r_2'$. This proves that $\mathcal{I}$ is closed under disjoint union.

    Let $I'(T_1)$ be the graph that we obtain from $I(T_1)$ by adding a new vertex, say $r'$, which is complete to $V(I(T_1))$. Let $T$ be the tree which we obtain from $T_1$ by adding the two new vertices $r,r'$ and the edges $rr'$ and $rr_1$. Let $l:=l_1\cup\{(r,r_1)\}$, and let $c:=c_{1}\cup \{(r,\emptyset), (r',\emptyset)\}$. Consider the Burling tree $(T,r,l,c)$. Then $I'(T_1)$ is isomorphic to the graph that we obtain from $I(T)$ by deleting $r$, and thus $I'(T_1)\in \mathcal{C}$. This proves that $\mathcal{I}$ is closed under operation $(ii)$ in the statement of $\autoref{thm:trivially.perfect.characterization.operations}$. This completes the proof of \autoref{lem:trivially.perfect.in.I}.
\end{proof}

The following is an immediate corollary of \autoref{lem:interval.derived.trivially.perfect} and \autoref{lem:trivially.perfect.in.I}:

\begin{corollary}
\label{cor:I.equals.trivially.perfect}
    The class $\mathcal{I}$ is the class of trivially perfect graphs.
\end{corollary}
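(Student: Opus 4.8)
The plan is to prove the claimed equality of classes by a straightforward double-inclusion argument, where both inclusions have already been established as separate lemmas earlier in this subsection. First I would invoke \autoref{lem:interval.derived.trivially.perfect}, which supplies the inclusion $\mathcal{I} \subseteq \{\text{trivially perfect graphs}\}$: every graph arising as an induced subgraph of some $I(T)$ is trivially perfect. Next I would invoke \autoref{lem:trivially.perfect.in.I}, which supplies the reverse inclusion $\{\text{trivially perfect graphs}\} \subseteq \mathcal{I}$. Combining the two containments yields $\mathcal{I} = \{\text{trivially perfect graphs}\}$, which is exactly the statement to be proved.

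Because both inclusions are already in hand, there is essentially nothing further to do, and the corollary is genuinely immediate; the only ``step'' is to cite the two lemmas. For context, all the real content lives inside those lemmas: the inclusion $\mathcal{I} \subseteq \{\text{trivially perfect graphs}\}$ is obtained via Wolk's characterization (\autoref{thm:characterization.trivially.perfect}) together with the existence of a top-left vertex (\autoref{lem:Burling.top-left}), by showing that every connected induced subgraph of $I(T)$ has a universal vertex; the reverse inclusion is obtained by checking that $\mathcal{I}$ contains $K_{1}$ and is closed under the two generating operations of \autoref{thm:trivially.perfect.characterization.operations}. Consequently I do not expect any obstacle in assembling the corollary itself, since the work has been front-loaded into \autoref{lem:interval.derived.trivially.perfect} and \autoref{lem:trivially.perfect.in.I}.
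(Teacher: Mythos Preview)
Your proposal is correct and matches the paper's approach exactly: the paper also states that the corollary is immediate from \autoref{lem:interval.derived.trivially.perfect} and \autoref{lem:trivially.perfect.in.I}, i.e., the same double-inclusion argument you describe.
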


Now \autoref{thm:Burling.derived.in.intersection.} is an immediate corollary of \autoref{obs:derived.is.equal.to.intersection}, \autoref{cor:C.proper.subclass.of.strongly.chordal}, and \autoref{cor:I.equals.trivially.perfect}. Recall that \autoref{thm:Burling.derived.in.intersection.} implies that both the class of trivially perfect graphs and Burling strongly perfect graphs are not \g{}. Hence if $\mathcal{H}$ is a finite set of graphs such that the class of $\mathcal{H}$-free graphs is \g{}, then $\mathcal{H}$ contains a Burling strongly chordal graph and a trivially perfect graph. Since, by \autoref{prop:trivially.perrfect.subclass.of.Burling.chordal}, every trivially perfect graph is a Burling strongly chordal graph, the condition for $\mathcal{H}$ to contain both a Burling strongly chordal graph and a trivially perfect graph is satisfied when $\mathcal{H}$ contains a trivially perfect graph. We summarize these in the following corollary:

\begin{corollary}
\label{cor:H.finite.guarding.trivially.perfect}
    Let $\mathcal{H}$ be a set of graphs. If the class of $\mathcal{H}$-free graphs is \g{}, then $\mathcal{H}$ contains a trivially perfect graph.
\end{corollary}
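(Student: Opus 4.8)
The plan is to prove the contrapositive. Assuming that no graph in $\mathcal{H}$ is trivially perfect, I will show that the class of $\mathcal{H}$-free graphs is not \g{}, which contradicts the hypothesis. The driving input is \autoref{thm:trivially.perfect.not.chig}, stating that the class of trivially perfect graphs is itself not \g{}; so the strategy is to realize the trivially perfect graphs as a subclass of the $\mathcal{H}$-free graphs and then push non-\g{}-ness up along that inclusion.

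First I would record the basic observation: if $\mathcal{H}$ contains no trivially perfect graph, then every trivially perfect graph is $\mathcal{H}$-free. Indeed, if a trivially perfect graph $G$ contained an induced copy of some $H \in \mathcal{H}$, then since the class of trivially perfect graphs is hereditary (its defining condition ranges over all induced subgraphs), $H$ would itself be trivially perfect, contradicting the assumption on $\mathcal{H}$. Hence the class of trivially perfect graphs is contained in the class of $\mathcal{H}$-free graphs.

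Next I would verify the monotonicity principle that being \g{} passes to subclasses. Suppose $\mathcal{A}' \subseteq \mathcal{A}$ and let $\mathcal{B}$ be any $\chib$ class. Every graph in $\mathcal{A}' \gcap \mathcal{B}$ has the form $G_1 \cap G_2$ with $G_1 \in \mathcal{A}' \subseteq \mathcal{A}$ and $G_2 \in \mathcal{B}$, so $\mathcal{A}' \gcap \mathcal{B} \subseteq \mathcal{A} \gcap \mathcal{B}$; consequently any $\chi$-bounding function for the larger class also bounds the smaller one. Thus $\mathcal{A}$ being \g{} implies $\mathcal{A}'$ is \g{}, and contrapositively a non-\g{} subclass forces the ambient class to be non-\g{}.

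Combining the two steps completes the argument: the trivially perfect graphs form a subclass of the $\mathcal{H}$-free graphs which, by \autoref{thm:trivially.perfect.not.chig}, is not \g{}, whence the $\mathcal{H}$-free graphs are not \g{} either, contradicting the hypothesis; so $\mathcal{H}$ must contain a trivially perfect graph. I expect no genuine obstacle here: the only point needing attention is the monotonicity principle, which is a routine consequence of the facts that graph-intersection is inclusion-monotone in each argument and that $\chi$-boundedness is inherited by subclasses. It is also worth remarking that finiteness of $\mathcal{H}$ is never invoked, which is precisely why the corollary can be stated for an arbitrary set of graphs.
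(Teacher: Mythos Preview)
Your argument is correct and is essentially the same as the paper's: the paper derives the corollary as an immediate consequence of \autoref{thm:trivially.perfect.not.chig} via exactly the containment and monotonicity you spell out. Your explicit verification that being \g{} passes to subclasses, and your observation that finiteness of $\mathcal{H}$ is never used, are both on point.
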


\subsection{Necessary conditions a finite set of graphs should satisfy in order to define a \texorpdfstring{\g}{TEXT} class}
\label{subsec:finite.sets.conditions}

We need the following classic result of Erd\H{o}s:

\begin{theorem}[Erd\H{os} \cite{erdos1959graph}]
    \label{thm:Erdos.large.girth.large.chi}
    Let $g\geq 3$ and $k\geq 2$. Then there exists a graph of girth at least $g$ and chromatic number at least $k$.
\end{theorem}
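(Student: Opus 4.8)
The plan is to use the probabilistic method, following Erd\H{o}s's original approach via the random graph $G(n,p)$. Fix $g \geq 3$ and $k \geq 2$, and consider a graph $G$ on vertex set $[n]$ in which each of the $\binom{n}{2}$ possible edges is present independently with probability $p = n^{\theta - 1}$, where $\theta$ is a constant to be chosen in the interval $(0, 1/g)$. The strategy is to show that, with probability tending to $1$ as $n \to \infty$, such a $G$ simultaneously has few short cycles and small independence number; after deleting one vertex from each short cycle we will be left with a graph of girth at least $g$ whose chromatic number is forced to be large by the elementary bound $\chi \geq |V|/\alpha$.

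First I would control the short cycles. Let $X$ denote the number of cycles in $G$ of length at most $g-1$. The expected number of cycles of length exactly $i$ is $\frac{n!}{(n-i)!\,2i}\,p^i \leq \frac{(np)^i}{2i} = \frac{n^{\theta i}}{2i}$, and every exponent satisfies $\theta i < \theta g < 1$, so summing over $3 \leq i \leq g-1$ gives $\mathbb{E}[X] = o(n)$. By Markov's inequality, $\Pr[X \geq n/2] \leq 2\,\mathbb{E}[X]/n \to 0$, so with probability tending to $1$ there are fewer than $n/2$ short cycles. Next I would control the independence number. Setting $x := \lceil (3/p)\ln n\rceil$, a union bound over all $x$-subsets gives
$$\Pr[\alpha(G) \geq x] \leq \binom{n}{x}(1-p)^{\binom{x}{2}} \leq \left(n\,e^{-p(x-1)/2}\right)^{x},$$
and since $p(x-1)/2 > \ln n$ for large $n$ while $p \to 0$, the base tends to $0$; hence $\Pr[\alpha(G) \geq x] \to 0$. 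Therefore, for all sufficiently large $n$, both events hold simultaneously with positive probability, and we may fix a graph $G$ with fewer than $n/2$ short cycles and $\alpha(G) < x$.

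Finally, delete one vertex from each short cycle to obtain $G'$; this destroys all cycles of length less than $g$, so $G'$ has girth at least $g$, and since we removed fewer than $n/2$ vertices, $|V(G')| > n/2$ while $\alpha(G') \leq \alpha(G) < x$. Consequently $\chi(G') \geq |V(G')|/\alpha(G') > (n/2)/x$, a quantity of order $n^{\theta}/\ln n$, which tends to infinity with $n$; choosing $n$ large enough makes it exceed $k$, and then $G'$ is the desired graph.

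The main obstacle is the tension in the choice of $p$: it must be small enough (requiring $\theta g < 1$) that short cycles are sparse enough to delete cheaply without losing more than half the vertices, yet large enough (requiring $np \to \infty$, i.e. $\theta > 0$) that the independence number stays sub-linear and thereby forces large chromatic number. Identifying the viable window $0 < \theta < 1/g$ and verifying that both tail probabilities vanish there is the heart of the argument; the deletion step and the final $\chi \geq |V|/\alpha$ estimate are then routine.
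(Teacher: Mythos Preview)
Your argument is a correct and complete rendition of Erd\H{o}s's original probabilistic proof. Note, however, that the paper does not supply its own proof of this statement: it simply quotes the theorem with a citation to \cite{erdos1959graph} and uses it as a black box (to derive \autoref{obs:finite.chib.contains.forest}), so there is no in-paper argument against which to compare yours.
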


The following is an immediate corollary of \autoref{thm:Erdos.large.girth.large.chi}:

\begin{observation}
\label{obs:finite.chib.contains.forest}
    Let $\mathcal{H}$ be a finite set of graphs. If the class of $\mathcal{H}$-free graphs is $\chi$-bounded, then $\mathcal{H}$ contains a forest.
\end{observation}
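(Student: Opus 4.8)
The plan is to prove the contrapositive: I will show that if $\mathcal{H}$ contains no forest, then the class of $\mathcal{H}$-free graphs is not $\chi$-bounded. The key observation is that a finite set $\mathcal{H}$ contains no forest precisely when every graph in $\mathcal{H}$ contains a cycle. Let $g^* := 1 + \max_{H \in \mathcal{H}} |V(H)|$; since $\mathcal{H}$ is finite this maximum exists. Any graph $F$ of girth at least $g^*$ then contains no induced subgraph isomorphic to any $H \in \mathcal{H}$: indeed, each such $H$ contains a cycle, hence an induced cycle of length at most $|V(H)| < g^*$, but $F$ has no cycle (induced or otherwise) of length less than $g^*$, so $F$ cannot contain $H$ as an induced subgraph.

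The main tool is \autoref{thm:Erdos.large.girth.large.chi}, which furnishes, for every $k \geq 2$, a graph $F_k$ of girth at least $g^*$ with $\chi(F_k) \geq k$. By the paragraph above, each $F_k$ is $\mathcal{H}$-free. Since $F_k$ has girth at least $g^* \geq 4$, it is in particular triangle-free, so $\omega(F_k) \leq 2$ for all $k$. Thus $\{F_k : k \geq 2\}$ is a family of $\mathcal{H}$-free graphs with bounded clique number but unbounded chromatic number, witnessing that the class of $\mathcal{H}$-free graphs is not $\chi$-bounded. Taking the contrapositive gives the observation.

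The only subtle point, and the one I would state carefully, is the deduction that a graph containing a cycle must contain a \emph{short induced} cycle: if $H$ has a cycle then a shortest cycle in $H$ is induced and has length at most $|V(H)|$, which is what lets me bound the girth threshold $g^*$ in terms of the finitely many sizes $|V(H)|$. Finiteness of $\mathcal{H}$ is essential here, since it is what makes $g^*$ a well-defined finite integer; without it the argument would fail. Everything else is a direct application of Erd\H{o}s's theorem and the fact that graphs of girth at least $4$ are triangle-free.
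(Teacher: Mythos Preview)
Your proof is correct and is exactly the argument the paper has in mind: the paper states the observation as an immediate corollary of Erd\H{o}s's theorem (\autoref{thm:Erdos.large.girth.large.chi}) without further details, and you have simply spelled out those details. The only nitpick is the trivial edge case $\mathcal{H}=\emptyset$, where your $g^*$ is undefined but the conclusion holds anyway since the class of all graphs is not $\chi$-bounded.
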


The following is an immediate corollary of \autoref{obs:chig.is.chib} and \autoref{obs:finite.chib.contains.forest}:

\begin{corollary}
\label{cor:H.finite.guarding.forest}
    Let $\mathcal{H}$ be a finite set of graphs. If the class of $\mathcal{H}$-free graphs is \g{}, then $\mathcal{H}$ contains a forest.
\end{corollary}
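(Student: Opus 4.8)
The plan is to chain the two observations that immediately precede the statement. First I would invoke \autoref{obs:chig.is.chib}: since the class of $\mathcal{H}$-free graphs is assumed to be \g{}, it is in particular $\chib$. Then I would feed this directly into \autoref{obs:finite.chib.contains.forest}: for a \emph{finite} set $\mathcal{H}$, $\chi$-boundedness of the class of $\mathcal{H}$-free graphs already forces $\mathcal{H}$ to contain a forest. Combining the two yields the claim with no further work, which is exactly the sense in which the corollary is ``immediate''.

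Since the real content lives in the two cited results, there is no genuine obstacle here; the only thing to be careful about is that both ingredients require finiteness of $\mathcal{H}$, and the hypothesis supplies this. For completeness I would recall where the difficulty actually sits, namely inside \autoref{obs:finite.chib.contains.forest}, which rests on Erd\H{o}s' \autoref{thm:Erdos.large.girth.large.chi}: if no member of $\mathcal{H}$ were a forest, then every $H\in\mathcal{H}$ would contain a cycle, and setting $g$ to exceed $\max_{H\in\mathcal{H}}|V(H)|$ (finite, as $\mathcal{H}$ is finite), any graph of girth at least $g$ would avoid every $H$ even as a (not necessarily induced) subgraph. Erd\H{o}s' theorem then supplies $\mathcal{H}$-free graphs of girth at least $g$ (hence bounded clique number) and arbitrarily large chromatic number, contradicting $\chi$-boundedness. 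Thus I would present the argument as a two-line deduction from \autoref{obs:chig.is.chib} and \autoref{obs:finite.chib.contains.forest}, noting only that finiteness of $\mathcal{H}$ is what makes the girth threshold $g$ well-defined.
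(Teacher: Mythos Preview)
Your proposal is correct and matches the paper's own argument exactly: the paper states that the corollary is immediate from \autoref{obs:chig.is.chib} and \autoref{obs:finite.chib.contains.forest}, which is precisely the two-step chain you outline. One tiny quibble: only \autoref{obs:finite.chib.contains.forest} actually needs finiteness of $\mathcal{H}$; \autoref{obs:chig.is.chib} holds for arbitrary classes.
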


\autoref{cor:H.finite.guarding.complete.multipartite}, \autoref{cor:H.finite.guarding.line.of.forest}, \autoref{cor:H.finite.guarding.trivially.perfect}, and \autoref{cor:H.finite.guarding.forest} give rise to necessary conditions on a finite set $\mathcal{H}$ of graphs for the class of $\mathcal{H}$-free graphs to be \g{}. We summarize these conditions in the following theorem:

\begin{theorem}
\label{thm:finite.chi-guarding.necessity}
    Let $\mathcal{H}$ be a finite set of graphs. If the class of $\mathcal{H}$-free graphs is \g{}, then $\mathcal{H}$ contains:
    \begin{itemize}[topsep=0.2cm,itemsep=-0.4cm,partopsep=0.5cm,parsep=0.5cm]
        \item a forest;
        \item a line graph of a forest;
        \item a complete multipartite graph; and
        \item a trivially perfect graph;
    \end{itemize}
\end{theorem}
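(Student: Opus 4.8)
The plan is to observe that this theorem is nothing more than the conjunction of the four corollaries already established in this section, so its proof amounts to assembling them. Concretely, I would suppose that $\mathcal{H}$ is a finite set of graphs for which the class of $\mathcal{H}$-free graphs is \g{}, and then verify each of the four membership conditions in turn by citing the relevant corollary.

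First, applying \autoref{cor:H.finite.guarding.forest} gives that $\mathcal{H}$ contains a forest; this in turn rests on \autoref{obs:chig.is.chib} (every \g{} class is $\chib$) together with the Erd\H{o}s high-girth, high-chromatic-number construction packaged in \autoref{obs:finite.chib.contains.forest}. Second, \autoref{cor:H.finite.guarding.line.of.forest} yields a line graph of a forest, via \autoref{thm:line.graphs.not.chig} and the observation (\autoref{obs:avoid.line.graphs.of.graphs.of.large.g}) that avoiding all line graphs of graphs of large girth forces $\mathcal{H}$ to contain a line graph of a forest. Third, \autoref{cor:H.finite.guarding.complete.multipartite}, a direct consequence of \autoref{thm:complete.multipartite.graphs.not.chig}, forces a complete multipartite graph into $\mathcal{H}$. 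Finally, \autoref{cor:H.finite.guarding.trivially.perfect} forces a trivially perfect graph into $\mathcal{H}$. Taking the conjunction of these four statements is exactly the claim.

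Since all the substance lives in the four corollaries, there is no genuine obstacle at the level of this theorem; it is an immediate corollary of the preceding results. The only point worth flagging is that the four required graphs need not be distinct — a single $H\in\mathcal{H}$ could simultaneously be, for instance, a forest and the line graph of a forest — but the statement only asserts the existence of a member of each type, which the conjunction supplies directly. The real work, which I would take as given here, is the collection of non-\g{} results of this section, most notably \autoref{thm:trivially.perfect.not.chig}, whose proof routes through the Burling-graph machinery of the previous subsection.
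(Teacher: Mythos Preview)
Your proposal is correct and matches the paper's approach exactly: the paper presents this theorem explicitly as a summary of \autoref{cor:H.finite.guarding.complete.multipartite}, \autoref{cor:H.finite.guarding.line.of.forest}, \autoref{cor:H.finite.guarding.trivially.perfect}, and \autoref{cor:H.finite.guarding.forest}, with no additional argument. Your remark that the four graphs need not be distinct is accurate and harmless.
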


We note that in an upcoming paper we discuss results towards a complete characterization of the \g{} classes which are defined by two forbidden induced subgraphs.

\section{A characterization of \texorpdfstring{$H$}{TEXT}-free intersectionwise \texorpdfstring{$\chi$}{TEXT}-guarding classes of graphs}
\label{sec:H-free}

In this section we characterize the graphs $H$ for which the class of $H$-free graphs is \g{}. The main result of this section is the following:

\begin{theorem}
\label{thm:characterization.H-free.guarding}
    Let $H$ be a graph. Then the class of $H$-free graphs is \g{} if and only if $H$ is isomorphic to $P_2$, $P_3$, or $rK_1$ for some $r > 0$.
\end{theorem}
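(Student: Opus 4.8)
The plan is to prove the two directions separately, with all the real work outsourced to the earlier results. For the \emph{if} direction I would treat the three families in turn. If $H \cong rK_1$, then the class of $H$-free graphs is \g{} immediately by \autoref{lem:rK1-free.guarding}. If $H \cong P_2$, I would note that a graph is $P_2$-free exactly when it has no edge; hence every $P_2$-free graph is edgeless, so each of its components is a single vertex and it is componentwise $2$-dependent. If $H \cong P_3$, I would use the standard fact that a graph is $P_3$-free if and only if every connected component is a clique (if some component is not complete, a shortest path between two non-adjacent vertices of it induces a $P_3$); hence every $P_3$-free graph is again componentwise $2$-dependent. In both of these cases \autoref{cor:componentwise.r.dependent.guarding}, applied with $r = 2$, shows that the class is \g{}.

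For the \emph{only if} direction I would apply \autoref{thm:finite.chi-guarding.necessity} to the singleton $\mathcal{H} = \{H\}$. This forces $H$ to be, simultaneously, a forest, a complete multipartite graph, a line graph of a forest, and a trivially perfect graph; the last condition will turn out to be automatically satisfied by the surviving graphs, so the decisive constraints are the first three. The heart of the argument is then a short structural case analysis. Write $H$ as a complete $k$-partite graph with parts $V_1, \ldots, V_k$. Since $H$ is also a forest, we must have $k \leq 2$: otherwise choosing one vertex from each of three distinct parts yields a triangle. If $k = 1$ then $H$ is edgeless, i.e.\ $H \cong rK_1$. If $k = 2$ then $H \cong K_{m,n}$; here $m,n \geq 2$ would give an induced $C_4$, contradicting acyclicity, so $\min(m,n) = 1$ and $H$ is a star $K_{1,t}$.

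It remains to eliminate the large stars using the line-graph condition. Since $H$ is a line graph of a forest it is in particular a line graph, and line graphs are claw-free (that is, contain no induced $K_{1,3}$), as used in the proof of \autoref{cor:line.graph.guarding}. A star $K_{1,t}$ with $t \geq 3$ contains $K_{1,3}$ as an induced subgraph, hence is not claw-free; so $t \leq 2$, giving $H \cong P_2$ when $t = 1$ and $H \cong P_3$ when $t = 2$. Collecting the cases $k = 1$ and $k = 2$, the only possibilities are $H \cong rK_1$, $H \cong P_2$, or $H \cong P_3$, which matches the list and completes the characterization.

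I do not expect a serious obstacle, since \autoref{lem:rK1-free.guarding}, \autoref{cor:componentwise.r.dependent.guarding}, and \autoref{thm:finite.chi-guarding.necessity} already supply the substantive content; the only care needed is bookkeeping in the case analysis, namely verifying that each excluded configuration ($\geq 3$ parts, $K_{m,n}$ with $m,n \geq 2$, and $K_{1,t}$ with $t \geq 3$) genuinely produces a triangle, an induced $C_4$, or an induced claw, respectively, so that the relevant necessary condition is violated.
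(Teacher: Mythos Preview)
Your proposal is correct and follows essentially the same approach as the paper: both directions rely on the same earlier results (\autoref{lem:rK1-free.guarding}, \autoref{cor:componentwise.r.dependent.guarding} for sufficiency; \autoref{thm:finite.chi-guarding.necessity} for necessity), and the forward direction is a short case analysis using the forest, complete-multipartite, and claw-free constraints. The only cosmetic difference is the order of the case split---the paper applies claw-freeness first to obtain a linear forest and then uses $(K_1+K_2)$-freeness, whereas you apply the complete-multipartite structure first and then claw-freeness---but the ingredients and the outcome are identical.
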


We start by proving the backwards direction of \autoref{thm:characterization.H-free.guarding}. Recall that in \autoref{sec:decomposable.graphs.guarding} we proved \autoref{lem:rK1-free.guarding} which states that for every positive integer $r$, the class of $rK_{1}$-free graphs is \g{}. Thus, since every $P_{2}$-free graph is a $P_{3}$-free graph, in order to prove the backwards direction it suffices to prove that the class of $P_{3}$-free graphs is \g{}. To this end we need the following well-known result:

\begin{proposition}[Folklore]
\label{prop:P3-free_structure}
    Let $G$ be a graph. Then $G$ is the disjoint union of complete graphs if and only if $G$ is $P_{3}$-free.
\end{proposition}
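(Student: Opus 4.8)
The plan is to prove the two directions of the equivalence separately, with the backward direction carrying whatever substance the statement has. For the forward direction I would assume that $G$ is a disjoint union of complete graphs and show it is $P_3$-free. Suppose for contradiction that $G$ contains an induced $P_3$ on vertices $a, b, c$ with $ab, bc \in E(G)$ and $ac \notin E(G)$. Since $ab \in E(G)$, the vertices $a$ and $b$ lie in a common connected component, which by hypothesis is a complete graph; likewise $b$ and $c$ lie in a common complete component. As the components partition $V(G)$ and $b$ belongs to exactly one of them, all three of $a, b, c$ lie in the same complete component, forcing $ac \in E(G)$, a contradiction.

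For the backward direction, which is the heart of the argument, I would assume $G$ is $P_3$-free and show that each connected component of $G$ is a clique, which is exactly what it means for $G$ to be a disjoint union of complete graphs. The key observation is a transitivity statement: if $uv \in E(G)$ and $vw \in E(G)$ with $u \neq w$, then $uw \in E(G)$, since otherwise $\{u, v, w\}$ would induce a $P_3$. I would then promote this to the global claim that any two vertices in the same component are adjacent, by taking a shortest path between them and using the transitivity observation to contract a length-two subpath, contradicting minimality unless the path already has length one; equivalently, one can induct on path length. Hence every component is complete.

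The only real obstacle is this promotion of the local transitivity step to the global statement that whole components are cliques, and it dissolves once the transitivity observation is in hand. A cleaner packaging of the same idea, which I would likely prefer in the final write-up, is to show directly that the relation ``equal or adjacent'' on $V(G)$ is an equivalence relation: reflexivity and symmetry are immediate, and transitivity is precisely the observation above. Its equivalence classes are then the vertex sets of the components, each of which is complete by construction, so $G$ is a disjoint union of complete graphs.
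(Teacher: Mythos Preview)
Your proof is correct and entirely standard. The paper does not actually give a proof of this proposition; it is stated as folklore and used without argument, so there is no ``paper's own proof'' to compare against. Your write-up (especially the packaging via the equivalence relation ``equal or adjacent'') is a clean way to supply the missing details.
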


In \autoref{sec:decomposable.graphs.guarding} we proved \autoref{cor:componentwise.r.dependent.guarding} which states that every class of componentwise $r$-dependent graphs is \g{}. Since, by \autoref{prop:P3-free_structure}, every $P_{3}$-free graph is componentwise $2$-dependent, we get the following:

\begin{corollary}
\label{cor:P3-free.guarding}
   The class of $P_{3}$-free graphs is \g{}.
\end{corollary}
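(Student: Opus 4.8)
The plan is to obtain this corollary by combining the structural description of $P_3$-free graphs with the already-established guarding result for componentwise $r$-dependent classes, exactly as the surrounding text signals. First I would invoke \autoref{prop:P3-free_structure} to rewrite membership in the class of $P_3$-free graphs in purely structural terms: a graph $G$ is $P_3$-free if and only if $G$ is a disjoint union of complete graphs. This is the only nontrivial fact about $P_3$-free graphs that the argument needs, and it is quoted as folklore just above.

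Next I would check that this structure forces the graphs in question into the framework of \autoref{cor:componentwise.r.dependent.guarding}. Each component of a $P_3$-free graph is a complete graph, and a complete graph has independence number $1$. Since $1 \leq 2 - 1$, every component has independence number at most $2-1$, which is precisely the defining condition for $G$ to be componentwise $2$-dependent. Hence the class of $P_3$-free graphs is a class of componentwise $2$-dependent graphs.

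Finally I would apply \autoref{cor:componentwise.r.dependent.guarding} with $r=2$, which states that every class of componentwise $r$-dependent graphs is \g{}, to conclude that the class of $P_3$-free graphs is \g{}. I expect there to be no real obstacle here: all the substantive work has already been carried out in \autoref{sec:decomposable.graphs.guarding}, where \autoref{lem:rK1-free.guarding} and \autoref{prop:componentwise.guarding.is.guarding} were used to establish the componentwise $r$-dependent case via a Ramsey-type clique bound. The only point requiring a moment's care is the index bookkeeping, namely verifying that ``independence number at most $r-1$'' with $r=2$ matches ``each component is a clique''; everything else is a direct citation.
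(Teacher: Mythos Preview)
Your proposal is correct and follows exactly the same route as the paper: use \autoref{prop:P3-free_structure} to see that every $P_3$-free graph is a disjoint union of cliques, hence componentwise $2$-dependent, and then apply \autoref{cor:componentwise.r.dependent.guarding} with $r=2$.
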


Since a graph is complete multipartite if and only if its complement is the disjoint union of complete graphs, \autoref{prop:P3-free_structure} implies the following:

\begin{proposition}[Folklore]
\label{prop:charact.of.compl.multi.induced}
    Let $G$ be a graph. Then $G$ is complete multipartite if and only if $G$ is $(K_{1}+K_{2})$-free.
\end{proposition}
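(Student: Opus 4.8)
The plan is to prove this folklore fact by passing to complements and reducing it to \autoref{prop:P3-free_structure}, exactly as the sentence introducing the proposition suggests. Since the statement is a biconditional, the cleanest route is to assemble a chain of equivalences each of which is either elementary or already available, so that both directions follow simultaneously.

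First I would record the two complementation facts that do all the work. The first is the standard induced-subgraph duality: for any graphs $G$ and $H$, the graph $G$ contains an induced copy of $H$ if and only if $G^{c}$ contains an induced copy of $H^{c}$, and hence $G$ is $H$-free if and only if $G^{c}$ is $H^{c}$-free. The second is the explicit computation of a single complement. Writing $K_{1}+K_{2}$ as the three-vertex graph consisting of one isolated vertex together with a disjoint edge, a direct check shows that its complement is the path $P_{3}$ (the former isolated vertex becomes the center, adjacent to the two endpoints of the former edge, which are now non-adjacent). Combining these two observations yields that $G$ is $(K_{1}+K_{2})$-free if and only if $G^{c}$ is $P_{3}$-free.

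Next I would invoke the equivalence already noted in the text, that a graph is complete multipartite if and only if its complement is a disjoint union of complete graphs; this is immediate from the definitions, since in a complete multipartite graph the parts are independent sets pairwise complete to one another, and under complementation these parts become cliques that are pairwise anticomplete, and conversely. Chaining everything together gives
\[
G \text{ is complete multipartite} \iff G^{c} \text{ is a disjoint union of complete graphs} \iff G^{c} \text{ is } P_{3}\text{-free} \iff G \text{ is } (K_{1}+K_{2})\text{-free},
\]
where the middle equivalence is exactly \autoref{prop:P3-free_structure}.

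There is no substantive obstacle here; the only point demanding care is verifying that the complement of $K_{1}+K_{2}$ is indeed $P_{3}$ (equivalently, that $P_{3}^{c}=K_{1}+K_{2}$), after which the proposition is a purely formal consequence of \autoref{prop:P3-free_structure} together with the definition of complete multipartite graphs.
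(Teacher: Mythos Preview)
Your proposal is correct and follows exactly the approach the paper indicates: the paper states the proposition as an immediate consequence of \autoref{prop:P3-free_structure} via the observation that a graph is complete multipartite if and only if its complement is a disjoint union of complete graphs, and your chain of equivalences spells this out precisely.
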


We now proceed with proving the characterization of \g{} graph classes which are defined by a single forbidden induced subgraph, as stated in \autoref{thm:characterization.H-free.guarding}.

\begin{proof}[Proof of \autoref{thm:characterization.H-free.guarding}]
    The backward direction follows directly from \autoref{cor:P3-free.guarding}, \autoref{lem:rK1-free.guarding}, and the fact that $P_2$ is an induced subgraph of $P_3$.
    
    For the forward direction: Let $H$ be a graph such that the class of $H$-free graphs is \g{}. We first notice that, by \autoref{cor:H.finite.guarding.forest}, $H$ is a forest. 

    Next, we observe that $H$ has to be claw-free, as otherwise we would obtain a contradiction with \autoref{cor:line.graph.guarding}. Hence, $H$ is a disjoint union of paths. If $H$ consists of multiple components, then each of these is a single vertex, since otherwise $H$ contains $K_{1}+K_{2}$, which contradicts the fact that, by \autoref{thm:finite.chi-guarding.necessity}, $H$ is complete multipartite, and thus, by \autoref{prop:charact.of.compl.multi.induced}, $H$ is $(K_{1}+K_{2})$-free. In this case, $H$ is isomorphic to $rK_1$ for some $r \geq 2$.

    Hence, we may assume that $H$ is a path. $H$ does not contain more than $3$ vertices, as any path on at least $4$ vertices contains $K_{1}+K_{2}$ as an induced subgraph, leading to a contradiction with \autoref{thm:finite.chi-guarding.necessity}. Thus, $H$ is $P_1$ (which equals $rK_1$ for $r = 1$), $P_2$, or $P_3$, each of which has previously been shown to be \g{}. This completes the proof of \autoref{thm:characterization.H-free.guarding}.
\end{proof}

\section{Intersectionwise self-\texorpdfstring{$\chi$}{TEXT}-guarding classes}
\label{sec:self.chi}

 As we discussed in \autoref{sec:intro} we call a class of graphs $\mathcal{A}$ \sg{} if for every positive integer $k$ the class $\gcap_{i\in [k]}\mathcal{A}$ is $\chib$. In this section we study \sg{} classes. In \autoref{sub:sxg.one.forbidden.is} we focus on \sg{} classes which are defined by one forbidden induced subgraph, and in \autoref{subsec:sxg.from.chig} we consider ways to construct \sg{} classes from \g{} classes.

\subsection{\texorpdfstring{$H$}{TEXT}-free intersectionwise self-\texorpdfstring{$\chi$}{TEXT}-guarding classes}
\label{sub:sxg.one.forbidden.is}

We begin with an easy observation. Since for every integer $k$, the $k$-fold graph-intersection of a class $\mathcal{C}$ contains $\mathcal{C}$, we have the following:

\begin{observation}
\label{obs:sg.implies.chi.bounded}
    Let $\mathcal{C}$ be a \sg{} class of graphs. Then $\mathcal{C}$ is $\chi$-bounded.
\end{observation}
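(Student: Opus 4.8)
The plan is to invoke the definition of \sg{} directly, specializing to a single copy of $\mathcal{C}$. Recall that $\mathcal{C}$ being \sg{} means precisely that $\gcap_{i\in[k]}\mathcal{C}$ is $\chib$ for every positive integer $k$. First I would take $k=1$: the $1$-fold graph-intersection $\gcap_{i\in[1]}\mathcal{C}$ is, by the definition of graph-intersection, exactly $\{G : \exists\, G_1 \in \mathcal{C} \text{ such that } G = G_1\}$, which is $\mathcal{C}$ itself. Hence $\mathcal{C}$ is $\chib$, as claimed.

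Alternatively, and more in the spirit of the remark preceding the statement, I would argue via a containment that holds for \emph{every} $k$. Given any $G\in\mathcal{C}$, setting $G_1 = \cdots = G_k = G$ yields $G = G_1 \cap \cdots \cap G_k$ with each $G_i \in \mathcal{C}$, so $G \in \gcap_{i\in[k]}\mathcal{C}$; thus $\mathcal{C} \subseteq \gcap_{i\in[k]}\mathcal{C}$. Since the latter class is $\chib$, any $\chi$-bounding function witnessing that fact also witnesses the $\chi$-boundedness of the subclass $\mathcal{C}$ (the inequality $\chi(G)\le f(\omega(G))$ is inherited by any subcollection of graphs).

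There is essentially no obstacle here: the statement is an immediate consequence of the definition, and the only point requiring any bookkeeping is the observation that a single-class graph-intersection returns the class unchanged, or equivalently that each graph of $\mathcal{C}$ is the intersection of finitely many copies of itself. I would present the $k=1$ version as the cleanest route, optionally noting the general containment to match the phrasing of the surrounding text.
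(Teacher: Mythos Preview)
Your proposal is correct and matches the paper's own justification: the paper states (just before the observation) that for every $k$ the $k$-fold graph-intersection of $\mathcal{C}$ contains $\mathcal{C}$, which is precisely your second argument. Your $k=1$ variant is an equally valid shortcut to the same conclusion.
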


We remark that the converse of \autoref{obs:sg.implies.chi.bounded} does not hold. In particular, as we mention in \autoref{sec:intro}, it follows from a result of Burling \cite{burling1965coloring} that the class of interval graphs is not \sg{}, and thus the class of perfect graphs is not \sg{}. The following is an immediate corollary of \autoref{obs:sg.implies.chi.bounded} and \autoref{obs:finite.chib.contains.forest}.

\begin{corollary}
\label{cor:H.sfg.is.forest}
    Let $H$ be a graph. If the class of $H$-free graphs is \sg{}, then $H$ is a forest.
\end{corollary}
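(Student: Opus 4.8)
The plan is to simply chain the two observations named in the statement; no additional work is required. First I would let $\mathcal{C}$ denote the class of $H$-free graphs and assume that $\mathcal{C}$ is \sg{}. Applying \autoref{obs:sg.implies.chi.bounded} to $\mathcal{C}$ immediately gives that $\mathcal{C}$ is $\chib$. The content being invoked here is merely that the $k$-fold graph-intersection $\gcap_{i\in[k]}\mathcal{C}$ contains $\mathcal{C}$ as a subclass for every positive integer $k$, so $\chi$-boundedness of all these intersections forces $\chi$-boundedness of $\mathcal{C}$ itself.

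Next I would apply \autoref{obs:finite.chib.contains.forest} to the finite family $\mathcal{H} := \{H\}$. Since the class of $\{H\}$-free graphs is by definition exactly the class of $H$-free graphs, and we have just shown this class to be $\chib$, the observation yields that $\{H\}$ contains a forest. As $H$ is the only member of $\{H\}$, it follows that $H$ is itself a forest, which is the desired conclusion.

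The only genuine mathematical substance is inherited from the cited results rather than produced in this argument. In particular, \autoref{obs:finite.chib.contains.forest} rests on Erd\H{o}s's theorem (\autoref{thm:Erdos.large.girth.large.chi}) that there exist graphs of arbitrarily large girth and chromatic number: a single non-forest $H$ necessarily contains a cycle, and such graphs witness triangle-free families of unbounded chromatic number in which $H$ does not appear as an induced subgraph. Consequently there is no real obstacle in the present proof; it is a direct composition of the two observations, and I would expect the final write-up to amount to a single sentence once both are invoked.
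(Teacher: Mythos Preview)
Your proposal is correct and matches the paper's approach exactly: the paper states this corollary as an immediate consequence of \autoref{obs:sg.implies.chi.bounded} and \autoref{obs:finite.chib.contains.forest}, which is precisely the two-step chaining you describe.
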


A \defin{chair} is a graph isomorphic to the graph on five vertices which we obtain by identifying one of the vertices of a $P_{2}$ with a vertex of degree two of a $P_{4}$. 

\begin{theorem}
\label{thm:sg.chair.claw.plus}
    The following classes of graphs are not \sg:
    \begin{enumerate}[label=(\roman*)]
        \item the class of chair-free graphs.
        \item the class of $(K_{1,3}+K_{1})$-free graphs.
    \end{enumerate}
\end{theorem}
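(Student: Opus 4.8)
The plan is to refute \sg{}-ness for both classes at the smallest possible value $k=2$, by exhibiting two subclasses whose $2$-fold graph-intersection is already not $\chi$-bounded. The two subclasses will be the class $\mathcal{L}_g$ of line graphs of graphs of girth at least $g$ (for some fixed $g\geq 3$) and the class $\mathcal{C}$ of complete multipartite graphs. Indeed, \autoref{lem:intersection.line.graphs.complete.multipartite.not.chib} already supplies exactly what we need: $\mathcal{L}_g \gcap \mathcal{C}$ is not $\chi$-bounded. So the entire task reduces to showing that, for each forbidden graph $F \in \{\text{chair},\, K_{1,3}+K_1\}$, both $\mathcal{L}_g$ and $\mathcal{C}$ are contained in the class of $F$-free graphs, and then invoking monotonicity of graph-intersection.

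The two containment checks are the substance of the argument, and they split along the two subclasses. For $\mathcal{L}_g$: line graphs are claw-free (Van~Rooij and Wilf~\cite{van1965interchange}), and since both the chair and $K_{1,3}+K_1$ contain an induced claw $K_{1,3}$, every claw-free graph is both chair-free and $(K_{1,3}+K_1)$-free; hence $\mathcal{L}_g$ is a subclass of both. For $\mathcal{C}$: by \autoref{prop:charact.of.compl.multi.induced} the complete multipartite graphs are exactly the $(K_1+K_2)$-free graphs, and both forbidden graphs contain an induced $K_1+K_2$ (in the chair, an end-edge of the underlying $P_4$ together with the degree-one vertex attached by the $P_2$; in $K_{1,3}+K_1$, any edge of the claw together with the isolated vertex). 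Consequently every complete multipartite graph is both chair-free and $(K_{1,3}+K_1)$-free, so $\mathcal{C}$ is a subclass of both.

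With these inclusions in hand, both parts follow uniformly. Writing $\mathcal{A}$ for the class of $F$-free graphs, the inclusions $\mathcal{L}_g,\mathcal{C}\subseteq\mathcal{A}$ give, for every $G=G_1\cap G_2$ with $G_1\in\mathcal{L}_g$ and $G_2\in\mathcal{C}$, that $G\in\mathcal{A}\gcap\mathcal{A}=\gcap_{i\in[2]}\mathcal{A}$; hence $\mathcal{L}_g\gcap\mathcal{C}\subseteq\gcap_{i\in[2]}\mathcal{A}$. Since any superclass of a class that is not $\chi$-bounded is itself not $\chi$-bounded, and $\mathcal{L}_g\gcap\mathcal{C}$ is not $\chi$-bounded by \autoref{lem:intersection.line.graphs.complete.multipartite.not.chib}, we conclude that $\gcap_{i\in[2]}\mathcal{A}$ is not $\chi$-bounded, and therefore $\mathcal{A}$ is not \sg{}. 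I expect no genuine obstacle here: the hard non-$\chi$-boundedness construction is entirely encapsulated in the cited lemma, and the only points requiring care are the two elementary subclass memberships (in particular verifying that complete multipartite graphs avoid $K_{1,3}+K_1$ via the $(K_1+K_2)$-free characterization) and the observation that failing \sg{}-ness already at $k=2$ is enough.
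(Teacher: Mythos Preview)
Your proposal is correct and follows essentially the same approach as the paper: both use \autoref{lem:intersection.line.graphs.complete.multipartite.not.chib} together with the observations that line graphs are claw-free and complete multipartite graphs are $(K_1+K_2)$-free, hence both subclasses lie inside chair-free and $(K_{1,3}+K_1)$-free graphs. The only cosmetic difference is that you work with $\mathcal{L}_g$ rather than all line graphs and spell out the induced-subgraph containments more explicitly.
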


\begin{proof}[Proof of \autoref{thm:sg.chair.claw.plus}]
    Let $\mathcal{C}$ be the class of complete multipartite graphs, and let $\mathcal{D}$ the class of line graphs. Then, by \autoref{lem:intersection.line.graphs.complete.multipartite.not.chib}, the class $\mathcal{C} \gcap \mathcal{D}$ is not $\chi$-bounded.

    The theorem now follows by the observation that both the class of $(K_{1,3} + K_{1})$-free graphs and the class of chair-free graphs contain the class of line graphs (since line graphs are claw-free graphs) and the class of complete multipartite graphs (since complete multipartite graphs are $(K_{1}+ K_{2})$-free graphs). This completes the proof of \autoref{thm:sg.chair.claw.plus}.
\end{proof}

We say that a graph is a \defin{linear forest} if it is a forest in which every component is a path.

\begin{corollary}
\label{cor:H.sfg.linear.forest.OR.star}
    Let $H$ be a graph. If the class of $H$-free graphs is \sg{}, then $H$ is a linear forest or a star.
\end{corollary}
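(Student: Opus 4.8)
The plan is to start from the two facts already in hand and reduce the statement to a purely structural fact about forests. \autoref{cor:H.sfg.is.forest} tells us that $H$ is a forest, so what remains is to rule out every forest that is neither a linear forest nor a star. First I would record a monotonicity principle: being \sg{} is inherited by subclasses, since $\gcap_{i\in[k]}\mathcal{A}\subseteq\gcap_{i\in[k]}\mathcal{B}$ whenever $\mathcal{A}\subseteq\mathcal{B}$, and a subclass of a $\chib$ class is $\chib$. Because $\{H'\text{-free graphs}\}\subseteq\{H\text{-free graphs}\}$ whenever $H'$ is an induced subgraph of $H$, combining this principle with \autoref{thm:sg.chair.claw.plus} yields: if the class of $H$-free graphs is \sg{}, then $H$ contains no induced chair and no induced $K_{1,3}+K_1$. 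Hence it suffices to prove that every chair-free, $(K_{1,3}+K_1)$-free forest is a linear forest or a star.

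Second, I would split on whether $H$ is connected. Suppose $H$ is disconnected and has a vertex $v$ of degree at least $3$. Since $H$ is a forest, $v$ together with three of its neighbours induces a claw $K_{1,3}$ (no two neighbours of $v$ can be adjacent, as that would create a triangle), and any vertex lying in a component other than the one containing $v$ is anticomplete to this claw; this produces an induced $K_{1,3}+K_1$, a contradiction. Therefore a disconnected $H$ has maximum degree at most $2$, and so $H$ is a disjoint union of paths, i.e. a linear forest.

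Third, suppose $H$ is connected, so $H$ is a tree. If every vertex has degree at most $2$, then $H$ is a path and we are done. Otherwise fix a vertex $v$ of degree at least $3$. Since the only vertex of degree at least $2$ in a star is its centre, if $H$ were a star it would be centred at $v$; so we may assume $H$ is not a star, which forces the existence of a vertex $w$ at distance exactly $2$ from $v$, say along a path $v-u-w$ with $u$ a neighbour of $v$. Choosing two further neighbours $p,q$ of $v$, I would verify that $\{p,q,u,v,w\}$ induces a chair: it carries the path $p-v-u-w$ with pendant $q$ at $v$, and acyclicity of $H$ rules out every other potential edge (for instance $pu$ or $qu$ would create a triangle through $v$, and $w$ is nonadjacent to $v,p,q$ as it lies two steps from $v$). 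This contradicts chair-freeness, so a tree $H$ is a path or a star. Combining the two cases, $H$ is a linear forest or a star. The only step carrying real content is this chair-detection in the connected case — specifically, selecting the right five vertices and using acyclicity to certify that the induced subgraph is exactly the chair rather than something with extra chords; the remaining steps are bookkeeping built on the monotonicity principle and \autoref{thm:sg.chair.claw.plus}.
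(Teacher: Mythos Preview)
Your proposal is correct and follows essentially the same route as the paper: both reduce to showing that a chair-free, $(K_{1,3}+K_1)$-free forest is a linear forest or a star, using a claw plus an isolated vertex to force connectivity and a chair to force the connected case to be a star. The paper leaves the monotonicity principle and the verification that the chair is actually induced implicit, whereas you spell these out, but the argument is the same.
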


\begin{proof}[Proof of \autoref{cor:H.sfg.linear.forest.OR.star}]
    Let $H$ be a graph as in the statement. By \autoref{cor:H.sfg.is.forest} we have that $H$ is a forest. Suppose that $H$ is not a linear forest. Then a component, say $H_{1}$, of $H$ contains a claw. Since, by \autoref{thm:sg.chair.claw.plus}, we have that $H$ is $(K_{1,3}+K_{1})$-free, it follows that $H$ is connected. We claim that $H_{1}$ is a star. Suppose not. Then $H_{1}$ contains a chair, which contradicts \autoref{thm:sg.chair.claw.plus}. This completes the proof of \autoref{cor:H.sfg.linear.forest.OR.star}.
\end{proof}

Recently Adenwalla, Braunfeld, Sylvester, and Zamaraev \cite{adenwalla2024boolean}, using different terminology, proved that if $H$ is a star then the class all $H$-free graphs is \sg{}.

\begin{theorem}[{Adenwalla, Braunfeld, Sylvester, and Zamaraev \cite[Lemma 5.9]{adenwalla2024boolean}}]
\label{thm:self_chi_star}
    Let $t$ be a positive integer. Then the class of $K_{1,t}$-free graphs is \sg{}.
\end{theorem}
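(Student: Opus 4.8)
The plan is to show something stronger than mere $\chi$-boundedness of the $k$-fold intersection: I would prove that the $k$-fold graph-intersection of the class of $K_{1,t}$-free graphs is itself contained in a single $K_{1,r}$-free class, for a value $r = r(k,t)$ depending only on $k$ and $t$, and then invoke the (easy) $\chi$-boundedness of $K_{1,r}$-free graphs. So let $G = G_1 \cap \cdots \cap G_k$ with each $G_i$ being $K_{1,t}$-free, and assume as usual that $V(G_1) = \cdots = V(G_k) = V(G)$.

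\textbf{Step 1 (the key step).} I claim that $G$ is $K_{1,r}$-free, where $r$ is the $k$-color Ramsey number $R_k(t)$, that is, the least $n$ such that every coloring of the edges of $K_n$ with $k$ colors contains a monochromatic $K_t$ (its existence follows by iterating \autoref{thm:ramsey_cliqueORindependent}). Suppose toward a contradiction that some vertex $v$ has a set $S$ of $r$ pairwise non-adjacent neighbors in $G$. For every pair $\{u,w\} \subseteq S$ the non-edge $uw \notin E(G) = \bigcap_{i\in[k]} E(G_i)$ is witnessed by some index $i$ with $uw \notin E(G_i)$; choosing one such index for each pair defines a $k$-coloring of the edges of the complete graph on $S$. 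Since $|S| = r = R_k(t)$, there is a monochromatic $K_t$, i.e.\ a set $S' \subseteq S$ with $|S'| = t$ all of whose pairs are non-edges of a single $G_i$. Then $S'$ is independent in $G_i$, while every vertex of $S' \subseteq N_G(v) \subseteq N_{G_i}(v)$ is adjacent to $v$ in $G_i$; hence $\{v\} \cup S'$ induces a $K_{1,t}$ in $G_i$, contradicting that $G_i$ is $K_{1,t}$-free.

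\textbf{Step 2 (wrapping up).} It remains to record that $K_{1,r}$-free graphs are $\chi$-bounded by a function of $r$ alone, via a maximum-degree bound. For a $K_{1,r}$-free graph $G$ and any vertex $v$, the set $N(v)$ has independence number at most $r-1$ (otherwise $v$ together with an independent set of size $r$ induces a $K_{1,r}$) and clique number at most $\omega(G)-1$ (a clique in $N(v)$ extends by $v$); by \autoref{thm:ramsey_cliqueORindependent} these two facts force $|N(v)| < R(\omega(G), r)$, and therefore $\chi(G) \le \Delta(G) + 1 \le R(\omega(G), r)$. Applying this with the $r = R_k(t)$ from Step 1 gives $\chi(G) \le R(\omega(G), R_k(t))$ for every $G$ in the $k$-fold graph-intersection, which is a $\chi$-bounding function depending only on $t$ and $k$. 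Since $k$ was arbitrary, the class of $K_{1,t}$-free graphs is \sg{}.

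The only substantive content is Step 1, and there the one delicate point is to set up the edge-coloring of $S$ correctly and to use the \emph{multicolor} Ramsey number $R_k(t)$ rather than the two-color version stated in the excerpt (this is why the resulting star $K_{1,r}$ grows with $k$). Step 2 is routine and does not even use the intersection structure, so I do not expect any genuine obstacle there.
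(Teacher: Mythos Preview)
Your proof is correct. Note, however, that the paper does not give its own proof of this statement: it simply cites \cite[Lemma~5.9]{adenwalla2024boolean} and moves on, so there is no in-paper argument to compare against. Your two-step approach (show the $k$-fold intersection is $K_{1,R_k(t)}$-free via multicolor Ramsey on the non-edges of an alleged star, then bound $\chi$ by $\Delta+1$ using Ramsey again on neighborhoods) is precisely the standard argument one would expect for this result. One cosmetic remark: rather than saying the multicolor Ramsey number ``follows by iterating \autoref{thm:ramsey_cliqueORindependent},'' you can cite \autoref{thm:ramsey_edge_monochromatic_clique} directly, since the paper states the multicolor version explicitly.
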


 In what follows we discuss a result of Gy\'arf\'as \cite{gyarfas1985problems} which implies that the class of $P_{4}$-free graphs is \sg{}. We first need to introduce some terminology. An orientation of an undirected graph $G$ is a \defin{transitive orientation} if the adjacency relation of the resulted directed graph is transitive. A \defin{comparability graph} is a graph which has a transitive orientation.

\begin{theorem}[{Gy\'arf\'as, \cite[Proposition 5.8]{gyarfas1985problems}}]
\label{thm:gyarfas.comparability}
    The class of comparability graphs is \sg{}.
\end{theorem}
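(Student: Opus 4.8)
The plan is to show, for each fixed positive integer $k$, that the class $\gcap_{i\in[k]}\mathcal{A}$, where $\mathcal{A}$ is the class of comparability graphs, is $\chib$ with bounding function $f(\omega)=\omega^{2^{k}}$. Fix $k$ and let $G=G_{1}\cap\cdots\cap G_{k}$ with each $G_{i}$ a comparability graph on the common vertex set $V$; for each $i\in[k]$ fix a transitive orientation of $G_{i}$, which I regard as a strict partial order $<_{i}$ on $V$ (so $uv\in E(G_{i})$ iff $u$ and $v$ are $<_{i}$-comparable). Since $uv\in E(G)$ exactly when $u,v$ are $<_{i}$-comparable for every $i$, each edge of $G$ carries a well-defined orientation in each $<_{i}$. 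The strategy is to partition $E(G)$ into at most $2^{k}$ spanning subgraphs of $G$, each of which I will show is itself a comparability graph, and then to combine proper colorings of the parts by a product coloring as in \autoref{prop:product.coloring.for.union}.

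Concretely, first I would fix an arbitrary strict linear order $\prec$ on $V$. To each edge $uv\in E(G)$ with $u\prec v$ I associate the sign vector $\tau(uv)\in\{0,1\}^{k}$ defined by $\tau(uv)_{i}=1$ if $u<_{i}v$ and $\tau(uv)_{i}=0$ if $v<_{i}u$. For $\tau\in\{0,1\}^{k}$ let $H_{\tau}$ be the spanning subgraph of $G$ whose edges are those $uv$ with $\tau(uv)=\tau$; these graphs partition $E(G)$. The key claim, and the heart of the argument, is that each $H_{\tau}$ is a comparability graph: I claim the orientation of $H_{\tau}$ induced by $\prec$ (orient $u\to v$ whenever $uv\in E(H_{\tau})$ and $u\prec v$) is transitive. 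To verify this, suppose $uv,vw\in E(H_{\tau})$ are oriented $u\to v$ and $v\to w$, so that $u\prec v\prec w$. For each coordinate $i$ with $\tau_{i}=1$ we have $u<_{i}v$ and $v<_{i}w$, hence $u<_{i}w$ by transitivity of $<_{i}$; for each $i$ with $\tau_{i}=0$ we have $v<_{i}u$ and $w<_{i}v$, hence $w<_{i}u$. In either case $u$ and $w$ are $<_{i}$-comparable with exactly the sign prescribed by $\tau_{i}$, for every $i$; therefore $uw\in E(G)$, $\tau(uw)=\tau$ (since $u\prec w$), and so $uw\in E(H_{\tau})$, oriented $u\to w$. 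This is precisely transitivity of the orientation, so $H_{\tau}$ is a comparability graph.

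It then remains to assemble the bound. Since comparability graphs are perfect (a classical fact, equivalently a consequence of Mirsky's theorem that a finite poset admits a partition into antichains equal in number to the length of a longest chain) and $H_{\tau}$ is a subgraph of $G$, we obtain $\chi(H_{\tau})=\omega(H_{\tau})\le\omega(G)$ for every $\tau$ (taking $\chi(H_{\tau})=1$ when $H_{\tau}$ is edgeless). As $G=\bigcup_{\tau}H_{\tau}$ on the common vertex set $V$, \autoref{prop:product.coloring.for.union} yields $\chi(G)\le\prod_{\tau}\chi(H_{\tau})\le\omega(G)^{2^{k}}$, which is the desired bound. I expect the main obstacle to be the discovery of the right decomposition rather than any single computation: once edges are grouped by their orientation signature relative to one fixed reference order $\prec$, the verification that $\prec$ transitively orients each part is the short case analysis above, and perfectness of comparability graphs together with the product-coloring lemma finish the proof. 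A pleasant byproduct is that the resulting bound is polynomial in $\omega$ for each fixed $k$.
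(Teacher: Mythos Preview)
Your argument is correct. Note that the paper does not supply its own proof of this theorem; it is quoted as Gy\'arf\'as's result, so there is no in-paper proof to compare against. Your approach is precisely the standard one (and essentially Gy\'arf\'as's): partition the edges of $G$ according to the agreement pattern of the $k$ transitive orientations, verify that each part inherits a transitive orientation, invoke perfection of comparability graphs, and combine via the product coloring. The transitivity check you wrote is the key step and is done correctly.

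One small remark: the auxiliary linear order $\prec$ is convenient but not necessary. If instead you use $<_{1}$ itself as the reference (record, for each edge, whether $<_{i}$ agrees or disagrees with $<_{1}$ for $i=2,\dots,k$), you obtain only $2^{k-1}$ classes and hence the slightly better bound $\chi(G)\le\omega(G)^{2^{k-1}}$. This changes nothing conceptually.
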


Jung \cite{jung1978class} proved that every $P_{4}$-free graph is a comparability graph. Hence, we have the following:

\begin{corollary}
\label{cor:self_chi_star}
    The class of $P_{4}$-free graphs is \sg{}.
\end{corollary}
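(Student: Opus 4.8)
The plan is to combine the two results stated immediately before the corollary: Jung's theorem that every $P_4$-free graph is a comparability graph, together with \autoref{thm:gyarfas.comparability}, which asserts that the class of comparability graphs is \sg{}. These say precisely that the class of $P_4$-free graphs is contained in a \sg{} class, so the whole argument reduces to the observation that the property of being \sg{} is inherited by subclasses. I do not expect any genuine obstacle here; the substantive content lives entirely in the cited theorems, and the only point worth spelling out is this downward-closure.

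First I would record the monotonicity of the $k$-fold graph-intersection in its argument. Concretely, if $\mathcal{A} \subseteq \mathcal{B}$ are classes and $k$ is a positive integer, then any graph of the form $G = G_{1} \cap \ldots \cap G_{k}$ with each $G_{i} \in \mathcal{A}$ also has each $G_{i} \in \mathcal{B}$, and hence $G \in \gcap_{i\in[k]}\mathcal{B}$. Thus $\gcap_{i\in[k]}\mathcal{A} \subseteq \gcap_{i\in[k]}\mathcal{B}$ for every $k$, directly from the definition of graph-intersection.

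Second, I would invoke the elementary fact that $\chi$-boundedness passes to subclasses: if $\mathcal{Y}$ is $\chi$-bounded by a function $f$ and $\mathcal{X} \subseteq \mathcal{Y}$, then $\mathcal{X}$ is $\chi$-bounded by the same $f$, since $\chi(G) \leq f(\omega(G))$ holds for every $G \in \mathcal{X} \subseteq \mathcal{Y}$. Taking $\mathcal{A}$ to be the class of $P_4$-free graphs and $\mathcal{B}$ the class of comparability graphs (which contains $\mathcal{A}$ by Jung's theorem), for each positive integer $k$ the class $\gcap_{i\in[k]}\mathcal{A}$ is a subclass of $\gcap_{i\in[k]}\mathcal{B}$, and the latter is $\chi$-bounded because $\mathcal{B}$ is \sg{} by \autoref{thm:gyarfas.comparability}. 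Therefore $\gcap_{i\in[k]}\mathcal{A}$ is $\chi$-bounded for every $k$, which is exactly the statement that the class of $P_4$-free graphs is \sg{}. In short, the only genuinely new step beyond the cited results is stating that \sg{} is downward-closed under taking subclasses, which follows from monotonicity of graph-intersection together with the hereditary nature of $\chi$-boundedness.
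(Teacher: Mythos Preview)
Your proposal is correct and follows exactly the same approach as the paper, which simply states ``Hence, we have the following'' after citing Jung's theorem and \autoref{thm:gyarfas.comparability}. You have merely made explicit the (trivial) downward-closure of the \sg{} property under subclasses that the paper leaves implicit.
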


 In view of the above we propose the following conjecture:

\begin{conjecture}
\label{conj:self_chi_one_forbidden_is}
    If $H$ is a linear forest, then the class of $H$-free graphs is \sg{}.
\end{conjecture}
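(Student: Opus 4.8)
The plan is to reduce the conjecture to the case of a single path, and then to attack that case by showing that bounded intersections cannot contain long induced paths.

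First I would reduce to paths. Every linear forest $H$ is an induced subgraph of a sufficiently long path $P_N$: lay the components of $H$ consecutively along $P_N$, leaving exactly one skipped vertex between consecutive components, so that vertices of distinct components are at distance at least two in $P_N$ and hence anticomplete, while each component appears as an induced subpath. Consequently $\{H\text{-free graphs}\}\subseteq\{P_N\text{-free graphs}\}$, since any $H$-free graph containing $P_N$ would contain $H$. Moreover, being \sg{} is inherited by subclasses: if $\mathcal{A}\subseteq\mathcal{B}$ then $\gcap_{i\in[k]}\mathcal{A}\subseteq\gcap_{i\in[k]}\mathcal{B}$ for every $k$, and a subclass of a $\chib$ class is $\chib$. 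Hence it suffices to prove that for every $t$ the class of $P_t$-free graphs is \sg{}. This already covers the known anchors: $P_2$- and $P_3$-free classes are even \g{} (\autoref{thm:characterization.H-free.guarding}), and $P_4$-free graphs are \sg{} by \autoref{cor:self_chi_star}.

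For the path case my proposed route is to show that the $k$-fold graph-intersection of $P_t$-free graphs is itself $P_s$-free for some $s=s(t,k)$, and then to invoke the classical theorem of Gy\'arf\'as that $P_s$-free graphs are $\chib$. Since the class $\gcap_{i\in[k]}\{P_t\text{-free graphs}\}$ is hereditary (restrict each factor to the relevant vertex subset), it is enough to bound the length of an induced path $P_m$ that can be written as $G_1\cap\cdots\cap G_k$ with each $G_i$ being $P_t$-free. Writing $P_m=v_1\cdots v_m$, each $G_i[\{v_1,\dots,v_m\}]$ is a $P_t$-free graph containing the spanning path $v_1\cdots v_m$ together with some chords, and every non-consecutive pair must be a non-edge of at least one $G_i$. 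Passing to complements, this says exactly that the non-edges of $P_m$ can be covered by $k$ graphs whose complements are $P_t$-free. A divide-and-conquer construction (split $[m]$ in half, cover the crossing non-edges with a bounded number of complete-multipartite-type pieces, recurse) shows that $O(\log m)$ factors always suffice, which strongly suggests that the minimum number of factors grows with $m$; the content of the \textbf{Main Lemma} to prove is the matching statement that it is unbounded.

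The hard part will be this lower bound, and I expect it to be the main obstacle (indeed the reason the statement is still a conjecture). The difficulty is structural: one cannot argue by induction on $k$ peeling off a single factor, because by \autoref{thm:characterization.H-free.guarding} the class of $P_t$-free graphs is not \g{} for $t\ge 4$, so $\{P_t\text{-free}\}\gcap\bigl(\gcap_{i\in[k-1]}\{P_t\text{-free}\}\bigr)$ need not be $\chib$ merely because the inner class is. A genuinely simultaneous argument across all $k$ factors is therefore needed, and the natural shape of the lower bound—showing that realizing a long induced path as an intersection of $P_t$-free graphs forces many factors—looks like a $\log$-type counting obstruction in the same spirit as the shift-graph and line-digraph bounds used earlier (\autoref{thm:erdos.hajnal.shift} and \autoref{thm:coloring.directed.line.graphs}). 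Establishing that bound, or else directly $\chi$-bounding the intersection by a layered/comparability-style coloring generalizing the $P_4$-free case, is where the real work lies.
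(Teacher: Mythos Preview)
This statement appears in the paper as an open \emph{conjecture}; the authors do not prove it and explicitly list even the special cases $H=P_5$ and $H=P_4+P_1$ as unresolved problems. There is therefore no proof in the paper to compare against, and your proposal is a plan of attack on an open problem rather than a proof.

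Your reduction from linear forests to paths is correct and standard. The proposed route after that, however, cannot work: the Main Lemma you aim for---that for every $t$ and $k$ there exists $s=s(t,k)$ such that $\gcap_{i\in[k]}\{P_t\text{-free}\}$ is $P_s$-free---is false already for $t=4$ and $k=2$. For arbitrary $m$, define $G_1,G_2$ on $\{v_1,\ldots,v_m\}$ by setting, for $i<j$,
\[
v_iv_j\in E(G_1)\ \Longleftrightarrow\ j=i+1\ \text{or}\ i\ \text{is odd},\qquad
v_iv_j\in E(G_2)\ \Longleftrightarrow\ j=i+1\ \text{or}\ i\ \text{is even}.
\]
Then $G_1\cap G_2=P_m$, since an edge $v_iv_j$ with $j>i+1$ would require $i$ to be simultaneously odd and even. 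Each of $G_1,G_2$ is a cograph. Indeed, in $G_1^{c}$ the even-indexed vertices induce a clique, the odd-indexed vertices are pairwise non-adjacent, and $v_{2a}v_{2b-1}\in E(G_1^{c})$ precisely when $b\ge a+2$. Along any induced $P_4$ in $G_1^{c}$ the parity pattern is therefore forced to be odd, even, even, odd, say $v_{2b-1},v_{2a},v_{2a'},v_{2b'-1}$; the adjacency and non-adjacency constraints then read $b\ge a+2$, $b'\ge a'+2$, $b\le a'+1$, $b'\le a+1$, which are jointly inconsistent. By the analogous argument $G_2$ is also $P_4$-free. Hence every path lies in $\{P_4\text{-free}\}\gcap\{P_4\text{-free}\}$; two factors suffice for all $m$, so no lower bound of the kind you seek exists.

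This is entirely consistent with what is known: by \autoref{cor:self_chi_star} every $k$-fold graph-intersection of cographs is $\chi$-bounded, yet by the above it is not $P_s$-free for any $s$. Whatever mechanism forces $\chi$-boundedness of these intersections, it is not the exclusion of long induced paths, and your proposed approach cannot establish the conjecture.
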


By \autoref{cor:H.sfg.linear.forest.OR.star} and \autoref{thm:self_chi_star} it follows that an affirmative answer to \autoref{conj:self_chi_one_forbidden_is} would imply a complete characterization of the \sg{} graph classes which are defined by a single forbidden induced subgraph. We were not able to decide the following: 

\begin{problem}
    Is the class of $P_{5}$-free graphs \sg?
\end{problem}

\begin{problem}
    Is the class of $(P_{4} + P_{1})$-free graphs \sg?
\end{problem}

In \autoref{subsec:sxg.from.chig} we will show that the class of $(P_{3}+rP_{2})$-free graphs is \sg.

\subsection{Intersectionwise self-\texorpdfstring{$\chi$}{TEXT}-guarding classes from intersectionwise \texorpdfstring{$\chi$}{TEXT}-guarding classes}
\label{subsec:sxg.from.chig}

The main result of this subsection is the following theorem which allows us to construct new \sg{} classes from \g{} classes which are defined by a finite set of forbidden induced subgraphs:

\begin{theorem}
\label{thm:finite.chi.guarding.class.add.K2.self}
        Let $k$ and $t$ be a positive integers and let $\mathcal{H} = \{H_{1}, \ldots , H_{k}\}$ be a set of graphs.
        For every $i\in [t]$ let $r_{1}^{i}, \ldots, r_{k}^{i}$ be $k$ nonnegative integers and let $\mathcal{H}^{i} := \{H_{j} + r_{j}^{i}K_{2}:j\in [k]\}$. If the class of $\mathcal{H}$-free graphs is \g{}, then the class $\gcap_{i\in[t]}\{\mathcal{H}^{i}\text{-free graphs}\}$ is $\chi$-bounded. In particular, for every $i\in [t]$ the class of $\mathcal{H}^{i}$-free graphs is \sg{}.
\end{theorem}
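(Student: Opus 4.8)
The plan is to separate the two difficulties bundled into the statement: combining the $t$ layers of the intersection, and handling the extra copies of $K_2$ inside a single layer. For the first, I would record the clean fact that \emph{the graph-intersection of finitely many \g{} classes is $\chib$}. Indeed, by \autoref{obs:chig.is.chib} every \g{} class is $\chib$, and since graph-intersection is associative we may write $\mathcal{C}_1 \gcap \cdots \gcap \mathcal{C}_m = \mathcal{C}_1 \gcap (\mathcal{C}_2 \gcap \cdots \gcap \mathcal{C}_m)$; if each $\mathcal{C}_i$ is \g{} then, by induction on $m$, the inner intersection is $\chib$, and intersecting a $\chib$ class with the \g{} class $\mathcal{C}_1$ is $\chib$ by the definition of \g{}. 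Taking all $\mathcal{C}_i$ equal shows in particular that \emph{every \g{} class is \sg{}}. Consequently, to prove the theorem it suffices to show that each single class $\{\mathcal{H}^i\text{-free graphs}\}$ is itself \g{}: the $\chi$-boundedness of $\gcap_{i\in[t]}\{\mathcal{H}^i\text{-free}\}$ and the ``in particular'' clause (take all layers equal) then follow at once.

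Second, I would reduce ``$\{\mathcal{H}^i\text{-free}\}$ is \g{}'' to a single-step statement by adding the disjoint edges one at a time. The heart of the argument is then the following Reduction Lemma: \emph{if the class of $\{H_1,\dots,H_k\}$-free graphs is \g{}, then so is the class of $\{H_1+K_2,H_2,\dots,H_k\}$-free graphs.} Iterating this across all $j\in[k]$ and all $r^i_j$ copies of $K_2$ builds $\mathcal{H}^i$ from $\mathcal{H}$ while preserving \g{}-ness, which is exactly what is needed.

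The engine for the Reduction Lemma is the observation that a disjoint $K_2$ cannot hide far from a clique. Writing $\mathcal{H}^+=\{H_1+K_2,H_2,\dots,H_k\}$ and letting $A$ be any $\mathcal{H}^+$-free graph, I claim that for every clique $Q$ of $A$ with $|Q|\ge 2$ the graph $A[V(A)\setminus N_A[Q]]$ is $\{H_1,\dots,H_k\}$-free: an induced $H_1$ in this set would be anticomplete in $A$ to any edge of $Q$, producing an induced $H_1+K_2$. To prove the Reduction Lemma I would fix a $\chib$ class $\mathcal{B}$ with a $\chi$-bounding function $f$ for $\{H_1,\dots,H_k\text{-free}\}\gcap\mathcal{B}$, take $G=A\cap B$ with $A$ being $\mathcal{H}^+$-free and $B\in\mathcal{B}$, and (assuming $\omega(G)\ge 2$, as $G$ is edgeless otherwise) let $Q$ be a maximum clique of $G$, which is then also a clique of $A$. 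The ``far'' part $G[V\setminus N_A[Q]]$ lies in $\{H_1,\dots,H_k\text{-free}\}\gcap\mathcal{B}$ and so uses at most $f(\omega(G))$ colours; it remains to colour the ``near'' part $G[N_A[Q]]=\bigcup_{q\in Q} G[N_A[q]]$.

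The hard part will be exactly this near part, and the difficulty is the mismatch between adjacency in the factor $A$ and adjacency in $G=A\cap B$: a vertex may be $A$-adjacent to $q$ yet non-adjacent to $q$ in $G$, so one cannot naively drop the clique number inside $N_A[q]$. The plan to overcome this is a secondary induction on $\omega(G)$: for each $q$ split $N_A[q]$ into the vertices that are $G$-adjacent to $q$, on which $q$ is universal and the clique number drops by one (handled by the induction), and the vertices that are $A$-adjacent but $G$-non-adjacent to $q$. For the latter set I would make $q$ universal, which keeps the graph inside $\{\mathcal{H}^+\text{-free}\}\gcap\mathcal{B}^+$ for the (still $\chib$) class $\mathcal{B}^+$ obtained by adding universal vertices to members of $\mathcal{B}$; the trouble is that this operation does not by itself lower $\omega$. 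Closing this gap---obtaining a bound depending only on $\omega(G)$ and not on the (unbounded) clique numbers of the factors $A$ and $B$---is the main obstacle, and I expect it to require a Ramsey-type argument (cf.\ \autoref{thm:ramsey_cliqueORindependent}) transferring control between adjacency in the factors and in the intersection, with the \g{} hypothesis on $\{H_1,\dots,H_k\text{-free}\}$ fed back in at each level of the recursion.
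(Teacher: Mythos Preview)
Your Reduction Lemma is false, and this is fatal to the whole strategy. Take $k=1$ and $H_1=P_3$: the class of $P_3$-free graphs is \g{} (\autoref{cor:P3-free.guarding}), but the class of $(P_3+K_2)$-free graphs is \emph{not} \g{}. Indeed, every complete multipartite graph is $(K_1+K_2)$-free and hence $(P_3+K_2)$-free, so the class of $(P_3+K_2)$-free graphs contains the class of complete multipartite graphs; but the latter is not \g{} by \autoref{thm:complete.multipartite.graphs.not.chig}, and a superclass of a non-\g{} class cannot be \g{}. (More generally, \autoref{thm:characterization.H-free.guarding} already tells you that for a single forbidden $H$, only $P_2$, $P_3$ and $rK_1$ give \g{} classes.) So your plan of upgrading the hypothesis from ``$\mathcal{H}$-free is \g{}'' to ``$\mathcal{H}^i$-free is \g{}'' and then combining layers trivially cannot work: the intermediate statement you need is simply not true. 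The gap you identified in the ``near'' part is not a technical obstacle to be closed with Ramsey; it is a symptom of the Reduction Lemma being false.

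The paper avoids this by \emph{never} claiming that $\{\mathcal{H}^i\text{-free}\}$ is \g{}. Instead it bounds $\chi$ of the $t$-fold intersection directly, by a simultaneous induction on $\omega(G)$ and on the vector $(r_1,\dots,r_t)$ of totals $r_i=\sum_j r_j^i$. Your ``far'' observation (that the common $A$-non-neighbours of an edge lose one $K_2$) is exactly what drives the paper's base case $t=1$; for $t>1$ the paper uses a ``boxes and shrinking'' argument in the spirit of Chudnovsky--Scott--Seymour--Spirkl: build many disjoint vertex sets of large chromatic number, apply Ramsey to edge-colour the pairs by which indices $i$ make them ``$i$-dense'', and derive a contradiction from an unshrinkable configuration. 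The key point is that the \g{} hypothesis on $\mathcal{H}$-free is only invoked at the very bottom of the induction (when some $r_i$ hits zero, via the $(t-1)$-fold intersection), not layer by layer.
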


An application of \autoref{thm:finite.chi.guarding.class.add.K2.self} is the following immediate corollary of \autoref{thm:characterization.H-free.guarding} and \autoref{thm:finite.chi.guarding.class.add.K2.self} which settles some cases of \autoref{conj:self_chi_one_forbidden_is}.

\begin{corollary}
    \label{cor:P_{3}+rK_{2}.is.self.chi}
    Let $r$ be a positive integer. Then the class of $(P_{3}+rK_{2})$-free graphs is \sg{}.
\end{corollary}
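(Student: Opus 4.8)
The plan is to deduce this directly from \autoref{thm:finite.chi.guarding.class.add.K2.self}, using \autoref{thm:characterization.H-free.guarding} to supply its hypothesis. Concretely, I would instantiate \autoref{thm:finite.chi.guarding.class.add.K2.self} with $k=1$ and the single-element set $\mathcal{H}=\{H_{1}\}$ where $H_{1}:=P_{3}$. The backward direction of \autoref{thm:characterization.H-free.guarding} (with $H=P_{3}$) tells us that the class of $P_{3}$-free graphs is \g{}, so the hypothesis of \autoref{thm:finite.chi.guarding.class.add.K2.self} that ``the class of $\mathcal{H}$-free graphs is \g{}'' is satisfied.

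Next I would choose the parameters governing the $K_{2}$-augmentation. Since $r$ is a positive integer (hence nonnegative), for the single index $j=1$ I set $r_{1}^{i}:=r$. Then for each $i$ the augmented set is $\mathcal{H}^{i}=\{H_{1}+r_{1}^{i}K_{2}\}=\{P_{3}+rK_{2}\}$, so that the class of $\mathcal{H}^{i}$-free graphs is precisely the class of $(P_{3}+rK_{2})$-free graphs. The ``in particular'' clause of \autoref{thm:finite.chi.guarding.class.add.K2.self} then asserts exactly that this class is \sg{}, which is the desired conclusion.

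I expect no genuine obstacle here: the statement is an immediate specialization of the already-established \autoref{thm:finite.chi.guarding.class.add.K2.self}, and the only points to verify are the bookkeeping ones, namely that $P_{3}$ appears in the list $\{P_{2},P_{3},rK_{1}\}$ of \autoref{thm:characterization.H-free.guarding} so that $\{P_{3}\text{-free graphs}\}$ is \g{}, and that the single augmenting parameter can be taken equal to $r$ so that $H_{1}+r_{1}^{i}K_{2}$ matches $P_{3}+rK_{2}$ on the nose. All other work has been carried out in the proof of \autoref{thm:finite.chi.guarding.class.add.K2.self}.
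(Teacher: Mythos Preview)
Your proposal is correct and follows exactly the approach the paper takes: the paper states this corollary as an immediate consequence of \autoref{thm:characterization.H-free.guarding} and \autoref{thm:finite.chi.guarding.class.add.K2.self}, and your instantiation with $\mathcal{H}=\{P_{3}\}$ and $r_{1}^{i}=r$ is precisely how one reads off the result from the ``in particular'' clause.
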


The following is an immediate corollary of \autoref{prop:charact.of.compl.multi.induced} which states that a graph $G$ is complete multipartite if and only if $G$ is $(P_{2}+K_{1})$-free, and \autoref{cor:P_{3}+rK_{2}.is.self.chi}:

\begin{corollary}
\label{cor:complete.multipartite.self.chi}
    The class of complete multipartite graphs is \sg{}.
\end{corollary}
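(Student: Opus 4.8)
The plan is to exhibit the class of complete multipartite graphs as a subclass of a class that is already known to be \sg{}, and to note that being \sg{} is inherited by subclasses. By \autoref{prop:charact.of.compl.multi.induced}, the class of complete multipartite graphs is exactly the class of $(P_{2}+K_{1})$-free graphs (recall $P_{2}=K_{2}$). So it suffices to place the class of $(P_{2}+K_{1})$-free graphs inside the scope of \autoref{cor:P_{3}+rK_{2}.is.self.chi}.

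First I would verify the induced-subgraph containment $P_{2}+K_{1} \subseteq_{\mathrm{ind}} P_{3}+K_{2}$: taking the two vertices of the $K_{2}$-component of $P_{3}+K_{2}$ together with any single vertex of its $P_{3}$-component induces an edge plus an isolated vertex, i.e. a copy of $P_{2}+K_{1}$. Consequently a graph containing $P_{3}+K_{2}$ also contains $P_{2}+K_{1}$, so every $(P_{2}+K_{1})$-free graph is $(P_{3}+K_{2})$-free; that is, the class of complete multipartite graphs is a subclass of the class of $(P_{3}+K_{2})$-free graphs. By \autoref{cor:P_{3}+rK_{2}.is.self.chi} applied with $r=1$, this larger class is \sg{}.

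It then remains to check that being \sg{} passes to subclasses, which is the only genuinely structural (though routine) point. If $\mathcal{B}\subseteq \mathcal{A}$ and $G\in \gcap_{i\in[k]}\mathcal{B}$, then $G=B_{1}\cap\cdots\cap B_{k}$ with each $B_{i}\in\mathcal{B}\subseteq\mathcal{A}$, whence $G\in\gcap_{i\in[k]}\mathcal{A}$; thus $\gcap_{i\in[k]}\mathcal{B}\subseteq \gcap_{i\in[k]}\mathcal{A}$ for every positive integer $k$. Since $\chi$-boundedness is inherited by subclasses (the same $\chi$-bounding function works), $\mathcal{A}$ being \sg{} forces $\mathcal{B}$ to be \sg{} as well. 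Applying this with $\mathcal{A}$ the class of $(P_{3}+K_{2})$-free graphs and $\mathcal{B}$ the class of complete multipartite graphs completes the argument.

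I do not expect any real obstacle here: the entire content is the induced-subgraph observation of the previous paragraph together with the monotonicity of graph-intersection under class inclusion, so the statement is indeed immediate once \autoref{cor:P_{3}+rK_{2}.is.self.chi} and \autoref{prop:charact.of.compl.multi.induced} are in hand.
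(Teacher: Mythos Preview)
Your proof is correct and follows exactly the route the paper indicates: the paper also states the corollary as immediate from \autoref{prop:charact.of.compl.multi.induced} and \autoref{cor:P_{3}+rK_{2}.is.self.chi}, and your argument simply makes explicit the two implicit steps (that $P_{2}+K_{1}$ is induced in $P_{3}+K_{2}$, and that \sg{} passes to subclasses). There is nothing to add or correct.
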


We remark that Adenwalla, Braunfeld, Sylvester, and Zamaraev proved the following strengthening of \autoref{cor:complete.multipartite.self.chi}:

\begin{proposition}[{Adenwalla, Braunfeld, Sylvester, and Zamaraev \cite[Proposition 5.26]{adenwalla2024boolean}}]
  For every positive integer $k$ the $k$-fold graph-intersection of the class of complete multipartite graphs is $\chib$ by the linear function $f(x)=k^{2^{k}}x$.  
\end{proposition}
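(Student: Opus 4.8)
The plan is to recast the $k$-fold graph-intersection of complete multipartite graphs in a coordinate/product model and then bound $\chi$ by $\omega$ through an induction on $k$. By \autoref{prop:charact.of.compl.multi.induced}, a graph is complete multipartite exactly when its complement is a disjoint union of cliques; thus if $G = G_1 \cap \dots \cap G_k$ with each $G_i$ complete multipartite on a common vertex set $V$, then each $G_i$ determines a partition $P_i$ of $V$ (its parts being the classes of the non-adjacency equivalence relation), and two vertices are adjacent in $G$ if and only if they lie in different parts of $P_i$ for every $i \in [k]$. Encoding each $v \in V$ by the tuple $\tau(v) = (P_1(v), \dots, P_k(v))$, adjacency in $G$ becomes ``the tuples differ in all $k$ coordinates'', while $G^c = \bigcup_{i \in [k]} H_i$, where $H_i$ is the cluster graph whose cliques are the parts of $P_i$. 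In this language $\omega(G)$ is the largest set of vertices whose tuples are pairwise distinct in every coordinate, and a proper colouring of $G$ is a cover of $V$ by sets in which every pair of vertices agrees in at least one coordinate. This is the form in which I would run the induction.

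First I would record the two base cases, which already exhibit the mechanism. For $k = 1$, $G$ is complete multipartite, hence perfect, so $\chi(G) = \omega(G)$. For $k = 2$, model each vertex as the edge $\{P_1(v), P_2(v)\}$ of a bipartite multigraph between the parts of $P_1$ and those of $P_2$; a set of pairwise-adjacent vertices is exactly a matching, so $\omega(G) = \nu$, and by K\"onig's theorem there is a vertex cover of size $\nu$ consisting of parts of $P_1$ and $P_2$. Assigning each vertex to a covering part (each part being a monochromatic class in which all vertices agree in one coordinate) yields $\chi(G) \le \nu = \omega(G)$, so the $2$-fold intersection is again perfect. Both cases sit comfortably inside the claimed bound $k^{2^k}$ and suggest that the engine is a K\"onig/covering argument; the loss that forces a super-exponential constant for larger $k$ enters at the inductive step below.

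For the inductive step I would peel off the last coordinate: write $G' = G_1 \cap \dots \cap G_{k-1}$ and let $\{Q_b\}$ be the parts of $P_k$. Two structural facts are that each $Q_b$ is an independent set of $G$, and that for vertices in distinct parts of $P_k$ adjacency in $G$ coincides with adjacency in $G'$. The crucial quantitative input is the following transversal lemma: every clique $S$ of $G'$ meets at most $\omega(G)$ of the parts $Q_b$. Indeed, choosing one vertex of $S$ from each part it meets produces vertices that pairwise differ in coordinates $1, \dots, k-1$ (as $S$ is a $G'$-clique) and also in coordinate $k$ (distinct parts), hence a clique of $G$, whose size is at most $\omega(G)$. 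Thus the large $G'$-cliques responsible for the gap between $\omega(G')$ and $\omega(G)$ are concentrated inside few $P_k$-parts, and since edges internal to a part are not edges of $G$, these concentrations do not actually need to be properly coloured.

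The heart of the proof, and the step I expect to be the main obstacle, is to convert this concentration into a colouring while controlling the constant. The target recursion is of ``squaring'' type: peeling one coordinate should cost roughly a product of two recursive colourings (one governing the across-part behaviour, essentially a K\"onig/covering layer as in the $k = 2$ case, and one governing a recursively coloured $(k-1)$-fold structure refined by the parts), so that the coefficient $\phi(k)$ satisfies $\phi(k) \lesssim \phi(k-1)^2 \cdot \mathrm{poly}(k)$; unrolling gives $\log \phi(k) = \Theta(2^k)$, matching $k^{2^k} = 2^{2^k \log k}$. The delicate point is that complete multipartite graphs are not \g{} in general (\autoref{thm:complete.multipartite.graphs.not.chig}), so colours cannot be reused freely across parts; the transversal lemma must therefore be leveraged through a simultaneous (rather than part-by-part) covering, and the bookkeeping of constants, showing that the recursion closes at exactly $k^{2^k}$, is where the real care is needed. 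The purely qualitative conclusion that the class is \sg{} is already available as \autoref{cor:complete.multipartite.self.chi}; the content of the present statement is the explicit linear bound with this coefficient.
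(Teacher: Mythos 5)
Your proposal is not a proof; it is a plan whose central step is explicitly left open. The parts you do carry out are correct: the encoding of $G = G_1 \cap \dots \cap G_k$ by partitions $P_1,\dots,P_k$ with adjacency meaning ``differ in every coordinate'' is the right model (and your characterization of independent sets as sets in which every pair agrees in some coordinate is accurate); the case $k=1$ is trivial; your $k=2$ argument via K\"onig's theorem is sound and in fact shows the $2$-fold graph-intersection is perfect; and the transversal lemma (a clique of $G' = G_1 \cap \dots \cap G_{k-1}$ meets at most $\omega(G)$ parts of $P_k$) is correctly proved. But the inductive step --- converting that transversal bound into an actual proper colouring of $G$ with at most $\phi(k)\,\omega(G)$ colours --- is never executed. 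You assert a ``squaring-type'' recursion $\phi(k) \lesssim \phi(k-1)^2 \cdot \mathrm{poly}(k)$ as a \emph{target}, without defining the colouring that realizes it, verifying properness, or closing the constant at $k^{2^k}$; you yourself flag this as ``the main obstacle'' and ``where the real care is needed.'' The difficulty is genuine: $\omega(G')$ can vastly exceed $\omega(G)$, so the inductive hypothesis applied to $G'$ gives no useful bound directly, and the transversal lemma only says large $G'$-cliques hide inside few $P_k$-parts; one still needs a mechanism (a simultaneous covering, a weighted or fractional relaxation, or a careful part-refinement scheme) to colour the across-part edges, and no such mechanism is exhibited. As written, the proposal proves the statement only for $k \le 2$.

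For context, the paper itself contains no proof of this proposition either: it is imported verbatim as Proposition 5.26 of Adenwalla, Braunfeld, Sylvester, and Zamaraev \cite{adenwalla2024boolean}, stated as a strengthening of the qualitative fact (which the paper does prove, via \autoref{thm:finite.chi.guarding.class.add.K2.self} and \autoref{prop:charact.of.compl.multi.induced}) that complete multipartite graphs are intersectionwise self-$\chi$-guarding. So there is no in-paper argument against which your sketch could be matched, and nothing in the paper fills the gap you left; to complete your route you would need to consult or reconstruct the argument from \cite{adenwalla2024boolean}, where the exact coefficient $k^{2^{k}}$ is established.
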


The rest of \autoref{subsec:sxg.from.chig} is devoted to the proof of \autoref{thm:finite.chi.guarding.class.add.K2.self}, for which we use a technique inspired by previous work of Chudnovsky, Scott, Seymour, and Spirkl \cite{pureI}: We assume towards a contradiction that there exists a graph $G=G_{1}\cap\ldots \cap G_{t}$ in the class $\gcap_{i\in[t]}\{\mathcal{H}^{i}\text{-free graphs}\}$ with chromatic number large in terms of its clique number. We then find in $G$ a large collection $\mathcal{B}$ of pairwise disjoint sets of vertices ("boxes") such that for each $B\in \mathcal{B}$ the subgraph which is induced by $B$ has large chromatic number. For a supergraph $H$ of $G$ we consider a pair $\{B,B'\}$ of boxes dense in the graph $H$ if for every $v\in B$ we have that $\chi(G[B' \setminus N_{H}(v)])$ is small, and for every $v\in B'$ we have that $\chi(G[B \setminus N_{H}(v)])$ is small. For each pair of boxes $\{B,B'\}\subseteq \mathcal{B}$, and each $i\in [t]$, we ask if the pair $\{B,B'\}$ could be made dense in $G_{i}$, possibly by "shrinking" (with respect to their chromatic number in $G$) the boxes $B$ and $B'$ by a constant factor. This results in a "shrink-resistant" configuration. Now, because we are applying induction on the clique number of $G$, and neighborhoods of vertices in $G$ have clique number smaller than $\omega(G)$, it follows that $G$ is relatively sparse. By playing this fact against the fact that each $G_{i}$ is relatively dense (for every edge, its common non-neighbors have small chromatic number by the inductive setup) we get a contradiction.

For the proof of \autoref{thm:finite.chi.guarding.class.add.K2.self} we need the notion of edge-coloring and a version of Ramsey's theorem. Let $k$ be a positive integer, and let $G$ be a graph. A \defin{$k$-edge-coloring} of $G$ is a function $f:E(G) \to[k]$.

\begin{theorem}[{Ramsey \cite[Theorem B]{ramsey1930problem}}]
    \label{thm:ramsey_edge_monochromatic_clique}
    Let $k$ and $t$ be positive integers. Then there exists an integer $n(k,t)$ such that if $G$ is a complete graph on $n(k,t)$ vertices, then every $k$-edge-coloring of $G$ results in a monochromatic clique of size $t$.
\end{theorem}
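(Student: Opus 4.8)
The plan is to prove the multicolor statement by induction on the number of colors $k$, bootstrapping from the two-color version already recorded as \autoref{thm:ramsey_cliqueORindependent}. The key observation is that a $2$-edge-coloring of a complete graph is nothing but a graph together with its complement: if we call the edges of one color "present" and the edges of the other color "absent", then a monochromatic clique in the first color is a clique of the resulting graph $G$, and a monochromatic clique in the second color is an independent set of $G$. Thus \autoref{thm:ramsey_cliqueORindependent}, read in this language, is exactly the case $k = 2$ of the statement to be proved.

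For the inductive step, suppose $k \geq 2$ and that a finite bound $n(k-1, t)$ exists. Given a complete graph on $N$ vertices with a $k$-edge-coloring $f \colon E \to [k]$, I would merge the last $k-1$ colors into one: define a $2$-edge-coloring by declaring an edge "red" when $f(e) = 1$ and "blue" when $f(e) \in \{2, \ldots, k\}$. Applying \autoref{thm:ramsey_cliqueORindependent} to the corresponding graph (red edges present, blue edges absent) with clique-target $t$ and independent-set-target $n(k-1, t)$ yields a dichotomy as soon as $N$ is at least the two-color Ramsey value $n\bigl(t, n(k-1,t)\bigr)$ supplied by that theorem. In the first outcome we obtain $t$ vertices pairwise joined by color-$1$ edges, which is already a monochromatic clique of size $t$ and we are done. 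In the second outcome we obtain a set $S$ of $n(k-1,t)$ vertices all of whose internal edges are colored from $\{2, \ldots, k\}$; restricting $f$ to $S$ gives a coloring that uses only $k-1$ colors, so the induction hypothesis produces a monochromatic clique of size $t$ inside $S$. Setting $n(k,t) := n\bigl(t, n(k-1,t)\bigr)$ therefore closes the induction, with the base case $k = 1$ handled by $n(1,t) = t$, since a single-color complete graph on $t$ vertices is itself a monochromatic clique.

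There is no genuine obstacle here; the only points demanding care are bookkeeping ones. First, I would state precisely the correspondence between the "clique or independent set" formulation of \autoref{thm:ramsey_cliqueORindependent} and the "monochromatic clique in a $2$-edge-coloring" formulation, so that the application with the two targets $t$ and $n(k-1,t)$ is unambiguous. Second, I would verify that the recursion $n(k,t) = n\bigl(t, n(k-1,t)\bigr)$ indeed defines a finite integer for every $k$ and $t$, which is immediate by induction on $k$ since \autoref{thm:ramsey_cliqueORindependent} guarantees finiteness of each two-color value. The substantive content — that the relevant Ramsey number is finite at all — is entirely inherited from \autoref{thm:ramsey_cliqueORindependent}, and the color-merging argument merely propagates finiteness from two colors to $k$ colors.
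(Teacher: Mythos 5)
Your proposal is correct, but note that the paper does not prove \autoref{thm:ramsey_edge_monochromatic_clique} at all: it is imported as a black box, cited to Ramsey's original paper, exactly like the two-color version \autoref{thm:ramsey_cliqueORindependent}. Your argument --- merging colors $2,\ldots,k$ into a single ``blue'' color, applying the two-color theorem with clique target $t$ and independent-set target $n(k-1,t)$, and recursing via $n(k,t) := n\bigl(t, n(k-1,t)\bigr)$ --- is the standard derivation of the multicolor theorem from the two-color one, and every step checks out: the dichotomy either yields a color-$1$ clique of size $t$ directly, or a set of $n(k-1,t)$ vertices whose internal edges all avoid color $1$, to which the induction hypothesis applies after relabeling the colors $\{2,\ldots,k\}$ as $[k-1]$ (a relabeling worth making explicit, since the paper defines a $k$-edge-coloring as a map into $[k]$). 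The base case $n(1,t)=t$ is fine, and finiteness propagates through the recursion as you observe. So there is no gap; your write-up simply supplies a complete proof where the paper relies on a citation, and it is the argument one would expect any standard source (including Ramsey's) to give.
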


Let $k$ and $t$ be positive integers. We denote by $R_{k}(t)$ the minimum positive integer such that if $G$ is a complete graph on $R_{k}(t)$ vertices, then every $k$-edge-coloring of $G$ results in a monochromatic clique of size $t$. We are now ready to proceed with the proof of \autoref{thm:finite.chi.guarding.class.add.K2.self}.

\begin{proof}[Proof of \autoref{thm:finite.chi.guarding.class.add.K2.self}]
We prove the theorem by induction on $t$. For the base case, where $t=1$, we prove by induction on $r:=\sum_{j\in[k]}r_{j}^{1}$ that the class of $\mathcal{H}^{1}$-free graphs is $\chi$-bounded.

Suppose that $r=0$. Then $\mathcal{H}^{1}$ is the class $\mathcal{H}$. Thus, by our assumptions, 
the class of $\mathcal{H}^{1}$-free graphs is \g{}. Hence, by \autoref{obs:chig.is.chib}, we have that the class of $\mathcal{H}^{1}$-free graphs is $\chi$-bounded.

Let us suppose that $r>0$, and let $j\in[k]$ be such that $r_{j}^{1}>0$. From the induction hypothesis we have that the class of $\{H_{1}+r_{1}^{1}K_{2}, \ldots, H_{j}+(r_{j}^{1}-1)K_{2}, \ldots, H_{k}+r_{k}^{1}K_{2}\}$-free graphs is $\chib$. Let $f_{r-1}$ be a $\chi$-bounding function for this class. 

\begin{claim}
\label{cl:thm.K2.k=1}
    The class of $\mathcal{H}^{1}$-free graphs is $\chi$-bounded by the function $f_{r}:\mathbb{N}\to \mathbb{N}$ which is defined recursively as follows:

    \[ f_{r}(n) =\begin{cases} 
      1, & \text{if } n=1;\\
      2f_{r}(n -1) + f_{r-1}(n), &\text{otherwise.}
   \end{cases}
   \]
\end{claim}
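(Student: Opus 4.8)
The plan is to fix an index $j\in[k]$ with $r_j^1>0$ (which exists since $r>0$) and to prove, by induction on $n:=\omega(G)$, that every $\mathcal{H}^1$-free graph $G$ satisfies $\chi(G)\le f_r(\omega(G))$. For the base case $n=1$ the graph $G$ is edgeless, so $\chi(G)\le 1=f_r(1)$. Throughout I write $\hat{\mathcal{H}}:=\{H_1+r_1^1K_2,\ldots,H_j+(r_j^1-1)K_2,\ldots,H_k+r_k^1K_2\}$ for the reduced family; by the (outer) induction hypothesis on $r$ the class of $\hat{\mathcal{H}}$-free graphs is $\chi$-bounded by $f_{r-1}$, and recall that $\chi$-bounding functions are non-decreasing, so $f_{r-1}(\omega(G[X]))\le f_{r-1}(n)$ for any $X\subseteq V(G)$.

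For the inductive step I assume $n\ge 2$, so that $G$ has an edge $ab$, and I split $V(G)$ into the two sets $N_G[\{a,b\}]$ and $A_G(\{a,b\})=V(G)\setminus N_G[\{a,b\}]$, bounding the chromatic number of each induced subgraph separately. The key step is to show that $G[A_G(\{a,b\})]$ is $\hat{\mathcal{H}}$-free. Being an induced subgraph of $G$, it is automatically $(H_\ell+r_\ell^1K_2)$-free for every $\ell$, so the only thing to verify is that it contains no induced $H_j+(r_j^1-1)K_2$. This is where the $+K_2$ structure enters: every vertex of $A_G(\{a,b\})$ is non-adjacent to both $a$ and $b$, so an induced copy of $H_j+(r_j^1-1)K_2$ inside $A_G(\{a,b\})$, taken together with the edge $ab$ (which is vertex-disjoint from and anticomplete to that copy), would form an induced $H_j+r_j^1K_2$ in $G$, contradicting that $G$ is $\mathcal{H}^1$-free. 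Hence $G[A_G(\{a,b\})]$ is $\hat{\mathcal{H}}$-free and $\chi(G[A_G(\{a,b\})])\le f_{r-1}(n)$.

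It remains to handle $N_G[\{a,b\}]$. Since $ab\in E(G)$ we have $a\in N_G(b)$ and $b\in N_G(a)$, so $N_G[\{a,b\}]=N_G(a)\cup N_G(b)$. Each open neighborhood has clique number at most $n-1$, because $a$ (respectively $b$) is complete to $N_G(a)$ (respectively $N_G(b)$), so any clique of $G[N_G(a)]$ extends to a strictly larger clique of $G$; thus $G[N_G(a)]$ and $G[N_G(b)]$ are $\mathcal{H}^1$-free graphs of clique number at most $n-1$, and the induction hypothesis on $n$ gives $\chi(G[N_G(a)]),\chi(G[N_G(b)])\le f_r(n-1)$. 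Summing, $\chi(G[N_G[\{a,b\}]])\le 2f_r(n-1)$, and combining the two parts yields $\chi(G)\le 2f_r(n-1)+f_{r-1}(n)=f_r(n)$, as required. The only genuinely delicate point is the verification that the anticomplete set $A_G(\{a,b\})$ avoids the reduced graph $H_j+(r_j^1-1)K_2$; once the edge $ab$ is recognized as supplying the one missing $K_2$, the remainder is a routine neighborhood-peeling argument and I expect no further obstacle.
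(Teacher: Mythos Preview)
Your proof is correct and follows essentially the same approach as the paper: induct on $\omega(G)$, pick an edge $ab$, split $V(G)$ into $N_G(a)\cup N_G(b)$ and $A_G(a)\cap A_G(b)$, bound the first part by $2f_r(\omega-1)$ via the inductive hypothesis on clique number, and bound the second part by $f_{r-1}(\omega)$ after observing that an induced $H_j+(r_j^1-1)K_2$ there would combine with the edge $ab$ to yield a forbidden $H_j+r_j^1K_2$. You are in fact slightly more explicit than the paper in noting that $G[A_G(\{a,b\})]$ is $\hat{\mathcal H}$-free (not merely $(H_j+(r_j^1-1)K_2)$-free), which is what is actually needed to apply $f_{r-1}$.
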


\begin{subproof}[Proof of \autoref{cl:thm.K2.k=1}]
We prove the claim by induction on $\omega:=\omega(G)$. The claim holds trivially when $\omega=1$.
Suppose that $\omega>1$, and that the claim hols for graphs $H$ such that $\omega(H)<\omega$.

Let $G$ be an $\mathcal{H}^{1}$-free graph, and let $uv\in E(G)$. Then $\{N(u)\cup N(v), A(u)\cap A(v)\}$ is a partition of $V(G)$, and thus we have that $\chi(G) \leq \chi(G[N(u)\cup N(v)]) + \chi(G[A(u)\cap A(v)])$. Suppose for contradiction that the graph $G[A(u)\cap A(v)]$ contains $H_{j}+(r_{j}^{1}-1)K_{2}$. Then the graph $G[A(u)\cap A(v)\cup\{u,v\}]$ contains $H_{j}+r_{j}^{1}K_{2}$, which is a contradiction. Hence, we have that the graph $G[A(u)\cap A(v)]$ is $(H_{j}+(r_{j}^{1}-1)K_{2})$-free. Thus, by the induction hypothesis, we have that $\chi(G[A(u)\cap A(v)]) \leq f_{r-1}(\omega)$. Also, $\omega(G[N(v)])<\omega(G)$, and thus, by the induction hypothesis, we have that $\chi(G[N(v)])\leq f_{r}(\omega-1)$. Similarly, $\chi(G[N(u)])\leq f_{r}(\omega-1)$. Finally, putting the above together we have that $\chi(G) \leq \chi(G[N(u)\cup N(v)]) + \chi(G[A(u)\cap A(v)]) \leq 2f_{r}(\omega-1) + f_{r-1}(n)$. This completes the proof of \autoref{cl:thm.K2.k=1}.
\end{subproof}

This concludes the proof of the case $t=1$. Suppose that $t>1$, and that our statement holds for every positive integer $t'<t$. For each $i\in[t]$, let $\mathcal{A}_{i}:= \gcap_{j\in[t]\setminus \{i\}}\{\mathcal{H}^{j}\text{-free graphs}\}$. By the induction hypothesis, it follows that for every $i\in[t]$, the class $\mathcal{A}_{i}$ is $\chi$-bounded. Thus, since the class $\mathcal{H}$ is \g{}, we have that the class $\mathcal{H} \gcap \mathcal{A}_{i}$ is $\chi$-bounded. For each $i\in[t]$, let $g_{i}$ be a $\chi$-bounding function for the class $\mathcal{H} \gcap \mathcal{A}_{i}$. Let $f\colon \mathbb{N}_{+}^{t+1}\rightarrow \mathbb{N}_{+}$ be the function which is defined recursively as follows:
\[ f(n,n_{1}, \ldots, n_{t}) =\begin{cases} 
      1, \ \text{if } n=1;\\
      \min_{i\in [t]}\{g_{i}(n): n_{i}=0\}, \ \text{if } n\neq 1 \text{ and there exists } i\in[t] \text{ such that } n_{i}=0;\\
      M(n,n_{1}, \ldots, n_{t})\cdot (t+2)^{t\cdot (R_{2^{t}}(t+2)-1)}\cdot R_{2^{t}}(t+2), \ \text{otherwise.}
   \end{cases}
\]
where $$M(n,n_{1}, \ldots, n_{t}):=\max\{t^{t+1} + 1, (t+1)f(n-1,n_{1}, \ldots, n_{t})+1,C(n,n_{1}, \ldots, n_{t})(t+1)(t+2)\},$$
and $$C(n,n_{1}, \ldots, n_{t}):=\max\{f(n,n_{1}, \ldots,n_{i}-1, \ldots, n_{t}):i\in[t]\}.$$

Let $G\in \gcap_{i\in[t]}\{\mathcal{H}^{i}\text{-free graphs}\}$ and for every $i\in [t]$ let $G_{i}$ be an $\mathcal{H}^{i}\text{-free}$ graph such that $G=\cap_{i\in[t]}G_{i}$. For each $i\in [t]$ let $r_{i}:= \sum_{j\in[k]} r_{j}^{i}$. In what follows we prove by induction on $\omega(G)$ that:
\begin{align}
\label{eq.thmK2.chi(G).main}
    \chi(G) \leq f(\omega(G), r_{1}, \ldots , r_{t}).
\end{align}
If $\omega(G)=1$, then (\ref{eq.thmK2.chi(G).main}) holds trivially. Let $\omega := \omega(G)>1$, and suppose that for every graph $H\in \gcap_{i\in[t]}\{\mathcal{H}^{i}\text{-free graphs}\}$ with $\omega(H)< \omega$, we have $\chi(H) \leq f(\omega(H), r_{1}, \ldots , r_{t})$. 

We prove, by induction on $\prod_{i\in[t]} r_{i}$, that $G$ satisfies (\ref{eq.thmK2.chi(G).main}). For the basis of the induction we observe that if there exists $i\in [t]$ such that $r_{i}=0$, then $G\in \mathcal{H} \gcap \mathcal{A}_{i}$, and thus $\chi(G)\leq \min_{i\in [t]}\{g_{i}(\omega): r_{i}=0\}$. In particular, $G$ satisfies (\ref{eq.thmK2.chi(G).main}) when $\prod_{i\in[t]} r_{i}=0$.

Let us suppose that for every $i\in [t]$, we have $r_{i}\geq 1$, and that for every graph $H\in \gcap_{i\in[t]}\{\mathcal{H}^{i}\text{-free graphs}\}$, with $\omega(H)\leq \omega$, and for every choice of $kt$ nonnegative integers $s_{j}^{i}$, where $i\in[t]$ and $j\in [k]$, such that $\prod_{i\in[t]} \left( \sum_{j\in[k]}s_{j}^{i}\right) < \prod_{i\in[t]} r_{i}$ we have: $$\chi(H)\leq f\left(\omega(H), \sum_{j\in[k]}s_{j}^{1}, \ldots , \sum_{j\in[k]}s_{j}^{t}\right).$$

Let us suppose towards a contradiction that $\chi(G)>f(\omega, r_{1}, \ldots , r_{t})$. For each $i\in[t]$, we denote by $c_{i}$ the number $f(\omega,r_{1}, \ldots,r_{i}-1, \ldots, r_{t})$. Then we have that $C(\omega, r_{1}, \ldots , r_{t})=\max\{c_{i}:i\in[t]\}$. We denote $C(\omega, r_{1}, \ldots , r_{t})$ by $C$. We denote $M(\omega, r_{1}, \ldots , r_{t})$ by $M$. We need to introduce a few new notions: Let $X$ and $Y$ be disjoint subsets of $V(G)$, and let $i\in[t]$. We say that $\{X,Y\}$ is an \defin{$i$-dense pair} in $G$ if the following hold:
\begin{itemize}[topsep=0.2cm,itemsep=-0.4cm,partopsep=0.5cm,parsep=0.5cm]
    \item For all $x\in X$ we have that $\chi(G[Y\setminus N_{G_{i}}(x)])\leq c_{i}$; and
    \item For all $y\in Y$ we have that $\chi(G[X\setminus N_{G_{i}}(y)])\leq c_{i}$.
\end{itemize}

Let $\mathcal{X}$ be a family of disjoint subsets of $V(G)$, let $X, Y\in \mathcal{X}$, and let $i\in[t]$ be such that:
\begin{itemize}
    \item $\{X,Y\}$ is not an $i$-dense pair;
    \item There exist $X'\subseteq X$ and $Y'\subseteq Y$, such that: 
    \begin{itemize}[topsep=0.2cm,itemsep=-0.4cm,partopsep=0.5cm,parsep=0.5cm]
        \item $\chi(G[X'])\geq \frac{1}{t+2} \chi(G[X])$,  $\chi(G[Y'])\geq \frac{1}{t+2} \chi(G[Y])$; and 
        \item $\{X',Y'\}$ is an $i$-dense pair in $G$.
    \end{itemize}
\end{itemize}
Then we say that the pair $\{X,Y\}$ is \defin{$i$-shrinkable} and we refer to the family $(\mathcal{X}\setminus \{X,Y\})\cup \{X', Y'\}$ as the family which is obtained from $\mathcal{X}$ by \defin{$i$-shrinking the pair $\{X,Y\}$ to the pair $\{X', Y'\}$}. If a family $\mathcal{X}$ of disjoint subsets of $V(G)$ contains an $i$-shrinkable pair for some $i\in [t]$, we say that $\mathcal{X}$ is \defin{shrinkable}; otherwise we say that $\mathcal{X}$ is \defin{unshrinkable}. Finally, in what follows, for disjoint subsets $X$ and $Y$ of $V(G)$ we denote by $I(X,Y)$ the set $\{i\in [t]: \{X,Y\}\text{ is an } i\text{-dense pair in } G\}$. 

\begin{claim}
\label{cl:unshrinkable.family}
There exists an unshrinkable family $\mathcal{B}$ of pairwise disjoint subsets of $V(G)$ such that the following hold:
\begin{itemize}[topsep=0.2cm,itemsep=-0.4cm,partopsep=0.5cm,parsep=0.5cm]
    \item $\mathcal{B}$ has size $t+2$;
    \item for every $B\in \mathcal{B}$, we have that $\chi(B)=M$; and
    \item there exists a set $I\subseteq [t]$ such that for each pair $\{B,B'\}$ of distinct subsets in $\mathcal{B}$, we have that $I(B,B')=I$.
\end{itemize}
\end{claim}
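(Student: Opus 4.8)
The plan is to build $\mathcal{B}$ in three stages: extract many pairwise disjoint high-chromatic sets, shrink pairs until the family is unshrinkable while controlling the loss of chromatic number, and finally thin the family down via Ramsey's theorem so that all surviving pairs share one index set. Since we are in the last case of the definition of $f$, we have $\chi(G) > f(\omega, r_{1}, \ldots, r_{t}) = M\cdot (t+2)^{t(N-1)}\cdot N$, where I write $N := R_{2^{t}}(t+2)$ and $L := M\cdot (t+2)^{t(N-1)}$, so $\chi(G) > NL$. First I would greedily extract $N$ pairwise disjoint sets $B_{1}, \ldots, B_{N}\subseteq V(G)$ with $\chi(G[B_{j}]) = L$ for each $j$. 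This is possible because a set $B$ that is \emph{minimal} subject to $\chi(G[B]) \ge L$ satisfies $\chi(G[B]) = L$ (deleting one vertex drops the chromatic number by at most one), and deleting such a $B$ decreases $\chi(G)$ by at most $L$; as long as fewer than $N$ of these sets have been removed, the remaining graph has chromatic number greater than $(N-j+1)L\ge L$, so another set can be extracted.

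The heart of the argument is the shrinking stage: I would repeatedly choose a shrinkable pair of the current family and $i$-shrink it, stopping once the family is unshrinkable. The key observation is a \textbf{monotonicity} property of $i$-density: if $\{X,Y\}$ is an $i$-dense pair and $X'\subseteq X$, $Y'\subseteq Y$, then $\{X',Y'\}$ is an $i$-dense pair as well, because $\chi(G[Y'\setminus N_{G_{i}}(x)])\le \chi(G[Y\setminus N_{G_{i}}(x)])$ and symmetrically. Hence $I(X',Y')\supseteq I(X,Y)$ for all subsets, and each time a pair is $i$-shrunk its index set strictly gains the index $i$ (absent before the shrink, present after). To bound the number of shrinks affecting a fixed set $B$, I would track the potential $\Phi_{B} := \sum_{Z}|I(B,Z)|$ summed over the other $N-1$ sets of the current family: by monotonicity $\Phi_{B}$ never decreases, and it strictly increases whenever $B$ is involved in a shrink; since $\Phi_{B}\le t(N-1)$, the set $B$ is shrunk at most $t(N-1)$ times. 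As every shrink multiplies $\chi(G[B])$ by at least $\tfrac{1}{t+2}$, after the process each surviving set satisfies $\chi(G[B]) \ge L/(t+2)^{t(N-1)} = M$; the analogous global potential $\sum_{\text{pairs}}|I|\le t\binom{N}{2}$ guarantees that the process terminates, at an unshrinkable family of $N$ sets each of chromatic number at least $M$.

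Finally, I would color each of the $\binom{N}{2}$ pairs $\{B,B'\}$ of this family by its index set $I(B,B')\subseteq[t]$, regarding this as a $2^{t}$-edge-coloring of $K_{N}$. Since $N = R_{2^{t}}(t+2)$, \autoref{thm:ramsey_edge_monochromatic_clique} produces $t+2$ sets on which every pair receives the same color, that is, a subfamily whose pairs all share one index set $I$. Because whether a pair is $i$-shrinkable depends only on the two sets of the pair and not on the ambient family, any subfamily of an unshrinkable family is unshrinkable, so this $(t+2)$-element subfamily is unshrinkable; restricting each of its sets to an induced subgraph of chromatic number exactly $M$ then yields the desired $\mathcal{B}$.

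I expect the main obstacle to be the per-set shrink bound: isolating the monotonicity of $i$-density and packaging it into a potential that simultaneously forces termination and keeps every set above chromatic number $M$ is the genuine technical content, and it is exactly what fixes the constants $N=R_{2^{t}}(t+2)$ and $L=M(t+2)^{t(N-1)}$ in the definition of $f$. A secondary delicate point is the very last normalization to chromatic number \emph{exactly} $M$: passing to a subset lowers the threshold $\tfrac{1}{t+2}\chi(G[B])$, which could in principle create a newly shrinkable pair or enlarge an index set, so one must either perform this restriction so as to preserve unshrinkability and the common index set $I$, or arrange the construction to deliver sets of chromatic number exactly $M$ directly.
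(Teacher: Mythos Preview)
Your approach matches the paper's: extract $N=R_{2^{t}}(t+2)$ disjoint sets of high chromatic number, shrink repeatedly until unshrinkable, normalize each set to $\chi=M$, and apply Ramsey (the paper normalizes before Ramsey, you after, which is immaterial). Your monotonicity/potential argument for the per-set bound of $t(N-1)$ shrinks is more careful than the paper, which simply asserts this bound in one line. The delicacy you flag at the end---that restricting to $\chi=M$ might break unshrinkability or enlarge some $I(B,B')$---is genuine, and the paper glosses over precisely the same step without comment.
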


\begin{subproof}[Proof of \autoref{cl:unshrinkable.family}]
Let $\mathcal{B} = \{B_{1}, \ldots ,B_{R_{2^{t}}(t+2)}\}$ be a partition of $V(G)$ which has size $R_{2^{t}}(t+2)$, and such that for every $B\in \mathcal{B}$, we have $\chi(B)\geq M\cdot (t+2)^{t\cdot (R_{2^{t}}(t+2)-1)}$. We observe that, since $\chi(G)>f(\omega, r_{1}, \ldots , r_{t})$, such a partition exists.

Let $\mathcal{B}'$ be the family of disjoint subsets of $V(G)$ which we obtain from $\mathcal{B}$ as follows: We start with $\mathcal{B}':=\mathcal{B}$. For every $i\in [t]$, and for all distinct $l,l'\in [R_{2^{t}}(t+2)]$, if there exist $A_{l}\subseteq B_{l}\in \mathcal{B}'$ and $ A_{l'}\subseteq B_{l'} \in \mathcal{B}'$ such that the pair $\{B_{l}, B_{l'}\}$ is $i$-shrinkable to the pair $\{A_{l}, A_{l'}\}$, then we let $\mathcal{B}'$ be the family which is obtained from $\mathcal{B'}$ by $i$-shrinking the pair $\{B_{l}, B_{l'}\}$ to the pair $\{A_{l}, A_{l'}\}$. We repeat this process until $\mathcal{B}'$ is unshrinkable. Since every initial element of $\mathcal{B}$ will be shrunk at most $t\cdot (R_{2^{t}}(t+2)-1)$ times, it follows that for every $B'\in \mathcal{B}'$ we have $\chi(B') \geq M$. By restricting the elements of $\mathcal{B}'$ to appropriate subsets, we may assume that for every $B'\in \mathcal{B}'$, we have $\chi(B') = M$.

Let $H$ be the complete graph on $\mathcal{B}'$, and let $\phi: E(H) \to 2^{[t]}$ be a $2^{t}$-edge-coloring of $H$ which is defined as follows: $\phi(\{A,B\}) = I(A,B) \subseteq [t]$. Then, by \autoref{thm:ramsey_edge_monochromatic_clique}, and the fact that $|V(H)|=R_{2^{t}}(t+2)$, it follows that $H$ contains a monochromatic, with respect to $\phi$, clique of size $t+2$. Let $\mathcal{B''}\subseteq \mathcal{B'}$ be such a clique, and let $I$ be the color of its edges. Then $\mathcal{B''}$ satisfies our \autoref{cl:unshrinkable.family}. This completes the proof of \autoref{cl:unshrinkable.family}.
\end{subproof}
 
 Let $\mathcal{B} = \{B_{1}, \ldots , B_{t+2}\}$ be a family of disjoint subsets of $V(G)$ and let $I$ be a subset of $[t]$ as in the statement of \autoref{cl:unshrinkable.family}.

\begin{claim}
    \label{cl:pair.of.boxes.nonnbrs.large.chi}
    Let $B_{l}$ and $B_{l'}$ be distinct elements of $\mathcal{B}$. Then for every $x\in B_{l}$ there exists $i\in[t]\setminus I$ such that $\chi(G[B_{l'}\setminus N_{G_{i}}(x)])\geq \frac{1}{t+1}M$.
\end{claim}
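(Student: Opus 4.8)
The plan is to fix a vertex $x\in B_{l}$ (note $x\notin B_{l'}$, as $B_{l}$ and $B_{l'}$ are disjoint) and to exploit that $G=\cap_{i\in[t]}G_{i}$, so that $N_{G}(x)=\cap_{i\in[t]}N_{G_{i}}(x)$ and hence a vertex of $B_{l'}$ fails to be a neighbour of $x$ in $G$ precisely when it fails to be a neighbour of $x$ in some $G_{i}$. I would begin by recording the resulting identity
\[
B_{l'}\setminus N_{G}(x)=\bigcup_{i\in[t]}\bigl(B_{l'}\setminus N_{G_{i}}(x)\bigr).
\]

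Next I would produce a lower bound on $\chi\bigl(G[B_{l'}\setminus N_{G}(x)]\bigr)$. Since $B_{l'}=(B_{l'}\cap N_{G}(x))\cup(B_{l'}\setminus N_{G}(x))$ and $\chi(G[B_{l'}])=M$, subadditivity of $\chi$ over this partition gives $\chi(G[B_{l'}\setminus N_{G}(x)])\geq M-\chi(G[B_{l'}\cap N_{G}(x)])$. Now $G[N_{G}(x)]$ lies in $\gcap_{i\in[t]}\{\mathcal{H}^{i}\text{-free graphs}\}$ (each $G_{i}[N_{G}(x)]$ is $\mathcal{H}^{i}$-free by heredity, and $G[N_{G}(x)]$ is their intersection), and it has clique number at most $\omega-1$, because $x$ together with any clique of $N_{G}(x)$ is a clique of $G$. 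Hence the induction hypothesis on $\omega$ (together with the monotonicity of $f$ in its first coordinate) yields $\chi(G[B_{l'}\cap N_{G}(x)])\leq \chi(G[N_{G}(x)])\leq f(\omega-1,r_{1},\ldots,r_{t})$, and the bound $M\geq (t+1)f(\omega-1,r_{1},\ldots,r_{t})+1$ coming from the definition of $M$ forces $\chi(G[B_{l'}\cap N_{G}(x)])<\tfrac{1}{t+1}M$. Combining these gives $\chi(G[B_{l'}\setminus N_{G}(x)])>\tfrac{t}{t+1}M$.

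Finally I would split the union above according to $i\in I$ versus $i\in[t]\setminus I$ and apply subadditivity once more:
\[
\tfrac{t}{t+1}M<\chi(G[B_{l'}\setminus N_{G}(x)])\leq \sum_{i\in I}\chi(G[B_{l'}\setminus N_{G_{i}}(x)])+\sum_{i\in[t]\setminus I}\chi(G[B_{l'}\setminus N_{G_{i}}(x)]).
\]
For $i\in I$ the pair $\{B_{l},B_{l'}\}$ is $i$-dense (since $I(B_{l},B_{l'})=I$), so each term of the first sum is at most $c_{i}\leq C$. Assuming for contradiction that every term of the second sum is $<\tfrac{1}{t+1}M$, the right-hand side is strictly less than $|I|\,C+(t-|I|)\tfrac{1}{t+1}M$; rearranging the resulting inequality yields $\tfrac{|I|}{t+1}M<|I|\,C$, which is $C>\tfrac{1}{t+1}M$ when $I\neq\emptyset$ and is an immediate numerical contradiction when $I=\emptyset$. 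But $M\geq C(t+1)(t+2)$ gives $C\leq \tfrac{M}{(t+1)(t+2)}<\tfrac{M}{t+1}$, a contradiction. Hence some $i\in[t]\setminus I$ satisfies $\chi(G[B_{l'}\setminus N_{G_{i}}(x)])\geq\tfrac{1}{t+1}M$, as claimed (in particular this also shows $[t]\setminus I\neq\emptyset$).

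I expect the only real difficulty to be bookkeeping with the constants: the argument works precisely because the definition of $M$ was engineered so that the "small" contribution from $N_{G}(x)$ (absorbed by the term $(t+1)f(\omega-1,\ldots)+1$) and the "dense" contributions from the coordinates in $I$ (absorbed by the term $C(t+1)(t+2)$) together still leave more than $\tfrac{t}{t+1}M$ of chromatic number to be distributed among the coordinates outside $I$. The structural ingredients — the union identity for $B_{l'}\setminus N_{G}(x)$, the drop in clique number inside $N_{G}(x)$, and the $i$-density of the monochromatic pairs — are each routine, so the proof reduces to checking that these chosen constants balance.
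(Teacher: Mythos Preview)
Your proof is correct and follows essentially the same route as the paper's: both arguments partition $B_{l'}$ into $N_G(x)$ and its complement, use the identity $B_{l'}\setminus N_G(x)=\bigcup_{i\in[t]}(B_{l'}\setminus N_{G_i}(x))$, bound the $i\in I$ contributions via $i$-density and the neighbourhood via the induction hypothesis on $\omega$, and play the resulting inequalities against the definition of $M$. If anything, your bookkeeping is tidier --- rearranging to $\tfrac{|I|}{t+1}M<|I|C$ and invoking $M\geq C(t+1)(t+2)$ is cleaner than the paper's intermediate estimate $|I|C\leq |I|\tfrac{t}{t+1}M$, which only balances when $|I|=0$ or $t=1$.
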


\begin{subproof}[Proof of \autoref{cl:pair.of.boxes.nonnbrs.large.chi}]
Let us suppose towards a contradiction that there exists $x\in B_{l}$ such that for every $i\in [t]\setminus I$ we have that $\chi(G[A_{G_{i}[B_{l'}]}(x)]) < \frac{1}{t+1}M$. Let $i\in I$. Then, since $\{B_{l},B_{l'}\}$ is an $i$-dense pair, we have that $\chi\left(G[A_{G_{i}[B_{l'}]}(x)] \right) \leq c_{i}$. Thus, we have that: 

\begin{align*}
\chi\left(G[\cup_{i\in I}A_{G_{i}[B_{l'}]}(x)]\right) &\leq  \sum_{i\in I} \chi\left(G[A_{G_{i}[B_{l'}]}(x)] \right) \leq \sum_{i\in I}c_{i} \leq |I|C \\
& \leq |I|\frac{t}{t+1}C(t+1)(t+2) \leq |I|\frac{t}{t+1}M.
\end{align*}
Hence, we have:

\begin{align*}
    \chi\left(G[A_{G[B_{l'}]}(x)]\right) &= \chi\left(G\Big[\left(\cup_{i\in I}A_{G_{i}[B_{l'}]}(x)\right)\cup \left(\cup_{i\in [t]\setminus I}A_{G_{i}[B_{l'}]}(x)\right)\Big]\right)\\
    & \leq \chi\left(G[\cup_{i\in I}A_{G_{i}[B_{l'}]}(x)]\right) + \chi\left(G[\cup_{i\in [t]\setminus I}A_{G_{i}[B_{l'}]}(x)]\right)\\
    & < |I|\frac{t}{t+1}M+ (t-|I|)\frac{M}{t+1} = \frac{t}{t+1}M.
\end{align*}

Since, by the induction hypothesis, we have that $\chi(G[N_{G}(x)]) \leq f(\omega-1, r_{1}, \ldots , r_{t})$, it follows that:
\begin{align*}   
\chi(G[B_{l'}]) & \leq \chi(G[N_{G[B_{l'}]}(x)]) + \chi(G[A_{G[B_{l'}]}(x)]) \\ 
&< f(\omega-1, r_{1}, \ldots , r_{t}) + \frac{t}{t+1}M \\
&\leq \frac{M}{t+1} + \frac{t\cdot M}{t+1} = M, 
\end{align*}
which contradicts the fact that $\chi(G[B_{l'}])=M$. This completes the proof of \autoref{cl:pair.of.boxes.nonnbrs.large.chi}.
\end{subproof}

By \autoref{cl:pair.of.boxes.nonnbrs.large.chi}, we have that for every $x\in B_{1}$ there exists a function $f_{x}\colon [2,\ldots, t+2] \rightarrow [t]\setminus I$, such that for every $l\in [2,\ldots, t+2]$ we have $\chi(G[B_{l}\setminus N_{G_{f_{x}(l)}}(x)]) \geq \frac{1}{t+1}M$. Let $g\colon B_{1} \rightarrow [t]^{t+1}$ with $g(x)=(f_{x}(2), f_{x}(3), \ldots , f_{x}(t+2))$. Then $g$ is a $t^{t+1}$-coloring of $G[B_{1}]$. Since $\chi(G[B_{1}])=M> t^{t+1}$, it follows that $g$ is not a proper $t^{t+1}$-coloring of $B_{1}$.

Let $\{u,v\}\subseteq B_{1}$ be an edge of $G$ such that $g(u)=g(v)$. In the $(t+1)$-tuple $g(u)$ at least one element of the set $[t]$ appears more than once. Let $p\in [t]$ be such an element, and let $l,l'\in [2,\ldots,t+2]$ be such that $l\neq l'$ and $p=f_{u}(l)=f_{u}(l')=f_{v}(l)=f_{v}(l')$. Then neither of $\{B_{1}, B_{l}\}$ and $\{B_{1}, B_{l'}\}$ is a $p$-dense pair. Hence, by \autoref{cl:unshrinkable.family}, we have that $p\notin I$, and thus, again by \autoref{cl:unshrinkable.family}, we have that $\{B_{l},B_{l'}\}$ is not a $p$-dense pair.

Let $A_{l}$ be the set $A_{G_{p}}(u)\cap A_{G_{p}}(v)\cap B_{l}$ and let $A_{l'}$ be the set $A_{G_{p}}(u)\cap A_{G_{p}}(v)\cap B_{l'}$.

\begin{claim}
\label{cl:subsets.for.shrinking}
    $\chi(G[A_{l}])\leq c_{p}$ and $\chi(G[A_{l'}])\leq c_{p}$.
\end{claim}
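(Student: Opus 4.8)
The plan is to prove both inequalities at once, since they are symmetric: I will establish the bound for $\chi(G[A_{l}])$, and the bound for $\chi(G[A_{l'}])$ follows verbatim after interchanging $l$ and $l'$. The key feature of $A_{l}$ is that, by its definition $A_{l}=A_{G_{p}}(u)\cap A_{G_{p}}(v)\cap B_{l}$, every vertex of $A_{l}$ is a non-neighbour in $G_{p}$ of both $u$ and $v$, while $uv$ is an edge of $G$ and hence of $G_{p}$ (because $G=\cap_{i\in[t]}G_{i}$). Thus in $G_{p}$ the pair $\{u,v\}$ induces a $K_{2}$ that is anticomplete to $A_{l}$, which is exactly what is needed to "absorb" one extra copy of $K_{2}$.

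Concretely, I would first fix an index $j\in[k]$ with $r_{j}^{p}\geq 1$; such a $j$ exists because $r_{p}=\sum_{j\in[k]}r_{j}^{p}\geq 1$. I claim that $G_{p}[A_{l}]$ is $(H_{j}+(r_{j}^{p}-1)K_{2})$-free. Indeed, if $G_{p}[A_{l}]$ contained an induced copy of $H_{j}+(r_{j}^{p}-1)K_{2}$, then, since $uv$ is an edge of $G_{p}$ anticomplete to $A_{l}$, the graph $G_{p}[A_{l}\cup\{u,v\}]$ would contain an induced copy of $H_{j}+r_{j}^{p}K_{2}$, contradicting that $G_{p}$ is $\mathcal{H}^{p}$-free. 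For every $j'\neq j$ the graph $G_{p}[A_{l}]$ is already $(H_{j'}+r_{j'}^{p}K_{2})$-free, being an induced subgraph of the $\mathcal{H}^{p}$-free graph $G_{p}$, and for each $i\neq p$ the graph $G_{i}[A_{l}]$ is $\mathcal{H}^{i}$-free. Hence the graphs $\{G_{i}[A_{l}]\}_{i\in[t]}$ witness that $G[A_{l}]=\cap_{i\in[t]}G_{i}[A_{l}]$ lies in the graph-intersection class defined by the families $\mathcal{H}^{1},\ldots,\mathcal{H}^{p-1},\widehat{\mathcal{H}},\mathcal{H}^{p+1},\ldots,\mathcal{H}^{t}$, where $\widehat{\mathcal{H}}$ is obtained from $\mathcal{H}^{p}$ by replacing $H_{j}+r_{j}^{p}K_{2}$ with $H_{j}+(r_{j}^{p}-1)K_{2}$.

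Finally I would invoke the inner induction hypothesis, namely the induction on $\prod_{i\in[t]}r_{i}$ set up just before the statement. The parameters attached to $G[A_{l}]$ are $r_{1},\ldots,r_{p}-1,\ldots,r_{t}$, whose product $r_{1}\cdots(r_{p}-1)\cdots r_{t}$ is strictly smaller than $\prod_{i\in[t]}r_{i}$ because every $r_{i}\geq 1$, and moreover $\omega(G[A_{l}])\leq\omega$. The induction hypothesis therefore gives $\chi(G[A_{l}])\leq f(\omega(G[A_{l}]),r_{1},\ldots,r_{p}-1,\ldots,r_{t})$, and since $f$ is non-decreasing in its first argument and $\omega(G[A_{l}])\leq\omega$, this is at most $f(\omega,r_{1},\ldots,r_{p}-1,\ldots,r_{t})=c_{p}$, as required. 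The main points to handle with care are the bookkeeping rather than any deep idea: verifying that $A_{l}$ genuinely lies in the common non-neighbourhood of the edge $uv$ \emph{in $G_{p}$} (not merely in $G$), choosing $j$ with $r_{j}^{p}\geq 1$, and confirming that lowering $r_{p}$ by one strictly decreases the relevant product so that the inner induction is applicable. The absorption step producing $H_{j}+r_{j}^{p}K_{2}$ and the comparison with $c_{p}$ are then routine.
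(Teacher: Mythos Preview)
Your proposal is correct and follows essentially the same approach as the paper: pick $j$ with $r_{j}^{p}\geq 1$, use the edge $uv\in E(G_{p})$ anticomplete in $G_{p}$ to $A_{l}$ to show $G_{p}[A_{l}]$ is $(H_{j}+(r_{j}^{p}-1)K_{2})$-free, and then apply the inner induction on $\prod_{i}r_{i}$. You are in fact slightly more explicit than the paper about the bookkeeping (verifying $uv\in E(G_{p})$, checking that $G[A_{l}]$ sits in the modified intersection class, and invoking monotonicity of $f$ in its first argument to pass from $\omega(G[A_{l}])$ to $\omega$), all of which is to the good.
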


\begin{subproof}[Proof of \autoref{cl:subsets.for.shrinking}]
    We prove that $\chi(G[A_{l}])\leq c_{p}$; the proof that $\chi(G[A_{l'}])\leq c_{p}$ is identical.
    
    Since, by the induction hypothesis, we have that $\prod_{i\in[t]} r_{i}\geq 1$, it follows that there exists $j\in [k]$ such that $r_{j}^{p}\geq 1$. We claim that the graph $G_{p}[A_{l}]$ is $(H_{j}+(r_{j}^{p}-1)K_{2})$-free. Indeed, otherwise $G_{p}[A_{l}(v)\cup\{u,v\}]$ contains $H_{j}+r_{j}^{p}K_{2}$, which contradicts the fact that $G_{p}$ is $\mathcal{H}^{p}$-free. Hence, by the induction hypothesis, we have that $\chi(G[A_{l}]) \leq f(\omega, r_{1}, \ldots , r_{p} - 1, \ldots, r_{t})= c_{p}$. This completes the proof of \autoref{cl:subsets.for.shrinking}.
\end{subproof}

We denote by $\tilde{B_{l}}$ the set $B_{l}\setminus (N_{G_{p}}(u) \cup A_{l})$, and by $\tilde{B_{l'}}$ the set $B_{l'}\setminus (N_{G_{p}}(v) \cup A_{l'})$. Observe that in the graph $G_{p}$ we have that $u$ is complete to $\tilde{B_{l'}}$ and $v$ is complete to $\tilde{B_{l}}$.

\begin{figure}
    \centering
    \includegraphics[page=1]{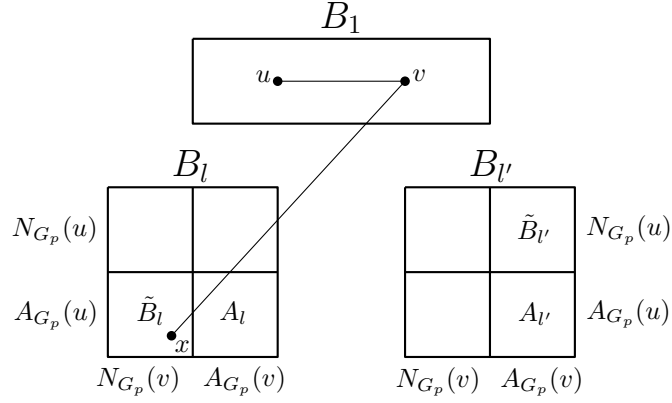}
    \caption{An illustration of the situation in the proof of \autoref{cl:good.pair}. By the definition of $\tilde{B_{l'}}$ we have that $v$ is anticomplete to $\tilde{B_{l'}}$. Thus both $v$ and $x$ are anticomplete to $\tilde{B_{l'}}\setminus N_{G_{p}}(x)$. The disjoint union of an induced  $H_{j}+(r_{j}^{p}-1)K_{2}$ in $\tilde{B_{l'}}\setminus N_{G_{p}}(x)$ with the edge $vx$ would result in an induced $H_{j}+r_{j}^{p}K_{2}$ in $G_{p}$.}
\end{figure}

\begin{claim}
\label{cl:good.pair}
$\{\tilde{B_{l}}, \tilde{B_{l'}}\}$ is a $p$-dense pair in $G$.
\end{claim}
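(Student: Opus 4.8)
The plan is to verify directly the two defining conditions of a $p$-dense pair, namely that $\chi(G[\tilde{B_{l'}} \setminus N_{G_p}(x)]) \le c_p$ for every $x \in \tilde{B_l}$, and that $\chi(G[\tilde{B_l} \setminus N_{G_p}(y)]) \le c_p$ for every $y \in \tilde{B_{l'}}$. These two conditions are symmetric under exchanging the roles of $v$ and $u$ and of $\tilde{B_l}$ and $\tilde{B_{l'}}$, so I would establish the first and obtain the second identically, with the edge $uy$ in place of $vx$.

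For the first condition I would fix $x \in \tilde{B_l}$ and write $W := \tilde{B_{l'}} \setminus N_{G_p}(x)$. The relevant structural facts have already been recorded before the statement of \autoref{cl:good.pair}: in $G_p$ the vertex $v$ is complete to $\tilde{B_l}$ and anticomplete to $\tilde{B_{l'}}$. Consequently $vx \in E(G_p)$ (since $x \in \tilde{B_l}$), while both $v$ and $x$ are anticomplete to $W$ in $G_p$ --- the former because $W \subseteq \tilde{B_{l'}}$, the latter by the definition of $W$ --- and neither $v$ nor $x$ lies in $W$, as $W \subseteq B_{l'}$ whereas $v \in B_1$ and $x \in B_l$. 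Mirroring the proof of \autoref{cl:subsets.for.shrinking}, and recalling that $r_j^p \ge 1$ for the index $j$ fixed earlier, I claim $G_p[W]$ is $(H_j + (r_j^p - 1)K_2)$-free: otherwise an induced $H_j + (r_j^p - 1)K_2$ in $G_p[W]$, taken together with the edge $vx$ (which is vertex-disjoint from $W$ and anticomplete to it in $G_p$), would form an induced $H_j + r_j^p K_2$ in $G_p$, contradicting that $G_p$ is $\mathcal{H}^p$-free.

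Once $G_p[W]$ is shown to be $(H_j + (r_j^p - 1)K_2)$-free, I would observe that $G[W] = \cap_{i \in [t]} G_i[W]$ exhibits $G[W]$ as the graph-intersection of $\mathcal{H}^i$-free graphs for $i \ne p$ and of a $(H_j + (r_j^p - 1)K_2)$-free (and still $\mathcal{H}^p$-free) graph for $i = p$; in other words, $G[W]$ lies in the graph-intersection class with the $p$-th parameter decremented at coordinate $j$. Since this lowers the product to $(r_p - 1)\prod_{i \ne p} r_i < \prod_{i} r_i$, the inner induction hypothesis applies and gives $\chi(G[W]) \le f(\omega(G[W]), r_1, \ldots, r_p - 1, \ldots, r_t) \le c_p$, exactly as concluded for $A_l$ in \autoref{cl:subsets.for.shrinking}. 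This yields the first condition; the symmetric computation with $u$, $y$, and the edge $uy$ yields the second, establishing that $\{\tilde{B_l}, \tilde{B_{l'}}\}$ is a $p$-dense pair.

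The only step needing real care is this last reduction to the inner induction hypothesis: I must check that decrementing $r_j^p$ strictly decreases $\prod_{i} r_i$ (guaranteed since every $r_i \ge 1$, so that the hypothesis is genuinely available) and that passing from the bound $f(\omega(G[W]), \ldots)$ supplied by the hypothesis to $c_p = f(\omega, \ldots)$ is valid --- the same point already invoked in \autoref{cl:subsets.for.shrinking}. The remaining ingredients --- that $vx$ is an edge of $G_p$ anticomplete to $W$, and that appending it produces the forbidden induced $H_j + r_j^p K_2$ --- are immediate from the completeness and anticompleteness relations of $u$ and $v$ to $\tilde{B_l}$ and $\tilde{B_{l'}}$ in $G_p$.
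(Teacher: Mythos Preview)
Your proposal is correct and follows essentially the same argument as the paper: fix $x\in\tilde{B_l}$, use that $vx\in E(G_p)$ while both $v$ and $x$ are anticomplete in $G_p$ to $W=\tilde{B_{l'}}\setminus N_{G_p}(x)$, deduce that $G_p[W]$ is $(H_j+(r_j^p-1)K_2)$-free, and apply the inner induction hypothesis to get $\chi(G[W])\le c_p$. Your write-up is in fact slightly more careful than the paper's in making the anticompleteness of $v$ to $\tilde{B_{l'}}$ explicit and in flagging the passage from $f(\omega(G[W]),\ldots)$ to $f(\omega,\ldots)=c_p$; both points are handled in the paper implicitly (the first via the figure caption, the second exactly as in \autoref{cl:subsets.for.shrinking}).
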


\begin{subproof}[Proof of \autoref{cl:good.pair}]
We prove that for every $x\in \tilde{B_{l}}$ we have $\chi(G[\tilde{B_{l'}}\setminus N_{G_{p}}(x)])\leq c_{p}$. The fact that for every $y \in \tilde{B_{l'}}$ we have $\chi(G[\tilde{B_{l}}\setminus N_{G_{p}}(y)])\leq c_{p}$ follows by symmetry.

Let $x\in \tilde{B_{l}}$. Since, by the induction hypothesis, we have that $r_{p}\geq 1$, it follows that there exists $j\in [k]$ such that $r_{j}^{p}\geq 1$. We claim that the graph $G_{p}[\tilde{B_{l'}}\setminus N_{G_{p}}(x)]$ is $(H_{j}+(r_{j}^{p}-1)K_{2})$-free. Indeed, otherwise $G_{p}[A_{G_{p}}(v)\cup\{x,v\}]$ contains $H_{j}+r_{j}^{p}K_{2}$, which contradicts the fact that $G_{p}$ is $\mathcal{H}^{p}$-free. Hence, by the induction hypothesis, we have that $\chi(G[\tilde{B_{l'}}\setminus N_{G_{p}}(x)]) \leq f(\omega, r_{1}, \ldots ,r_{p} - 1, \ldots, r_{t})= c_{p}$. This completes the proof of \autoref{cl:good.pair}.
\end{subproof}

\begin{claim}
\label{cl:we.can.shrink}
    $\chi(G[\tilde{B_{l}}])\geq \frac{1}{t+2}\chi(G[B_{l}])$ and  $\chi(G[\tilde{B_{l'}}])\geq \frac{1}{t+2}\chi(G[B_{l'}])$.
\end{claim}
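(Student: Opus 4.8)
The plan is to prove the bound for $\tilde{B_{l}}$ in detail and then obtain the bound for $\tilde{B_{l'}}$ by the symmetric argument, interchanging the roles of $u$ and $v$ and of $l$ and $l'$. I would first do the set-theoretic bookkeeping. Since $u\in B_{1}$ and $l\geq 2$, we have $u\notin B_{l}$, so $B_{l}\setminus N_{G_{p}}(u) = B_{l}\cap A_{G_{p}}(u)$. Because $A_{l}=A_{G_{p}}(u)\cap A_{G_{p}}(v)\cap B_{l}\subseteq B_{l}\cap A_{G_{p}}(u)$, and $\tilde{B_{l}}=B_{l}\setminus(N_{G_{p}}(u)\cup A_{l})$, this gives the disjoint decomposition $B_{l}\setminus N_{G_{p}}(u)=\tilde{B_{l}}\sqcup A_{l}$. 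Subadditivity of the chromatic number over a vertex partition then yields
\[
\chi(G[B_{l}\setminus N_{G_{p}}(u)]) \leq \chi(G[\tilde{B_{l}}]) + \chi(G[A_{l}]).
\]

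Next I would combine this with the two bounds already in hand. Recall that $p=f_{u}(l)$, so by the definition of the function $f_{u}$ preceding \autoref{cl:subsets.for.shrinking} we have $\chi(G[B_{l}\setminus N_{G_{p}}(u)])\geq \frac{1}{t+1}M$. Moreover, \autoref{cl:subsets.for.shrinking} gives $\chi(G[A_{l}])\leq c_{p}\leq C$. Substituting these into the displayed inequality produces
\[
\chi(G[\tilde{B_{l}}]) \geq \frac{1}{t+1}M - C.
\]

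Finally I would invoke the defining lower bound on $M$. Since $M\geq C(t+1)(t+2)$, we have $C\leq \frac{M}{(t+1)(t+2)}$, and therefore
\[
\chi(G[\tilde{B_{l}}]) \geq \frac{M}{t+1} - \frac{M}{(t+1)(t+2)} = \frac{M}{t+2} = \frac{1}{t+2}\chi(G[B_{l}]),
\]
using that $\chi(G[B_{l}])=M$ from \autoref{cl:unshrinkable.family}. The only step that genuinely requires care is the identification $B_{l}\setminus N_{G_{p}}(u)=\tilde{B_{l}}\sqcup A_{l}$; once that decomposition is justified, the remainder is a short arithmetic manipulation entirely driven by the choice of $M$. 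The reason the constant $t+2$ (rather than $t+1$) appears in the conclusion is precisely this loss of $C$ to the "bad" part $A_{l}$, which the $(t+1)(t+2)$ factor in $M$ is designed to absorb.
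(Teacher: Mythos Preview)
Your proof is correct and follows essentially the same approach as the paper: both use the decomposition of $B_{l}\setminus N_{G_{p}}(u)$ into $\tilde{B_{l}}$ and $A_{l}$, combine the lower bound $\frac{1}{t+1}M$ from the choice of $f_{u}$ with the upper bound $\chi(G[A_{l}])\leq c_{p}\leq C$ from \autoref{cl:subsets.for.shrinking}, and then use $M\geq C(t+1)(t+2)$ to convert $\frac{M}{t+1}-C$ into $\frac{M}{t+2}$. Your write-up is in fact slightly more explicit than the paper's about the set-theoretic identity $B_{l}\setminus N_{G_{p}}(u)=\tilde{B_{l}}\sqcup A_{l}$, which is a good thing.
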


\begin{subproof}[Proof of \autoref{cl:we.can.shrink}]
    We prove that $\chi(G[\tilde{B_{l}}])\geq \frac{1}{t+2}\chi(G[B_{l}])$. The proof that $\chi(G[\tilde{B_{l'}}])\geq \frac{1}{t+2}\chi(G[B_{l'}])$ is identical.
    
    By the definition of $g$ and the choice of $l$ we have that $\chi(G[B_{l}\cap A_{G_{p}}(u)])\geq \frac{1}{t+1}M = \frac{1}{t+1}\chi(G[B_{l}])$. By \autoref{cl:subsets.for.shrinking}, we have that $\chi(G[A_{G_{p}}(u)\cap A_{G_{p}}(v)\cap B_{l}])\leq c_{p}$. It follows that: $$\chi(G[\tilde{B_{l}}])\geq \frac{1}{t+1}\chi(G[B_{l}]) - c_{p}\geq \frac{1}{t+1}\chi(G[B_{l}]) - C = \frac{M}{t+1} - C.$$ We also have: 
    \begin{align*}
        M &\geq C(t+1)(t+2) \\
    (t+2)M - (t+1)M &\geq C(t+1)(t+2) \\ 
        \frac{M}{t+1} - \frac{M}{t+2} &\geq C  \\
        \frac{M}{t+1} - C &\geq \frac{M}{t+2}.
    \end{align*}

    Since $\chi(G[B_{l}])=M$ we have that $\frac{M}{t+2} = \frac{1}{t+2}\chi(G[B_{l}])$ and thus by the above we have that $\chi(G[\tilde{B_{l}}])\geq \frac{1}{t+2}\chi(G[B_{l}])$, as desired. This completes the proof of \autoref{cl:we.can.shrink}.
\end{subproof}

By \autoref{cl:good.pair} and \autoref{cl:we.can.shrink}, it follows that the pair $\{B_{l},B_{l'}\}$ is $p$-shrinkable which contradicts the fact that the family $\mathcal{B}$ is unshrinkable. Hence, $$\chi(G)\leq f(\omega(G), r_{1}, \ldots , r_{t}).$$ 

Let $h:\mathbb{N} \to \mathbb{N}$, be defined as follows:

$$h(n)=f(n, \sum_{j\in[k]} r^{1}_{j}, \ldots , \sum_{j\in[k]} r^{t}_{j}).$$

Then we proved that $\chi(G)\leq h(\omega(G))$. Hence, $h$ is a $\chi$-bounding function for the class $\gcap_{i\in[t]}\{\mathcal{H}^{i}\text{-free graphs}\}$. This completes the proof of \autoref{thm:finite.chi.guarding.class.add.K2.self}.
\end{proof}

\section*{Acknowledgements}

We are thankful to Rimma H\"am\"al\"ainen for her involvement in the initial stages of this work and for many fruitful discussions.

\printbibliography

\end{document}